\documentclass[11pt]{article}
\usepackage[margin=1.0in]{geometry}
\usepackage[utf8]{inputenc}
\usepackage{mathrsfs,amssymb,amsfonts,amsthm,amsmath,amsbsy}
\usepackage{mathtools}
\usepackage{mathabx}
\usepackage{graphicx}
\usepackage{wrapfig}
\usepackage{caption}
\usepackage{subfig}
\usepackage{dsfont}
\usepackage{float}
\usepackage{upgreek}
\usepackage{listings}
\usepackage{lipsum}
\usepackage[export]{adjustbox}
\usepackage[
backend=bibtex,
style=alphabetic,
citestyle=ieee-alphabetic,
maxnames=10
]{biblatex}
\addbibresource{biblio.bib}
\usepackage{titlesec}
\usepackage{hyperref}
\hypersetup{
	colorlinks,
	citecolor=blue,
	linkcolor=blue,
	urlcolor=green}

\newtheorem{theorem}{Theorem}[section]
\newtheorem{proposition}[theorem]{Proposition}

\newtheorem{lemma}[theorem]{Lemma}

\newtheorem{assumption}[theorem]{Assumption}

\newtheorem{definition1}[theorem]{Definition}
\newtheorem{condition}{Condition}

\newtheorem{remark1}[theorem]{Remark}

\DeclareMathOperator*{\argmax}{argmax}
\DeclarePairedDelimiter\floor{\lfloor}{\rfloor}
\DeclarePairedDelimiter\ceil{\lceil}{\rceil}
\usepackage[ruled,vlined]{algorithm2e}
\usepackage[font=small]{caption}

\newcommand{\Prob}{\mathbb{P}}
\newcommand{\EVal}{\mathbb{E}}
\newcommand{\R}{\mathbb{R}}

\newcommand{\Z}{\mathbb{Z}}
\newcommand{\N}{\mathbb{N}}
\usepackage[ruled,vlined]{algorithm2e}
\usepackage[font=small]{caption}
\usepackage[usenames,dvipsnames]{xcolor}
\usepackage{tikz}
\usepackage{subfig}
\usepackage{dsfont,fancyhdr}
\usetikzlibrary{matrix}

\usepackage{color}

\title{Aging and sub-aging\\for one-dimensional random walks\\amongst random conductances}
\author{D.\ A.\ Croydon\footnote{Research Institute for Mathematical Sciences, Kyoto University, croydon@kurims.kyoto-u.ac.jp}, D.\ Kious\footnote{Department of Mathematical Sciences, University of Bath, d.kious@bath.ac.uk} and C.\ Scali\footnote{Department of Mathematical Sciences, University of Bath, cs2458@bath.ac.uk}}

\begin{document}

\maketitle

\begin{abstract}
\noindent
We consider random walks amongst random conductances in the cases where the conductances can be arbitrarily small, with a heavy-tailed distribution at 0, and where the conductances may or may not have a heavy-tailed distribution at infinity. We study the long time behaviour of these processes and prove aging statements. When the heavy tail is only at 0, we prove that aging can be observed for the maximum of the process, i.e.~the same maximal value is attained repeatedly over long time-scales. When there are also heavy tails at infinity, we prove a classical aging result for the position of the walker, as well as a sub-aging result that occurs on a shorter time-scale.\\
\textbf{MSC2020:} Primary 60K37; 
secondary
60G50, 
60G52, 
60J27, 
82B41, 
82D30. 	
\\
\textbf{Keywords and phrases:} random conductance model, random walk in random environment, disordered media, aging, sub-aging, blocking, trapping.
\end{abstract}

\section{Introduction}

In this paper, we study the aging phenomenon for random walks amongst random conductances in dimension one. It is now well understood that these random walks can exhibit atypical behaviour, in the sense that they can be sub-diffusive due to the presence of atypical areas in the environment. There are two ways to create a slow-down for the walk. First, the environment can have very small conductances, acting as walls into which the walker will collide for a long time before overcoming them. Second, the environment can contain large conductances that the walker, at each visit, will cross back and forth many times before exiting them, hence these conductances act like traps that the walker has to escape. These atypical areas appear when the law of the conductances are chosen such that they have a heavy tail at $0$, for the walls, or at infinity, for the traps.

Let us first discuss the trapping mechanism corresponding to large conductances. This effect is reminiscent of Bouchaud's trap model, which was introduced by the physicist Jean-Philippe Bouchaud \cite{Bou92}. This model consists of a continuous-time random walk on $\mathbb{Z}^d$ such that, at each vertex, a trap is placed with exponential waiting times whose average is random, independent and heavy-tailed. The behaviour of this model is strikingly different in dimension one and in dimension two and above. In dimension two and above, it has been proved by Ben Arous and \v Cern\'y \cite{BenArousZd}, that the properly rescaled random walk converges to a fractional-kinetics process, which is a Brownian motion time-changed by the inverse of an independent stable subordinator. In \cite{BarlowCernyZD, Cerny2d}, it has been proved that random walks in random conductances with heavy tails (at infinity) also converge in two or more dimensions to a fractional-kinetics. In dimension one, the behaviour of the Bouchaud trap model is radically different, with the limiting process being a singular diffusion called the FIN diffusion,  first defined by Fontes, Isopi and Newman \cite{FIN}. This diffusion falls into the large class of spatially-subordinated Brownian motions later defined  in \cite{BenArousCabezasCernyRoyfman}, and can be described as follows. Consider a degenerate Poisson point process on $\mathbb{R}\times\mathbb{R}^+$, corresponding to the limit of the positions of the traps $\mathbb{Z}$ together with their rescaled depth (or average waiting time). The FIN diffusion is a Brownian motion time-changed by the inverse of its own local time on this degenerate Poisson point process. A major difference between FIN diffusion and fractional-kinetics is that the time-change is \emph{not} independent of the Brownian motion itself: indeed, in dimension one, the delay that is being accumulated at time $t$ depends on the trajectory up to this time. Naturally, one can expect that a random walk among heavy-tailed random conductances in one dimension converges has a FIN diffusion scaling limit, and indeed this was proved in \cite{Cerny2d}.

The slow-down created by walls is of different nature. Indeed, the walk will not stay put at the same place a long time but its maximum (and minimum) will not change for long periods, as the walker will collide against the high walls in the environment many times. This phenomenon is only observed in dimension one because, in higher dimension, the walker will easily go around these walls without any major slow-down. In dimension one, the scaling limit of this walk was derived by Kawazu and Kesten \cite{KawazuKesten} and is in yet another class of processes: it roughly resembles FIN diffusion, except that the Brownian motion is not time-changed by the inverse of its local time on a Poisson process, but is spatially-deformed by the inverse of a subordinator, where the atoms in the Poisson process corresponding to the jumps in the subordinator represent the scaling limit of the walls of high resistance.

In this paper, we study environments either containing walls, or containing both walls and traps. (See the end of Section \ref{sec:mainres} for a discussion of possible further results to ours.) In the second case, the rescaled environment can be seen in the limit as the superimposition of two independent Poisson processes: one for the walls and one for the traps. We use a unifying approach to prove scaling limits of such random walks, that is, the theory of stochastic processes associated with resistance forms, see \cite{CroydonHamblyKumagai,CroydonResistanceForm} (see also \cite{ALW} for a related work on trees). We would further like to mention the works \cite{ESZ1,ESZ2,ESZ3} that are conceptually important. Let us also acknowledge the works \cite{BergerSalviShort,BergerSalvi2020,ZindyAging} and \cite{Frib_conduc,SubBallisticZD,QuenchedBiasedRWRC}, which deal with a biased version of the random walks on random conductances in dimension one and in higher dimensions, respectively.

Now, as noted above, the main goal of our paper is to study the aging phenomenon for these random walks, when in the presence of walls, or of walls and traps, in dimension one. Bouchaud's trap model was introduced by Bouchaud  as a toy model to understand aging for the dynamics of spin glasses, which tend to stay around states with atypically low energy for long periods. More generally, aging of a system is the phenomenon that the time it takes to observe a change in the state of the system is of the order of the age of the system. As explained by Ben Arous and \v Cern\'y \cite{BenArousZd}, proving an aging result involves finding a two-point function $F(t,t\cdot h)$, with $h>1$, measuring the state of the system after it has aged for a further time $(h-1)t$ after time $t$ that exhibits a non-trivial limit $F(h)$ as $t\rightarrow\infty$.

The choice of the two-point function is important and depends on the details of the model. For instance, in the case of Bouchaud's trap model, Rinn, Maass and Bouchaud \cite{RinnMaassBouchaud} considered $F$ to be the probability that the random walk is at the same location at times $t$ and $t\cdot h$. In that case, the random walk is likely to visit the same few places for long periods. A more precise statement was conjectured in \cite{RinnMaassBouchaud} and proved in \cite{BenArousCerny2AgingRegimes}, corresponding to the phenomenon called {\it sub-aging}. In that case the two-point function is the probability that the random walk stays in the same position the whole time, from time $t$ to time $t+t^{\gamma}$, where $\gamma<1$ is related to the tails of the averages of the exponential waiting times in Bouchaud's trap model.

For random walks amongst random conductances, the aging results in the case where the environment has walls, but no traps, is different. Indeed, the random walker will not stay around the same locations for a very long time but its maximum will. Indeed, the probability that the maximum of the random walk at time $t$ is equal to the maximum obtained between times $t$ and $t\cdot h$ will have a non-trivial limit for $h>1$. We state this result in Theorem \ref{theoremAgingNEW}. (See also the comment following Theorem \ref{theoremAgingNEW} concerning the more detailed statement that will be given later in the article.)

When the environment has both walls and traps, because of the similarities explained above, one can expect that the corresponding random walk amongst random conductances will show aging and sub-aging similar to that of Bouchaud's trap model. We prove this result is indeed true, with the exception that we have an additional slow-down effect due to the presence of walls, see Theorem \ref{theoremSubAging0}.

Finally, let us emphasize that all the aging and sub-aging results outlined above are in dimension one and are proved under the annealed measures, that is the measures that average over all possible environments. For Bouchaud's trap model, quenched results were proved in dimension two in \cite{BenArousCernyMountford} and in dimension three and above \cite{CernyThesis}, with different slow-downs due to the difference in the Green function of simple random walk. Such quenched results do not hold in dimension one, see \cite{CroydonMuirhead}, because the environment seen from the walker is not mixing enough. It is reasonable to believe that the same is true for one-dimensional random walks amongst random conductances.

\paragraph{Acknowledgements.}

DK was partially supported by the EPSRC grant EP/V00929X/1. DC was supported by JSPS Grant-in-Aid for Scientific Research (C) 19K03540 and the Research Institute for Mathematical Sciences, an International Joint Usage/Research Center located in Kyoto University. CS was supported by a scholarship from the EPSRC Centre for Doctoral Training in Statistical Applied Mathematics at Bath (SAMBa), under the project EP/S022945/1.

\subsection{Model(s)}

As already set out above, the model that we consider is the random walk amongst random conductances. In particular, we will consider two different versions of this model, one with heavy-tailed resistances, and one with both heavy-tailed resistances and heavy-tailed conductances. Let $E=\{\{i, i+1\}:\:i \in \Z\}$ indicate the nearest-neighbour links on $\Z$, and let $(c(\{i, i+1\}))_{i \in \Z}$ be a family of positive weights associated with those edges. Moreover, for each $x \in \Z$, define $c(x) = c(\{x, x-1\}) + c(\{x, x+1\})$. Then, one can naturally define a random walk on this lattice starting from the origin by considering the continuous-time Markov chain $X_t$ with state space $\Z$ and generator
\begin{equation}\label{eqn:Generator}
(Lf)(x) \coloneqq \sum_{y : |x - y|\le1} \frac{c(\{x, y\})}{c(x)} \left( f(y) - f(x) \right).
\end{equation}
Note that, as this Markov chain has exponential holding times of mean $1$ at each site, the long-term behaviour of this model closely resembles the one of the discrete-time Markov chain with jump probabilities
\begin{equation}\label{RandomWalkConductances}
P\left( X_{k + 1} = y | X_{k} = x \right) = P_x\left( X_{1} = y \right) \coloneqq \frac{c(\{x, y\})}{c(x)}.
\end{equation}
We define the random walk amongst random conductances by selecting the weights $(c(\{i, i+1\}))_{i \in \Z}$ to be independent and identically-distributed (i.i.d.)\ under some probability measure $\mathbf{P}$ on a probability space $\Omega$. For a fixed realisation $\omega \in \Omega$ of the environment, it is possible to define a walk as in \eqref{eqn:Generator} and such that $X_{0} = x, x \in \Z$ almost surely; we call the distribution of such a walk its \textbf{quenched law}, and denote it by $P_{x}^\omega(\cdot)$. Moreover, the \textbf{annealed law} of this random walk is obtained by integrating out the environment:
\[    \Prob_{x}(\cdot) \coloneqq \mathbf{E}\left[ P_{x}^\omega( \cdot) \right] = \int_\Omega P_{x}^\omega( \cdot)\mathbf{P}\left(d\omega\right).\]
We also use the notation $P^\omega(\cdot) = P_{0}^\omega(\cdot)$ and $\Prob(\cdot) = \Prob_{0}(\cdot)$.

Let  $(r(\{i, i+1\}))_{i \in \Z}$ be the family of associated resistances, where, for all $i \in \Z$, $r(\{i, i+1\}) = 1/c(\{i, i+1\})$. Let us state the two fundamental assumptions on the distribution of the environment under which we will work. To distinguish between the two cases more clearly, in the second of the cases, we will denote the probability measure on the probability space on which the environment is built by $\widetilde{\mathbf{P}}$.

\begin{assumption}\label{AssumptionTail}
Fix $\alpha_0$ and $\alpha_\infty$ to be two constants in $(0, 1)$.
\begin{description}
\item [Random walk amongst random walls.] The family $(c(\{i, i+1\}))_{i \in \Z}$ satisfies:
    \begin{equation}\label{RW}\tag{RW}
    \mathbf{E}\left[ c(\{0, 1\}) \right] < \infty \quad\quad \textnormal{and} \quad\quad \mathbf{P}\left( r(\{0, 1\}) > t \right) = L_0(t) t^{-\alpha_0},\:\forall t>1,
    \end{equation}
  where $L_0(t)$ is slowly varying at infinity. We recall that a function $L$ is slowly varying at infinity if $\lim_{x \to \infty} L(ax)/L(x) = 1$ for all $a > 0$.

\item [Random walk amongst random walls and traps.] The family $(c(\{i, i+1\}))_{i \in \Z}$ satisfies:
    \begin{equation}\label{RWT}\tag{RWT}
    \widetilde{\mathbf{P}}\left[ c(\{0, 1\}) > t \right] = L_\infty(t) t^{-\alpha_\infty} \quad\quad \textnormal{and} \quad\quad \widetilde{\mathbf{P}}\left( r(\{0, 1\}) > t \right) = L_0(t) t^{-\alpha_0},\:\forall t>1,
    \end{equation}
    where both $L_0(t)$ and $L_\infty(t)$ are slowly varying at infinity.
\end{description}
\end{assumption}

In order to make the model more general, we also include a vanishing bias as in \cite{MottModel, GantertMortersWachtelAging}. We can recover the unbiased model by setting the bias parameter to $0$.

\begin{assumption}\label{AssumptionWeakBias}
For both \eqref{RW} and \eqref{RWT} as in Assumption~\ref{AssumptionTail}, and for $\lambda\in\mathbb{R}$, the $n$-scale weakly-biased random walk is the continuous-time Markov process with generator as in \eqref{eqn:Generator} with conductances and resistances deterministically-tilted in the following way:
\[c^{\lambda/n}(\{i, i+1\}) = c(\{i, i+1\}) e^{2 \lambda i/n},\qquad r^{\lambda/n}(\{i, i+1\})=\frac{1}{c^{\lambda/n}(\{i, i+1\})}.\]
\end{assumption}

We will denote by $(X_{t})_{t \ge 0}$ the random walk under \eqref{RW} and Assumption~\ref{AssumptionWeakBias}, whilst $(\widetilde{X}_{t})_{t \ge 0}$ will denote the one under \eqref{RWT} and Assumption~\ref{AssumptionWeakBias}. For $x \in \mathbb{Z}$, will denote by $P^{\omega, \lambda/n}_{x}$ and $\mathbb{P}_{x}^{\lambda/n}$ respectively the quenched and annealed laws of $(X_{t})_{t \ge 0}, X_0 = x$; note that we do not need to change the notation for $\mathbf{P}$. The notation corresponding to $(\widetilde{X}_{t})_{t \ge 0}$ is set to $\widetilde{P}_{x}^{\omega, \lambda/n}$ and $\widetilde{\mathbb{P}}_{x}^{\lambda/n}$. We drop the subscript and write $P^{\omega, \lambda/n}$ and $\mathbb{P}^{\lambda/n}$ when $x = 0$, the same conventions is used for $\widetilde{P}^{\omega, \lambda/n}$ and $\widetilde{\mathbb{P}}^{\lambda/n}$.

Let us also introduce the scaling terms
\begin{equation} \label{eqn:RightScalingTerms}
	d_{n, \infty} \coloneqq \inf \left\{ t > 0 : \widetilde{\mathbf{P}} \left( c(\{0, 1\}) > t\right) \le \frac{1}{n} \right\}, \quad d_{n, 0} \coloneqq \inf \left\{ t > 0 : \widetilde{\mathbf{P}} \left( r(\{0, 1\}) > t\right) \le \frac{1}{n} \right\}.
\end{equation}
It will perhaps be useful to the reader to indicate that $d_{n,\infty}$ should be thought of as $n^{1/\alpha_\infty}$ and $d_{n,0}$ should be thought of as $n^{1/\alpha_0}$. These quantities are different in general (by a factor of a slowly-varying function), but of the same order when the tails of the distributions above are polynomial.

\subsection{Main results}\label{sec:mainres}

In this section, we provide a first statement of our main results. These results will be restated in a more precise manner in Section \ref{sec:more-results}. Towards these ends, let us start by introducing the time scales at which we will look at the processes $X$ and $\widetilde{X}$, respectively:
\begin{equation}\label{eqn:RightTimeScales}
a_n \coloneqq n d_{n, 0} \quad\quad \text{and} \quad\quad b_n \coloneqq d_{n, \infty} d_{n, 0}.
\end{equation}
Let us explain why these are the relevant time scales for the scaling of our processes. Under the assumption \eqref{RW}, the natural time scale (corresponding to a distance scale of $n$) is given by $a_n=n d_{n,0}$, which is larger than the $n^2$ time scaling seen for a usual symmetric random walk on $\mathbb{Z}$. The factor $d_{n,0}$ represents the size of the largest resistances (walls) met by the random walk on the relevant scale, and, taking into account the excursions away from these walls, one can check that the time accumulated by the random walk bouncing against these before being able to overcome them is precisely of the order $n d_{n,0}$. Under the assumption \eqref{RWT}, the natural scale becomes $b_n$, because now the motion of the random walk is also perturbed by the large conductances, which act in the limit as large exponential waiting times, similar to what happens in the case of Bouchaud's trap model.

For our first main result, we consider the supremum of $X$ over a time interval, i.e.\
\[	\widebar{X}_{[a, b]} \coloneqq \sup_{a \le s \le b} X_{s},\]
furthermore, we write $\widebar{X}_{t} \coloneqq \widebar{X}_{[0, t]}$ for the running supremum of $X$.

\begin{theorem}\label{theoremAgingNEW}
	Under \eqref{RW} and Assumption~\ref{AssumptionWeakBias}, for all $0<\alpha_0<1$,  the following aging statement holds. There exists an explicit function $\theta:(1,\infty)\to(0,1)$ such that, for all $h>1$,
	\begin{equation*}
		\lim_{n \to \infty} \Prob^{\lambda/n}\left( \widebar{X}_{a_n} = \widebar{X}_{[a_n,h a_n]} \right) = \theta(h).
	\end{equation*}
\end{theorem}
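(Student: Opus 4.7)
The plan is to derive Theorem~\ref{theoremAgingNEW} from a functional scaling limit for the rescaled walk $X^{(n)}_t \coloneqq n^{-1} X_{a_n t}$. Under \eqref{RW} and Assumption~\ref{AssumptionWeakBias}, the rescaled edge-resistances converge (with space scale $n$ and resistance scale $d_{n,0}$) to an atomic limit environment whose atoms form a Poisson point process on $\mathbb{R}\times(0,\infty)$ with intensity governed by $\alpha_0$. Following the resistance-form programme of \cite{CroydonHamblyKumagai,CroydonResistanceForm}, the corresponding sequence of resistance metric spaces equipped with their speed measures converges in the Gromov-Hausdorff-vague sense, and the associated Markov processes converge in distribution. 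Concretely, this produces a limiting process $Y$ of the Kawazu-Kesten type described in the introduction: a (weakly biased) Brownian motion whose space coordinate is deformed by the inverse of an $\alpha_0$-stable-like subordinator. The first step is thus to quote (or verify from the framework) the convergence $X^{(n)} \Rightarrow Y$ under the annealed laws $\Prob^{\lambda/n}$, in the Skorohod $J_1$ topology on $D([0,\infty),\mathbb{R})$.

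The second step is to transfer the event of interest to the limit via the continuous mapping theorem. The joint running-supremum functional $f \mapsto (\sup_{s\leq 1} f(s),\ \sup_{1\leq s\leq h} f(s))$ is continuous in the $J_1$ topology at every continuous $f$, and $Y$ is almost surely continuous; hence
\begin{equation*}
\bigl(\widebar{X}^{(n)}_1,\ \widebar{X}^{(n)}_{[1,h]}\bigr) \ \Rightarrow\ \bigl(\widebar{Y}_1,\ \widebar{Y}_{[1,h]}\bigr).
\end{equation*}
Since $\widebar{X}^{(n)}_1 = n^{-1}\widebar{X}_{a_n}$ and $\widebar{X}^{(n)}_{[1,h]} = n^{-1}\widebar{X}_{[a_n,h a_n]}$, and since $\{\widebar{Y}_1 = \widebar{Y}_{[1,h]}\}$ is a continuity set of the joint limit (the difference $\widebar{Y}_{[1,h]}-\widebar{Y}_1$ having no positive atom), we conclude
\begin{equation*}
\lim_{n\to\infty}\Prob^{\lambda/n}\bigl(\widebar{X}_{a_n}=\widebar{X}_{[a_n,h a_n]}\bigr) = \mathbb{P}\bigl(\widebar{Y}_1=\widebar{Y}_{[1,h]}\bigr) \eqqcolon \theta(h).
\end{equation*}

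The third step is to make $\theta(h)$ explicit and to check that $\theta(h)\in(0,1)$. The key observation is that $\widebar{Y}_t$ remains constant precisely on the time intervals during which $Y$ is trapped to the left of its next unbreached wall; reading off the record-time process of $Y$, the successive wall-crossings occur at the jump times of a (tilted) $\alpha_0$-stable subordinator, where the bias $\lambda$ and the slowly-varying correction from $L_0$ enter only through the normalisation. The event $\{\widebar{Y}_1=\widebar{Y}_{[1,h]}\}$ then translates into the event that the times $1$ and $h$ belong to the same excursion interval of this subordinator, which by a Dynkin-Lamperti-type computation gives an explicit arcsine-type integral in $\alpha_0$ (and $\lambda$), taking values strictly in $(0,1)$ for every $h>1$.

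The main obstacle is the third step: while the scaling limit and the continuous-mapping argument are largely routine once the resistance-form machinery of the paper is in place, computing $\theta(h)$ explicitly requires a careful description of the record structure of the Kawazu-Kesten limit and a precise identification of the record-time process as a (tilted) stable subordinator of index $\alpha_0$, with particular care needed in the weakly-biased case where the bias deforms both the excursion law and the crossing-time law at each wall.
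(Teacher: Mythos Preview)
Your second step contains a genuine gap. The event $\{\widebar{Y}_1 = \widebar{Y}_{[1,h]}\}$ is \emph{not} a continuity set of the limiting joint law: the diagonal $\{(x,y):x=y\}\subset\mathbb{R}^2$ is closed with empty interior, so it equals its own boundary, and the limit assigns it probability $\theta(h)>0$ (which is exactly what you are trying to prove). Your justification that the difference has ``no positive atom'' is beside the point; the atom at $0$ is the obstruction. Portmanteau gives you only the upper bound $\limsup_n \Prob^{\lambda/n}(\widebar{X}_{a_n}=\widebar{X}_{[a_n,ha_n]}) \le \Prob(\widebar{Y}_1=\widebar{Y}_{[1,h]})$ from the closedness of the diagonal; the matching lower bound does not follow from weak convergence of the pair alone.

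The paper closes this gap by exploiting the atomic structure of the limiting running maximum. It first shows (Lemma~\ref{lemmaMaximumDisc}) that $\widebar{Z}^\lambda_1$ is almost surely located at a discontinuity $x_\ell$ of $S^{\alpha_0}$, and then proves a pointwise convergence of probabilities, $P^{\omega,\lambda/n}(\widebar{X}_{a_n}=j_\ell(n))\to P^{\omega,\lambda}(\widebar{Z}_1=x_\ell)$ for each atom $x_\ell$ (Proposition~\ref{PropostionWallsPointProcess}). The nontrivial content is that no mass leaks to points immediately to the left or right of $j_\ell(n)$; this is handled by Lemmas~\ref{lemmaEstimate1RWRW}--\ref{lemmaEstimate2RWRW} via effective-resistance and commute-time estimates plus a last-maximum-time argument. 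The proof of Proposition~\ref{theoremAging} then conditions on $\widebar{X}^{(n)}_1=j_\ell(n)/n$, passes the conditional law of $X^{(n)}_1$ to the limit, restarts via the Markov property, and applies the atom-by-atom convergence a second time on $[1,h]$. None of this is recoverable from the continuous mapping theorem.

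Your third step is also off the mark for this paper: the ``explicit'' $\theta(h)$ is simply $\Prob^\lambda(\widebar{Z}_1=\widebar{Z}_{[1,h]})$, expressed through the limiting diffusion $Z^\lambda$ of Section~\ref{sec:limits}. No Dynkin--Lamperti computation or identification of the record-time process as a stable subordinator is carried out (nor is it clear such an identification holds in the biased case, since the spatial deformation by $(S^{\alpha_0,\lambda})^{-1}$ and the non-constant speed measure break the self-similarity you would need).
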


The function $\theta$ above depends on the law of the environment, i.e.~on the law of the conductances, and on the bias parameter $\lambda$.
We remark that, for this model, it can further be checked from the arguments of this article that $\Prob^{\lambda/n}\left( \widebar{X}_{a_n} = \widebar{X}_{ha_n} \right)$ converges to a non-trivial limit for all $h>0$. We highlight, however, that such a result is hardly unique to the current model. Indeed, it will hold for the usual simple symmetric random walk on $\mathbb{Z}$, with the limiting expression being given by the corresponding probability for the standard Brownian motion. What is distinct to this setting, and will be made precise in Theorem \ref{TheoremGap} below, is that, with high probability, the location of the running supremum of $X$ has a particular feature, namely being to the left of an edge of large resistance, i.e.\ one of scale $d_{n,0}$. In particular, this clarifies that, under the assumption \eqref{RW}, the main trapping mechanism is that of the large resistances that act like walls, preventing the random walk from progressing towards the right (or left), hence its maximum will stay still for a long time before jumping quickly to a new value. This phenomenon is what enables us to prove the above theorem, which is certainly not true for the usual simple symmetric random walk on $\mathbb{Z}$. We further note that the function $\theta$ above will be given using the law of the scaling limit of $(X_t)$, defined in Section \ref{sec:limits} below.

\begin{remark1}
We expect that the analogous result should be true for one-dimensional Mott variable-range hopping in the regime studied in \cite{MottModel}. Indeed, as was demonstrated in that article, the behaviour of the Mott model of \cite{MottModel} is very closely related to the nearest-neighbour random conductance model studied here, and similar arguments work in the analysis of each. The extra complication in the Mott model is that one has to show the effect of long-range jumps is asymptotically negligible.
\end{remark1}

Our second main result concerns the situation when the conductances have heavy tails at infinity.

\begin{theorem}\label{theoremSubAging0}
Under \eqref{RWT} and Assumption~\ref{AssumptionWeakBias}, for all $\alpha_0, \alpha_\infty \in (0, 1)$, the following aging statement holds. There exists an explicit function $\widetilde{\theta}:(1,\infty)\to(0,1)$ such that, for all $h>1$,
\begin{equation*}
    \lim_{n \to \infty} \widetilde{\Prob}^{\lambda/n}\left( \left| \widetilde{X}_{b_n} - \widetilde{X}_{h b_n} \right| \le 1 \right) = \widetilde{\theta}(h).
\end{equation*}
Furthermore, the following sub-aging statement holds. There exists an explicit function $\widebar{\theta}:(0, \infty)\to(0,1)$ such that, for all $h>0$,
\begin{equation*}
    \lim_{n \to \infty} \widetilde{\Prob}^{\lambda/n}\left( \left| \widetilde{X}_{b_n + s_1 d_{n, \infty}} - \widetilde{X}_{b_n + s_2 d_{n, \infty}} \right| \le 1, \,\, \forall s_1, s_2 \in [0, h] \right) = \widebar{\theta}(h).
\end{equation*}
\end{theorem}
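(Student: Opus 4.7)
The plan is to derive both the aging and the sub-aging statements from the scaling-limit theorem for $\widetilde{X}$ stated in Section~\ref{sec:limits}, supplemented by a finer analysis on the shorter scale $d_{n,\infty}$ for the sub-aging claim. For the aging part, I would use that $(n^{-1}\widetilde{X}_{b_n t})_{t\geq 0}$ converges in distribution under $\widetilde{\Prob}^{\lambda/n}$ to a limiting process $(\widetilde{Z}_t)_{t\ge 0}$, which is roughly a Brownian motion deformed by the inverse of an $\alpha_0$-stable subordinator (encoding the walls) and then time-changed by the inverse of its local time on an independent Poisson point process (encoding the traps). Since this limit spends positive time pinned at each atom of the trap PPP, the event $\{\widetilde{Z}_1=\widetilde{Z}_h\}$ has positive probability, and it is exactly the limit of the rescaled event $\{n^{-1}|\widetilde{X}_{b_n}-\widetilde{X}_{hb_n}|=0\}$; the explicit $\widetilde{\theta}(h)$ is thus identified as $\widetilde{\Prob}(\widetilde{Z}_1=\widetilde{Z}_h)$. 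The remaining technical point is that the weaker lattice event $|\widetilde{X}_{b_n}-\widetilde{X}_{hb_n}|\le 1$, which rescales to $\le 1/n$, is asymptotically equivalent to the coincidence event in the limit: if the two rescaled positions lie at distinct macroscopic atoms of $\widetilde{Z}$, then they cannot be at distance $\le 1$ on $\Z$ with non-vanishing probability.

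For the sub-aging statement, one cannot rely on the large-scale limit because $d_{n,\infty}$ is much smaller than $b_n$. The key observation is that, at time $b_n$, with probability tending to $1$, the walker is bouncing on a single ``trap-edge'' $\{x_n, x_n+1\}$ whose conductance $C_n := c(\{x_n, x_n+1\})$ is of order $d_{n,\infty}$. Conditionally on this, the event that $|\widetilde{X}_{b_n+s_1 d_{n,\infty}} - \widetilde{X}_{b_n+s_2 d_{n,\infty}}| \le 1$ for all $s_1, s_2 \in [0, h]$ coincides with the event that the walker has not escaped the pair $\{x_n, x_n+1\}$ by time $b_n + h d_{n,\infty}$. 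A standard geometric-sum-of-exponentials computation then shows that the residual escape time from this pair, divided by $d_{n,\infty}$, is asymptotically exponential with rate proportional to $d_{n,\infty}/C_n$ weighted by the neighbouring ``bulk'' conductances. Identifying the joint limiting law of the rescaled trap depth $C_n/d_{n,\infty}$ (a size-biased version of the trap PPP, since deep traps are more likely to be the ones occupied at a given time) and integrating the exponential survival probability against this law yields the explicit $\widebar{\theta}(h)$.

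The hard part is the sub-aging step, and in particular identifying the correct size-biased law of the occupied trap depth at time $b_n$ in a way that is compatible with the wall-driven spatial deformation. A naive ``environment seen from the particle'' argument does not apply directly, since the environment is not stationary under the walker's shift; instead, one needs a Palm-type calculation within the limiting PPP, combined with enough control on the scaling-limit trajectory near the trap atoms to identify which depth-biased law emerges. A further delicate point is verifying that the residual holding time is asymptotically memoryless, i.e.~that the walker's history at the trap prior to time $b_n$ does not bias the remaining escape time on the scale $d_{n,\infty}$; I expect this to follow from the fact that, once in a trap of depth of order $d_{n,\infty}$, the walker performs of order $d_{n,\infty}$ internal oscillations before escaping, so the escape clock has effectively ``restarted'' by time $b_n$.
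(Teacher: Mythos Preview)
Your outline for the aging statement is essentially the paper's approach: $\widetilde{\theta}(h)=\widetilde{\Prob}^\lambda(\widetilde{Z}_1=\widetilde{Z}_h)$, and the work lies in showing that the lattice event $|\widetilde{X}_{b_n}-\widetilde{X}_{hb_n}|\le 1$ matches the limiting coincidence event. The paper does this not by a direct continuity argument (the functional is discontinuous) but via a quenched point-process convergence of the marginal of $n^{-1}\widetilde{X}_{b_n}$ (Proposition~\ref{PropositionMarginalWithTraps}), which shows the walker localises on explicit pairs $\{j_\ell(n),j_\ell(n)+1\}$ matched to the atoms $x_\ell$ of $\nu^{\alpha_\infty}$; the aging probability is then decomposed over these pairs using the Markov property. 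Your sketch is right in spirit but underestimates this step.

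For the sub-aging part your intuition is correct, but two of your anticipated difficulties are handled differently (and more simply) in the paper. First, the memorylessness concern is a non-issue: by the Markov property at time $b_n$ one only needs the law of the escape time from the pair $\{j_\ell(n),j_\ell(n)+1\}$ started from the current vertex, and this is computed directly as a geometric sum of unit exponentials (Lemma~\ref{LemmaQuenchedLaplaceTransform}); no ``restart'' argument is required. Second, the paper avoids any Palm-type calculation for the trap depth. The depth $A^1$ is identified as $\nu^{\alpha_\infty}(\widetilde{Z}^\lambda_1)$ directly through the scaling limit (Lemma~\ref{LemmaConvergenceDepthMeas}), so the correct ``size-bias'' is encoded automatically in the dynamics of $\widetilde{Z}^\lambda$. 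The genuinely new idea you are missing is how to decouple the \emph{neighbouring} conductances $A^0,A^2$ from the trap location: the paper introduces a modified environment $\widehat{\omega}$ in which the conductances adjacent to large traps are re-sampled independently, shows this perturbation does not affect the scaling limit (Lemma~\ref{LemmaNeighboursSmall} and \eqref{eqn:BothEnvirnomentsConverge}), and thereby obtains that $A^0,A^2$ are asymptotically i.i.d.\ unconditioned conductances independent of $A^1$ (Lemma~\ref{LemmaWeakCVNeighbours}). This re-sampling trick is what replaces the stationarity you correctly note is unavailable.
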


Again, the limiting functions, $\widetilde{\theta}$ and $\widebar{\theta}$ in this case, will be made explicit in Section \ref{sec:more-results}, once we have defined the scaling limit of $\widetilde{X}$. Note that this second theorem incorporates a different way for a random walk to undergo aging. Under \eqref{RWT}, the random walk is now also trapped by large conductances (i.e.\ those of scale $d_{n,\infty}$), over which it will cross many times before escaping. Moreover, the walker will come back to the same large conductance with good probability (depending on the tail decay of the resistance distribution) many times. The aging statement in this case corresponds to the fact that, after a time of the order of the age of the system, the walker will be likely to be on a large conductance and come back to it after a multiple of that time. The sub-aging statement provides finer information: at time $b_n$, i.e.\ the age of the system, the walker is likely to be on a large conductance and to stay adjacent to it for a time of order $d_{n,\infty}$. We highlight that the tail decay of the distribution of the resistances does not affect the length of the sub-aging timescale.

Let us make two comments on our main statements above. First, we expect that, following a similar strategy to the one presented in this paper, one could recover an aging statement for the maximum of the walk under the assumption \eqref{RWT}, similar to that of Theorem \ref{theoremAgingNEW}, but at a different time-scale. We choose to present the result for the \eqref{RW} model only, as it is the edges of large resistance that capture the aging phenomenon under consideration in that result. Second, the reader may wonder why we consider the case with walls only and the case with walls and traps, but not the case with traps only: the reason is that it seems clear to us that the statements and proofs would be very similar to those for Bouchaud's trap model in \cite{FIN,CroydonMuirhead}.

\subsection{Outline of the proof}

In this section, we discuss the organisation of the paper and outline the proof of the main results stated above. As explained in the introduction, under the assumptions \eqref{RWT} or \eqref{RW}, the random walk will cross back and forth edges with atypically large conductances many times, and collide with edges of atypically large resistance (i.e.~small conductance). Accordingly, the rescaled random walk will converge towards a diffusion in random environment that will localize on some points, and whose maximum will also localize on some points. In order to study the limit of random walks amongst random conductances, it is useful to consider environments as empirical point processes of normalised conductances, or resistances (i.e.~inverse conductances), that encode the positions and the values of large conductances, or resistances. Then, under appropriate assumptions on the conductances or resistances, one can prove that the environments converge to degenerate, dense Poisson point processes on $\mathbb{R}\times\mathbb{R}^+$. The Poisson process corresponding to large conductances locates points where the limiting diffusion will localize, while the Poisson process corresponding to large resistances corresponds to points where the maximum will stagnate.

As for the organisation of the article, in Section \ref{sec:limits}, we will define the point processes and associated subordinators that will encode the limiting environments. In the same section, we define the limiting processes, which are diffusions on these limiting environments. In Section \ref{sec:more-results}, we state more results and in particular restate Theorem \ref{theoremAgingNEW} and Theorem \ref{theoremSubAging0} with refined details. In Section \ref{sec:coupling}, we prove the convergence of the empirical point processes towards the limiting Poisson point processes and give an explicit construction of a crucial coupling between the discrete empirical point processes and their limits. This coupling is used throughout the rest of the paper. Section \ref{sec:randomwalksestimates} provides technical tools and estimates for random walks that will useful in proving the main results. We prove for instance that, under \eqref{RWT}, the probability that the random walk is located on a given large conductance converges towards the probability that the limiting diffusion is located on the corresponding atom of the limiting Poisson point process. Finally, we prove the aging statements in Section \ref{sec:agingproof} and the sub-aging statement in Section \ref{sec:subagingproof}. In Section \ref{sec:reslimitprocess}, we provide some useful estimates on the limit processes. The article also contains an appendix, which contains some notes on $J_1$ convergence.

\section{Limit processes and refined statements of main results}

\subsection{Limit processes and limit environments}\label{sec:limits}

In this section, we recall the definitions of two processes $Z^\lambda$ and $\widetilde{Z}^\lambda$ that were considered in \cite{MottModel}. In particular, these processes are the scaling limits of the random walks we consider. The process $Z^\lambda$ is a (generalized) diffusion in a random environment (given by a two-sided subordinator). We enlarge our probability space so that the environment is defined under the measure $\mathbf{P}$, and write $P^{\omega, \lambda}$ for the quenched law of the process and $\Prob^{\lambda}$ for its annealed law. We do the same (adding a tilde on top of the measures) for $\widetilde{Z}^\lambda$. For the processes with vanishing bias $\lambda=0$, $\Prob^{0}$ and $\widetilde{\Prob}^{0}$, we will drop the superscript $\lambda$ in the notation. We remark that $\lambda$ is a positive parameter that is present in the limit due to Assumption~\ref{AssumptionWeakBias}. We may write $\mathbb{P}^{\lambda}(Z \in \cdot)$ in place of $\mathbb{P}^{\lambda}(Z^\lambda \in \cdot)$ to ease the notation.

Let us start by defining $Z^\lambda$. Consider a standard Brownian motion $B=(B_t)_{t \ge 0}$ (started from 0) and an independent two-sided L\'evy process $S^{\alpha_0}$ with L\'evy measure
\[	\alpha_0 x^{-1-\alpha_0}\mathds{1}_{\{x>0\}}dx.\]
Furthermore, for $\lambda>0$, define an exponentially-tilted version of the L\'evy process by setting
\begin{equation}\label{eqn:eqnTwoSidedLevy}
S^{\alpha_0, \lambda}(u) \coloneqq \int_{0}^{u} e^{-2\lambda v} dS^{\alpha_0}(v).
\end{equation}
Note that $S^{\alpha_0, 0}= S^{\alpha_0}$. Furthermore, let us define the measure $\mu^{\lambda}$, whose support is $\overline{S^{\alpha_0,\lambda}(\R)}$, i.e.~the closure of the image of the L\'evy process defined in \eqref{eqn:eqnTwoSidedLevy},  by
\begin{equation}\label{SmoothSpeedMeasure}
	\mu^{\lambda} \left( (a, b] \right) \coloneqq 2\mathbf{E}\left[c({0, 1})\right] \int_{(S^{\alpha_0,\lambda})^{-1}(a)}^{(S^{\alpha_0, \lambda})^{-1}(b)} e^{2\lambda v} dv,
\end{equation}
where $(S^{\alpha_0, \lambda})^{-1}$ denotes the right-continuous inverse of $S^{\alpha_0, \lambda}$. Writing $(L_t^B(x))_{t \ge 0, x \in \R}$ for the local times of $B$, we further define
\begin{equation}\label{eqn:eqnDefinitionH}
H^{\lambda}_t \coloneqq \inf\left\{ s \ge 0: \, \int_{\R} L_s^{B} (x) \mu^{\lambda} (dx) > t \right\}.
\end{equation}
Finally, we construct $(Z_t)_{t \ge 0}$ by setting
\begin{equation}\label{eqn:eqnDefinitionZ}
Z^\lambda_t \coloneqq (S^{\alpha_0, \lambda})^{-1} \left( B_{H^{\lambda}_t} \right).
\end{equation}
Note that, for the process $Z^\lambda$, $P^{\omega, \lambda}(\cdot) = \Prob^{\lambda}(\cdot \, | \, S^{\alpha_0, \lambda})$, we may use both notations.

The definition of $(\widetilde{Z}^\lambda_t)_{t \ge 0}$ is similar. Consider, independent of $B$ and $S^{\alpha_0}$, a two-sided L\'evy process $S^{\alpha_\infty}$ with intensity $\alpha_\infty x^{- 1 -\alpha_\infty}\mathds{1}_{\{x>0\}}dx$ and, similarly to \eqref{eqn:eqnTwoSidedLevy}, we define its tilted version
\begin{equation*}
S^{\alpha_\infty, \lambda}(u) \coloneqq \int_{0}^{u} e^{2\lambda v} dS^{\alpha_\infty}(v);
\end{equation*}
we highlight that the difference in the sign of $2\lambda v$ between the above expression and \eqref{eqn:eqnTwoSidedLevy} is intentional. We define an associated measure and time-change by supposing
\begin{equation}\label{eqn:HeavyTailedSpeedMeasure}
	\widetilde{\mu}^{\lambda} \left( (a, b] \right) \coloneqq \int_{(S^{\alpha_0, \lambda})^{-1}(a)}^{(S^{\alpha_0, \lambda})^{-1}(b)} e^{2\lambda v} dS^{\alpha_\infty}(v)\text{ and } \widetilde{H}^{\lambda}_t \coloneqq \inf\left\{ s \ge 0: \, \int_{\R} L_s^{B} (x) \widetilde{\mu}^{\lambda} (dx) > t \right\},
\end{equation}
and then set
\begin{equation}\label{eqn:eqnDefinitionZTilde}
\widetilde{Z}^\lambda_t \coloneqq (S^{\alpha_0, \lambda})^{-1} \left( B_{\widetilde{H}^{\lambda}_t} \right).
\end{equation}
Note that, for the process $\widetilde{Z}^\lambda$, $\widetilde{P}^{\omega, \lambda}(\cdot) = \widetilde{\Prob}^{\lambda}(\cdot \, | \, S^{\alpha_0, \lambda}, S^{\alpha_\infty, \lambda})$, we may use both notations.

For later purposes, it will be useful to recall a well-known representation of the subordinators considered above. In particular, let us introduce the measures
\begin{equation}\label{eqn:PoissonPointMeasures}
	 \nu^{\alpha_\infty}(dz) \coloneqq \sum_{(x, w) \in \mathcal{P}^{\alpha_\infty}} w \delta_x(dz), \quad\quad \textnormal{and} \quad\quad \nu^{\alpha_0}(dz) \coloneqq \sum_{(y, v) \in \mathcal{P}^{\alpha_0}} v \delta_y(dz),
\end{equation}
where $\mathcal{P}^{\alpha_\infty}$ is a Poisson point process on $\R \times \R_+$ with intensity $dx\alpha_\infty w^{-1-\alpha_\infty}dw$ and $\mathcal{P}^{\alpha_0}$ is a Poisson point process on $\R \times \R_+$ with intensity $dy\alpha_0v^{-1-\alpha_0}dv$, and we suppose these two Poisson processes are independent. We can then write the two-sided L\'evy processes above as
\begin{equation} \label{eqn:eqnDefAlpha0Sub}
	S^{\alpha_0, \lambda}(t) = \int_{0}^{t} e^{-2\lambda s} \nu^{\alpha_0}(ds) \,\, \textnormal{for} \,\, t \ge 0, \quad S^{\alpha_0, \lambda}(t) = -\int_{t}^{0} e^{-2\lambda s} \nu^{\alpha_0}(ds) \,\, \textnormal{for} \,\, t < 0,
\end{equation}
and
\begin{equation} \label{eqn:eqnDefAlphaInftySub}
	S^{\alpha_\infty, \lambda}(t) = \int_{0}^{t} e^{2\lambda s} \nu^{\alpha_\infty}(ds) \,\, \textnormal{for} \,\, t \ge 0, \quad S^{\alpha_\infty, \lambda}(t) = -\int_{t}^{0} e^{2\lambda s} \nu^{\alpha_\infty}(ds) \,\, \textnormal{for} \,\, t < 0.
\end{equation}
It is also convenient to introduce at this point the discrete counterparts of these subordinators. For this purpose, let us define  $R^{\lambda/n}(i, j)$ to be the \textbf{effective resistance} between indices $i$ and $j$ on $\mathbb{Z}$ in the electrical network associated with $(c^{\lambda/n}(\{i, i+1\}))_{i \in \Z}$, i.e.\ for $i<j$, we set $R^{\lambda/n}(i,i) \coloneqq 0$ and
\[R^{\lambda/n}(i, j)\equiv R^{\lambda/n}(j,i)\coloneqq \sum_{k=i}^{j-1}r^{\lambda/n}(\{k, k+1\}).\]
When $A$ and $B$ are sets of indices, we denote $R^{\lambda/n}(A,B)$ the effective resistance between two sets. As noted in the introduction, the general intuition is that the scaling limits of the random walks are impacted by both the large resistances and, in the case of \eqref{RWT}, the large conductances. Due to the heavy-tailed distributions, when observing the environment on an interval of length of order $n$, the sum of the resistances will be of the same order as the largest resistance encountered, that is $d_{n,0}$. Similarly, under \eqref{RWT}, the sum of the conductances will be of the same order as the largest conductance encountered, that is $d_{n,\infty}$. We incorporate these scaling factors into the following definitions. For the resistances, we define
\begin{equation}\label{EquationEffctiveResistance}
S^{\alpha_0, \lambda/n, (n)}(t):=\left\{
                                 \begin{array}{ll}
                                   \frac{1}{d_{n, 0}} R^{\lambda/n}\left( 0, \floor{nt} \right), & \textnormal{for} \,\, t \ge 0,\\
 - \frac{1}{d_{n, 0}} R^{\lambda/n}\left( \ceil{nt}, 0 \right), & \textnormal{for} \,\, t <0.\\
                                 \end{array}\right.
\end{equation}
Similarly, for the conductances,
\[S^{\alpha_\infty, \lambda/n, (n)}(t) :=\left\{
                                       \begin{array}{ll}
                                         \frac{1}{d_{n, \infty}} \sum_{i = 0}^{\floor{nt}-1} c^{\lambda/n}(\{ i , i + 1 \}), & \textnormal{for} \,\, t \ge 0,\\
                                         -\frac{1}{d_{n, \infty}} \sum_{i = \ceil{nt}}^{-1} c^{\lambda/n}(\{ i , i + 1 \}), & \textnormal{for} \,\, t< 0.
                                       \end{array}
                                     \right.\]

\subsection{Restatement of the main results}\label{sec:more-results}

In this section, we restate the results of Section \ref{sec:mainres} with some more detail, and also present some further statements. In particular, the results of this section include those of Section \ref{sec:mainres}.

Towards stating the first result of the section, we recall the definitions \eqref{eqn:RightScalingTerms} and \eqref{eqn:RightTimeScales} of the scaling terms and define
\begin{equation}\label{EquationDefGaps}
	\begin{split}
		\mathrm{Gap}^{\lambda}_{n}(t) &\coloneqq \frac{1}{d_{n, 0}} r^{\lambda/n}\left( \widebar{X}_{t a_n}, \widebar{X}_{t a_n} + 1 \right) = S^{\alpha_0, \lambda/n, (n)}\left(n^{-1}\left(\widebar{X}_{t a_n} + 1\right)\right) - S^{\alpha_0, \lambda/n, (n)}\left(n^{-1}\widebar{X}_{t a_n}\right), \\
		\mathrm{Gap}^{\lambda}(t) &\coloneqq S^{\alpha_0, \lambda}\left(\widebar{Z}^{\lambda}_{t} \right) - S^{\alpha_0, \lambda}\left(\widebar{Z}^{\lambda}_{t^{-}} \right).
	\end{split}
\end{equation}
The following theorem describes the scaling limit of the size of the wall seen by the maximum of the walker after time $a_n$, under the assumption \eqref{RW}.

\begin{theorem} \label{TheoremGap}
Under \eqref{RW} and Assumption~\ref{AssumptionWeakBias}, for all $0<\alpha_0<1$ it holds that, under the annealed law $\Prob^{\lambda/n}$,
	\[	\textnormal{Gap}^{\lambda}_n(1) \stackrel{(\mathrm{d})}{\to} \textnormal{Gap}^{\lambda}(1), \quad\quad \textnormal{as} \,\, n \to \infty,	\]
where $\textnormal{Gap}^\lambda(1)$ is a non-trivial random variable taking values in $(0,\infty)$.
 \end{theorem}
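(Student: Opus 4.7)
The plan is to deduce the result from the joint scaling limit of the walker and its environment, together with the structural observation that the running maximum $\bar{Z}^\lambda_1$ of the limit diffusion is almost surely located at a jump time of the subordinator $S^{\alpha_0,\lambda}$, so that $\mathrm{Gap}^\lambda(1)$ reduces to the size of that jump.

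First I would record the joint distributional convergence
\[
\Bigl( (n^{-1} X_{a_n t})_{t \ge 0}, \; S^{\alpha_0, \lambda/n, (n)} \Bigr) \xrightarrow{(\mathrm{d})} \Bigl( (Z^\lambda_t)_{t \ge 0}, \; S^{\alpha_0,\lambda} \Bigr)
\]
under $\mathbb{P}^{\lambda/n}$, in a product Skorokhod $J_1$ topology (which on the walker component coincides with uniform-on-compacts, since $Z^\lambda$ has continuous sample paths). The environment convergence, together with a coupling under which the large jumps of $S^{\alpha_0,\lambda/n,(n)}$ match those of $S^{\alpha_0,\lambda}$ in both location and size up to vanishing errors, is the content of Section \ref{sec:coupling}; the walker's scaling limit is delivered by the resistance-form framework of \cite{CroydonHamblyKumagai, CroydonResistanceForm} combined with the estimates of Section \ref{sec:randomwalksestimates}. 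Since the running-maximum functional $f \mapsto \sup_{s \le 1} f(s)$ is continuous on $C([0,1], \mathbb{R})$, the continuous mapping theorem then yields
\[
\Bigl( n^{-1} \bar{X}_{a_n}, \; S^{\alpha_0,\lambda/n,(n)} \Bigr) \xrightarrow{(\mathrm{d})} \Bigl( \bar{Z}^\lambda_1, \; S^{\alpha_0,\lambda} \Bigr).
\]

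I would then isolate the structural fact that $\bar{Z}^\lambda_1$ is a.s.\ a jump time of $S^{\alpha_0,\lambda}$ with a positive jump. Since $(S^{\alpha_0,\lambda})^{-1}$ is continuous and non-decreasing, one has $\bar{Z}^\lambda_1 = (S^{\alpha_0,\lambda})^{-1}\bigl( \sup_{t \le 1} B_{H^\lambda_t} \bigr)$. Conditional on the environment, the law of $\sup_{t \le 1} B_{H^\lambda_t}$ is absolutely continuous with respect to Lebesgue measure on $\mathbb{R}$; the range of the pure jump subordinator $S^{\alpha_0,\lambda}$ has Lebesgue measure zero, so the supremum almost surely falls inside a gap of $S^{\alpha_0,\lambda}$, and the inverse identifies that gap with a jump time whose jump size is $\mathrm{Gap}^\lambda(1) > 0$. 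This already shows that the candidate limit is non-trivial and supported on $(0,\infty)$; its explicit description in terms of $\mathcal{P}^{\alpha_0}$ can be deferred to Section \ref{sec:reslimitprocess}.

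It remains to argue that the gap size can be read off consistently from the two environments, despite the map $(f,x) \mapsto f(x) - f(x-)$ failing to be Skorokhod-continuous at jump points. Passing to an almost-sure Skorokhod representation, the coupling of Section \ref{sec:coupling} matches the atom of $\mathcal{P}^{\alpha_0}$ responsible for the jump of $S^{\alpha_0,\lambda}$ at $\bar{Z}^\lambda_1$ with a unique edge $\{k_n, k_n+1\}$ whose rescaled resistance converges to $\mathrm{Gap}^\lambda(1)$ and whose endpoints satisfy $k_n/n \to \bar{Z}^\lambda_1$. The claim then reduces to the assertion that, with probability tending to one, $\bar{X}_{a_n} = k_n$; granted this, the expression for $\mathrm{Gap}^\lambda_n(1)$ in \eqref{EquationDefGaps} is precisely the rescaled resistance of the matched edge and the result follows. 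The main obstacle is this final pinning step: even though $\bar{X}_{a_n}/n$ is close to $\bar{Z}^\lambda_1$, one must verify that on the lattice the walker's maximum sits exactly at the vertex immediately to the left of the dominant wall, rather than being displaced by a few lattice units or trapped against a sub-leading wall. I would combine two inputs for this: an exclusion argument, showing that between the finitely many walls of rescaled resistance above a given threshold identified by the coupling no intermediate edge carries a rescaled resistance comparable to $\mathrm{Gap}^\lambda(1)$ in the limit; and an effective-resistance-based crossing-time estimate guaranteeing that, on the time scale $a_n$, the walker either crosses a macroscopic wall or remains strictly to its left, so that $\bar{X}_{a_n}$ cannot linger mid-interval. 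Both inputs are of the same nature as the random-walk estimates developed in Section \ref{sec:randomwalksestimates} and will also feature in the proof of Theorem \ref{theoremAgingNEW}.
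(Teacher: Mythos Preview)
Your overall architecture matches the paper's: reduce to the coupled environment, establish that $\bar Z^\lambda_1$ lands on a jump of $S^{\alpha_0,\lambda}$, and then pin $\bar X_{a_n}$ to the discrete vertex matched to that jump. The paper packages the pinning step as Proposition~\ref{PropostionWallsPointProcess} via the two estimates in Lemmas~\ref{lemmaEstimate1RWRW} and~\ref{lemmaEstimate2RWRW}, and then reads off the gap convergence through the measure $\bar\pi^{(n)}$ in Lemma~\ref{LemmaConvergenceWallMeas}; your sketch of the pinning is in the same spirit.

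There is, however, a genuine error in your justification of the structural fact. You write that, conditional on the environment, the law of $\sup_{t\le 1} B_{H^\lambda_t}$ is absolutely continuous with respect to Lebesgue measure. This cannot be true: the time-changed process $Y_t = B_{H^\lambda_t}$ takes values in the support of the speed measure $\mu^\lambda$, namely $\overline{S^{\alpha_0,\lambda}(\mathbb R)}$, which has Lebesgue measure zero. Hence $\bar Y_1$ is a.s.\ in this null set and its law is purely singular, not absolutely continuous. What you need is that the \emph{Brownian} running maximum $(\bar B)_{H^\lambda_1}$ falls inside one of the open gaps $(S^{\alpha_0,\lambda}(v^-),S^{\alpha_0,\lambda}(v))$. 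But $H^\lambda_1$ depends on $B$, so even this is not immediate from absolute continuity at a fixed time. The paper's Lemma~\ref{lemmaMaximumDisc} handles this by first using the time-reversal argument of Lemma~\ref{lemmaZNotAtMaximum} to show $Z^\lambda_1<\bar Z^\lambda_1$, which buys a rational time $q\le H^\lambda_1$ at which the maximum has already been attained; one then applies absolute continuity of $\bar B_q$ for \emph{fixed} $q$, independent of $S^{\alpha_0,\lambda}$. You should replace your absolute-continuity claim by this two-step argument, or at minimum invoke Lemma~\ref{lemmaMaximumDisc} directly rather than attempting to reprove it.
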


The following result will later be shown to be a consequence of the construction needed to prove Theorem~\ref{TheoremGap}, and it implies Theorem \ref{theoremAgingNEW}.

\begin{proposition}\label{theoremAging}
	Under \eqref{RW} and Assumption~\ref{AssumptionWeakBias}, for all $0<\alpha_0<1$, the following aging statement holds. For all $h > 1$, we have
	\begin{equation*}
		\lim_{n \to \infty} \Prob^{\lambda/n}\left( \widebar{X}_{a_n} = \widebar{X}_{[a_n,h a_n]} \right) = \theta(h) \coloneqq \Prob^\lambda \left(\widebar{Z}_{1} = \widebar{Z}_{[1, h]} \right),
	\end{equation*}
	where the right-hand side takes values in $(0,1)$.
\end{proposition}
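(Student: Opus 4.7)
The plan is to derive this aging statement from the joint scaling limit of the walk and its environment that underlies the proof of Theorem~\ref{TheoremGap}. From the resistance-form scaling limit for random walks amongst random conductances, one has under $\Prob^{\lambda/n}$ the joint convergence in distribution
\[
    \bigl(n^{-1} X_{t a_n}\bigr)_{t \in [0, h]},\; S^{\alpha_0, \lambda/n, (n)} \;\Longrightarrow\; \bigl(Z^\lambda_t\bigr)_{t \in [0, h]},\; S^{\alpha_0, \lambda}.
\]
Since $Z^\lambda$ has continuous sample paths almost surely, the continuous-mapping theorem applied to the supremum functional on $C[0, h]$ yields $(n^{-1}\widebar{X}_{t a_n})_{t \in [0, h]} \Rightarrow (\widebar{Z}^\lambda_t)_{t \in [0, h]}$, and in particular joint convergence of $(n^{-1}\widebar{X}_{a_n}, n^{-1}\widebar{X}_{[a_n, h a_n]})$ to $(\widebar{Z}^\lambda_1, \widebar{Z}^\lambda_{[1, h]})$.

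Next, I would recast both the discrete and the limiting aging events as barrier-non-crossing events. By Theorem~\ref{TheoremGap}, with probability tending to one the site $\widebar{X}_{a_n}$ lies immediately to the left of an edge of atypically large resistance of order $d_{n, 0} \, \textnormal{Gap}^{\lambda}_n(1)$, and the discrete event $\{\widebar{X}_{a_n} = \widebar{X}_{[a_n, h a_n]}\}$ is the event that the walk does not traverse this wall to the site $\widebar{X}_{a_n} + 1$ during $[a_n, h a_n]$. Correspondingly, $\widebar{Z}^\lambda_1$ lies at the left endpoint of a plateau of $(S^{\alpha_0, \lambda})^{-1}$ of width $\textnormal{Gap}^{\lambda}(1) > 0$, and the event $\{\widebar{Z}^\lambda_1 = \widebar{Z}^\lambda_{[1, h]}\}$ is that $Z^\lambda$ does not escape this plateau on the right during $[1, h]$. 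Under the coupling of Section~\ref{sec:coupling}, the discrete wall is identified with the limit plateau, and the two non-crossing events should be matched asymptotically.

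The main obstacle is matching these events at the level of probabilities. A direct Portmanteau argument applied to the joint of $(\widebar{Z}^\lambda_1, \widebar{Z}^\lambda_{[1,h]})$ is insufficient because the events are of equality type: in particular, the discrete running maximum can increase by an $O(1)$ (hence $o(n)$) amount during $[a_n, h a_n]$, invisibly to the continuum scaling, so that the discrete event may fail while the continuum event holds. I would handle this by conditioning on the restricted environment in a neighbourhood of $\widebar{X}_{a_n}$ and on the state of the walk at time $a_n$, and then expressing the conditional probability of non-crossing the wall via standard electrical-network identities, which depend continuously on the local resistances and the initial condition. This quenched crossing probability converges, by joint environment-and-walk convergence and the explicit coupling of Section~\ref{sec:coupling}, to the analogous Brownian-motion barrier-crossing probability governing $Z^\lambda$ on the plateau. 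Integrating over the environment and the starting position then yields $\Prob^{\lambda/n}(\widebar{X}_{a_n} = \widebar{X}_{[a_n, h a_n]}) \to \Prob^\lambda(\widebar{Z}_1 = \widebar{Z}_{[1, h]})$.

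Finally, to show $\theta(h) \in (0, 1)$: since $\textnormal{Gap}^{\lambda}(1) \in (0, \infty)$ is a non-degenerate random variable by Theorem~\ref{TheoremGap}, conditioning on the event that the gap is large gives a small barrier-crossing probability over time $h - 1$, and hence $\theta(h) > 0$, while conditioning on the gap being small gives a crossing probability close to one, and hence $\theta(h) < 1$.
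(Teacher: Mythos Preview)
Your overall architecture --- joint scaling limit, continuous mapping for the supremum, then condition on the state at time $a_n$ and use the Markov property to handle the non-crossing event over $[a_n,ha_n]$ --- matches the paper's in spirit. You also correctly identify the obstacle: the equality-type event is not Portmanteau-continuous, since the discrete maximum can increase by $O(1)$ without this being visible at the continuum scale.

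However, the resolution you propose has a genuine gap. You write that the conditional probability of non-crossing the wall can be expressed ``via standard electrical-network identities, which depend continuously on the local resistances and the initial condition.'' This is not available: the event $\{\widebar{X}_{[a_n,ha_n]}=\widebar{X}_{a_n}\}$ is a \emph{time-constrained} event (the walk must not traverse the wall before time $(h-1)a_n$), and electrical-network identities yield hitting probabilities and expected hitting/commute times, not probabilities of crossing by a fixed time. There is no formula of this type that is both explicit and continuous in the data.

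What the paper actually does is more delicate. Working under the coupling and quenched law restricted to $[-K,K]$, it first shows that for each atom $x_\ell$ of $\nu^{\alpha_0}$ with associated discrete wall $j_\ell(n)$, the conditional law of $n^{-1}X_{a_n}$ given $\widebar{X}_{a_n}=j_\ell(n)$ converges weakly to the conditional law of $Z^\lambda_1$ given $\widebar{Z}^\lambda_1=x_\ell$. This does not follow from $J_1$-convergence alone; it requires the localisation estimates (Lemmas~\ref{lemmaEstimate1RWRW} and~\ref{lemmaEstimate2RWRW}) to squeeze a continuous bump function $g_\delta$ down onto the indicator $\mathds{1}_{\{\widebar{X}_1^{(n)}=j_\ell(n)/n\}}$. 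Then, by Skorohod representation, one obtains coupled random variables $A_n\to A$ with these conditional laws, and checks (non-trivially) that $A$ is almost surely a continuity point of $S^{\alpha_0}$. Only then can one invoke Proposition~\ref{PropostionWallsPointProcess} a second time, now at time $h-1$ and from the random starting point $A_n$, to get $P^{\omega,\lambda/n}_{A_n}(\widebar{X}^{(n)}_{h-1}=j_\ell(n)/n)\to P^{\omega,\lambda}_{A}(\widebar{Z}_{h-1}=x_\ell)$. The proof concludes by summing over the finitely many atoms $\{x_\ell:v_\ell>\delta\}$ and showing the remainder is controlled by $1-\sum_{v_\ell>\delta}v_\ell\to 0$, using that the law of $\widebar{Z}^\lambda_1$ is purely atomic (Lemma~\ref{lemmaMaximumDisc}).

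In short, the time-constrained crossing probability is handled not by any closed-form identity but by re-applying the full scaling-limit machinery (Proposition~\ref{PropostionWallsPointProcess}) from a random starting point, and this step hinges on the continuity-point property of the conditional position at time $1$. Your proposal does not supply any of these ingredients.
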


Finally, the subsequent result implies Theorem \ref{theoremSubAging0}, providing an explicit form for the aging and sub-aging functions.

\begin{proposition}\label{theoremSubAging}
Under the hypothesis \eqref{RWT} and Assumption~\ref{AssumptionWeakBias}, for all $\alpha_0, \alpha_\infty \in (0, 1)$, the following aging statement holds. For all  $h > 1$, we have
\begin{equation*}
    \lim_{n \to \infty} \widetilde{\Prob}^{\lambda/n}\left( \left| \widetilde{X}_{b_n} - \widetilde{X}_{h b_n} \right| \le 1 \right) = \widetilde{\theta}(h) \coloneqq \widetilde{\Prob}^\lambda \left(\widetilde{Z}_{1} = \widetilde{Z}_{h} \right)  ,
\end{equation*}
where the right-hand side takes values in $(0,1)$. Furthermore, the following sub-aging statement holds. For all $h>0$, we have
\begin{equation*}
    \lim_{n \to \infty} \widetilde{\Prob}^{\lambda/n}\left( \left| \widetilde{X}_{b_n + s_1 d_{n, \infty}} - \widetilde{X}_{b_n + s_2 d_{n, \infty}} \right| \le 1, \,\, \forall s_1, s_2 \in [0, h] \right) = \widebar{\theta}(h) \coloneqq \widetilde{\EVal}^\lambda\left[ e^{- h\frac{A^0 + A^2}{2A^1}}\right],
\end{equation*}
where $A^0, A^1, A^2$ are such that $A^1 \stackrel{(\mathrm{d})}{=} \nu^{\alpha_\infty}\left(\widetilde{Z}^{\lambda}_1\right)$ and $A^0, A^2$ are distributed as independent conductances under $\widetilde{\mathbf{P}}$, not tilted, and  independent of $\widetilde{Z}^{\lambda}$.
\end{proposition}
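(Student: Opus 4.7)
Both statements will be deduced from the scaling convergence $(n^{-1}\widetilde{X}_{b_n t})_{t\ge 0}\Rightarrow(\widetilde{Z}^{\lambda}_t)_{t\ge 0}$ under the annealed law (to be established in earlier sections via the resistance-form approach and the explicit coupling of Section \ref{sec:coupling}). The aging assertion will follow essentially directly from this convergence, whereas the sub-aging assertion, taking place on the much finer scale $d_{n,\infty}$ that is invisible at the level of the scaling limit, requires a quantitative analysis of the trapping dynamics on a single large-conductance edge.

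\textbf{Aging.} Since $\{|\widetilde{X}_{b_n}-\widetilde{X}_{hb_n}|\le 1\}=\{|n^{-1}\widetilde{X}_{b_n}-n^{-1}\widetilde{X}_{hb_n}|\le 1/n\}$, a Portmanteau/continuous-mapping argument applied to the Skorohod convergence reduces the claim to showing that, under $\widetilde{\Prob}^\lambda$, the atoms of $\nu^{\alpha_\infty}$ visited by $\widetilde{Z}^\lambda$ at times $1$ and $h$ are almost surely either equal or separated by a macroscopic gap. This follows from the fact that $\widetilde{Z}^\lambda$ sits on the atoms of $\nu^{\alpha_\infty}$ for all positive times, combined with the absolute continuity of the law of $\widetilde{Z}^\lambda_t$ for fixed $t$, whose proof will rely on the estimates on the limit process collected in Section \ref{sec:reslimitprocess}. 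The identity $\widetilde{\theta}(h)=\widetilde{\Prob}^\lambda(\widetilde{Z}_1=\widetilde{Z}_h)\in(0,1)$ then follows immediately.

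\textbf{Sub-aging.} The event in question is exactly that $\widetilde{X}$ remains in some two-point set $\{x,x+1\}$ throughout $[b_n,b_n+hd_{n,\infty}]$. The plan is to condition on $x=\widetilde{X}_{b_n}$ and on the three conductances $c(\{x-1,x\}),c(\{x,x+1\}),c(\{x+1,x+2\})$, and to establish the joint convergence
\[
\bigl(\widetilde{X}_{b_n}/n,\ c(\{x-1,x\}),\ c(\{x,x+1\})/d_{n,\infty},\ c(\{x+1,x+2\})\bigr)\ \Rightarrow\ \bigl(\widetilde{Z}^\lambda_1,\,A^0,\,A^1,\,A^2\bigr),
\]
in which $A^1=\nu^{\alpha_\infty}(\widetilde{Z}^\lambda_1)$ arises from the Palm distribution for the atom visited by the limit process, and $A^0,A^2$ are independent untilted copies of $c(\{0,1\})$ under $\widetilde{\mathbf{P}}$ (the tilts $e^{\pm 2\lambda/n}$ being negligible at bounded local scale). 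Conditional on $(A^0,A^1,A^2)$, the chain restricted to $\{x,x+1\}$ is reversible with near-uniform invariant measure (since $c(x),c(x+1)\sim d_{n,\infty}A^1$) and escapes the pair at total rate $(A^0+A^2)/(2d_{n,\infty}A^1)\bigl(1+o(1)\bigr)$. Standard convergence of a geometric holding-time decomposition to the exponential then shows that the exit time, rescaled by $d_{n,\infty}$, is asymptotically $\mathrm{Exp}\bigl((A^0+A^2)/(2A^1)\bigr)$, so the probability of no exit by time $hd_{n,\infty}$ converges to $\exp\bigl(-h(A^0+A^2)/(2A^1)\bigr)$; taking the annealed expectation over $(A^0,A^1,A^2)$ produces $\widebar{\theta}(h)$.

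\textbf{Main obstacle.} The most delicate step is the joint convergence displayed above. The walker's path over $[0,b_n]$ has already explored many conductances, so decoupling the three local conductances around the final position $x$ from everything else revealed by the path will require care. The natural approach is to freeze the walker at a slightly earlier time, say $b_n-\varepsilon b_n$, re-sample only the three conductances adjacent to the eventual exit site, and show that this re-sampling does not macroscopically perturb either the scaling limit or the event of interest; the Palm identification of $A^1$ as $\nu^{\alpha_\infty}(\widetilde{Z}^\lambda_1)$ will then follow from the fact that the speed measure $\widetilde{\mu}^\lambda$ biases the residence of $\widetilde{Z}^\lambda$ at each atom by its weight. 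Quantifying this re-sampling argument, and checking that the local escape analysis is uniform over the relevant triples $(A^0,A^1,A^2)$, is where the bulk of the technical work will go.
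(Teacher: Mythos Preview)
Your sub-aging plan is essentially the paper's approach: the paper also re-samples the conductances adjacent to large traps, establishes that this does not perturb the scaling limit, identifies $A^1$ with $\nu^{\alpha_\infty}(\widetilde Z^\lambda_1)$ via the marginal convergence, and computes the Laplace transform of the rescaled escape time from the two-point trap. One implementation difference worth noting: the paper re-samples the neighbours of \emph{all} large conductances in the box simultaneously (the set $N_n^{\alpha_\infty}$), then shows the re-sampled environment still satisfies the same coupling and scaling limit (Lemma~\ref{LemmaNeighboursSmall} and \eqref{eqn:BothEnvirnomentsConverge}). This avoids your proposed ``freeze at $b_n-\varepsilon b_n$'' step entirely and cleanly gives the independence of $A^0,A^2$ from the whole trajectory, not just from the position at a slightly earlier time.

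Your aging argument, however, has a genuine gap in the lower bound. From the weak convergence $(n^{-1}\widetilde X_{b_n},n^{-1}\widetilde X_{hb_n})\Rightarrow(\widetilde Z_1,\widetilde Z_h)$ and the atomicity of the limit, Portmanteau does give
\[
\limsup_n \widetilde{\Prob}^{\lambda/n}\bigl(|\widetilde X_{b_n}-\widetilde X_{hb_n}|\le 1\bigr)\le \widetilde{\Prob}^\lambda(\widetilde Z_1=\widetilde Z_h),
\]
via the inclusion in $\{|n^{-1}\widetilde X_{b_n}-n^{-1}\widetilde X_{hb_n}|\le\varepsilon\}$ and $\varepsilon\downarrow 0$. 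But the matching $\liminf$ does \emph{not} follow: weak convergence only tells you that when $\widetilde Z_1=\widetilde Z_h=x_\ell$, the discrete walk is within $o(1)\cdot n$ of $j_\ell(n)$ at both times, not within distance~$1$. Your ``macroscopic gap'' observation is about the limiting atoms and says nothing about how tightly the discrete walk localises onto a single edge. The paper closes this gap with a separate localisation result (Proposition~\ref{PropositionMarginalWithTraps} and the underlying Lemma~\ref{Lemma:ConditionalProbHitting}): it shows, via explicit resistance/commute-time estimates (Lemmas~\ref{LemmaHittingTheMiddle}--\ref{LemmaInvariantMeasure}), that conditionally on being within $\delta n$ of a large-conductance edge at time $(1-\eta)b_n$, the walk is on the two endpoints of that edge at time $b_n$ with probability $1-o_{\delta,\eta}(1)$. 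This localisation is then fed into a decomposition of $\widetilde{\Prob}^{\lambda/n}(|\widetilde X_{b_n}-\widetilde X_{hb_n}|\le 1)$ over atoms with weight $>\delta$ (Proposition~\ref{PropositionInsideKAging}). You should either import this localisation step explicitly, or acknowledge that the lower bound requires substantially more than Portmanteau.
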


As will become clear in the proof, the key to these conclusions is showing that, with high probability, at time $b_n$, the process $\widetilde{X}$ is in a trap whose depth is of order $d_{n,\infty}$  (i.e.\ $\widetilde{X}$ is adjacent to a conductance of this scale), and also the limiting process $\widetilde{Z}$ is in a non-trivial trap at time $1$. For the second claim in particular, the limiting expression arises from the observation that the conductance environment around the large conductance is asymptotically close (up to a multiplicative constant) in distribution to that of $(A^0, d_{n,\infty}A^1,A^2)$, from which it follows that the time to escape from the edge in question is approximately exponential with mean $2d_{n,\infty}A^1/(A^0+A^2)$.

\section{Coupling and convergence of the environment}\label{sec:coupling}

The goal of this section is to prove in Proposition \ref{PropositionPoissonPoint} (see also Propositions \ref{PropositionRWEnvConvergence} and \ref{PropositionRWTEnvConvergence}) that the environment, under an explicit coupling, converges to its limiting counterpart in a precise sense. Before stating the main result of this section let us recall some useful notions of convergence for measures.

\subsection{Convergence of point processes}

In this section, we recall notions of convergence of measures.  (For further background, see \cite[Section 2]{BenArousCerny2AgingRegimes}, for example.) Let $\mathcal{M}$ denote the family of locally finite Borel measures on $\R$.

\begin{definition1}
	Consider  $\nu\in \mathcal{M}$ and a family $(\nu^{(n)}; n \in \N)$ in $\mathcal{M}$. We say that $\nu^{(n)}$ converges vaguely to $\nu$, and write $\nu^{(n)} \stackrel{v}{\to} \nu$ as $n \to \infty$, if for all continuous real-valued functions $f$ on $\R$ with bounded support
\[	\int_{\R} f(y) \nu^{(n)}(dy) \to \int_{\R} f(y) \nu(dy), \quad \textnormal{as} \,\, n \to \infty.\]
\end{definition1}

\begin{definition1}
Consider  $\nu\in \mathcal{M}$ and a family $(\nu^{(n)}; n \in \N)$ in $\mathcal{M}$. We say that $\nu^{(n)}$ converges in point-process sense to $\nu$, and write $\nu^{(n)} \stackrel{pp}{\to} \nu$ as $n \to \infty$, if the following holds. If the atoms of $\nu$ and $\nu^{(n)}$ are, respectively, at locations $y_i$ and $y^{(n)}_{i}$ in $\R$ with weights $w_i$ and $w^{(n)}_{i}$ in $(0, \infty)$, then the set $V^{(n)} \coloneqq \bigcup_{i} \{ (y^{(n)}_{i}, w^{(n)}_{i}) \}$ converges to the set $V \coloneqq \bigcup_{i} \{ (y_{i}, w_{i}) \}$ in the following sense: for any open set $U \subset \R\times (0, \infty)$ whose closure is a compact subset of $\R\times (0, \infty)$ and is such that the boundary does not contain any point of $V$, the number of points $|U \cap V^{(n)}|$ is finite and equals the number of points $|U \cap V|$ for all $n$ large enough.
\end{definition1}

Furthermore, we introduce a condition that relates to the above two notions of convergence of measures.

\begin{condition}\label{Condition1}
Consider  $\nu\in \mathcal{M}$ with atoms $(x_\ell,w_\ell)$ and a family $(\nu^{(n)}; n \in \N)$ in $\mathcal{M}$ with atoms $(x^{(n)}_\ell,w^{(n)}_\ell)$. For each $\ell \ge 0$ there exists a sequence $j_\ell(n)$ such that
\[\left(x^{(n)}_{j_\ell(n)}, w^{(n)}_{j_\ell(n)} \right) \to \left( x_\ell, w_\ell \right), \quad \textnormal{as} \,\, n \to \infty.\]
\end{condition}

\begin{lemma}\cite[Proposition 2.1]{FIN}\label{LemmaCond1PointCv}
Consider  $\nu\in \mathcal{M}$ and a family $(\nu^{(n)}; n \in \N)$ in $\mathcal{M}$. If $\nu^{(n)} \stackrel{pp}{\to} \nu$ as $n \to \infty$, then Condition~\ref{Condition1} holds. If Condition~\ref{Condition1} holds and $\nu^{(n)} \stackrel{v}{\to} \nu$, then $\nu^{(n)} \stackrel{pp}{\to} \nu$.
\end{lemma}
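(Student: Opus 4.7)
The plan is to prove the two directions separately, with most of the work in the harder implication (Condition~\ref{Condition1} plus vague convergence implying point-process convergence).

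For the forward direction, I would fix an atom $(x_\ell, w_\ell) \in V$ and build the required sequence $j_\ell(n)$ by a diagonal argument. Specifically, for each $k \ge 1$ I would select an open ball $U_k$ around $(x_\ell, w_\ell)$ of radius $r_k \in (1/(k+1), 1/k)$ with $\overline{U_k}$ a compact subset of $\R \times (0,\infty)$. Since $V$ is countable, almost every radius will ensure $\partial U_k \cap V = \emptyset$, so such a choice exists. By point-process convergence applied to each $U_k$, there exists $N_k$ with $|U_k \cap V^{(n)}| = |U_k \cap V| \ge 1$ for $n \ge N_k$. Define $j_\ell(n)$ for $n \in [N_k, N_{k+1})$ by picking any index whose atom lies in $U_k$; this atom then converges to $(x_\ell, w_\ell)$ as $n \to \infty$.

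For the reverse direction, fix an open $U$ with compact closure in $\R \times (0,\infty)$ and $\partial U \cap V = \emptyset$, say $\overline{U} \subset [a,b] \times [w_0, w_1]$ with $w_0 > 0$. Local finiteness of $\nu$ forces $k := |U \cap V|$ to be finite. The lower bound $|U \cap V^{(n)}| \ge k$ is immediate from Condition~\ref{Condition1}: each of the $k$ atoms of $U \cap V$ is the limit of a sequence of atoms of $V^{(n)}$, and those sequences eventually enter the open set $U$ and remain distinct since their targets are distinct.

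The upper bound is the main obstacle and requires combining both hypotheses. I would argue by contradiction: suppose that along a subsequence $n_\ell$ there exist $k+1$ distinct atoms $(x^{(n_\ell)}_{i_r}, w^{(n_\ell)}_{i_r}) \in U \cap V^{(n_\ell)}$ for $r = 1, \ldots, k+1$. Compactness of $\overline{U}$ lets me pass to a further subsequence so that each converges to some $(x^*_r, w^*_r) \in \overline{U}$ with $w^*_r \ge w_0$. For each distinct limit position $x^*$ appearing among the $x^*_r$, I would pick a small open interval $I$ around $x^*$ that contains no other atom of $\nu$ in its closure and whose endpoints are not atoms of $\nu$, so that $\nu^{(n_\ell)}(I) \to \nu(I) = \nu(\{x^*\})$ by vague convergence. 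Comparing $\nu^{(n_\ell)}(I)$ with the total mass carried by the atoms in the cluster at $x^*$, I obtain $\nu(\{x^*\}) \ge \sum_{r : x^*_r = x^*} w^*_r > 0$, so $x^*$ is an atom of $\nu$. Now apply Condition~\ref{Condition1} to the atom $(x^*, \nu(\{x^*\})) \in V$ to produce yet another sequence of atoms of $V^{(n)}$ approaching $(x^*, \nu(\{x^*\}))$. If $\nu(\{x^*\}) \neq w^*_r$ for some $r$ in the cluster, or if the cluster has size $\ge 2$, then for $n_\ell$ large the interval $I$ contains both the cluster atoms and the Condition~\ref{Condition1} atom with total weight strictly exceeding $\nu(\{x^*\})$, contradicting vague convergence. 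Thus each cluster has size one and its limit equals $(x^*, \nu(\{x^*\})) \in V$. Since $\partial U \cap V = \emptyset$ and each limit lies in $\overline{U}$, each limit lies in $U \cap V$, producing $k+1$ distinct elements of $U \cap V$ and contradicting $|U \cap V| = k$.

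The delicate step is the bookkeeping around clusters in the upper bound, where one must rule out both the merging of many small atoms of $V^{(n)}$ into a single large atom of $V$ (handled by vague convergence) and the splitting of mass away from a single atom of $V$ that Condition~\ref{Condition1} also tracks (handled by comparing both streams of atoms inside the same test interval $I$). Everything else is routine once the test sets are chosen to have boundaries disjoint from $V$, which is always achievable by countability of the atoms.
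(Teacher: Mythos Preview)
The paper does not supply its own proof of this lemma; it is simply quoted from \cite[Proposition~2.1]{FIN}, so there is nothing in the paper to compare your argument against. Your approach is the natural one and is correct in structure.

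There is, however, one step in the upper-bound half of the reverse implication that is stated too strongly and would fail for a general $\nu\in\mathcal{M}$: you assert that one can choose an interval $I$ about $x^*$ containing no other atom of $\nu$ and satisfying $\nu(I)=\nu(\{x^*\})$. Since the lemma is stated for arbitrary locally finite measures, $\nu$ may carry a non-trivial diffuse part near $x^*$, and its atoms may accumulate at $x^*$, so such an $I$ need not exist. The fix is routine: by continuity of measure one can always choose $I$ with non-atom endpoints and $\nu(I)<\nu(\{x^*\})+\varepsilon$. Letting $\varepsilon\downarrow 0$ first yields $\sum_{r:\,x^*_r=x^*}w^*_r\le\nu(\{x^*\})$, and then fixing any $\varepsilon<w_0$ and running your cluster/Condition~\ref{Condition1} comparison inside the corresponding $I$ produces the required contradiction (in each of your bad cases the competing atoms in $I$ carry total limiting weight at least $\nu(\{x^*\})+w_0>\nu(I)$). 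With this adjustment your proof goes through.
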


\subsection{Coupling and convergence of the discrete environment}\label{SectionCopuledSpaces}

The goal of this section is to prove the convergence of the environment we consider in this paper. Recall the hypotheses \eqref{RW} and \eqref{RWT} given in Assumption \ref{AssumptionTail}. Recall also the definitions \eqref{eqn:PoissonPointMeasures} of the independent measures $\nu^{\alpha_\infty}$ and $\nu^{\alpha_0}$.

We will see the discrete environment as the superposition of two empirical measures. For this purpose, for fixed $K\in\mathbb{N}$, let us define the measures
\begin{equation}\label{eqn:eqn2DiscetePointMeasures}
	\begin{split}
	\nu^{\alpha_\infty, (n)} &\coloneqq \frac{1}{d_{n, \infty}} \sum_{x \in \Z, \, |x| \le Kn} \delta_{x/n} c\left(\{x, x + 1\}\right),\\
	\nu^{\alpha_0, (n)} &\coloneqq \frac{1}{d_{n, 0}} \sum_{x \in \Z, \, |x| \le Kn} \delta_{x/n} r\left(\{x, x + 1\}\right).
	\end{split}
\end{equation}
Note that we chose to not emphasize the dependence on $K$ in the notation. The result below holds for all $K\in\mathbb{N}$.

\begin{proposition}(Vague and point-process convergence of the environment).\label{PropositionPoissonPoint} First, under the assumption \eqref{RW}, there exists an explicit coupling under which $\nu^{\alpha_0, (n)}$ converges almost surely, in both the vague and the point process sense, to $\nu^{\alpha_0}$ restricted to $[-K, K]$ and, moreover, Condition \ref{Condition1} is satisfied.

Second, under the assumption \eqref{RWT}, there exists an explicit coupling under which $\nu^{\alpha_\infty, (n)}$ and $\nu^{\alpha_0, (n)}$ converge almost surely, in both the vague and the point process sense, to the independent measures $\nu^{\alpha_\infty}$ and $\nu^{\alpha_0}$ restricted to $[-K, K]$ and, moreover Condition \ref{Condition1} is satisfied by both sequences.
\end{proposition}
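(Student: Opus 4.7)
The plan is to exploit Lemma~\ref{LemmaCond1PointCv}, which reduces point-process convergence to vague convergence together with Condition~\ref{Condition1}: hence it suffices to construct an explicit coupling and verify vague convergence and Condition~\ref{Condition1} under it. My strategy combines a pointwise quantile representation of the environment with an almost-sure realization of the classical functional limit theorem for partial sums of i.i.d.\ heavy-tailed random variables.

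On a common probability space, let $(U_i)_{i\in\mathbb{Z}}$ be i.i.d.\ uniforms on $(0,1)$. Under \eqref{RW}, I would set $r(\{i,i+1\}):=F^{-1}(1-U_i)$, where $F$ is the distribution function of $r(\{0,1\})$; under \eqref{RWT}, set $c(\{i,i+1\}):=G^{-1}(U_i)$, where $G$ is the distribution function of $c(\{0,1\})$, and take $r(\{i,i+1\}):=1/c(\{i,i+1\})$. Consider the discrete subordinators $\Sigma^{(n)}(t):=d_{n,0}^{-1}\sum_{k=0}^{\lfloor nt\rfloor-1}r(\{k,k+1\})$ and, in the \eqref{RWT} case, $\widetilde{\Sigma}^{(n)}(t):=d_{n,\infty}^{-1}\sum_{k=0}^{\lfloor nt\rfloor-1}c(\{k,k+1\})$. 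The classical heavy-tailed invariance principle yields $\Sigma^{(n)}\Rightarrow S^{\alpha_0}$ in the Skorokhod $J_1$ topology on $D(\mathbb{R})$, and in the \eqref{RWT} case the joint convergence $(\widetilde{\Sigma}^{(n)},\Sigma^{(n)})\Rightarrow(S^{\alpha_\infty},S^{\alpha_0})$ to \emph{independent} stable subordinators. Skorokhod's representation theorem---or, more explicitly, a coupling built from the common order statistics of the $U_i$---lets me realize these convergences almost surely on a common space, compatibly with the quantile construction.

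A standard property of $J_1$ convergence for càdlàg functions is that every isolated jump of the limit is tracked by a jump of the prelimit whose location and size converge. Since the jumps of $\Sigma^{(n)}$ at times $k/n$ have magnitudes $r(\{k,k+1\})/d_{n,0}$---precisely the atoms of $\nu^{\alpha_0,(n)}$---this immediately gives Condition~\ref{Condition1}: for every atom $(x_\ell,w_\ell)\in\mathcal{P}^{\alpha_0}|_{[-K,K]}$, there exist indices $j_\ell(n)$ with $(j_\ell(n)/n,r(\{j_\ell(n),j_\ell(n)+1\})/d_{n,0})\to(x_\ell,w_\ell)$ almost surely. For vague convergence, any continuous test function with compact support in $[-K,K]\times[\epsilon,\infty)$ is sensitive only to finitely many atoms of either measure, and under the coupling these atoms are in bijection for all large $n$, so $\int f\,d\nu^{\alpha_0,(n)}\to\int f\,d\nu^{\alpha_0}$ almost surely. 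Lemma~\ref{LemmaCond1PointCv} then upgrades this to point-process convergence; in the \eqref{RWT} case the same argument is applied jointly to both coordinates.

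The main obstacle is the \eqref{RWT} claim that $\nu^{\alpha_\infty,(n)}$ and $\nu^{\alpha_0,(n)}$, both functions of the single family $(U_i)$, nevertheless have \emph{independent} Poisson-process limits. The source of the independence is elementary: a large conductance corresponds to $U_i$ near $1$, a large resistance to $U_i$ near $0$, and these are disjoint events on $(0,1)$, so the macroscopic atoms of the two empirical measures live at disjoint indices and decouple. To make this rigorous, I would thin the $(U_i)$ using the indicators $\{U_i<\eta\}$ and $\{U_i>1-\eta\}$ for small $\eta>0$, show that each thinned family separately produces the full limit on its respective scale, deduce the independence of the two limit processes from the independence of the thinned families (which are functions of disjoint subsets of $(0,1)$), and let $\eta\to0$.
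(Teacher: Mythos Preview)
Your high-level strategy is reasonable, but there are two genuine gaps.

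\textbf{The vague convergence step is not correct as written.} The measures $\nu^{\alpha_0,(n)}$ and $\nu^{\alpha_0}$ are measures on $\mathbb{R}$, and vague convergence requires $\int f\,d\nu^{\alpha_0,(n)}\to\int f\,d\nu^{\alpha_0}$ for continuous $f$ with compact support in $\mathbb{R}$. Such an integral is $\sum_{|i|\le Kn} f(i/n)\,r(\{i,i+1\})/d_{n,0}$, which involves \emph{all} the atoms, not only the large ones. Your test functions ``with compact support in $[-K,K]\times[\epsilon,\infty)$'' are the wrong objects: those pertain to the point process on $\mathbb{R}\times(0,\infty)$, not to vague convergence of $\nu^{\alpha_0,(n)}$ on $\mathbb{R}$. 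You therefore still owe a proof that the cumulative contribution of the small atoms (those with weight $\le\delta$) is uniformly negligible as $\delta\to0$. The paper handles exactly this by splitting the sum into three regimes and using Lemmas~\ref{LemmaFIN1} and~\ref{LemmaFIN2}; alternatively, had you argued that almost-sure $J_1$ convergence of $\Sigma^{(n)}$ to $S^{\alpha_0}$ implies vague convergence of the associated Stieltjes measures, that would close the gap --- but you did not make that argument.

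\textbf{Skorokhod representation does not give you what the statement asks for.} The proposition requires an \emph{explicit} coupling, and the rest of the paper (Lemma~\ref{LemmaCorollaryEnvironment}, Propositions~\ref{PropositionRWEnvConvergence}--\ref{PropositionRWTEnvConvergence}, all of Section~\ref{sec:randomwalksestimates}) uses that explicit structure repeatedly. Skorokhod representation produces an abstract probability space carrying \emph{copies} of your processes; it does not hand you back the original family $(U_i)$, so your phrase ``compatibly with the quantile construction'' is an unproved claim, and the parenthetical ``coupling built from the common order statistics'' is not a construction. The paper instead builds the coupling constructively \`a la \cite{FIN}: it starts from the limiting subordinators $S^{\alpha_0},S^{\alpha_\infty}$ themselves, defines the discrete conductances and resistances as explicit deterministic functions of their increments (via the maps $g_n^{\alpha_0},g_n^{\alpha_\infty}$), and uses i.i.d.\ Bernoulli$(p)$ coins with $p=\widetilde{\mathbf{P}}(c(\{0,1\})\ge1)$ to interleave them. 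This makes the independence of the two limits automatic and makes the matching of atoms in Condition~\ref{Condition1} completely explicit. Your thinning idea with $\{U_i<\eta\}$ versus $\{U_i>1-\eta\}$ is morally the same decoupling, but as stated it is only a heuristic; turning it into an explicit coupling that survives the later arguments would essentially reproduce the paper's construction.
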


In order to prove this result we will use a coupling technique developed in \cite{FIN} and further used in \cite{BenArousCerny2AgingRegimes}. Additionally to their strategy, when we work under the assumption \eqref{RWT},  we need to take care of the dependence between the large conductances and the large  resistances, so as to show that the measures described in \eqref{eqn:eqn2DiscetePointMeasures} are asymptotically independent. We will detail the coupling only for $\nu^{\alpha_\infty,(n)}$  under the assumption \eqref{RWT}, and justify the asymptotic independence of $\nu^{\alpha_\infty,(n)}$ and $\nu^{\alpha_0,(n)}$. The coupling for $\nu^{\alpha_0,(n)}$ under the assumption \eqref{RWT} or \eqref{RW} follows from similar arguments.

Let us next present a result that is key to justifying the asymptotic independence of $\nu^{\alpha_\infty,(n)}$ and $\nu^{\alpha_0,(n)}$. To do this, we need the following notation: for any $0< \widehat{\delta} < 1$ (to be chosen later),
\begin{itemize}
    \item the set of $n$-walls $\mathcal{J}_n^{\alpha_0} \coloneqq \left\{ j \in \Z: r(\{j, j+1\}) > d_{n, 0}^{1 - \widehat{\delta}}  \right\}$;
    \item the set of $n$-traps $\mathcal{J}_n^{\alpha_\infty} \coloneqq \left\{ j \in \Z: c(\{j, j+1\}) > d_{n, \infty}^{1 - \widehat{\delta}}\right\}$.
\end{itemize}
The following lemma states that, under \eqref{RWT}, these two sets are well-separated with high probability. By a simpler argument, a similar result holds for the set of $n$-walls under \eqref{RW}.

\begin{lemma}\label{LemmaGoodSeparationIndep} Assume \eqref{RWT}. Let us consider the sets $T^{\alpha_0}_n = (\mathcal{J}_n^{\alpha_0}) \cap [-Kn, Kn]$ and $T^{\alpha_\infty}_n = (\mathcal{J}_n^{\alpha_\infty}) \cap [-Kn, Kn]$. Define the event
\begin{equation}\label{def:Tn}
\mathcal{T}_n \coloneqq \left\{ |i - j| > n^{1/4} \,\, \textnormal{for all distinct} \, i, j \in T^{\alpha_0}_n \cup T^{\alpha_\infty}_n \right\}.
\end{equation}
Then, for all $\widehat{\delta} = \widehat{\delta}(\alpha_0, \alpha_\infty)$ small enough, almost surely there exists $n_0 = n_0(\omega,K, \widehat{\delta}) >0$ such that $\mathcal{T}_n$ occurs for all $n\ge n_0$.
\end{lemma}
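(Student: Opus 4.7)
My plan is to bound $\widetilde{\mathbf{P}}(\mathcal{T}_n^c)$ by a first-moment union bound on the number of \emph{bad pairs} of edges, and then upgrade to an almost-sure-for-all-large-$n$ statement by a Borel--Cantelli argument along a geometric subsequence, closed out by a monotonicity interpolation. Two facts I would use freely: the conductances are i.i.d.\ under $\widetilde{\mathbf{P}}$, so the single-edge indicators $\mathbf{1}\{j\in\mathcal{J}_n^{\alpha_0}\cup\mathcal{J}_n^{\alpha_\infty}\}$ are independent across distinct edges; and, for any single edge, the two defining events are disjoint for $n$ large, because $r\cdot c=1$ forces $d_{n,0}^{1-\widehat{\delta}}d_{n,\infty}^{1-\widehat{\delta}}<1$, which fails eventually since $d_{n,0},d_{n,\infty}\to\infty$.

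The first step is to quantify the single-edge probabilities. Using Assumption~\ref{AssumptionTail} and the definition of $d_{n,0}$ (so that $L_0(d_{n,0})d_{n,0}^{-\alpha_0}\sim 1/n$, and hence $d_{n,0}$ is regularly varying of index $1/\alpha_0$ by Karamata), a direct computation yields
\[
p_n^0:=\widetilde{\mathbf{P}}\bigl(r(\{0,1\})>d_{n,0}^{1-\widehat{\delta}}\bigr)=n^{-(1-\widehat{\delta})}L_0^*(n),
\]
with $L_0^*$ slowly varying, and analogously $p_n^\infty=n^{-(1-\widehat{\delta})}L_\infty^*(n)$. Setting $p_n:=p_n^0+p_n^\infty=n^{-(1-\widehat{\delta})}L(n)$, a union bound over the at most $CKn^{5/4}$ ordered pairs of distinct edges $(i,j)$ in $[-Kn,Kn]$ with $|i-j|\le n^{1/4}$ gives
\[
\widetilde{\mathbf{P}}(\mathcal{T}_n^c)\le Cn^{5/4}p_n^2=Cn^{-3/4+2\widehat{\delta}}L(n)^2,
\]
which decays polynomially whenever $\widehat{\delta}<3/8$; this is the source of the admissibility condition on $\widehat{\delta}=\widehat{\delta}(\alpha_0,\alpha_\infty)$.

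The main obstacle is that this bound is \emph{not} summable in $n$, so Borel--Cantelli does not apply directly. To remedy this, I would work along $n_k:=2^k$ and introduce the inflated bad set
\[
\widehat{T}_k:=\bigl\{j\in[-Kn_{k+1},Kn_{k+1}]:\ r(\{j,j+1\})>d_{n_k,0}^{1-\widehat{\delta}}\ \text{or}\ c(\{j,j+1\})>d_{n_k,\infty}^{1-\widehat{\delta}}\bigr\}.
\]
Monotonicity of $d_{\cdot,0}$ and $d_{\cdot,\infty}$ in $n$, combined with $[-Kn,Kn]\subseteq[-Kn_{k+1},Kn_{k+1}]$ and $1-\widehat{\delta}>0$, forces $T_n^{\alpha_0}\cup T_n^{\alpha_\infty}\subseteq\widehat{T}_k$ for every $n\in[n_k,n_{k+1}]$; since also $n^{1/4}\le n_{k+1}^{1/4}=2^{1/4}n_k^{1/4}$, the event $\widehat{\mathcal{T}}_k:=\{\widehat{T}_k\text{ has no two distinct elements within }2^{1/4}n_k^{1/4}\}$ implies $\mathcal{T}_n$ for every such $n$. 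Re-running the first-moment bound for $\widehat{\mathcal{T}}_k$ gives $\widetilde{\mathbf{P}}(\widehat{\mathcal{T}}_k^c)\le C\cdot 2^{-k(3/4-2\widehat{\delta})}L(2^k)^2$, which \emph{is} summable in $k$ for $\widehat{\delta}<3/8$. The first Borel--Cantelli lemma then delivers the required $n_0(\omega,K,\widehat{\delta})$.
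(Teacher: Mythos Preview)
Your proof is correct and follows essentially the same approach as the paper's: a first-moment union bound on close pairs of large edges, followed by Borel--Cantelli along a subsequence combined with a monotonicity interpolation to cover all $n$. The only difference is cosmetic: you use the geometric subsequence $n_k=2^k$, whereas the paper uses $n_\ell=\exp\{(\log\ell)^2\}$ (with ratio tending to $1$) and an inflated event at level $\ell$ covering $n\in[\ell,2\ell]$; your choice is arguably the cleaner of the two.
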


\begin{proof} Let us start by noticing that
	\[\left\{ |i - j| > n^{1/4} \,\, \textnormal{for all distinct} \, i, j \in T^{\alpha_0}_n \cup T^{\alpha_\infty}_n \right\}^c  \subseteq \bigcup_{m = -Kn}^{Kn} A(m) \cap \{m  \in T^{\alpha_0}_n \cup T^{\alpha_\infty}_n\},\]
	where
	\[	A(m) \coloneqq \left\{ \exists j \in \{ m - n^{1/4} , \dots, m + n^{1/4} \} \backslash \{m\}  \,\, \text{such that } j \in T^{\alpha_0}_n \cup T^{\alpha_\infty}_n \right\}.\]
Using the fact that slowly varying functions grow slower than any polynomial asymptotically, we have that, for all $\varepsilon>0$ and for $n$ large enough,
\begin{equation}\label{eq:slowly}
n^{\tfrac{1}{\alpha_0}-\varepsilon}\le d_{n,0}\le n^{\tfrac{1}{\alpha_0}+\varepsilon} \text{ and }n^{\tfrac{1}{\alpha_\infty}-\varepsilon}\le d_{n,\infty}\le n^{\tfrac{1}{\alpha_\infty}+\varepsilon}.
\end{equation}
Using the previous estimates, one can prove that, for $n$ large enough,
	\begin{equation*}
	\begin{split}
	\widetilde{\mathbf{P}}\left(r(\{j,j+1\})>d_{n,0}^{1-\widehat{\delta}}\right)&\le n^{-1+3\widehat{\delta}}\text{ and }
	\widetilde{\mathbf{P}}\left(c(\{j,j+1\})>d_{n,\infty}^{1-\widehat{\delta}}\right)\le n^{-1+3\widehat{\delta}}.
	\end{split}
	\end{equation*}
Using the above, the independence of $A(m)$ and  $\{m \in {T}^{\alpha_0}_n\cup {T}^{\alpha_\infty}_n\}$, and a union bound, we obtain that, if $\widehat{\delta}$ is chosen suitably small, then, for all $n$ large enough,
\begin{equation*}
	\widetilde{\mathbf{P}}\left( \left\{ |i - j| > b_n \,\, \textnormal{for all distinct} \, i, j \in T^{\alpha_0}_n \cup T^{\alpha_\infty}_n \right\}^c \right) \le 3Kn\cdot 3n^{1/4}\cdot  2n^{-2+ 6 \widehat{\delta}} \le  n^{-1/2}.
\end{equation*}

To complete the proof, we need to improve this result to an almost sure one. We take inspiration from \cite[Appendix C]{BergerSalvi2020}. Let us define the following event
\begin{equation*}
    \widebar{\mathcal{T}_{\ell}}^{c} \coloneqq \bigcup_{m = -2K\ell}^{2K\ell} \widebar{A}(m) \cap  \{m  \in \widebar{T}^{\alpha_0}_{\ell} \cup \widebar{T}^{\alpha_\infty}_{\ell}\},
\end{equation*}
where $\widebar{T}^{\alpha_i}_{\ell} = (\mathcal{J}_\ell^{\alpha_i}) \cap [-2K\ell, 2K\ell]$, $i=0,\infty$, and
\begin{equation*}
	\widebar{A}(m) \coloneqq \left\{ \exists j \in \{ m - 2\ell^{1/4} , \dots, m + 2\ell^{1/4} \} \backslash \{m\}  \,\, \text{such that } j \in \widebar{T}^{\alpha_0}_\ell \cup \widebar{T}^{\alpha_\infty}_\ell \right\}.
\end{equation*}
It is crucial to note that $\forall n \in\{ \ell, \dots, 2\ell\}$, it holds that $\mathcal{T}_{n}^{c} \subseteq \widebar{\mathcal{T}_{\ell}}^{c}$. Moreover, let us consider the subsequence $n_{\ell} =\exp\{(\log(\ell))^2\}$. Arguing as in the first paragraph of the proof, we then have that
\[\sum_{\ell} \widetilde{\mathbf{P}}(\widebar{\mathcal{T}}_{n_{\ell}}^{c})\leq \sum_{\ell} Cn_\ell\cdot n_\ell^{1/4}\cdot n_\ell^{-2+ 6 \widehat{\delta}}\leq  \sum_{\ell} Cn_\ell^{-1/2} < \infty.\]
Thus, by Borel-Cantelli, there almost-surely exists $\ell_0$ such that $\widebar{\mathcal{T}}_{n_{\ell}}^{c}$ does not happen for all $\ell \ge \ell_0$. Furthermore, we observe that $\lim_{\ell} n_{\ell + 1} / n_{\ell} = 1$, so there exists $\ell_1$ such that, for all $\ell \ge \ell_1$, we have that $n_{\ell + 1} \le 2 n_{\ell}$. We are able to conclude by setting $\widetilde{\ell} = \max \{ \ell_0, \ell_1\}$ and noting that, for all $\ell \ge \widetilde{\ell}$, the events $\widebar{\mathcal{T}}_{n_{\ell}}$ occur and that $\widebar{\mathcal{T}}_{n_{\ell}}\subseteq\mathcal{T}_{n}$ for all $n_{\ell} \le n \le n_{\ell+1}$.
\end{proof}

From now on, we assume that the sets $\mathcal{J}_n^{\alpha_0}$ and $\mathcal{J}_n^{\alpha_\infty}$ are chosen with $\widehat{\delta}$ small enough such that $\mathcal{T}_n$ holds almost surely as in Lemma~\ref{LemmaGoodSeparationIndep}. Our next step is to build an explicit coupling measure $\widebar{\widetilde{\mathbf{P}}}$ between the limit measures $\nu^{\alpha_\infty}$ and $\nu^{\alpha_0}$ and the discrete measures $\nu^{\alpha_\infty, (n)}$, $\nu^{\alpha_0, (n)}$. Following \cite{FIN}, we will couple conditioned sequences of conductances and resistances.
 In order to do so we need several ingredients. First, let us define the quantity $p \coloneqq \widetilde{\mathbf{P}}(c(\{0, 1\}) \ge 1)$ (and, as a by-product, $(1-p) \coloneqq \widetilde{\mathbf{P}}(r(\{0, 1\}) > 1)$). Consider:
\begin{enumerate}
	\item A sequence of i.i.d.\ $\mathrm{Ber}(p)$ random variables, $\{b_i\}_{i \in \mathbb{Z}}$.
	\item Two independent two-sided stable subordinators $S^{\alpha_\infty}, S^{\alpha_0}$, that are formally defined in equations~\eqref{eqn:eqnDefAlpha0Sub}-\eqref{eqn:eqnDefAlphaInftySub}.
	\item Two independent sequences of i.i.d.\ random variables $\{\widehat{c}(\{x, x+1\})\}_{x\in \Z}$ and $\{\widehat{r}(\{x, x+1\})\}_{x\in \Z}$, where $\widehat{c}(\{0, 1\})$ is distributed like $c(\{0, 1\})$ conditional on $\{c(\{0, 1\}) \ge 1\}$ and $\widehat{r}(\{0, 1\})$ is distributed like $r(\{0, 1\})$ conditional on $\{r(\{0, 1\}) > 1\}$.
\end{enumerate}

Let us build the coupling in the $\alpha_\infty$-case, and note the other can be constructed in the same way. From the subordinator $S^{\alpha_\infty}$, one can define a measure $\widebar{\nu}^{\alpha_\infty}$ such that,  for all $a<b$,
\begin{equation*}
	\widebar{\nu}^{\alpha_\infty} \left( (a, b]\right) = p^{-1/\alpha_\infty}\left(S^{\alpha_\infty}(pb) - S^{\alpha_\infty}(pa)\right).
\end{equation*}
Using \cite[(5.40)]{ResnickHeavyTail}, we have that the term above has is distributed like $S^{\alpha_\infty}(b) - S^{\alpha_\infty}(a)$, i.e.~$\widebar{\nu}^{\alpha_\infty}$ has the distribution of $\nu^{\alpha_\infty}$. Let $\widebar{\mathcal{P}}^{\alpha_\infty}$ be the associated point process. Define the function $G^{\alpha_\infty} : [0, \infty) \to [0, \infty)$ through the formula
\begin{equation}\label{eqn:GFunctionCoupling}
	\widebar{\widetilde{\mathbf{P}}} \left( S^{\alpha_\infty}(1) > G_{\alpha_\infty}(y) \right) = \widetilde{\mathbf{P}} \left( \widehat{c}(\{x, x+1\}) > y \right),
\end{equation}
note that $G_{\alpha_\infty}(y)$ is well defined, non decreasing and right-continuous by the continuity of the distribution of $S^{\alpha_\infty}(1)$. Thus, one can also define its generalised right-continuous inverse $G_{\alpha_\infty}^{-1}$. Moreover, it is also possible to define the function $g^{\alpha_\infty}_n$ as
\[g^{\alpha_\infty}_n(y) \coloneqq \frac{1}{d^*_{n, \infty}} G_{\alpha_\infty}^{-1}\left(n^{1/\alpha_\infty} y \right),\]
where
\begin{equation}\label{def:d*}
d^*_{n, \infty} \coloneqq \inf \{ t > 0 : \widetilde{\mathbf{P}} \left( \widehat{c}(\{0, 1\}) > t\right) \le 1/n \}.
\end{equation}
It is not hard to check that $d^*_{n, \infty}/d_{n, \infty} \to  p^{-1/\alpha_\infty}$ as $n \to \infty$, see \cite[Proposition~2.6]{ResnickHeavyTail}. The next lemma explains how to use these objects to build a copies of $\{\widehat{c}(\{x, x+1\})\}_{x\in \Z}$ and $\{\widehat{r}(\{x, x+1\})\}_{x\in \Z}$ from the subordinators.

\begin{lemma}
	Consider the two independent families $\{\widebar{\widehat{c}}(\{x, x+1\})\}_{x\in \Z}$ and $\{\widebar{\widehat{r}}(\{x, x+1\})\}_{x\in \Z}$ defined for all $x \in \Z$ by setting
\[	\widebar{\widehat{c}}(\{x, x + 1\}) := d^*_{n, \infty} g_{n}^{\alpha_\infty} \left( S^{\alpha_\infty}\left(\frac{1}{n}(x+1) \right) - S^{\alpha_\infty}\left(\frac{1}{n}(x) \right) \right),\]
	and
	\[	\widebar{\widehat{r}}(\{x, x + 1\}) := d^*_{n, 0} g_{n}^{\alpha_0} \left( S^{\alpha_0}\left(\frac{1}{n}(x+1) \right) - S^{\alpha_0}\left(\frac{1}{n}(x) \right) \right).	\]
	These define i.i.d.\ copies of the random conditioned conductances $\{\widehat{c}(\{x, x+1\})\}_{x \in \Z}$ and associated resistances $\{\widebar{\widehat{r}}(\{x, x+1\})\}_{x\in \Z}$.
\end{lemma}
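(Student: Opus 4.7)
The plan is to show that the $\widebar{\widehat{c}}(\{x,x+1\})$ family arises as a quantile transform applied to i.i.d.\ copies of $S^{\alpha_\infty}(1)$, which pin down both the marginal law and the independence structure; the argument for $\widebar{\widehat{r}}(\{x,x+1\})$ is identical with $S^{\alpha_0}$ replacing $S^{\alpha_\infty}$, and the joint independence of the two families then falls out from the a priori independence of the two subordinators.

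In detail, first I would unwind the definition of $g_n^{\alpha_\infty}$: by construction,
\[
\widebar{\widehat{c}}(\{x,x+1\}) = G_{\alpha_\infty}^{-1}\bigl(n^{1/\alpha_\infty}\,W_x^{(n)}\bigr), \qquad W_x^{(n)} \coloneqq S^{\alpha_\infty}\bigl(\tfrac{x+1}{n}\bigr)-S^{\alpha_\infty}\bigl(\tfrac{x}{n}\bigr),
\]
so the $d^*_{n,\infty}$ factor cancels the $1/d^*_{n,\infty}$ inside $g_n^{\alpha_\infty}$. Next I would invoke two standard properties of the stable subordinator $S^{\alpha_\infty}$: stationary independent increments (so that $(W_x^{(n)})_{x\in\Z}$ is an i.i.d.\ family) and $\alpha_\infty$-self-similarity, which gives $W_x^{(n)}\stackrel{(\mathrm d)}{=} n^{-1/\alpha_\infty}S^{\alpha_\infty}(1)$ and hence $n^{1/\alpha_\infty}W_x^{(n)}\stackrel{(\mathrm d)}{=} S^{\alpha_\infty}(1)$.

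The marginal identification then reduces to showing that $G_{\alpha_\infty}^{-1}(S^{\alpha_\infty}(1))\stackrel{(\mathrm d)}{=}\widehat c(\{0,1\})$, which is exactly the inverse probability transform. Writing $F_{\widehat c}(y)=\widetilde{\mathbf P}(\widehat c(\{0,1\})\le y)$ and $F_{S}(t)=\widebar{\widetilde{\mathbf P}}(S^{\alpha_\infty}(1)\le t)$, the defining relation \eqref{eqn:GFunctionCoupling} rewrites as $F_{S}\circ G_{\alpha_\infty} = F_{\widehat c}$, from which $G_{\alpha_\infty}^{-1}\circ F_{S}^{-1}=F_{\widehat c}^{-1}$ (in the generalised-inverse sense) after checking the usual care needed with right-continuous inverses. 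Since $S^{\alpha_\infty}(1)$ has a continuous, strictly increasing distribution on $(0,\infty)$, there are no ties or flat pieces to worry about, so $G_{\alpha_\infty}^{-1}(S^{\alpha_\infty}(1))$ has the distribution function $F_{\widehat c}$ as required. Combining the three ingredients above yields that $(\widebar{\widehat c}(\{x,x+1\}))_{x\in\Z}$ is an i.i.d.\ family with the law of $\widehat c(\{0,1\})$; the argument for the resistances is verbatim with $(\alpha_\infty,d^*_{n,\infty})$ replaced by $(\alpha_0,d^*_{n,0})$.

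For the joint independence of the two families, I would simply remark that $\widebar{\widehat c}$ is a measurable function of $S^{\alpha_\infty}$ alone and $\widebar{\widehat r}$ is a measurable function of $S^{\alpha_0}$ alone, and the two subordinators are independent by the construction at the start of this section. The only subtle point of the whole argument is the care needed with the generalised inverse $G_{\alpha_\infty}^{-1}$, but this is largely cosmetic because $F_{S}$ is continuous and strictly increasing on the support of $S^{\alpha_\infty}(1)$; all the rest is self-similarity plus the classical quantile transform.
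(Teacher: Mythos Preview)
Your proposal is correct and follows essentially the same approach as the paper: both arguments use the stationary independent increments of $S^{\alpha_\infty}$ to get the i.i.d.\ structure, invoke self-similarity to reduce $S^{\alpha_\infty}(1/n)$ to $S^{\alpha_\infty}(1)$, and then appeal to the defining relation \eqref{eqn:GFunctionCoupling} for $G_{\alpha_\infty}$ to identify the marginal law. The paper carries out the last step by computing the tail probability $\widebar{\widetilde{\mathbf P}}(\widebar{\widehat c}(\{0,1\})>t)$ directly, whereas you phrase it as a quantile transform, but these are two presentations of the same computation.
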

\begin{proof}
We give the proof in the $\alpha_\infty$-case; the $\alpha_0$-case follows in the same way. Using the stationarity and independence of the increments of $S^{\alpha_\infty}$, we only need to prove that $\widebar{\widetilde{\mathbf{P}}}[\widebar{\widehat{c}}(\{0, 1\}) > t] = \widetilde{\mathbf{P}}[\widehat{c}(\{0, 1\}) > t]$. By substituting one gets
	\begin{equation*}
	\begin{split}
	\widebar{\widetilde{\mathbf{P}}}\left(\widebar{\widehat{c}}(\{0, 1\}) > t\right) &= \widebar{\widetilde{\mathbf{P}}}\left(S^{\alpha_\infty}\left( \frac{1}{n} \right) > G_{\alpha_\infty}(t) n^{-1/\alpha_\infty}\right)\\
	&= \widebar{\widetilde{\mathbf{P}}}\left(S^{\alpha_\infty}\left( 1 \right) > G_{\alpha_\infty}(t) \right)\\
	&=\widetilde{\mathbf{P}}(\widehat{c}(\{0, 1\}) > t),
	\end{split}
	\end{equation*}
	where the second equality is due to the self-similarity relation of $S^{\alpha_\infty}$, and the third equality comes from \eqref{eqn:GFunctionCoupling}. This concludes the proof.
\end{proof}

We are now ready to present our explicit coupling. We start by defining the set of conductances
\[	\widebar{c}(\{x, x+1\}) :=\left\{
                           \begin{array}{ll}
                             \widebar{\widehat{c}}(\{x^*, x^*+1\}) \mathds{1}_{\{b_x = 1\}} + \widebar{\widehat{r}}(\{x-x^*, x-x^*+1\})^{-1} \mathds{1}_{\{b_x = 0\}},&\text{for }x\ge0,\\
                              \widebar{\widehat{c}}(\{x^*-1, x^*\}) \mathds{1}_{\{b_x = 1\}} + \widebar{\widehat{r}}(\{x-x^*, x-x^*+1\})^{-1} \mathds{1}_{\{b_x = 0\}},&\text{for }x<0,
                           \end{array}
                         \right.\]
where
\begin{equation}\label{def:x*}
\begin{split}
	x^* &\coloneqq \sum_{j=0}^{x-1} \mathds{1}_{\{b_j = 1\}} \text{, for }x\ge0\text{, and }
	x^* \coloneqq -\sum_{j=x+1}^{-1} \mathds{1}_{\{b_j = 1\}} \text{, for }x<0.
	\end{split}
\end{equation}
We also define the resistance $\widebar{r}(\{x, x+1\})=1/\widebar{c}(\{x, x+1\})$. The fact that $\{\widebar{c}(\{x, x+1\})\}_{x \in \Z} \stackrel{(\mathrm{d})}{=}\{ c(\{x, x+1\})\}_{x \in \Z}$ is a straightforward application of conditioning. We are now able to define the coupled version of the two measures of Proposition~\ref{PropositionPoissonPoint},
\begin{equation*}
	\widebar{\nu}^{\alpha_\infty, (n)} \coloneqq \frac{1}{d_{n, \infty}} \sum_{x \in \Z, \, |x| \le Kn} \delta_{x/n} \widebar{c}\left(\{x, x + 1\}\right), \quad \quad \widebar{\nu}^{\alpha_0, (n)} \coloneqq \frac{1}{d_{n, 0}} \sum_{x \in \Z, \, |x| \le Kn} \delta_{x/n} \widebar{r}\left(\{x, x + 1\}\right).
\end{equation*}
Before going to the proof of Proposition~\ref{PropositionPoissonPoint}, we state two lemmas from \cite{FIN} that are useful for the analysis of the coupled measures.

\begin{lemma}\cite[Lemma 3.1]{FIN}\label{LemmaFIN1}
	For any fixed $y > 0$, $g_n^{\alpha_0}(y) \to y$ and $g_n^{\alpha_\infty}(y) \to y$ as $n \to \infty$.
\end{lemma}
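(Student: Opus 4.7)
The idea is that both $S^{\alpha_\infty}(1)$ and $\widehat{c}(\{0,1\})$ have tails that are regularly varying with index $-\alpha_\infty$, so the quantile-matching map $G_{\alpha_\infty}^{-1}$ is essentially a linear rescaling, and $d_{n,\infty}^*$ is precisely the right normalisation to cancel it. I would write the proof for $g_n^{\alpha_\infty}$ only; the case of $g_n^{\alpha_0}$ is identical with $\widehat{r}$ in place of $\widehat{c}$ and $S^{\alpha_0}$ in place of $S^{\alpha_\infty}$.

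Inverting \eqref{eqn:GFunctionCoupling} gives $\widetilde{\mathbf{P}}(\widehat{c}(\{0,1\}) > G_{\alpha_\infty}^{-1}(u)) = \widebar{\widetilde{\mathbf{P}}}(S^{\alpha_\infty}(1) > u)$ for $u$ in the range. First, I would invoke the one-big-jump tail asymptotic for stable subordinators: since the Lévy measure of $S^{\alpha_\infty}$ has tail $\int_u^\infty \alpha_\infty x^{-1-\alpha_\infty}dx = u^{-\alpha_\infty}$ (equivalently, the Laplace exponent is $\Gamma(1-\alpha_\infty)\lambda^{\alpha_\infty}$), Karamata's Tauberian theorem together with monotonicity gives $\widebar{\widetilde{\mathbf{P}}}(S^{\alpha_\infty}(1) > u) \sim u^{-\alpha_\infty}$ as $u \to \infty$. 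Setting $u = n^{1/\alpha_\infty} y$ yields
\[
\widetilde{\mathbf{P}}\bigl(\widehat{c}(\{0,1\}) > G_{\alpha_\infty}^{-1}(n^{1/\alpha_\infty} y)\bigr) \sim y^{-\alpha_\infty}/n.
\]
Second, the regular variation $\widetilde{\mathbf{P}}(\widehat{c}(\{0,1\}) > t) = p^{-1} L_\infty(t) t^{-\alpha_\infty}$ inherited from \eqref{RWT}, combined with the definition \eqref{def:d*} of $d_{n,\infty}^*$, gives via the uniform convergence theorem for regularly varying functions
\[
\widetilde{\mathbf{P}}\bigl(\widehat{c}(\{0,1\}) > z\, d_{n,\infty}^*\bigr) \sim z^{-\alpha_\infty}/n,
\]
uniformly for $z$ in compact subsets of $(0,\infty)$.

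Writing $v_n := G_{\alpha_\infty}^{-1}(n^{1/\alpha_\infty} y)$, for each fixed $\epsilon \in (0,y)$ the two displays above and the monotonicity of the map $t \mapsto \widetilde{\mathbf{P}}(\widehat{c}(\{0,1\}) > t)$ sandwich $v_n$ between $(y-\epsilon)d_{n,\infty}^*$ and $(y+\epsilon)d_{n,\infty}^*$ for all $n$ sufficiently large, hence $g_n^{\alpha_\infty}(y) = v_n/d_{n,\infty}^* \to y$. The main subtlety I anticipate is that $G_{\alpha_\infty}^{-1}$ is only a generalised right-continuous inverse, so \eqref{eqn:GFunctionCoupling} may hold with inequality rather than equality at jump points of either distribution function; however, the monotone sandwich above is insensitive to such flat pieces, because both bounds compare tail probabilities directly without referring to the precise value of $G_{\alpha_\infty}^{-1}$. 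The only genuine technical input beyond monotonicity is the tail asymptotic $\widebar{\widetilde{\mathbf{P}}}(S^{\alpha_\infty}(1)>u)\sim u^{-\alpha_\infty}$, which can alternatively be taken as a standard fact about one-sided $\alpha_\infty$-stable laws.
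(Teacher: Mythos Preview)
Your argument is correct. The paper does not supply its own proof of this lemma but simply cites \cite[Lemma 3.1]{FIN}; your proposal reconstructs essentially the same computation that appears there, namely matching the regularly varying tail of $\widehat{c}(\{0,1\})$ against the known tail asymptotic $\widebar{\widetilde{\mathbf{P}}}(S^{\alpha_\infty}(1)>u)\sim u^{-\alpha_\infty}$ (which indeed holds with constant exactly $1$ for the chosen L\'evy measure) and invoking the definition of $d_{n,\infty}^*$ together with the uniform convergence theorem for regularly varying functions to conclude via a monotone sandwich.
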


We note that, using the monotonicity of $g_n^{\alpha_0}$, this lemma readily implies $g_n^{\alpha_0}(y_n) \to y$ whenever $y_n\rightarrow y>0$. A similar comment applies to $g_n^{\alpha_\infty}$.

\begin{lemma}\cite[Lemma 3.2]{FIN}\label{LemmaFIN2}
	For any $\delta' > 0$, there exist positive constants $C_1, C_2, C_3$ and $C_4$ such that
	\begin{align*}
		&g_n^{\alpha_0}(x) \le C_1 x^{1 - \delta'}, \quad \quad \textnormal{for} \,\, n^{-1/\alpha_0} \le x \le 1 \,\, \textnormal{whenever} \,\, n^{-1} \le C_2, \\
		&g_n^{\alpha_\infty}(x) \le C_3 x^{1 - \delta'}, \quad \quad \textnormal{for} \,\, n^{-1/\alpha_\infty} \le x \le 1 \,\, \textnormal{whenever} \,\, n^{-1} \le C_4.
	\end{align*}
\end{lemma}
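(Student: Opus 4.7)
The plan is to translate the asserted polynomial bound into a tail-comparison via the defining relation \eqref{eqn:GFunctionCoupling}, and then absorb the remaining slowly varying correction by Potter's inequality; the argument for $g_n^{\alpha_0}$ is strictly parallel after the obvious substitutions $(\alpha_\infty, L_\infty, d_{n,\infty}, c)\leftrightarrow(\alpha_0, L_0, d_{n,0}, r)$. Set $y := g_n^{\alpha_\infty}(x)$ and $u := y\,d^*_{n,\infty}$. Since the law of $S^{\alpha_\infty}(1)$ has no atoms, $G_{\alpha_\infty}$ is continuous, and the definition of the right-continuous inverse gives
$$\widetilde{\mathbf{P}}\bigl(\widehat{c}(\{0,1\}) > u\bigr) \le \widebar{\widetilde{\mathbf{P}}}\bigl(S^{\alpha_\infty}(1) > n^{1/\alpha_\infty} x\bigr).$$
In the prescribed range $x \in [n^{-1/\alpha_\infty}, 1]$, the argument on the right lies in $[1, n^{1/\alpha_\infty}]$, so the classical tail estimate $\widebar{\widetilde{\mathbf{P}}}(S^{\alpha_\infty}(1) > t) \asymp t^{-\alpha_\infty}$ for $\alpha_\infty$-stable subordinators bounds the right-hand side above by a constant multiple of $n^{-1} x^{-\alpha_\infty}$.

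Next, I would use assumption \eqref{RWT} combined with the conditioning defining $\widehat{c}$ to compute $\widetilde{\mathbf{P}}(\widehat{c}(\{0,1\}) > u) = p^{-1} L_\infty(u)\, u^{-\alpha_\infty}$ for all $u \ge 1$; the boundary case $u < 1$ merely forces $g_n^{\alpha_\infty}(x) \le 1/d^*_{n,\infty}$, which already satisfies the claim in the prescribed range of $x$, and so from here I would assume $u \ge 1$. Combining the two bounds, dividing through, and using $L_\infty(d_{n,\infty})\,d_{n,\infty}^{-\alpha_\infty} \sim 1/n$ coming from \eqref{eqn:RightScalingTerms} together with $d^*_{n,\infty}/d_{n,\infty} \to p^{-1/\alpha_\infty}$, one arrives at the scale-free estimate
$$ y^{\alpha_\infty} \lesssim \frac{L_\infty(u)}{L_\infty(d_{n,\infty})}\, x^{\alpha_\infty}. $$

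The main technical obstacle is to absorb the ratio of slowly varying functions. Lemma~\ref{LemmaFIN1} and monotonicity of $g_n^{\alpha_\infty}$ imply $y \le 2$ for $x \le 1$ and $n$ large, so $u/d_{n,\infty}$ is bounded above; in the subcase $u \ge d_{n,\infty}$ the ratio is bounded and one recovers the stronger $y \lesssim x$. For $u < d_{n,\infty}$, with both $u$ and $d_{n,\infty}$ exceeding a threshold that is met uniformly for $n$ large, I would invoke Potter's inequality: for any $\delta'' > 0$,
$$\frac{L_\infty(u)}{L_\infty(d_{n,\infty})} \le 2\left(\frac{u}{d_{n,\infty}}\right)^{-\delta''}.$$
Substituting this, together with $u/d_{n,\infty} \asymp y$, into the previous display produces $y^{\alpha_\infty + \delta''} \lesssim x^{\alpha_\infty}$, i.e.\ $y \lesssim x^{\alpha_\infty/(\alpha_\infty + \delta'')}$. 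For any prescribed $\delta' \in (0,1)$, the choice $\delta'' := \alpha_\infty\delta'/(1-\delta')$ makes the exponent equal to $1 - \delta'$, completing the sketch.
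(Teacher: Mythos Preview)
The paper does not give its own proof of this lemma; it is merely cited from \cite[Lemma~3.2]{FIN}, so there is nothing in the present paper to compare your argument against.

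Your sketch is essentially the standard proof (and is the approach taken in \cite{FIN}): unwind the definition of $g_n^{\alpha_\infty}$ through \eqref{eqn:GFunctionCoupling}, use the stable tail $\widebar{\widetilde{\mathbf{P}}}(S^{\alpha_\infty}(1)>t)\asymp t^{-\alpha_\infty}$ on $[1,\infty)$, and absorb the slowly varying ratio via Potter's bounds. The chain of inequalities you wrote is correct, and the choice $\delta''=\alpha_\infty\delta'/(1-\delta')$ indeed produces the exponent $1-\delta'$. One small point to tighten: Potter's inequality requires both arguments to exceed a threshold $A=A(L_\infty,\delta'')$ that may be larger than $1$, so the case $1\le u\le A$ is not yet covered by your ``$u\ge 1$'' split. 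This is easily handled, since in that range $y=u/d^*_{n,\infty}\le A/d^*_{n,\infty}$, which for large $n$ is $o(n^{-(1-\delta')/\alpha_\infty})$ and hence below $C_3 x^{1-\delta'}$ for all $x\ge n^{-1/\alpha_\infty}$; but you should state this explicitly.
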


\begin{proof}[Proof of Proposition~\ref{PropositionPoissonPoint}]
We restrict ourselves for simplicity to the box $[0, 1]$; extending to $[-K, K]$ does not change the proof. Moreover, we will only detail the proof of the convergence of $\widebar{\nu}^{\alpha_\infty, (n)}$ under the assumption \eqref{RWT}, as the proof of the convergence of $\widebar{\nu}^{\alpha_0, (n)}$ under the assumption \eqref{RWT} or \eqref{RW} follows in the same manner. Concerning notation, let us set, for $x \in [0, 1]$, $N^{(n)}(x) \coloneqq \sum_{i = 0}^{\floor{xn} - 1} b_i$ and use the shorthand $N^{(n)} = N^{(n)}(1)$. Furthermore let us introduce the function $h_n(x) \coloneqq N^{(n)}(x)/n$ and its right continuous inverse $h^{-1}_n$. One can check that, for $i$ such that $b_i=1$, $h_n(i/n) = i^*/n$ and $h^{-1}_n(i^*/n)-\tfrac1n  = i/n$, where $i^*$ is defined in \eqref{def:x*}.

First we prove almost-sure vague convergence for $\widebar{\nu}^{\alpha_\infty, (n)}$. We highlight the fact that, in the following, equalities and limits will hold almost-surely by the coupling. Let us consider a bounded continuous function $f$ of compact support $I = [0, 1]$. We use the notation
\begin{equation}\label{eqn:SetLargeIncrements}
I^{(n), \alpha_\infty}_y \coloneqq \left\{ z \in \mathbb{Z}: \, b_z = 1, \, \frac{z}{n} \in I ,\, S^{\alpha_\infty}\left(\frac{z^*+1}{n} \right) - S^{\alpha_\infty}\left(\frac{z^*}{n}\right) \ge y\right\}.
\end{equation}
Then, we have
\begin{align*}
\int_I f(y) \widebar{\nu}^{\alpha_\infty, (n)}(dy) &=\frac{1}{d_{n, \infty}}\sum_{i/n \in I} f\left(\frac{i}{n}\right) \widebar{c}(\{i, i+1\}) \\ &= \frac{1}{d_{n, \infty}}\sum_{i \in I^{(n), \alpha_\infty}_0} f\left(\frac{i}{n}\right) \widebar{c}(\{i, i+1\}) + \frac{1}{d_{n, \infty}}\sum_{i/n \in I, \, b_i = 0} f\left(\frac{i}{n}\right) \widebar{c}(\{i, i+1\}).
\end{align*}
Using that $f$ is uniformly bounded, there exists a constant $C>0$ such that
\[\frac{1}{d_{n, \infty}}\sum_{i/n \in I, \, b_i = 0} f\left(\frac{i}{n}\right) \widebar{c}(\{i, i+1\}) \le C \frac{1}{d_{n, \infty}} \sum_{i/n \in I, \, b_i = 0} 1 \le C d_{n, \infty}^{-1} n \to 0,\]
as $n \to \infty$, where we used \eqref{eq:slowly} and the fact that $\alpha_\infty<1$ to deduce the convergence to zero of the upper bound. It remains to deal with the other term. We split the sum into three parts. In particular, we fix $\delta>0$, and then consider summing over the three sets $I^{(n), \alpha_\infty}_\delta$, $I^{(n), \alpha_\infty}_{n^{-1/\alpha_\infty}} \setminus I^{(n), \alpha_\infty}_{\delta}$ and $I^{(n), \alpha_\infty}_0 \setminus I^{(n), \alpha_\infty}_{n^{-1/\alpha_\infty}}$ separately. For the first term,
\begin{align*}
\frac{1}{d_{n, \infty}}\sum_{i \in I^{(n), \alpha_\infty}_\delta} f\left(\frac{i}{n}\right) \widebar{c}(\{i, i+1\}) = \frac{d^*_{n, \infty}}{d_{n, \infty}} \sum_{i \in I^{(n), \alpha_\infty}_\delta} f\left( \frac{i}{n} \right) g_{n}^{\alpha_\infty} \left( S^{\alpha_\infty}\left(\frac{i^*+1}{n} \right) - S^{\alpha_\infty}\left(\frac{i^*}{n} \right) \right),
\end{align*}
where $d^*_{n,\infty}$ is defined in \eqref{def:d*}. Notice that we can re-write the right-hand side as
\begin{flalign*}
	&\lim_{n \to \infty} \frac{d^*_{n, \infty}}{d_{n, \infty}} \sum_{i \in I^{(n), \alpha_\infty}_\delta} f\left( \frac{i}{n} \right) g_{n}^{\alpha_\infty} \left( S^{\alpha_\infty}\left(\frac{i^*+1}{n} \right) - S^{\alpha_\infty}\left(\frac{i^*}{n} \right) \right) \\
	&\quad \quad = \lim_{n \to \infty} \frac{d^*_{n, \infty}}{d_{n, \infty}} \sum_{i \in I^{(n), \alpha_\infty}_\delta} f\left(h_n^{-1}\left(\frac{i^*}{n} \right) -\frac1n \right) g_{n}^{\alpha_\infty} \left( S^{\alpha_\infty}\left(\frac{i^*+1}{n} \right) - S^{\alpha_\infty}\left(\frac{i^*}{n} \right) \right).
\end{flalign*}
By the functional law of large numbers we have that $h_n (x) \to p x$ uniformly in $[0, 1]$ and consequently $h_n^{-1} (s) \to s/p$.
By Lemma~\ref{LemmaFIN1} and using that $f$ is bounded, that $g_n$ is monotone, that $I^{(n), \alpha_\infty}_\delta$ has almost-surely finitely many terms, and that the number of atoms of $\widebar{\mathcal{P}}^{\alpha_\infty}$ such that $w_j \ge \delta$ and $x_j \in [0, p]$ is $\widebar{\widetilde{\mathbf{P}}}$-a.s.~finite (each with distinct $x_j$, and for no atoms does $w_j=\delta$), this implies that
\begin{align*}
&\lim_{n \to \infty} \frac{d^*_{n, \infty}}{d_{n, \infty}} \sum_{j \in I^{(n), \alpha_\infty, p}_\delta} f\left(h_n^{-1}\left(\frac{j}{n} \right) -\frac1n\right) g_{n}^{\alpha_\infty} \left( S^{\alpha_\infty}\left(\frac{j+1}{n} \right) - S^{\alpha_\infty}\left(\frac{j}{n} \right) \right)\\
= & p^{-1/\alpha_\infty} \sum_{\substack{(x_j, w_j) \in \widebar{\mathcal{P}}^{\alpha_\infty} :\\ w_j \ge \delta, x_j \in [0, p]}} f\left(\frac{1}{p}x_j\right) w_j,
\end{align*}
where $I^{(n), \alpha_\infty, p}_y \coloneqq \left\{ z \in \N: \, z\le N^{(n)},\, S^{\alpha_\infty}\left(\frac{z+1}{n} \right) - S^{\alpha_\infty}\left(\frac{z}{n}\right) \ge y\right\}$. Let us define, for $\delta'>0$,
\[ H_\delta \coloneqq \sum_{\substack{(x_j, w_j) \in \widebar{\mathcal{P}}^{\alpha_\infty} :\\ w_j \le \delta, x_j \in I}} w_j^{1 - \delta'}.\]
Using Lemma~\ref{LemmaFIN2}, one can prove that, for $\delta'$ small enough and a positive constant $C$,
\begin{equation}\label{eqn:SmallDiscrete}
	\begin{split}
	\limsup_{n\to\infty}	\frac{d^*_{n, \infty}}{d_{n, \infty}}\sum_{i \in I^{(n), \alpha_\infty}_{n^{-1/\alpha_\infty}} \setminus I^{(n), \alpha_\infty}_{\delta}} & \left|f\left( \frac{i}{n} \right)\right| g_{n}^{\alpha_\infty} \left( S^{\alpha_\infty}\left(\frac{i^*+1}{n} \right) - S^{\alpha_\infty}\left(\frac{i^*}{n} \right) \right) \\ &\le C \limsup_{n\to\infty}\sum_{i \in I^{(n), \alpha_\infty}_{n^{-1/\alpha_\infty}} \setminus I^{(n), \alpha_\infty}_{\delta}} \left( S^{\alpha_\infty}\left(\frac{i^*+1}{n} \right) - S^{\alpha_\infty}\left(\frac{i^*}{n} \right) \right)^{1-\delta'} \\
		&\le C H_\delta.
	\end{split}
\end{equation}
We also claim that, $\widebar{\widetilde{\mathbf{P}}}$-a.s., $H_{\delta} \to 0$. Indeed, as $H_\delta$ is  positive and monotone its almost-sure limit is well-defined and
\begin{equation}\label{eqn:SmallAtomsAreSmall}
\widebar{\widetilde{\mathbf{E}}}\left[H_\delta\right] \le \alpha_\infty |I| \int_0^\delta w^{1-\delta'} w^{-1-\alpha_\infty}dw \to 0 \quad\quad \textnormal{as} \,\, \delta \to 0,
\end{equation}
as we can choose $\delta'$ such that $\delta' + \alpha_\infty < 1$. Finally, one can notice that, for all $x \le n^{-1/\alpha_\infty}$, by monotonicity of $g_n^{\alpha_\infty}$, one gets $g_n^{\alpha_\infty}(x) \le g_n^{\alpha_\infty}(n^{-1/\alpha_\infty}) \le C/d_{n, \alpha_\infty}$ for some finite positive $C$.
Then, using \eqref{eq:slowly}, we obtain that
\begin{equation}\label{eqn:VerySmallDiscrete}
	\begin{split}
		\lim_{n \to \infty} \frac{d^*_{n, \infty}}{d_{n, \infty}} \sum_{i \in I^{(n), \alpha_\infty}_0 \setminus I^{(n), \alpha_\infty}_{n^{-1/\alpha_\infty}}} f\left( \frac{i}{n} \right) &g_{n}^{\alpha_\infty} \left( S^{\alpha_\infty}\left(\frac{i^*+1}{n} \right) - S^{\alpha_\infty}\left(\frac{i^*}{n} \right) \right) \\
		&\le \frac{C'}{d_{n, \infty}} \sum_{i \in I^{(n), \alpha_\infty}_0} 1 \le \frac{C''}{d_{n, \infty}} n \to 0 \quad\quad \textnormal{as} \,\, n \to \infty.
	\end{split}
\end{equation}
Putting everything together, we have that
\begin{equation*}
	\lim_{n \to \infty} \frac{1}{d_{n, \infty}}\sum_{i/n \in I} f\left(\frac{i}{n}\right) \widebar{c}(\{i, i+1\}) = \lim_{\delta \to 0} p^{-1/\alpha_\infty} \sum_{\substack{j : w_j \ge \delta \\ x_j \in [0, p]}} f\left(\frac{1}{p} x_j\right) w_j = \int_I f(y) \widebar{\nu}^{\alpha_\infty}(dy).
\end{equation*}
This implies almost-sure vague convergence of the coupled measures. As stated at the beginning of the proof, the argument for the $\alpha_0$-process is identical. Additionally,  the independence of the limits is guaranteed by construction.

Now let us deal with point process convergence. We aim to prove Condition~\ref{Condition1} and apply Lemma \ref{LemmaCond1PointCv}. For any atom $(x_\ell, w_\ell)$ of $\widebar{\nu}^{\alpha_\infty}$ we need to find a sequence $j_\ell(n)$ such that
\begin{equation}\label{eqn:eqnProofPointProcess}
	\frac{j_\ell(n)}{n} \to x_{\ell}, \quad\quad \textnormal{and} \quad\quad  \frac{1}{d_{n, \infty}}\widebar{c}\left( j_{\ell}(n), j_{\ell}(n) + 1 \right) \to w_\ell.
\end{equation}
Note that, using the definition of $\widebar{\nu}^{\alpha_\infty}$, for all atoms $x_\ell\in[0, 1]$ of $\widebar{\nu}^{\alpha_\infty}$, there must exist an atom $(x_\ell^*, w^{*}_{\ell})$ of ${\nu}^{\alpha_\infty}$ with $x_\ell^*\in[0, p]$ such that $x^*_\ell/p = x_\ell$ and $ p^{-1/\alpha_\infty} w^{*}_{\ell} = w_{\ell}$. Then, we pick $j^*_\ell(n)$ to be such that $nx^*_\ell \in (j^*_\ell(n), j^*_\ell(n)+1]$, so that Lemma~\ref{LemmaFIN1} and the comment below \eqref{def:d*} guarantee that
\[\frac{d^*_{n, \infty}}{d_{n, \infty}}g_{n}^{\alpha_\infty}\left( S^{\alpha_\infty}\left(\frac{j^*_\ell(n)+1}{n} \right) - S^{\alpha_\infty}\left(\frac{j^*_\ell(n)}{n} \right) \right) \to p^{-1/\alpha_\infty} w^{*}_{\ell}.\]
Clearly $j^*_\ell(n)/n \to x^*_\ell$, but we also can find an index $j_{\ell}(n)$ such that $h^{-1}_n(j^*_\ell(n)/n) = j_\ell(n)/n$, and this index is such that $j_\ell(n)/n \to x_{\ell} = x^*_\ell/p $ and $\widebar{c}\left( j_{\ell}(n), j_{\ell}(n) + 1 \right)/d_{n, \infty} \to w_\ell$.

We still have to take care of the fact that the interval $[0, N^{(n)}/n]$ may not contain all the atoms (or that it may contain too many of them). However, notice that the event $E_n \coloneqq \{ np + n^{2/3} \ge N^{(n)} \ge np - n^{2/3}\}$ will happen eventually almost-surely by a standard application of Chernoff's bound (see e.g.\ \cite[Theorem A.1.4]{Chernoff}) and the Borel-Cantelli Lemma. On $E_n$ and using that subordinators do not have jumps almost-surely at deterministic times (i.e.\ there is no atom such that $x^*_\ell = p$), we get that, for all atoms and $n$ large enough $x_\ell < N^{(n)}/n$, completing the construction of the coupling and proving the convergence stated in \eqref{eqn:eqnProofPointProcess}.
\end{proof}

The following result is a corollary of Proposition~\ref{PropositionPoissonPoint} and its proof.

\begin{lemma}\label{LemmaCorollaryEnvironment}
	Assume the notation and coupling of Section~\ref{SectionCopuledSpaces}, and let $J_1$ denote the classical Skorohod topology (see \eqref{eqn:Skorohod}), then the following statement holds almost surely:
	\[S^{\alpha_0, \lambda/n, (n)}(t) \stackrel{J_1}{\to}S^{\alpha_0, \lambda}(t).\]
	Moreover, under \eqref{RW}, we have that
	\[
	\frac{1}{2n} \sum_{i \in [-Kn, Kn]} \delta_{i/n} \widebar{c}^{\lambda/n}(i)\rightarrow \mathbf{E}\left[c(0,1)\right]e^{2\lambda v}\mathds{1}_{[-K,K]}(v)dv,
	\]
	weakly as a finite measure on $\mathbb{R}$. And, under
	\eqref{RWT}, we have that
	\[
	\frac{1}{2d_{n, \infty}} \sum_{i \in [-Kn, Kn]} \delta_{i/n} \widebar{c}^{\lambda/n}(i) \rightarrow e^{2\lambda v}dS^{\alpha_\infty}(dv),
	\]
	weakly as a finite measure on $\mathbb{R}$.
\end{lemma}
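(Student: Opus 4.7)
The plan is to establish the three assertions in turn, each by exploiting the coupling constructed in Section~\ref{SectionCopuledSpaces} together with Proposition~\ref{PropositionPoissonPoint}. Throughout, work under the coupling measure so that statements hold almost surely.

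For the $J_1$ convergence of $S^{\alpha_0, \lambda/n, (n)}$, I would first observe that
\[
S^{\alpha_0, \lambda/n, (n)}(t) = \frac{1}{d_{n,0}} \sum_{k=0}^{\lfloor nt\rfloor - 1} e^{-2\lambda k/n}\, r(\{k, k+1\}),
\]
so the process is a monotone non-decreasing pure-jump function, as is its limit $S^{\alpha_0,\lambda}$ since $\alpha_0 \in (0,1)$. For monotone càdlàg functions, $J_1$ convergence reduces to pointwise convergence at continuity points of the limit (and convergence of jumps at discontinuities), so I would argue the latter. Fix $t>0$ that is not a jump time of $S^{\alpha_0, \lambda}$. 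Splitting the sum at a threshold $\delta$ as in the proof of Proposition~\ref{PropositionPoissonPoint}, the contribution of indices $k$ for which the rescaled resistance exceeds $\delta$ is a finite sum whose summands are governed by large atoms of $\bar\nu^{\alpha_0,(n)}$. By the point-process convergence these atoms converge to the large atoms of $\nu^{\alpha_0}$ in $[0,t]$, and the continuous factor $e^{-2\lambda k/n}$ converges to $e^{-2\lambda x_j}$, yielding a limit $\sum_{(x_j,w_j)\in\mathcal{P}^{\alpha_0},\,w_j>\delta,\,x_j\in[0,t]} e^{-2\lambda x_j}w_j$. The small-atom remainder is controlled, uniformly in $n$, by exactly the bounds \eqref{eqn:SmallDiscrete}–\eqref{eqn:SmallAtomsAreSmall} already used for $\alpha_\infty$ (with the parameter $\alpha_0\in(0,1)$ playing the corresponding role, and Lemma~\ref{LemmaFIN2} supplying the sub-linear bound on $g_n^{\alpha_0}$). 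Sending $\delta\downarrow 0$ identifies the pointwise limit with $\int_0^t e^{-2\lambda s}\, dS^{\alpha_0}(s) = S^{\alpha_0,\lambda}(t)$, and the $t<0$ case is symmetric.

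For the second claim, under \eqref{RW} the conductances are i.i.d.\ with finite mean, so I would just apply the strong law of large numbers on mesoscopic blocks. For a continuous test function $f$ with support in $[-K,K]$, write
\[
\frac{1}{2n}\sum_{|i|\le Kn} f(i/n)\, \widebar{c}^{\lambda/n}(i) = \frac{1}{2n}\sum_{|i|\le Kn} f(i/n)\Bigl( e^{2\lambda(i-1)/n} c(\{i-1,i\}) + e^{2\lambda i/n} c(\{i,i+1\}) \Bigr).
\]
Partitioning $\{-Kn,\ldots,Kn\}$ into blocks on which $f(\cdot/n)$ and $e^{2\lambda \cdot/n}$ vary by at most $\varepsilon$, the LLN on each block together with the uniform continuity of the tilt on $[-K,K]$ delivers the Riemann sum limit $\mathbf{E}[c(\{0,1\})]\int_{-K}^{K} f(v)\, e^{2\lambda v}\, dv$, which is exactly the claim.

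For the third claim, the mean conductance is in general infinite under \eqref{RWT}, so the argument proceeds via the vague convergence of $\bar{\nu}^{\alpha_\infty,(n)}$ to $\nu^{\alpha_\infty}$ established in Proposition~\ref{PropositionPoissonPoint}. Expanding $\widebar{c}^{\lambda/n}(i)$ as the sum of the two incident tilted edge-conductances, testing against $f$, and using that each edge is adjacent to two vertices (hence counted twice in the vertex sum) together with the continuous multiplier $e^{2\lambda \cdot/n}\to e^{2\lambda v}$ uniformly on $[-K,K]$, one obtains in the limit
\[
\frac{1}{2d_{n,\infty}} \sum_{|i|\le Kn} f(i/n)\, \widebar{c}^{\lambda/n}(i) \;\longrightarrow\; \int_{-K}^{K} f(v)\, e^{2\lambda v}\, dS^{\alpha_\infty}(v),
\]
as the factor $1/2$ cancels the double-counting. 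The continuous tilt can be absorbed into the test function, so the vague convergence of Proposition~\ref{PropositionPoissonPoint} suffices.

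The main obstacle is the first assertion: the cancellation of the small-atom contribution to $S^{\alpha_0,\lambda/n,(n)}$ must be uniform in $n$, and the monotone structure must be leveraged to upgrade pointwise limits to $J_1$ convergence. Both issues are, however, already essentially resolved by the material preceding the lemma—Lemmas~\ref{LemmaFIN1}–\ref{LemmaFIN2} and the computations in the proof of Proposition~\ref{PropositionPoissonPoint}—together with the equivalence for monotone functions recorded in the paper's appendix on $J_1$ convergence.
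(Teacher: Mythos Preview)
Your treatment of the second and third assertions matches the paper's: the functional law of large numbers under \eqref{RW}, and the vague convergence of $\widebar{\nu}^{\alpha_\infty,(n)}$ from Proposition~\ref{PropositionPoissonPoint} under \eqref{RWT}, are precisely what the paper invokes.

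For the first assertion your underlying ingredients are the same as the paper's (large atoms via point-process convergence, small atoms via the bounds of Lemmas~\ref{LemmaFIN1}--\ref{LemmaFIN2}), but the logical packaging differs and contains a gap. You assert that for monotone c\`adl\`ag functions, $J_1$ convergence ``reduces to pointwise convergence at continuity points of the limit (and convergence of jumps at discontinuities)''. As stated this is not a theorem: pointwise convergence at continuity points is \emph{not} sufficient for $J_1$, even for monotone functions---take $f=\mathds{1}_{[1/2,1]}$ and $f_n=\tfrac12\mathds{1}_{[1/2-1/n,1]}+\tfrac12\mathds{1}_{[1/2,1]}$, which converges pointwise everywhere but not in $J_1$ since a single limiting jump is split into two. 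Your parenthetical ``convergence of jumps'' is the right instinct, but it has to be made precise (for instance: point-process convergence of the jump measure on $[0,T]\times(0,\infty)$ together with convergence of the total increment), and then proved or cited. The paper does not record any such general criterion in Appendix~\ref{sec:AppendixJ_1}; instead it constructs the time-change $\xi_n\in\Xi$ explicitly, by linearly interpolating between the matched large-jump locations (the atoms in $I^{(n),\alpha_0}_\delta$ and $I^{\alpha_0,q}_\delta$), and then verifies directly that both terms in the $J_1$ metric \eqref{eqn:Skorohod} are small. This also makes transparent the role of the coupling parameter $q=1-p$ and the map $\widetilde{h}_n$, which your sketch leaves implicit. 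Either tighten your criterion into a citable statement, or follow the explicit construction.
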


\begin{proof}
	The first statement is justified in Appendix~\ref{sec:AppendixJ_1}. The second one is an immediate consequence of the assumption \eqref{RW} and the functional law of large numbers. The third statement follows directly from the convergence of $\nu^{\alpha_\infty}$ towards $dS^{\alpha_\infty}(dv)$ stated in \eqref{eqn:eqn2DiscetePointMeasures}.
\end{proof}

The following results, in the spirit of \cite[Theorem 4.1]{MottModel}, give the convergence of the environment as a compact metric measure space under \eqref{RW} and \eqref{RWT}, respectively.

\begin{proposition}\label{PropositionRWEnvConvergence}
Consider, for $n\ge1$, $\mathcal{X}_n = [-Kn, Kn] \cap \Z$, $m^{\lambda/n}_n(a, b) = d_{n, 0}^{-1} \widebar{R}^{\lambda/n}(a, b) \coloneqq d_{n, 0}^{-1} \sum_{k=a}^{b-1}\widebar{r}^{\lambda/n}(\{k, k+1\})$, $\mu^{\lambda/n}_n(dx) = \frac{1}{2n} \sum_{i \in \mathcal{X}_n} \delta_i \widebar{c}^{\lambda/n}(i)$, and $\Phi_n (\cdot) = \frac{1}{n} (\cdot) \colon \mathcal{X}_n \to \mathbb{R}$, as well as $\mathcal{X} = \overline{S^{\alpha_0, \lambda}([-K, K])}$, $d$ the Euclidean metric, the speed measure $\mu^\lambda$ defined in \eqref{SmoothSpeedMeasure}, and $\Phi(\cdot) = (S^{\alpha_0, \lambda})^{-1}(\cdot)\colon \mathcal{X} \to \mathbb{R}$. Moreover consider a sequence $(\beta_n)$ in $\mathcal{X}_n$ such that $\lim_nn^{-1}\beta_n=\beta$, where $\beta$ is a continuity point of $S^{\alpha_0, \lambda}$. Under the hypothesis of \eqref{RW} and under the coupling of Proposition~\ref{PropositionPoissonPoint}, explicitly constructed above, the quintuplet
\begin{equation}\label{tildexn}
\left( \mathcal{X}_n, m^{\lambda/n}_n, \mu^{\lambda/n}_n, \beta_n, \Phi_n \right),
\end{equation}
converges $\widebar{\mathbf{P}}$-a.s.\ to its continuous counterpart
\begin{equation*}
\left( \mathcal{X}, d, \mu^{\lambda}, S^{\alpha_0, \lambda}(\beta), \Phi \right).
\end{equation*}
in the spatial Gromov-Hausdorff-Prohorov topology (see \cite[Section 7]{CroydonResistanceForm}).
\end{proposition}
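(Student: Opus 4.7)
The plan is to realize both metric spaces as subsets of $\R$ via explicit isometric embeddings, and then check that all five ingredients of the spatial Gromov--Hausdorff--Prohorov tuple converge in the common ambient space $\R$. Define the embedding $\psi_n \colon \mathcal{X}_n \to \R$ by $\psi_n(a) \coloneqq S^{\alpha_0, \lambda/n, (n)}(a/n)$. By the very definition \eqref{EquationEffctiveResistance}, one has $|\psi_n(a) - \psi_n(b)| = m^{\lambda/n}_n(a, b)$, so $\psi_n$ is an isometry onto its image; the continuous space $(\mathcal{X}, d)$ embeds into $\R$ by inclusion. This reduces the task to showing Hausdorff convergence of the images, weak convergence of the pushforward measures, and pointwise convergence of the marked points and the additional maps, all inside $\R$.

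For the Hausdorff convergence of $\psi_n(\mathcal{X}_n)$ to $\mathcal{X} = \overline{S^{\alpha_0, \lambda}([-K,K])}$, the $J_1$ convergence of $S^{\alpha_0, \lambda/n, (n)}$ to $S^{\alpha_0, \lambda}$ supplied by Lemma~\ref{LemmaCorollaryEnvironment} is the main input: it guarantees uniform proximity of the ranges away from jumps, while on a jump interval of $S^{\alpha_0,\lambda}$ the $\psi_n$-images of the consecutive lattice points straddling the corresponding edge are close to the two endpoints of the interval. Taking the closure in $\mathcal{X}$ then absorbs the interiors of these jump intervals, which are filled in at the discrete level by $\psi_n$-images spaced at scale $d_{n,0}^{-1}$. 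Convergence of the distinguished point is immediate: since $\beta$ is assumed to be a continuity point of $S^{\alpha_0, \lambda}$, the $J_1$ convergence yields $\psi_n(\beta_n) = S^{\alpha_0, \lambda/n, (n)}(\beta_n/n) \to S^{\alpha_0, \lambda}(\beta)$. Similarly, $\Phi_n \circ \psi_n^{-1}(x) = \psi_n^{-1}(x)/n$ converges to $\Phi(x) = (S^{\alpha_0, \lambda})^{-1}(x)$ at continuity points of the right-continuous inverse, which is a standard consequence of $J_1$ convergence of the subordinators.

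The main obstacle is the measure convergence $(\psi_n)_* \mu^{\lambda/n}_n \to \mu^\lambda$, because the $J_1$ convergence of $S^{\alpha_0,\lambda/n,(n)}$ is not preserved by composition with arbitrary bounded continuous functions. The approach is to decompose the discrete edges into \emph{bulk} edges (those with resistance of order smaller than $d_{n,0}$, for which $\psi_n(i)$ and $\psi_n(i+1)$ are close in $\R$) and \emph{wall} edges (those contributing to the jumps of the limit subordinator). For $f \in C_b(\R)$,
\[
\int f\, d(\psi_n)_*\mu^{\lambda/n}_n \;=\; \frac{1}{2n}\sum_{i \in \mathcal{X}_n} f\bigl(S^{\alpha_0,\lambda/n,(n)}(i/n)\bigr)\, \widebar{c}^{\lambda/n}(i).
\]
On the bulk edges, continuity of $f$ combined with the $J_1$ convergence of the subordinator lets one replace $f(\psi_n(i))$ by $f(S^{\alpha_0,\lambda}(i/n))$ up to a vanishing error; the weak convergence of $\frac{1}{2n}\sum_i \delta_{i/n}\, \widebar{c}^{\lambda/n}(i)$ to an explicit exponential-tilted Lebesgue measure (from Lemma~\ref{LemmaCorollaryEnvironment}) then produces the integral defining $\mu^\lambda$ after a change of variables. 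The wall edges must be shown to contribute negligibly: although they determine the jumps of the limit, the associated conductances $c^{\lambda/n}(\{i,i+1\}) = 1/r^{\lambda/n}(\{i,i+1\})$ are of order $d_{n,0}^{-1}$ (vanishingly small), so their total $\mu^{\lambda/n}_n$-mass is $o(1)$ even after summing over the order $n$ such walls in $[-Kn,Kn]$.

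Once Hausdorff convergence of the images, weak convergence of the pushforward measures, and convergence of $\beta_n$ and $\Phi_n$ have all been established through the common embedding into $\R$, the convergence in the spatial Gromov--Hausdorff--Prohorov topology of \cite{CroydonResistanceForm} follows directly from its definition via isometric embeddings into an ambient metric space. The hardest part is the careful splitting in Step~3 above between bulk and wall edges, together with the uniform control on $\psi_n$ near jumps; everything else reduces to standard consequences of $J_1$ convergence and the law of large numbers.
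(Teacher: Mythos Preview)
Your approach is essentially the same as the paper's: embed both spaces isometrically into $\R$ via $\psi_n = S^{\alpha_0,\lambda/n,(n)}\circ\Phi_n$ (the paper calls this $\zeta_n$), and then verify Hausdorff, measure, root and map convergence in the common ambient space using the $J_1$ convergence from Lemma~\ref{LemmaCorollaryEnvironment}. The treatment of the Hausdorff convergence, the marked point, and $\Phi_n\circ\psi_n^{-1}$ matches the paper's.

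The one place you diverge is the measure step, where you set up a bulk/wall edge decomposition. This works, but it is more laborious than necessary, and your justification contains a small slip: there are not ``order $n$'' wall edges in $[-Kn,Kn]$; at any fixed threshold $\epsilon d_{n,0}$ there are only $O_\epsilon(1)$ of them, and it is that finiteness (together with $c = 1/r \le (\epsilon d_{n,0})^{-1}$) that makes their $\mu_n^{\lambda/n}$-mass vanish. The paper bypasses the decomposition entirely with a one-line observation: one first has $(\Phi_n)_*\mu_n^{\lambda/n} \to \mathbf{E}[c(0,1)]e^{2\lambda v}\mathds{1}_{[-K,K]}(v)\,dv$ weakly (Lemma~\ref{LemmaCorollaryEnvironment}), and since this limiting intermediate measure is \emph{non-atomic}, pushing forward by the $J_1$-convergent maps $S^{\alpha_0,\lambda/n,(n)}$ yields $(\psi_n)_*\mu_n^{\lambda/n}\to (S^{\alpha_0,\lambda})_*\bigl(\mathbf{E}[c(0,1)]e^{2\lambda v}\,dv\bigr)=\mu^\lambda$ directly. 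The non-atomicity is exactly what ensures the countable set of discontinuities of $S^{\alpha_0,\lambda}$ is harmless, so no explicit bulk/wall splitting is needed. For the map component $\Phi$, the paper packages the argument via a correspondence rather than pointwise convergence of inverses, citing the construction below \cite[(70)]{MottModel}; your version via convergence of $(S^{\alpha_0,\lambda/n,(n)})^{-1}$ is equivalent since the limiting inverse is continuous.
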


\begin{proof}
The proof of this result is a relatively straightforward adaptation of \cite[Theorem 4.1]{MottModel}. In fact, it is slightly easier, since all the relevant spaces can be isometrically embedded into $\mathbb{R}$. Hence we will be brief with the details. First, define a map $\zeta_n:\mathcal{X}_n\rightarrow\mathbb{R}$ by setting
\begin{equation}\label{pindef}
\zeta_n(x)\coloneqq d_{n,0}^{-1}\mathrm{sign}(x)\widebar{R}^{\lambda/n}(0,x),\qquad \forall x\in \mathcal{X}_n,
\end{equation}
where we denote by $\mathrm{sign}$ the sign of $x$. (Note that it is not necessary to define $\mathrm{sign}(0)$.) Let $\mathcal{Y}_n\coloneqq\zeta_n(\mathcal{X}_n)$, so that $\zeta_n$ is a bijection from $\mathcal{X}_n$ to $\mathcal{Y}_n$, and that the quintuplet at \eqref{tildexn} is isometrically equivalent to
\begin{equation}\label{doijd}
\left( \mathcal{Y}_n, d, \mu^{\lambda/n}_n\circ\zeta_n^{-1}, \zeta_n(\beta_n), \Phi_n\circ\zeta_n^{-1}  \right),
\end{equation}
where again we use $d$ to denote the Euclidean metric (restricted to the relevant space). Consequently, to check the claim of the lemma, it suffices to show that: $\mathcal{Y}_n$ converges to $\mathcal{X}$ with respect to the usual convergence of compact subsets of $\mathbb{R}$ (with respect to the Hausdorff metric);  $\mu^{\lambda/n}_n\circ\zeta_n^{-1}$ converges to $\mu^{\lambda}$ weakly; $\zeta_n(\beta_n)$ converges to $S^{\alpha_0, \lambda}(\beta)$; and there exist correspondences $\mathcal{C}_n$ between $\mathcal{X}$ and $\mathcal{Y}_n$ (i.e.\ subsets of $\mathcal{X}\times \mathcal{Y}_n$ such that each $x\in\mathcal{X}$ is paired with at least one element $y\in\mathcal{Y}_n$, and vice versa) for which $\sup_{(x,y)\in\mathcal{C}_n}(|x-y|+| \Phi(x)-\Phi_n\circ\zeta_n^{-1}(y)|)\rightarrow 0$.

Towards checking these requirements, we start by noting that
\[\zeta_n(x)=S^{\alpha_0,\lambda/n,(n)}\circ\Phi_n(x).\]
Hence
\[\mathcal{Y}_n=S^{\alpha_0,\lambda/n,(n)}([-K,K]\cap(\mathbb{Z}/n))\left\{
  \begin{array}{l}
    \subseteq S^{\alpha_0,\lambda/n,(n)}([-K,K]),\\
    \supseteq S^{\alpha_0,\lambda/n,(n)}([-K+1/n,K]).
  \end{array}
\right.\]
Since we may assume that both $-K$ and $K$ are continuity points of $S^{\alpha_0, \lambda}$ with $\widebar{\mathbf{P}}$-probability one, it readily follows from this and the almost-sure $J_1$ convergence of $S^{\alpha_0,\lambda/n,(n)}$ to $S^{\alpha_0, \lambda}$, see Lemma~\ref{LemmaCorollaryEnvironment}, that $\mathcal{Y}_n$ converges almost-surely to $\mathcal{X}$ as compact subsets of $\mathbb{R}$. Moreover, since we have assumed that $\beta$ is a continuity point of $S^{\alpha_0, \lambda}$, it is moreover clear that $\zeta_n(\beta_n)=S^{\alpha_0,\lambda/n,(n)}(\beta_n/n)\rightarrow S^{\alpha_0, \lambda}(\beta)$. Next, for the measure convergence, we start by observing that, by Lemma~\ref{LemmaCorollaryEnvironment}
\[\mu_n^{\lambda/n}\circ\Phi_n^{-1}\rightarrow \mathbf{E}\left[c(0,1)\right]e^{-2\lambda v}\mathds{1}_{[-K,K]}(v)dv\]
weakly as finite measures on $\mathbb{R}$. Hence, again applying the $J_1$ convergence of $S^{\alpha_0,\lambda/n,(n)}$ to $S^{\alpha_0, \lambda}$, and using that the limiting measure here does not have any atoms, it follows that
\[\mu^{\lambda/n}_n\circ\zeta_n^{-1}=\mu^{\lambda/n}_n\circ\Phi_n^{-1}\circ( S^{\alpha_0,\lambda/n,(n)})^{-1}\rightarrow \mu^{\lambda}\]
weakly as finite measures on $\mathbb{R}$. Finally, to construct an appropriate correspondence, we can again use the $J_1$ convergence of $S^{\alpha_0,\lambda/n,(n)}$ to $S^{\alpha_0, \lambda}$ to proceed exactly as in the proof of \cite[(65)]{MottModel}. In particular, the construction of a suitable correspondence is given below \cite[(70)]{MottModel}. Roughly, each point $x\in \mathcal{X}$ is matched to a nearby point $y\in \mathcal{Y}_n$ (and vice versa), which can be done as a result of the Hausdorff convergence of the sets in question. Since the inverse of the limiting subordinator is continuous, it follows that we also have that $\Phi_n\circ\zeta_n^{-1}(y)=(S^{\alpha_0,\lambda/n,(n)})^{-1}(y)$ is close to $(S^{\alpha_0,\lambda})^{-1}(x)$. Since they are identical to the argument of \cite{MottModel}, we omit the details.
\end{proof}

\begin{proposition}\label{PropositionRWTEnvConvergence}
Consider  $\mathcal{X}_n = [-Kn, Kn] \cap \Z$, $m_n^{\lambda/n}(a, b) = d_{n, 0}^{-1} \widebar{R}^{\lambda/n}(a, b)$, $\widetilde{\mu}_n^{\lambda/n}(dx) = 1/(2d_{n, \infty}) \sum_{i \in \mathcal{X}_n} \delta_i \widebar{c}^{\lambda/n}(i)$, and $\Phi_n (\cdot) = 1/n (\cdot) \colon \mathcal{X}_n \to \mathbb{R}$, as well as $\mathcal{X} = \overline{S^{\alpha_0, \lambda}([-K, K])}$, $d$ the Euclidean metric, $\widetilde{\mu}^{\lambda}$ the speed measure defined in \eqref{eqn:HeavyTailedSpeedMeasure}, and $\Phi(\cdot) = (S^{\alpha_0, \lambda})^{-1}(\cdot)\colon \mathcal{X} \to \mathbb{R}$. Moreover, consider a sequence $(\beta_n)$ in $\mathcal{X}_n$ such that $\lim_nn^{-1}\beta_n=\beta$, where $\beta$ is a continuity point of $S^{\alpha_0, \lambda}$. Under the hypothesis of \eqref{RWT} and under the coupling of Proposition~\ref{PropositionPoissonPoint}, explicitly constructed above, the quintuplet
\begin{equation*}
\left( \mathcal{X}_n, m_n^{\lambda/n}, \widetilde{\mu}_n^{\lambda/n}, \beta_n, \Phi_n \right),
\end{equation*}
converges $\widebar{\widetilde{\mathbf{P}}}$-a.s.\ to its continuous counterpart
\[\left( \mathcal{X}, d, \widetilde{\mu}^\lambda, S^{\alpha_0, \lambda}(\beta), \Phi \right)\]
in the spatial Gromov-Hausdorff-Prohorov topology.
\end{proposition}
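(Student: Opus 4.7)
The strategy is to mirror the proof of Proposition \ref{PropositionRWEnvConvergence} essentially line-by-line, with the only substantive adjustment occurring in the verification of convergence of the speed measure, since in the present setting $\widetilde{\mu}^\lambda$ is purely atomic rather than absolutely continuous. As in the RW case, I would embed everything into $\mathbb{R}$ via the map $\zeta_n(x) = d_{n,0}^{-1} \mathrm{sign}(x) \widebar{R}^{\lambda/n}(0,x) = S^{\alpha_0, \lambda/n, (n)} \circ \Phi_n(x)$, which is an isometry onto $\mathcal{Y}_n \coloneqq \zeta_n(\mathcal{X}_n) \subset \mathbb{R}$. It then suffices to show that (i) $\mathcal{Y}_n \to \mathcal{X}$ in Hausdorff distance, (ii) $\zeta_n(\beta_n) \to S^{\alpha_0,\lambda}(\beta)$, (iii) $\widetilde{\mu}_n^{\lambda/n}\circ\zeta_n^{-1} \to \widetilde{\mu}^\lambda$ weakly, and (iv) there exist correspondences on which both the identity and $(S^{\alpha_0,\lambda/n,(n)})^{-1}$ are asymptotically isometric to $(S^{\alpha_0,\lambda})^{-1}$.

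Parts (i), (ii), and (iv) are already handled by the proof of Proposition \ref{PropositionRWEnvConvergence}: they use only the almost-sure $J_1$ convergence $S^{\alpha_0, \lambda/n, (n)} \to S^{\alpha_0, \lambda}$ given by Lemma \ref{LemmaCorollaryEnvironment}, together with the a.s.\ continuity of $S^{\alpha_0, \lambda}$ at the deterministic points $\pm K$ and $\beta$. These arguments carry over without any change whatsoever, since the hypotheses on the resistance environment are identical under \eqref{RW} and \eqref{RWT}. For (iii), the third part of Lemma \ref{LemmaCorollaryEnvironment} provides the weak convergence of $\widetilde{\mu}_n^{\lambda/n} \circ \Phi_n^{-1}$ to the atomic measure $e^{2\lambda v}\,dS^{\alpha_\infty}(v)$ on $[-K,K]$, and by the definition \eqref{eqn:HeavyTailedSpeedMeasure}, $\widetilde{\mu}^\lambda$ is exactly the pushforward of this measure by $S^{\alpha_0,\lambda}$. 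Hence the target statement is weak convergence of pushforwards: $(\widetilde{\mu}_n^{\lambda/n} \circ \Phi_n^{-1}) \circ (S^{\alpha_0, \lambda/n, (n)})^{-1} \to (e^{2\lambda v}\,dS^{\alpha_\infty}(v)) \circ (S^{\alpha_0, \lambda})^{-1}$.

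The hard part is step (iii), because we are pushing an atomic measure forward by a càdlàg function that is itself discontinuous, so the continuous mapping theorem cannot be used directly. My plan is to establish this via a point-process argument identical in spirit to the proof of Proposition \ref{PropositionPoissonPoint}: for each atom $(x_\ell,w_\ell)$ of $e^{2\lambda v}\,dS^{\alpha_\infty}(v)$ in $[-K,K]$, Proposition \ref{PropositionPoissonPoint} produces an approximating atom $(j_\ell(n)/n,\, d_{n,\infty}^{-1}\widebar{c}^{\lambda/n}(j_\ell(n)))$ of $\widetilde{\mu}_n^{\lambda/n}\circ\Phi_n^{-1}$. By independence of $\mathcal{P}^{\alpha_\infty}$ and $\mathcal{P}^{\alpha_0}$ (whose discrete counterpart is Lemma \ref{LemmaGoodSeparationIndep}), $x_\ell$ is almost surely a continuity point of $S^{\alpha_0,\lambda}$, so the $J_1$ convergence from Lemma \ref{LemmaCorollaryEnvironment} forces $S^{\alpha_0, \lambda/n, (n)}(j_\ell(n)/n) \to S^{\alpha_0,\lambda}(x_\ell)$. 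This yields atomwise convergence of the pushed-forward measures. The contribution of small atoms, which is needed to upgrade atomwise convergence to weak/vague convergence, is controlled exactly as in \eqref{eqn:SmallDiscrete}--\eqref{eqn:SmallAtomsAreSmall}: truncating at atoms of mass $\delta$, using the bound of Lemma \ref{LemmaFIN2} on $g_n^{\alpha_\infty}$, and letting $\delta\to 0$. The main obstacle throughout is precisely this interchange of the atom-truncation limit with the pushforward by the discontinuous $J_1$-limit $S^{\alpha_0,\lambda}$, but the independence of the two subordinator jump sets makes this harmless.
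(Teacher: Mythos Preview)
Your proposal is correct and follows the same approach as the paper: embed via $\zeta_n$, reuse (i), (ii), (iv) from Proposition~\ref{PropositionRWEnvConvergence} verbatim, and handle the measure component by combining the convergence $\widetilde{\mu}_n^{\lambda/n}\circ\Phi_n^{-1}\to e^{2\lambda v}\,dS^{\alpha_\infty}(dv)$ with the $J_1$ convergence of $S^{\alpha_0,\lambda/n,(n)}$, the key observation being that the jump sets of the two independent subordinators are almost-surely disjoint. The paper's proof is essentially a two-sentence sketch of exactly this; your atom-by-atom truncation argument is a valid way to unpack that sketch.
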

\begin{proof}
The proof is entirely similar to the one of Proposition~\ref{PropositionRWEnvConvergence}, apart from some additional care is needed to handle the measure component. In particular, to check that $\widetilde{\mu}_n^{\lambda/n}\circ\zeta_n^{-1}$ converges weakly to $\widetilde{\mu}^{\lambda}$, one can combine the convergence of $\widetilde{\mu}_n^{\lambda/n}\circ\Phi_n^{-1}$ to $e^{2\lambda v}dS^{\alpha_\infty}(dv)$, see Lemma~\ref{LemmaCorollaryEnvironment}, and the $J_1$ convergence of $S^{\alpha_0,\lambda/n,(n)}$ to $S^{\alpha_0, \lambda}$. The one subtlety in doing this is resolved by observing that, because the limiting subordinators are independent, their discontinuities are almost-surely disjoint.
\end{proof}

In all that follows we will drop the bar on top of the probability measures  $\mathbf{P}$ and $\widetilde{\mathbf{P}}$, the reader should assume the coupling to be in place from now on unless stated otherwise.

\section{Random walk estimates}\label{sec:randomwalksestimates}

\subsection{Couplings}

In the next sections we follow a classical general strategy to prove aging. That is, we exploit the coupling of Proposition~\ref{PropositionPoissonPoint} and the consequent Proposition~\ref{PropositionRWEnvConvergence} and Proposition~\ref{PropositionRWTEnvConvergence}. This guarantees almost-sure convergence of the \textbf{quenched} distribution of the process. In our proof, it is crucial that the convergence of Proposition~\ref{PropositionPoissonPoint} holds almost-surely on the coupling. Then, if one can prove the aging statement for the quenched law (that has nicer properties than the annealed one, such as the strong Markov property), the annealed aging theorems follow by applying the dominated convergence theorem. We would like to stress that, as stated in \cite[Corollary 1.10]{MottModel}, quenched convergence is true only after coupling, and not in the original probability space. Below, we work with the quintuplets defined in Proposition~\ref{PropositionRWEnvConvergence} and Proposition~\ref{PropositionRWTEnvConvergence}.

We introduce the random walks and the diffusions in the resistance space. Under Assumption~\eqref{RW}, define
\begin{equation}\label{RandomWalkResistanceSpace}
	Y^{(n)}_t \coloneqq \zeta_n\left(X_{t a_n}\right),
\end{equation}
where we recall the definition of $\zeta_n$ from \eqref{pindef}. Let  $B^{\beta}$ be a standard Brownian motion started at $S^{\alpha_0, \lambda}(\beta)$, and let $H^{\lambda, \beta}$ be the associated time-change defined as in \eqref{eqn:eqnDefinitionH}. Define a process in the resistance space by setting
\[	Y^{\lambda}_t \coloneqq  B^{\beta}_{H^{\lambda, \beta}_t}.\]
Similarly, under Assumption~\eqref{RWT},
\[	\widetilde{Y}^{(n)}_t \coloneqq \zeta_n\left(\widetilde{X}_{t b_n}\right),\]
and
\[	\widetilde{Y}^{\lambda}_t \coloneqq  B^{\beta}_{\widetilde{H}^{\lambda, \beta}_t},\]
where $\widetilde{H}^{\lambda, \beta}_t$ is defined in terms of \eqref{eqn:HeavyTailedSpeedMeasure} with $B^\beta$ instead of $B$.
In this section the reader should consider all the walks above to be built on the coupled version of the spaces. We avoid introducing specific notation to avoid unnecessary complication.

\begin{proposition}\label{prop:QuenchedWeakConvergence}
Assume \eqref{RW}. Under the coupling constructed in Section~\ref{SectionCopuledSpaces}, we have that $\mathbf{P}$-a.s., if $n^{-1}\beta_n\rightarrow \beta$, where $\beta$ is a continuity point of $S^{\alpha_0,\lambda}$, then
\[P_{\beta_n}^{\omega, \lambda/n, K}\left( \left(n^{-1} X_{t a_n} \right)_{t \ge 0} \in \cdot\right) \quad \mathrm{and} \quad P_{\beta_n}^{\omega, \lambda/n, K}\left( (Y_t^{(n)})_{t \ge 0} \in \cdot \right)	\]
	converge respectively, weakly as probability measures on $D([0, \infty), \mathbb{R})$, to the laws of $(Z^{\lambda}_t)_{t \ge 0}$ started at $\beta$ and $(Y^{\lambda}_t)_{t \ge 0}$ started at $S^{\alpha_0,\lambda}(\beta)$. Analogously, under \eqref{RWT} we have that $\widetilde{\mathbf{P}}$-a.s., if $n^{-1}\beta_n\rightarrow \beta$, where $\beta$ is a continuity point of $S^{\alpha_0,\lambda}$, then
	\[	\widetilde{P}_{\beta_n}^{\omega, \lambda/n ,K}\left( \left(n^{-1} \widetilde{X}_{t b_n} \right)_{t \ge 0} \in \cdot \right) \quad \mathrm{and} \quad \widetilde{P}_{\beta_n}^{\omega, \lambda/n, K}\left( (\widetilde{Y}_t^{(n)})_{t \ge 0} \in \cdot \right)	\]
	converge respectively, weakly as probability measures on $D([0, \infty), \mathbb{R})$, to the laws of $(\widetilde{Z}^{\lambda}_t)_{t \ge 0}$ started at $\beta$ and $(\widetilde{Y}^{\lambda}_t)_{t \ge 0}$ started at $S^{\alpha_0,\lambda}(\beta)$. In particular, both the convergence statements above hold with $\beta_n = \beta = 0$.
\end{proposition}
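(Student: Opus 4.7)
The proof reduces, via the framework of stochastic processes on resistance metric measure spaces developed in \cite{CroydonHamblyKumagai, CroydonResistanceForm}, to the spatial Gromov--Hausdorff--Prohorov convergence already established in Proposition~\ref{PropositionRWEnvConvergence} (under \eqref{RW}) and Proposition~\ref{PropositionRWTEnvConvergence} (under \eqref{RWT}). Indeed, both the discrete walks and their scaling limits are the Hunt processes canonically associated with resistance forms, so GHP convergence of the underlying metric measure spaces (together with matching basepoints and spatial embeddings) transfers to quenched weak convergence of the processes themselves.

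\textbf{Main steps.} First I would reformulate things in the resistance space: for a fixed (coupled) environment and fixed $K$, the walk $X$ on $\mathcal{X}_n$ is the Markov chain associated with the electrical network determined by the resistance metric $m_n^{\lambda/n}$ and invariant measure $\mu_n^{\lambda/n}$; after embedding isometrically into $\mathbb{R}$ via $\zeta_n$ defined in \eqref{pindef} and speeding time by $a_n$, it becomes exactly the resistance-form process associated with the quintuplet of Proposition~\ref{PropositionRWEnvConvergence}. The limit $Y^{\lambda}$ is, by construction \eqref{eqn:eqnDefinitionH}, the corresponding resistance-form process for $(d, \mu^\lambda)$ on $\mathcal{X}$. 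Invoking the GHP-to-process convergence result of \cite{CroydonResistanceForm}, for $\mathbf{P}$-a.e.\ realisation of the coupled environment, $Y^{(n)}$ started at $\zeta_n(\beta_n)$ converges in the quenched sense, weakly on $D([0,\infty), \mathbb{R})$, to $Y^\lambda$ started at $S^{\alpha_0,\lambda}(\beta)$. Composing with the labelling maps $\Phi_n \circ \zeta_n^{-1}$, which by Lemma~\ref{LemmaCorollaryEnvironment} converge in the $J_1$ sense to the continuous function $\Phi = (S^{\alpha_0,\lambda})^{-1}$, then yields the corresponding convergence of $n^{-1} X_{t a_n}$ to $Z^\lambda$. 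Under \eqref{RWT}, the identical scheme, substituting Proposition~\ref{PropositionRWTEnvConvergence} and the speed measure $\widetilde{\mu}^\lambda$, gives the statement for $\widetilde{X}$ and $\widetilde{Y}^{(n)}$. For $\beta_n = \beta = 0$, the continuity hypothesis is satisfied automatically, since $0$ is almost surely not a jump time of either $S^{\alpha_0}$ or $S^{\alpha_\infty}$.

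\textbf{Main obstacle.} The principal subtlety lies in the heavy-tailed case \eqref{RWT}, where the limiting speed measure $\widetilde{\mu}^\lambda$ is a pure-jump measure whose atoms encode the trapping effect of large conductances. One must verify that, under the coupling of Section~\ref{SectionCopuledSpaces}, each discrete conductance of scale $d_{n,\infty}$ is matched with a specific atom of $S^{\alpha_\infty}$ (both in location and weight) and that these atoms lie at continuity points of $S^{\alpha_0,\lambda}$, so that the spatial labelling behaves well. This alignment is guaranteed almost surely by the explicit coupling in Proposition~\ref{PropositionPoissonPoint} together with the almost-sure disjointness of the jump sets of the two independent subordinators used in the proof of Proposition~\ref{PropositionRWTEnvConvergence}. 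Once this is in hand, the remaining verifications (non-explosion on the bounded region, convergence of starting points, and compatibility of the embeddings at continuity points) are standard within the resistance-form framework, and the final clause about $\beta_n = 0$ follows because $0$ is $\mathbf{P}$-a.s.\ a continuity point of the relevant subordinators.
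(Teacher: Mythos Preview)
Your proposal is correct and follows essentially the same route as the paper: invoke the spatial Gromov--Hausdorff--Prohorov convergence of Propositions~\ref{PropositionRWEnvConvergence} and~\ref{PropositionRWTEnvConvergence}, then apply \cite[Theorem~7.1]{CroydonResistanceForm} to deduce quenched process convergence. The only cosmetic difference is the order of presentation: the paper obtains the $Z^\lambda$ (resp.\ $\widetilde{Z}^\lambda$) statement directly from the quintuplet with spatial embedding $\Phi_n$, and then notes that the $Y^\lambda$ statement follows by replacing $\Phi_n\circ\zeta_n^{-1}$ with the identity map, whereas you first deduce $Y^{(n)}\to Y^\lambda$ and then compose with the labelling maps to recover $Z^\lambda$; these are the same argument read in opposite directions.
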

\begin{proof}
The results for $Z^\lambda$ and $\widetilde{Z}^\lambda$ are straightforward consequences of Propositions~\ref{PropositionRWEnvConvergence} and \ref{PropositionRWTEnvConvergence} and \cite[Theorem 7.1]{CroydonResistanceForm}. It is fundamental that $\beta$ is a continuity point of $S^{\alpha_0, \lambda}$. As for the $Y^\lambda$ statement, one can proceed along the same lines, replacing the map $\Phi_n\circ\zeta_n^{-1}$ in \eqref{doijd} with the identity map. A similar argument also gives the result for $\widetilde{Y}^\lambda$.
\end{proof}

\begin{remark1}
	Note that Proposition~\ref{prop:QuenchedWeakConvergence} and the dominated convergence theorem imply the weak convergence of the processes mentioned there under the annealed law. In particular, this confirms the predictions of \cite[Remark~1.9]{MottModel} on these scaling limits.
\end{remark1}

\subsection{Aging estimates under assumption \eqref{RW}}

Throughout this section, we work under the assumption \eqref{RW}, and under the coupling constructed in Section~\ref{SectionCopuledSpaces}.
We will prove the aging statement. We restrict the space to the box $[-K, K]$, but we will drop this in the notation for brevity's sake. The processes are started at $\beta_n = \beta = 0$ unless stated otherwise.

Let us define $\widebar{\rho}^{(n)}(t)$, respectively $\widebar{\rho}(t)$,  the quenched marginal distribution of the maximum of the process $(n^{-1} X_{s a_n})_{s \le t}$, respectively $(Z^{\lambda}_s)_{s \le t}$. By Lemma \ref{lemmaMaximumDisc}, we know that $\widebar{\rho}(t)$ is purely atomic, so that we can define  $\text{supp}(\widebar{\rho}(t)) \subseteq \text{supp}(\nu^{\alpha_0})$ to be its support, i.e.~the set of its atoms. Recall that, by  Proposition~\ref{PropositionPoissonPoint},  Condition~\ref{Condition1} holds and for any atom $(x_{\ell}, v_{\ell})$ of the measure $\nu^{\alpha_0} $, there exists  $j_{\ell}(n)\in\mathbb{Z}$ such that $x_\ell^{(n)}:=j_{\ell}(n)/n\to x_\ell$ and $v_{\ell}^{(n)} \to v_{\ell}$. Moreover, recall that, under the coupling of Section~\ref{SectionCopuledSpaces}, the {\it quenched} convergence of Proposition~\ref{prop:QuenchedWeakConvergence} holds. The aim of the subsection is to prove the following result.

\begin{proposition} \label{PropostionWallsPointProcess}
Under the coupling of Section~\ref{SectionCopuledSpaces}, we have that, for all $t \ge 0$,
	\[	\widebar{\rho}^{(n)}(t) \stackrel{n \to \infty}{\to} \widebar{\rho}(t),	\]
vaguely and in the point process sense. More precisely, for every atom $(x_{\ell}, v_{\ell})$ of $\widebar{\rho}(t)$, there exists $j_{\ell}(n) \in [0, Kn]$ such that
	\begin{equation}\label{eqn:WallsPP}
		P^{\omega, \lambda/n}\left(\widebar{X}_{a_nt} = j_{\ell}(n) \right) \stackrel{n \to \infty}{\to} P^{\omega, \lambda}\left(\widebar{Z}_t = x_\ell \right).
	\end{equation}
For the atom such that $x_{\ell} = K$, we have $j_{\ell}(n) = Kn$. For all atoms $x_\ell \neq K$ of $\widebar{\rho}$, there exists $w_\ell>0$ such that $(x_\ell,w_\ell)$ is an atom of $\nu^{\alpha_0}$. Moreover, $(j_{\ell}(n)/n, w_\ell^{(n)})$, where $w_\ell^{(n)}:=d_{n,0}^{-1}r(\{j_{\ell}(n), j_{\ell}(n) + 1\})$ is the atom of $\nu^{\alpha_0, (n)}$ that converges to $(x_\ell, w_\ell)$, as provided by Proposition~\ref{PropositionPoissonPoint}.

For $\beta_n/n \to \beta$ as $n \to \infty$, the convergence above stays valid if  $X$ is started from $\beta_n$ and $\widetilde{Z}$ from $\beta$, as long as $\beta$ is a continuity point of $S^{\alpha_0}$, i.e.\ $\beta$ is not an atom of $\nu^{\alpha_0}$.
\end{proposition}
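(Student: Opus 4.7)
The plan is to derive the atom-by-atom convergence \eqref{eqn:WallsPP} from the quenched functional convergence of Proposition~\ref{prop:QuenchedWeakConvergence}, together with a localisation argument pinning the discrete maximum onto the approximating wall site $j_\ell(n)$. Once \eqref{eqn:WallsPP} is in hand, vague convergence and Condition~\ref{Condition1} follow from Lemma~\ref{LemmaCond1PointCv} and the fact (Lemma~\ref{lemmaMaximumDisc}) that $\widebar{\rho}(t)$ is purely atomic, together with the observation that no mass is lost in the limit (the boundary atom at $x_\ell=K$ absorbs any ``escape to the right edge of the box'').

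First, I would upgrade Proposition~\ref{prop:QuenchedWeakConvergence} to quenched weak convergence of the scalar $n^{-1}\widebar{X}_{a_nt}$ to $\widebar{Z}^\lambda_t$. The supremum-on-$[0,t]$ functional is continuous on $D([0,\infty),\mathbb{R})$ at paths which are continuous at $t$, and $Z^\lambda$ has continuous sample paths under $P^{\omega,\lambda}$ because $H^\lambda$ is a strictly increasing, continuous time-change of a Brownian motion. Next, since $Z^\lambda_s=(S^{\alpha_0,\lambda})^{-1}(B_{H^\lambda_s})$ takes values in the set of jump times of $S^{\alpha_0,\lambda}$, the running maximum $\widebar{Z}^\lambda_t$ sits almost surely at the $x$-coordinate of an atom of $\nu^{\alpha_0}$ (possibly at the boundary). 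In particular, every atom $x_\ell\neq K$ of $\widebar{\rho}(t)$ is the first coordinate of some atom $(x_\ell,w_\ell)$ of $\nu^{\alpha_0}$, for which Proposition~\ref{PropositionPoissonPoint} supplies the approximating sequence $(j_\ell(n)/n,w_\ell^{(n)})\to(x_\ell,w_\ell)$.

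The key technical step, and the main obstacle, is the localisation. For each such atom I would fix an open interval $U_\ell\ni x_\ell$ whose closure meets no other atom of $\widebar{\rho}(t)$ and no other atom of $\nu^{\alpha_0}$ of weight bounded away from zero. Weak convergence (with $\widebar{\rho}(t)(\partial U_\ell)=0$ by construction) then yields
\[
P^{\omega,\lambda/n}\bigl(n^{-1}\widebar{X}_{a_nt}\in U_\ell\bigr)\longrightarrow P^{\omega,\lambda}\bigl(\widebar{Z}^\lambda_t=x_\ell\bigr),
\]
so it remains to show $P^{\omega,\lambda/n}(\widebar{X}_{a_nt}\in nU_\ell\setminus\{j_\ell(n)\})\to 0$. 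The intuition is that, for the discrete maximum to lie strictly to the right of $j_\ell(n)$ inside $nU_\ell$, the walker must have crossed the wall $\{j_\ell(n),j_\ell(n)+1\}$ of resistance of order $d_{n,0}w_\ell$ at some time $s\le a_nt$, yet failed to reach the next $n$-wall by time $a_nt$. By the \eqref{RW} analogue of Lemma~\ref{LemmaGoodSeparationIndep}, consecutive $n$-walls are separated by at least $n^{1/4}$ sites, and on the intermediate stretch every resistance is bounded by $d_{n,0}^{1-\widehat\delta}$. A commute-time estimate combined with the truncated first moment bound $\mathbf{E}[r\mathds{1}_{r\le d_{n,0}^{1-\widehat\delta}}]\lesssim d_{n,0}^{(1-\widehat\delta)(1-\alpha_0)}$ gives that the traversal time of such a stretch is $o(a_n)=o(nd_{n,0})$ for every $\widehat\delta>0$, so, conditionally on the rare wall-crossing, the walker a.a.s.\ exits $nU_\ell$ to the right well before time $a_nt$, and the non-negligible contribution to $\{n^{-1}\widebar{X}_{a_nt}\in U_\ell\}$ reduces to $\{\widebar{X}_{a_nt}=j_\ell(n)\}$, as required.

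The remaining cases are routine. The boundary atom $x_\ell=K$ is handled by the same scheme with $j_\ell(n)=Kn$, interpreted as the event that the walker attains the right edge of the box; and the case of a starting point $\beta_n/n\to\beta\notin\mathrm{supp}(\nu^{\alpha_0})$ needs no modification, since the hypothesis that $\beta$ is a continuity point of $S^{\alpha_0}$ is precisely what Proposition~\ref{prop:QuenchedWeakConvergence} requires to start the limit process at $S^{\alpha_0,\lambda}(\beta)$, and it guarantees that in the limit the starting point stays away from the walls.
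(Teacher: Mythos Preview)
Your overall structure---weak convergence of the running maximum followed by a localisation at each atom---is exactly the one the paper uses, but the localisation step has two genuine gaps.

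For the right side $\{\widebar{X}_{a_nt}\in(j_\ell(n),j_\ell(n)+\delta n]\}$, your commute-time argument does not deliver the conclusion ``the walker a.a.s.\ exits $nU_\ell$ to the right''. Reaching the \emph{next} $n$-wall quickly is not enough: the atoms of $\nu^{\alpha_0}$ are dense, so for large $n$ the next $n$-wall lies inside $U_\ell$, and iterating the step runs into the fact that the total resistance from $j_\ell(n)+1$ to the right edge of $nU_\ell$ is of order $d_{n,0}\,(S^{\alpha_0,\lambda}(x_\ell+\delta)-S^{\alpha_0,\lambda}(x_\ell))$, which is macroscopic, so the traversal time is of order $a_n$, not $o(a_n)$. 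What actually makes the right side work (Lemma~\ref{lemmaEstimate1RWRW} in the paper) is to pass to the resistance-space process $Y^{(n)}=\zeta_n(X)$: there the image of $(j_\ell(n),j_\ell(n)+\delta n]$ is an interval whose left endpoint converges to $S^{\alpha_0,\lambda}(x_\ell)$ (the value \emph{after} the jump), and $S^{\alpha_0,\lambda}(x_\ell)$ is almost surely not of the form $S^{\alpha_0,\lambda}(v^-)$ for any $v\in D$; since by Lemma~\ref{lemmaMaximumDisc} the maximum $\widebar{Y}_t$ lives on $\{S^{\alpha_0,\lambda}(v^-):v\in D\}$, Portmanteau gives $\lim_{\delta\to0}\limsup_nP^{\omega,\lambda/n}(\widebar{Y}^{(n)}_t\in[S^{\alpha_0,\lambda}(x_\ell),S^{\alpha_0,\lambda}(x_\ell+\delta)])=0$.

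For the left side $\{\widebar{X}_{a_nt}\in[j_\ell(n)-\delta n,j_\ell(n)-1]\}$ you say nothing, and it is not symmetric to the right side. The obstruction here is temporal, not spatial: the walker may first enter $[j_\ell(n)-\delta n,j_\ell(n)]$ only at a time close to $a_nt$, and then have no time left to hit $j_\ell(n)$. The paper (Lemma~\ref{lemmaEstimate2RWRW}) splits on whether $\tau_{x_\ell^{(n)}-\delta}$ occurs before or after $1-\eta$; the ``early'' case is handled by resistance ratios and a commute-time bound, while the ``late'' case is controlled by $P^{\omega,\lambda}(\widebar{T}^Z_t\ge t-2\eta)$ via a coupling to the limit, and this in turn vanishes as $\eta\to0$ only because $P^{\omega,\lambda}(Z_t=\widebar{Z}_t)=0$ (Lemma~\ref{lemmaZNotAtMaximum}), a non-trivial property of the limiting diffusion that your sketch never invokes. (Minor aside: $H^\lambda$ is strictly increasing but not continuous---it jumps across the gaps of $\overline{S^{\alpha_0,\lambda}(\mathbb{R})}$---though $Z^\lambda$ is still continuous because $(S^{\alpha_0,\lambda})^{-1}$ collapses those jumps.)
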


Before proving the proposition above, we will state and prove two useful lemmas.

\begin{lemma}\label{lemmaEstimate1RWRW} For all $x_\ell$ such that $(x_\ell, w_\ell) \in \text{supp}(\widebar{\rho}(1))$
\[\lim_{\delta \to 0} \limsup_{n \to \infty} P^{\omega, \lambda/n}\left( \widebar{X}_{a_n} \in [j_{\ell}(n) + 1, j_{\ell}(n) + \delta n] \right) = 0.\]	
\end{lemma}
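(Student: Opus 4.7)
The strategy is to transfer the event to the resistance space via the monotone coordinate change $\zeta_n$, where the limiting process has a running-maximum distribution that is atomless above the starting point. Since $\zeta_n$ is strictly monotone on $\mathbb{Z}$, $\widebar{Y}^{(n)}_1 = \zeta_n(\widebar{X}_{a_n})$, and the probability in the statement equals
\[P^{\omega,\lambda/n}\!\left(\widebar{Y}^{(n)}_1 \in \bigl[\zeta_n(j_\ell(n)+1),\,\zeta_n(j_\ell(n)+\lfloor\delta n\rfloor)\bigr]\right).\]
The almost-sure $J_1$ convergence of $S^{\alpha_0,\lambda/n,(n)}$ to $S^{\alpha_0,\lambda}$ provided by Lemma~\ref{LemmaCorollaryEnvironment}, combined with Condition~\ref{Condition1} built into the coupling of Section~\ref{SectionCopuledSpaces}, ensures that $\zeta_n(j_\ell(n)+1) = \zeta_n(j_\ell(n)) + d_{n,0}^{-1} r^{\lambda/n}(\{j_\ell(n), j_\ell(n)+1\}) \to S^{\alpha_0,\lambda}(x_\ell-) + w_\ell = S^{\alpha_0,\lambda}(x_\ell)$, and that $\zeta_n(j_\ell(n)+\lfloor\delta n\rfloor) \to S^{\alpha_0,\lambda}(x_\ell + \delta)$ provided $x_\ell + \delta$ is a continuity point of $S^{\alpha_0,\lambda}$---which excludes only a countable set of $\delta$'s that we may avoid in the limit $\delta \to 0^+$.

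Next, I would apply the quenched weak convergence $Y^{(n)} \Rightarrow Y^\lambda$ from Proposition~\ref{prop:QuenchedWeakConvergence} in the Skorohod $J_1$ topology. Although $Y^\lambda$ itself carries jumps (occurring when the driving Brownian motion $B$ traverses a gap in $\mathrm{supp}(\mu^\lambda)$ corresponding to a jump of $S^{\alpha_0,\lambda}$), it is almost surely continuous at the deterministic time $t = 1$, so the running-supremum-at-$1$ functional is continuous at the limit path and $\widebar{Y}^{(n)}_1 \Rightarrow \widebar{Y}^\lambda_1$ weakly under $P^{\omega,\lambda/n}$. Fixing $\eta > 0$, the discrete interval is for large $n$ contained in the closed set $I_{\eta,\delta} \coloneqq [S^{\alpha_0,\lambda}(x_\ell) - \eta,\, S^{\alpha_0,\lambda}(x_\ell+\delta) + \eta]$, and Portmanteau yields
\[\limsup_{n\to\infty} P^{\omega,\lambda/n}\!\bigl(\widebar{X}_{a_n} \in [j_\ell(n)+1, j_\ell(n)+\lfloor\delta n\rfloor]\bigr) \le P^{\omega,\lambda}\!\bigl(\widebar{Y}^\lambda_1 \in I_{\eta,\delta}\bigr).\]

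Finally, sending $\eta \to 0$ and then $\delta \to 0^+$ along continuity points of $S^{\alpha_0,\lambda}$, by monotone downward continuity of probability and the right-continuity of $S^{\alpha_0,\lambda}$ at $x_\ell$, the right-hand side converges to $P^{\omega,\lambda}\!\bigl(\widebar{Y}^\lambda_1 = S^{\alpha_0,\lambda}(x_\ell)\bigr)$. The main obstacle is verifying that this probability vanishes for $\mathbf{P}$-a.e.\ environment. Writing $\widebar{Y}^\lambda_1 = \widebar{B}_{H^{\lambda,\beta}_1}$ with $H^{\lambda,\beta}_1 > 0$ almost surely, the distribution of the maximum of a Brownian motion up to any positive random time (measurable with respect to $B$) is absolutely continuous on $(B^{\beta}_0, \infty) = (S^{\alpha_0,\lambda}(\beta), \infty)$, which contains $S^{\alpha_0,\lambda}(x_\ell)$ because $x_\ell$ is an atom of $\widebar{\rho}(1)$ and therefore lies strictly above the continuity point $\beta$; this gives atomlessness at the required level and closes the argument.
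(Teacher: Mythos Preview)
Your overall route matches the paper's: pass to the resistance space via $\zeta_n$, use the quenched $J_1$ convergence of $Y^{(n)}$ to $Y^\lambda$ together with the almost-sure convergence of the endpoints coming from $S^{\alpha_0,\lambda/n,(n)}\to S^{\alpha_0,\lambda}$, and reduce the question to showing $P^{\omega,\lambda}\bigl(\widebar{Y}^\lambda_1 = S^{\alpha_0,\lambda}(x_\ell)\bigr)=0$. That reduction is fine and is exactly what the paper does.

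The gap is in your justification of this last equality. Two of the assertions you rely on are false. First, the identity $\widebar{Y}^\lambda_1 = (\widebar{B})_{H^\lambda_1}$ does not hold as random variables: since $\mu^\lambda$ is supported on the Lebesgue-null set $\overline{S^{\alpha_0,\lambda}(\mathbb{R})}$, the additive functional $t\mapsto \int L^B_t(x)\,\mu^\lambda(dx)$ is constant on each Brownian excursion into a gap $(S^{\alpha_0,\lambda}(v^-),S^{\alpha_0,\lambda}(v))$, so $H^\lambda$ jumps across such excursions and the time-changed process $Y^\lambda$ never sees the Brownian maximum attained inside the gap. In fact the paper proves (see \eqref{ybar} in Lemma~\ref{lemmaMaximumDisc}) that $\widebar{Y}^\lambda_t$ is only the supremum of $[0,\widebar{B}_{H^\lambda_t}]\cap\overline{S^{\alpha_0,\lambda}(\mathbb{R})}$. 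Second, and more damaging, your claim that ``the maximum of a Brownian motion up to any positive random time measurable with respect to $B$ is absolutely continuous'' is simply wrong: for $T=\tau_c=\inf\{t:B_t=c\}$ one has $\widebar{B}_T=c$ deterministically. Since $H^\lambda_1$ is a $B$-stopping time (conditionally on $S^{\alpha_0}$), there is no general principle ruling out an atom of $\widebar{B}_{H^\lambda_1}$ at the specific level $S^{\alpha_0,\lambda}(x_\ell)$.

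The paper closes this step differently, via Lemma~\ref{lemmaMaximumDisc}: using the time-reversal argument of Lemma~\ref{lemmaZNotAtMaximum} to produce a \emph{deterministic} rational time $q$ with $\widebar{B}_{H^\lambda_1}=\widebar{B}_q$, it shows that $\widebar{B}_{H^\lambda_1}$ almost surely falls into some open gap $(S^{\alpha_0,\lambda}(v^-),S^{\alpha_0,\lambda}(v))$, whence $\widebar{Y}^\lambda_1\in\{S^{\alpha_0,\lambda}(v^-):v\in D\}$ almost surely. One then observes that $S^{\alpha_0,\lambda}(x_\ell)$ cannot coincide with any $S^{\alpha_0,\lambda}(v^-)$, since that would force the subordinator to be constant on $[x_\ell,v)$. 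You should invoke Lemma~\ref{lemmaMaximumDisc} (or reproduce its argument) at this point rather than the absolute-continuity heuristic.
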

\begin{proof}
By our construction of the measures $\nu^{\alpha_0, (n)}$ and $\nu^{\alpha_0}$ and the fact that $S^{\alpha_0, \lambda/n, (n)}$ converges in the $J_1$ topology to $S^{\alpha_0, \lambda}$ (see Appendix~\ref{sec:AppendixJ_1}), it is true that
\begin{equation*}
	S^{\alpha_0, \lambda/n, (n)}\left(x_{\ell}^{(n)} + 1/n \right) \to S^{\alpha_0}\left(x_\ell \right), \quad \mathrm{and} \quad \limsup_{n \to \infty} S^{\alpha_0, \lambda/n, (n)}\left(x_{\ell}^{(n)} + \delta \right) \le S^{\alpha_0}\left(x_\ell + \delta \right).
\end{equation*}
If we consider $\widebar{Y}_1^{(n)}$ to be the maximum of the walk in the resistance space defined in \eqref{RandomWalkResistanceSpace}, we then obtain
\begin{align*}
	\lim_{\delta \to 0} \limsup_{n \to \infty}& \,\, P^{\omega, \lambda/n}\left( \widebar{X}_{a_n} \in [j_{\ell}(n) + 1, j_{\ell}(n) + \delta n] \right) \\
	&= \lim_{\delta \to 0} \limsup_{n \to \infty} P^{\omega, \lambda/n}\left( \widebar{Y}^{(n)}_1 \in \left[S^{\alpha_0, \lambda/n, (n)}\left(x_{\ell}^{(n)} + 1/n \right), S^{\alpha_0, \lambda/n, (n)}\left(x_{\ell}^{(n)}+ \delta \right)\right] \right) \\
	& \le  \lim_{\delta \to 0} P^{\omega, \lambda} \left(\widebar{Y}_1 \in \left[S^{\alpha_0, \lambda}\left(x_\ell \right), S^{\alpha_0, \lambda}\left(x_\ell + \delta \right)\right] \right) \\
	&= P^{\omega, \lambda}\left(\widebar{Y}_1 \equiv \widebar{ (B_{H^{\lambda}})}_1 =S^{\alpha_0, \lambda}\left(x_\ell \right) \right) \\
	&= 0,
\end{align*}
where the second equality holds by the almost-sure right-continuity of subordinators, and the third equality is due to Lemma \ref{lemmaMaximumDisc}. (For checking this final claim, it is useful to note that, almost-surely, $S^{\alpha_0,\lambda}(x_\ell)$ can not be equal to $S^{\alpha_0,\lambda}(v^{-})$ for any $v\in D$, where $D$ is the set of discontinuities of $S^{\alpha_0}$. Indeed, if $S^{\alpha_0,\lambda}(x_\ell)=S^{\alpha_0,\lambda}(v^{-})$ for such a $v$, then it must hold that $v>x_\ell$ and moreover $S^{\alpha_0,\lambda}$ is constant on $[x_\ell,v)$, which can not be the case.)
\end{proof}

Let us introduce the quantities
\begin{equation}\label{TimeOfMaxima}
	T^{Z^{\lambda}}_t \coloneqq \inf\left\{ s \le t: Z^{\lambda}_s = \widebar{Z}^{\lambda}_t \right\} \quad \mathrm{and} \quad \widebar{T}^{Z^{\lambda}}_t \coloneqq \sup\left\{ s \le t: Z^{\lambda}_s = \widebar{Z}^{\lambda}_t \right\}.
\end{equation}
One can similarly define the same quantities for the ``discrete" process $X^{(n)} \coloneqq (n^{-1} X_{t a_n})_{t\geq 0}$. In the following, we will write $\tau^X_a$ for the hitting time of a point $a$ by a process $X$.

\begin{lemma}\label{lemmaEstimate2RWRW} For all $x_\ell$ such that $(x_\ell, w_\ell) \in \text{supp}(\widebar{\rho}(1))$
	\[	\lim_{\delta \to 0} \limsup_{n \to \infty} P^{\omega, \lambda/n}\left( \widebar{X}_{a_n} \in [j_{\ell}(n) - \delta n,j_{\ell}(n) - 1] \right) = 0.	\]	
\end{lemma}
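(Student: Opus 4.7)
The plan is to mirror the argument of Lemma~\ref{lemmaEstimate1RWRW}, the only new aspects being that the atom $x_\ell$ is approached from the left and the relevant limiting value in the resistance space is $S^{\alpha_0, \lambda}(x_\ell^-)$, i.e.\ the bottom of the jump at $x_\ell$ rather than its top. First, I would use the monotonicity of $\zeta_n$ on $\mathbb{Z}$ to rewrite the event as
\[\{\widebar{X}_{a_n} \in [j_\ell(n) - \delta n, j_\ell(n) - 1]\} = \{\widebar{Y}^{(n)}_1 \in [\zeta_n(j_\ell(n) - \delta n),\, \zeta_n(j_\ell(n) - 1)]\},\]
where $\widebar{Y}^{(n)}_1$ is the running supremum of the resistance-space walk defined in \eqref{RandomWalkResistanceSpace}.

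Next, I would identify the limits of the two endpoints. The upper endpoint $\zeta_n(j_\ell(n) - 1) = S^{\alpha_0, \lambda/n, (n)}((j_\ell(n) - 1)/n)$ is a sum of rescaled resistances that excludes the large resistance $r(\{j_\ell(n), j_\ell(n) + 1\})$; writing it as $S^{\alpha_0, \lambda/n, (n)}(j_\ell(n)/n)$ minus a single generic resistance divided by $d_{n, 0}$ (which is $o(1)$ under the coupling, since non-wall positions satisfy $r \le d_{n, 0}^{1 - \widehat{\delta}}$), one obtains $\lim_n \zeta_n(j_\ell(n) - 1) = S^{\alpha_0, \lambda}(x_\ell^-)$. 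For the lower endpoint, restricting $\delta \downarrow 0$ to continuity points of $S^{\alpha_0, \lambda}$ (a conull choice, since discontinuities are countable), Lemma~\ref{LemmaCorollaryEnvironment} yields $\lim_n \zeta_n(j_\ell(n) - \delta n) = S^{\alpha_0, \lambda}(x_\ell - \delta)$. Combining these with the quenched weak convergence $\widebar{Y}^{(n)}_1 \Rightarrow \widebar{Y}_1$ from Proposition~\ref{prop:QuenchedWeakConvergence} and a small enlargement of the interval to invoke the Portmanteau theorem, one deduces
\[\limsup_{n \to \infty} P^{\omega, \lambda/n}\left(\widebar{X}_{a_n} \in [j_\ell(n) - \delta n, j_\ell(n) - 1]\right) \le P^{\omega, \lambda}\left(\widebar{Y}_1 \in \left[S^{\alpha_0, \lambda}(x_\ell - \delta),\, S^{\alpha_0, \lambda}(x_\ell^-)\right]\right).\]

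Finally, letting $\delta \downarrow 0$ through continuity points, the right-hand interval shrinks by monotonicity of $S^{\alpha_0, \lambda}$ to the singleton $\{S^{\alpha_0, \lambda}(x_\ell^-)\}$, and since $\widebar{Y}_1 = \sup_{s \le H^{\lambda, \beta}_1} B^\beta_s$ is the running supremum of a Brownian motion up to an a.s.-positive random time, its quenched law is absolutely continuous, so $P^{\omega, \lambda}(\widebar{Y}_1 = S^{\alpha_0, \lambda}(x_\ell^-)) = 0$, giving the claim. The main obstacle compared to Lemma~\ref{lemmaEstimate1RWRW} is structural: $S^{\alpha_0, \lambda}(x_\ell^-)$ is the bottom rather than the top of the jump at $x_\ell$ and is not of the form $S^{\alpha_0, \lambda}(z)$ for any $z$ in the support of $\widebar{\rho}(1)$, so the absence of mass at this specific value must be established directly via absolute continuity of the Brownian supremum rather than via Lemma~\ref{lemmaMaximumDisc}.
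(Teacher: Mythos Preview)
Your final step has a genuine gap. The identification $\widebar{Y}_1 = \sup_{s \le H^{\lambda}_1} B_s$ is incorrect: the time-change $H^{\lambda}$ is not surjective onto $[0,H^\lambda_1]$, so the supremum of the time-changed process $Y_t = B_{H^\lambda_t}$ over $[0,1]$ is in general strictly smaller than the Brownian supremum $\widebar{B}_{H^\lambda_1}$. Indeed, as established in the proof of Lemma~\ref{lemmaMaximumDisc} (see \eqref{ybar}), one has $\widebar{Y}_t = \sup\bigl([0,\widebar{B}_{H^\lambda_t}]\cap \overline{S^{\alpha_0,\lambda}(\mathbb{R})}\bigr)$, and the same lemma shows that the quenched law of $\widebar{Y}_1$ is concentrated on the countable set $\{S^{\alpha_0,\lambda}(v^-):v\in D\}$. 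In particular it is purely atomic, not absolutely continuous.

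This is fatal for your strategy: the point $S^{\alpha_0,\lambda}(x_\ell^-)$ is precisely one of these atoms, and
\[
P^{\omega,\lambda}\bigl(\widebar{Y}_1 = S^{\alpha_0,\lambda}(x_\ell^-)\bigr) \;=\; P^{\omega,\lambda}\bigl(\widebar{Z}_1 = x_\ell\bigr) \;>\; 0,
\]
since $(x_\ell,w_\ell)\in\mathrm{supp}(\widebar{\rho}(1))$ by hypothesis. So your Portmanteau bound, after letting $\delta\to 0$, converges to this strictly positive quantity rather than to zero. The asymmetry you correctly flagged between the top and bottom of the jump is exactly why the argument of Lemma~\ref{lemmaEstimate1RWRW} does not transfer: there the limiting point $S^{\alpha_0,\lambda}(x_\ell)$ lies outside the support of $\widebar{Y}_1$, here $S^{\alpha_0,\lambda}(x_\ell^-)$ lies inside it. The paper therefore abandons the resistance-space route entirely and instead rewrites the event as $\{\tau^{X^{(n)}}_{x_\ell^{(n)}-\delta}\le 1,\ \tau^{X^{(n)}}_{x_\ell^{(n)}}>1\}$, splits according to whether $\tau^{X^{(n)}}_{x_\ell^{(n)}-\delta}$ occurs before or after $1-\eta$, and controls the two pieces via electrical-network estimates (effective resistance ratios and the commute-time identity) together with Lemma~\ref{lemmaZNotAtMaximum}, which gives $P^{\omega,\lambda}(Z_1=\widebar{Z}_1)=0$.
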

\begin{proof}
Let us rephrase the problem in the following way
\begin{align*}
	P^{\omega, \lambda/n}\left( \widebar{X}_{a_n} \in [j_{\ell}(n) - \delta n,j_{\ell}(n) - 1] \right) = P^{\omega, \lambda/n}\left( \tau^{X^{(n)}}_{x_\ell^{(n)} - \delta } \le 1, \, \tau^{X^{(n)}}_{x_\ell^{(n)}} > 1 \right).
\end{align*}
For any $\eta > 0$, we get by the law of total probability and the strong Markov property of the quenched law
\begin{align}\label{SplittingKeyQuantityRWRW}
\lefteqn{	P^{\omega, \lambda/n}\left(\tau^{X^{(n)}}_{x_{\ell}^{(n)} - \delta } \le 1, \, \tau^{X^{(n)}}_{x_\ell^{(n)}} > 1 \right) }\nonumber\\
&= P^{\omega, \lambda/n}\left( \tau^{X^{(n)}}_{x_\ell^{(n)} - \delta } < 1 - \eta, \, \tau^{X^{(n)}}_{x_\ell^{(n)}} > 1 \right) + P^{\omega, \lambda/n}\left( \tau^{X^{(n)}}_{x_\ell^{(n)} - \delta } \in [1 - \eta, 1], \, \tau^{X^{(n)}}_{x_\ell^{(n)}} > 1 \right) \nonumber \\
	&\le P^{\omega, \lambda/n}_{x_\ell^{(n)} - \delta}\left(\tau^{X^{(n)}}_{x_\ell^{(n)}} \ge \eta \right) + P_{0}^{\omega, \lambda/n}\left(\tau^{X^{(n)}}_{x_\ell^{(n)} - \delta } \in [1 - \eta, 1] \right).
\end{align}
The rest of the proof will focus on finding bounds from above for the last two quantities. Let us deal with the first one. For  $\delta' > 100 \delta$, a union bound gives that
\begin{align*}
	P^{\omega, \lambda/n}_{x_\ell^{(n)} - \delta}\left(\tau^{X^{(n)}}_{x_\ell^{(n)}} \ge \eta \right) \le P^{\omega, \lambda/n}_{x_\ell^{(n)} - \delta}\left(\tau^{X^{(n)}}_{x_\ell^{(n)} - \delta', x_\ell^{(n)}} \ge \eta \right) + P^{\omega, \lambda/n}_{x_\ell^{(n)} - \delta}\left(\tau^{X^{(n)}}_{x_\ell^{(n)}} > \tau^{X^{(n)}}_{x_\ell^{(n)} - \delta'}\right),
\end{align*}
where $\tau^X_{{a, b}} \coloneqq \min\{\tau^X_{a}, \tau^X_{b} \}$. Using a well-known electric networks formula (see, for example, \cite[Equation~(A.1)]{BergerSalvi2020}) we get, for the second term in the sum,
\begin{equation*}
	P^{\omega, \lambda/n}_{x_\ell^{(n)} - \delta} \left( \tau^X_{x_\ell^{(n)} - \delta'} < \tau^X_{x_\ell^{(n)}} \right) = \frac{R^{\lambda/n}\left(j_{\ell}(n) - \delta n, j_{\ell}(n)\right)}{R^{\lambda/n}\left(j_{\ell}(n) - \delta' n, j_{\ell}(n)\right)}.
\end{equation*}
From the almost sure convergence of the rescaled effective resistance we also get that
\begin{equation}\label{2HittingTimesBoundRWRW}
	\limsup_{n \to \infty} \frac{R^{\lambda/n}\left(j_{\ell}(n) - \delta n, j_{\ell}(n)\right)}{R^{\lambda/n}\left(j_{\ell}(n) - \delta' n, j_{\ell}(n)\right)} \le \frac{S^{\alpha_0, \lambda}(x_\ell^{-}) - S^{\alpha_0, \lambda}(x_\ell - 2\delta)}{S^{\alpha_0, \lambda}(x_\ell^{-}) - S^{\alpha_0, \lambda}(x_\ell - \delta')}.
\end{equation}
Let us now deal with the other term. Thanks to the commute time identity (\cite[Proposition~10.7]{LPBook}) and Markov's inequality, we get
\begin{align*}
	P^{\omega, \lambda/n}_{x_\ell^{(n)} - \delta}\left(\tau^{X^{(n)}}_{x_\ell^{(n)} - \delta', x_\ell^{(n)}} \ge \eta \right) &\le \frac{R^{\lambda/n}\left(j_{\ell}(n) - \delta n, \{j_{\ell}(n), j_{\ell}(n) - \delta' n \}\right)}{\eta n d_{n, 0} }  2\sum_{i = j_{\ell}(n) - \delta' n}^{j_{\ell}(n) - 1} c^{\lambda/n}(\{i, i+1\}), \\
	&\le \frac{R^{\lambda/n}\left(j_{\ell}(n) - \delta n, j_{\ell}(n)\right)}{\eta n d_{n, 0} } 2\sum_{i = j_{\ell}(n) - \delta' n}^{j_{\ell}(n) - 1} c^{\lambda/n}(\{i, i+1\}).
\end{align*}
Because we work under the coupling, by Proposition \ref{PropositionRWEnvConvergence} and using the strong law of large numbers, we obtain
\begin{align}
	\lefteqn{\limsup_{n \to \infty} \frac{R^{\lambda/n}\left(j_{\ell}(n) - \delta n, j_{\ell}(n)\right)}{\eta n d_{n, 0} } 2\sum_{i = j_{\ell}(n) - \delta' n}^{j_{\ell}(n) - 1} c^{\lambda/n}(\{i, i+1\})}\nonumber\\
& \le 4 \delta' e^{2\lambda K} \mathbf{E}[c(\{0, 1\})] \frac{S^{\alpha_0, \lambda}(x_\ell^{-}) - S^{\alpha_0, \lambda}(x_\ell - 2\delta)}{\eta}.\label{CoverTimeBoundRWRW}
\end{align}
Let us now focus on the second quantity appearing in \eqref{SplittingKeyQuantityRWRW}. Recall the definitions of \eqref{TimeOfMaxima}, we claim that
\begin{equation}\label{LastMaximumBoundRWRW}
	\limsup_{n \to \infty} P^{\omega, \lambda/n}\left(\tau^{X^{(n)}}_{x_\ell^{(n)} - \delta } \in [1-\eta, 1] \right) \le \limsup_{n \to \infty} P^{\omega, \lambda/n}\left( T^{X^{(n)}}_1 \ge 1-\eta \right) \le P^{\omega, \lambda}\left( \widebar{T}^{Z}_1 \ge 1 - 2\eta \right).
\end{equation}
We just need to justify the second inequality. Let us consider the coupled version on which $(X^{(n)}_t)_{t \in [0, 1]}$ converges $P^{\omega, \lambda/n}$ almost-surely path by path in the uniform topology towards $(Z^{\lambda}_t)_{t \in [0, 1]}$ (which is true on bounded intervals by the continuity of the limit $Z^{\lambda}$, see \cite[Lemma~5.4]{MottModel}). Note that the two probabilities above still make sense, but we can now compare the two events $\{T^{X^{(n)}}_1 \ge 1 -\eta\}$ and $\{\widebar{T}^{Z^{\lambda}}_1 \ge 1 - 2\eta\}$ on the same probability space. In particular, we want to show that, for all $n$ large enough
\begin{equation}\label{aim}
	\left\{ T^{X^{(n)}}_1 \ge 1 -\eta \right\} \subseteq \left\{ \widebar{T}^{Z^{\lambda}}_1 \ge 1 - 2\eta \right\}.
\end{equation}
One can prove this statement by showing that $\{ \widebar{T}^{Z^{\lambda}}_1 < 1 - 2\eta \} \subseteq \{ T^{X^{(n)}}_1 < 1 -\eta \}$ for all $n$ large enough. Let us assume $\widebar{T}^{Z^{\lambda}}_1 < 1 - 2\eta $, then by the continuity of the process $Z^{\lambda}$, we get
\begin{equation*}
	\inf_{t \in [1 - 2\eta, 1]} \left|Z^{\lambda}_t - \widebar{Z}^{\lambda}_1\right| > 0.
\end{equation*}
Thanks to the uniform convergence of $X^{(n)}$ towards $Z^{\lambda}$ we also get that
\begin{equation*}
	\lim_{n \to \infty} \inf_{t \in [1 - \eta, 1]} \left|X^{(n)}_t - \widebar{Z}^{\lambda}_1\right| > 0 \quad \mathrm{and} \quad \lim_{n \to \infty} \inf_{t \in [0, 1]} \left|X^{(n)}_t - \widebar{Z}^{\lambda}_1\right| = 0.
\end{equation*}
But then we can always choose $n$ large enough such that $T^{X^{(n)}}_1 < 1 -\eta$, since we know that at time $T^{X^{(n)}}_1$ the process $X^{(n)}$ is close to $\widebar{Z}^{\lambda}_1$. Hence we have shown that \eqref{aim} holds for large $n$.

We can now finish the proof of the lemma by noticing that the limit superior (as $n \to \infty$) of the two quantities of \eqref{SplittingKeyQuantityRWRW} is bounded, for all $\eta > 0$, by the sum of the quantities of \eqref{2HittingTimesBoundRWRW}, \eqref{CoverTimeBoundRWRW} and \eqref{LastMaximumBoundRWRW}. By continuity of probability and Lemma \ref{lemmaZNotAtMaximum}, we get that
\begin{equation*}
	\lim_{\eta \to 0} P^{\omega, \lambda}\left( \widebar{T}^{Z}_1 \ge 1 - 2\eta \right) = P^{\omega, \lambda}\left( \widebar{T}^{Z}_1 = 1 \right) \le P^{\omega, \lambda}\left( Z_1 = \widebar{Z}_1 \right) = 0.
\end{equation*}
Consequently, for all $\varepsilon>0$ we can choose $\eta^* > 0$ such that $P^{\omega, \lambda}\left( \widebar{T}^{Z}_1 \ge 1 - 2\eta^* \right) \le \varepsilon/3$. Fixing this quantity and using the existence of the left limits of the process $S^{\alpha_0}$, we can find $\delta^*$ small enough (we can fix $\delta'$ to any finite value) such that \eqref{2HittingTimesBoundRWRW} and \eqref{CoverTimeBoundRWRW} are respectively less than $\varepsilon/3$. Overall, we have proved that, for all $\varepsilon>0$, there exists $\delta^*>0$ such that, for all $\delta \le \delta^*$,
\begin{equation*}
	\limsup_{n \to \infty} P^{\omega, \lambda/n}\left(\widebar{X}_{a_n} \in [j_{\ell}(n) - \delta n, j_{\ell}(n) - 1] \right) \le \varepsilon,
\end{equation*}
which is enough to conclude the proof.
\end{proof}

We are now ready to prove Proposition \ref{PropostionWallsPointProcess}.

\begin{proof}[Proof of Proposition \ref{PropostionWallsPointProcess}]
We will prove the result for $t=1$ for notational simplicity, but the same proof holds for general $t>0$. Using the $J_1$-convergence of processes, the continuity of the limiting process and \cite[Theorem 13.4.1]{whitt}, we have that the quenched distribution of the maximum converges in $J_1$ to the quenched distribution of the maximum, i.e.\
\begin{equation*}
	\left(n^{-1} \widebar{X}_{t a_n} \right)_{t \in [0, 1]} \stackrel{(\mathrm{d})}{\to} (\widebar{Z}^{\lambda}_t)_{t \in [0, 1]}.
\end{equation*}
This implies the vague convergence statement (it is actually stronger since it involves the whole process and not just the marginal). So we only need to prove Condition~\ref{Condition1} for $\widebar{\rho}^{(n)}(1)$, $n\ge1$, and $\widebar{\rho}(1)$. Let us fix any atom $x_\ell \in \text{supp}(\widebar{\rho}(1))$ (including the special atom $x_\ell = K$ in the analysis), we need to show that there exists $j_\ell^{(n)}/n \to x_\ell$ such that
\begin{equation*}
   	P^{\omega, \lambda/n}\left(n^{-1} \widebar{X}_{a_n} = j_\ell^{(n)}/n \right) \stackrel{n \to \infty}{\to} P^{\omega, \lambda}\left(\widebar{Z}_1 = x_\ell \right).
\end{equation*}
We claimed in the statement that the only good candidate for $j_\ell^{(n)}$ is the index such that $j_\ell^{(n)}/n \to x_\ell$ and $ d_{n, 0}^{-1}r(\{j_\ell^{(n)}, j_\ell^{(n)} +1 \}) \to \nu^{\alpha_0}(x_{\ell})$. That a point satisfying these conditions exists is guaranteed by the almost-sure point process convergence of $\nu^{(n), \alpha_0}$ towards $\nu^{\alpha_0}$, and by the fact that $\widebar{\rho}(1)$ is absolutely continuous with respect to $\nu^{\alpha_0}$, by  {Lemma~\ref{lemmaMaximumDisc}}. Note that it is immediate to get that $d_{n, 0}^{-1}r^{\lambda/n}(\{j_\ell^{(n)}, j_\ell^{(n)} +1 \})$ also converges to $\nu^{\alpha_0}(x_{\ell})e^{-\lambda x_\ell}$.

Let us recall the notation $x_{\ell}^{(n)} = j_{\ell}(n)/n$ and note that
\begin{equation*}
   	\limsup_{n \to \infty} P^{\omega, \lambda/n}\left(n^{-1} \widebar{X}_{a_n} = x_{\ell}^{(n)} \right) \le P^{\omega, \lambda}\left(\widebar{Z}_1 = x_{\ell} \right),
\end{equation*}
otherwise the vague convergence statement would be violated. We are left with the task of proving that
\begin{equation}\label{LimiInfMaxProb}
	\liminf_{n \to \infty} P^{\omega, \lambda/n}\left(n^{-1} \widebar{X}_{a_n} = x_\ell^{(n)} \right) \ge P^{\omega, \lambda}\left(\widebar{Z}_1 = x_\ell \right).
\end{equation}
For convenience, let us fix the notation $X^{(n)}_t \coloneqq n^{-1} X_{t a_n}$. By convergence in distribution in the $J_1$ topology we get that
\begin{align*}
	\lefteqn{P^{\omega, \lambda}\left(\widebar{Z}_1 = x_\ell \right)}\\ =& \lim_{\delta\to 0} P^{\omega, \lambda}\left(\widebar{Z}_1 \in [x_\ell - \delta, x_\ell] \right) \nonumber \\
	 \le& \lim_{\delta\to 0} \liminf_{n \to \infty} P^{\omega, \lambda/n}\left( \widebar{X}^{(n)}_1 \in [x^{(n)}_\ell - \delta, x^{(n)}_\ell + \delta] \right) \nonumber \\
	\le&  \liminf_{n \to \infty} P^{\omega, \lambda/n}\left(\widebar{X}^{(n)}_1 = x_\ell^{(n)} \right)  \nonumber \\
	&+ \lim_{\delta\to 0} \limsup_{n \to \infty} \left\{ P^{\omega, \lambda/n} \left(\widebar{X}^{(n)}_1 \in \left[x^{(n)}_\ell + 1/n, x^{(n)}_\ell + \delta\right] \right)+P^{\omega, \lambda/n} \left( \widebar{X}^{(n)}_1  \in \left[x^{(n)}_\ell - \delta, x^{(n)}_\ell\right) \right) \right\}\\
	=& \liminf_{n \to \infty} P^{\omega, \lambda/n}\left(\widebar{X}^{(n)}_1 = x_\ell^{(n)} \right),
\end{align*}
where we applied Lemma~\ref{lemmaEstimate1RWRW} and Lemma~\ref{lemmaEstimate2RWRW} in the last equality. This completes the proof of \eqref{LimiInfMaxProb}, and the extension to a generic starting point $\beta_n$ is straightforward. Thus we conclude the proof of the proposition.
\end{proof}

\subsection{Sub-aging estimates under assumption \eqref{RWT}}

Throughout this section, we work under the assumption \eqref{RWT}, and under the coupling constructed in Section~\ref{SectionCopuledSpaces}. We will prove the sub-aging statement. We restrict the space to the box $[-K, K]$, but we will drop this in the notation for simplicity. The processes are started at $\beta_n = \beta = 0$ unless stated otherwise. Let us recall the definition of the following quantity for all $i \in \Z$
\begin{equation}\label{InvariantMeasure}
    c^{\lambda/n}(i) \coloneqq c^{\lambda/n}(\{i, i-1 \}) + c^{\lambda/n}(\{i, i+1 \}),
\end{equation}
which is a quenched invariant measure associated with the random walk $\widetilde{X}$ under $P^{\omega, \lambda/n}$.

\begin{proposition}\label{PropositionMarginalWithTraps}
	For a fixed $t>0$, let $\widetilde{\rho}^{(n)}(t)$ be the quenched marginal distribution of $n^{-1} \widetilde{X}_{t b_n}$ and let $\widetilde{\rho}(t)$ denote the one of $\widetilde{Z}_{t}$, then on the coupling described in Section~\ref{SectionCopuledSpaces} we have that, for any fixed $t \ge 0$,
	\[
	\widetilde{\rho}^{(n)}(t) \stackrel{n \to \infty}{\to} \widetilde{\rho}(t),
	\]
	vaguely. Moreover, for any atom in $(x_\ell, v_{\ell}) \in \text{supp}(\widetilde{\rho}(t))$ we have that there exists $ j_{\ell}(n)\in\mathbb{Z}$ such that
	\begin{equation*}
		\widetilde{P}^{\omega, \lambda/n}\left(\widetilde{X}_{t b_n} \in \{ j_{\ell}(n), j_{\ell}(n) + 1 \} \right) \to \widetilde{P}^{\omega, \lambda}\left( \widetilde{Z}_{t} \in x_\ell \right).
	\end{equation*}
More precisely, there exists $w_\ell$ such that $(x_\ell,w_\ell)$ is an atom of  $\nu^{\alpha_\infty}$ and $j_{\ell}(n)$ is the index such that $j_{\ell}(n)/n \to x_{\ell}$ and $d_{n,\infty}^{-1}c(\{j_{\ell}(n), j_{\ell}(n) + 1\}) = w_\ell^{(n)} \to w_\ell$, where $(j_{\ell}(n)/n, w_\ell^{(n)})$ is the atom of $\nu^{\alpha_\infty, (n)}$ that converges to $(x_\ell, w_\ell)$, as provided by Proposition~\ref{PropositionPoissonPoint}.

For $\beta_n/n \to \beta$ as $n \to \infty$, the convergence above stays valid if  $\widetilde{X}^{(n)}$ is started from $\beta_n/n $ and $\widetilde{Z}$ from $\beta$, as long as $\beta$ is a continuity point of $S^{\alpha_0}$, i.e.\ $\beta$ is not an atom of $\nu^{\alpha_0}$.
\end{proposition}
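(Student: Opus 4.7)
The plan is to follow the template of Proposition~\ref{PropostionWallsPointProcess}, with the atoms of the limit marginal $\widetilde{\rho}(t)$ now indexed by atoms of $\nu^{\alpha_\infty}$ (large conductances/traps) rather than of $\nu^{\alpha_0}$. One key structural difference is that a single atom $(x_\ell, w_\ell)$ of the limit corresponds in the discrete picture to a \emph{pair} of adjacent sites $\{j_\ell(n), j_\ell(n)+1\}$, because both endpoints of the large-conductance edge carry invariant mass $c^{\lambda/n}(\cdot)$ of order $d_{n,\infty}$ and are essentially indistinguishable at scale $n$.

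First, vague convergence of $\widetilde{\rho}^{(n)}(t)$ to $\widetilde{\rho}(t)$ is immediate from Proposition~\ref{prop:QuenchedWeakConvergence}, using that the relevant processes live inside the bounded set $[-K, K]$. Next, for the identification of atoms, a version of Lemma~\ref{lemmaMaximumDisc} tailored to $\widetilde{Z}$ tells us that $\widetilde{\rho}(t)$ is supported on atoms of $\nu^{\alpha_\infty}$, so Condition~\ref{Condition1} verified in Proposition~\ref{PropositionPoissonPoint} produces the candidate indices $j_\ell(n)$. The upper bound
\[
\limsup_n \widetilde{P}^{\omega,\lambda/n}\bigl(\widetilde{X}_{tb_n} \in \{j_\ell(n), j_\ell(n)+1\}\bigr) \le \widetilde{P}^{\omega,\lambda}(\widetilde{Z}_t = x_\ell)
\]
then follows by sandwiching inside the window $[x_\ell - \delta, x_\ell + \delta]$ (for $\delta$ such that $x_\ell \pm \delta$ are continuity points), applying the portmanteau theorem, and letting $\delta \to 0$.

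The bulk of the work is the matching lower bound, which reduces to showing the two estimates
\[
\lim_{\delta\to 0}\limsup_{n\to\infty}\widetilde{P}^{\omega,\lambda/n}\bigl(\widetilde{X}_{tb_n} \in [j_\ell(n)+2,\, j_\ell(n)+\delta n]\bigr) = 0
\]
and its symmetric counterpart on the left. I would prove these by splitting on whether the walker visited $\{j_\ell(n), j_\ell(n)+1\}$ in the window $[tb_n - \eta b_n, tb_n]$ for a small $\eta$. On the no-visit event, the weak convergence of Proposition~\ref{prop:QuenchedWeakConvergence} together with the fact that $\widetilde{Z}^\lambda$ sits constantly at the trap over a positive-length interval around time $t$ (on the event $\widetilde{Z}^\lambda_t = x_\ell$) shows the probability vanishes as $\eta \to 0$. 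On the recent-visit event, I would apply the strong Markov property at the last such visit and exploit reversibility in the window $[j_\ell(n) - \delta n, j_\ell(n) + \delta n]$, which by Lemma~\ref{LemmaGoodSeparationIndep} contains no other trap or wall at the relevant scales; in this clean window the invariant mass away from the pair is only $O(\delta n)$ whereas the pair itself carries mass of order $d_{n,\infty}$, so the expected fraction of time spent off the pair is bounded using a commute-time / Kac-type argument by a quantity of order $\delta n / d_{n,\infty}$, which vanishes since $\alpha_\infty < 1$.

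The main obstacle I anticipate is making the equilibrium step on the recent-visit event sufficiently quantitative: the commute time identity controls expected round-trip times between the pair and the boundary of the window, but to convert this into a pointwise-in-time probability bound for being at a specific off-pair site one needs either a reversibility/stationarity comparison with the local invariant measure started from the last visit to the pair, or an integration-in-time Kac-type argument. The separation Lemma~\ref{LemmaGoodSeparationIndep} is essential here, since it ensures the window is clean of competing heavy-tailed conductances or resistances, so that the invariant-measure comparison really is of order $\delta n / d_{n,\infty}$ and is not contaminated by another atom on either scale.
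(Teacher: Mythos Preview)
Your outline has the right skeleton --- vague convergence from Proposition~\ref{prop:QuenchedWeakConvergence}, the upper bound by portmanteau, and the lower bound by showing the discrete mass does not leak to the annulus $[j_\ell(n)+2,\,j_\ell(n)+\delta n]$ --- and this is indeed how the paper proceeds. However, two of your key steps in the lower-bound argument do not work as stated.

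First, you propose to apply the strong Markov property at the \emph{last} visit to $\{j_\ell(n),j_\ell(n)+1\}$ before time $tb_n$. The last visit time is not a stopping time, so strong Markov is not available there. The paper avoids this by conditioning instead at the deterministic time $(t-\eta)b_n$ on the event $\{|\widetilde X^{(n)}_{t-\eta}-x_\ell^{(n)}|<\delta\}$ (Lemma~\ref{Lemma:ConditionalProbHitting}), applying ordinary Markov, and then showing the walk reaches the pair quickly (commute-time estimate, Lemma~\ref{LemmaHittingLnFast}) and, once at the pair, is unlikely to be off it at the terminal time. The pointwise-in-time bound you correctly flag as the obstacle is handled by the reversibility inequality $\widetilde P^{\omega,\lambda/n}_{\widehat\beta_n}(\widetilde X_s=y)\le \pi(y)/\pi(\widehat\beta_n)$ for the reflected chain (Lemma~\ref{LemmaInvariantMeasure}), not by a Kac-type time-average argument.

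Second, your quantitative claim that the off-pair invariant mass in $[j_\ell(n)-\delta n,\,j_\ell(n)+\delta n]$ is $O(\delta n)$ is false under \eqref{RWT}: the conductances have infinite mean, so a sum over $\delta n$ edges is not of order $\delta n$. Moreover, Lemma~\ref{LemmaGoodSeparationIndep} only gives separation at scale $n^{1/4}$, so a window of width $\delta n$ is \emph{not} guaranteed to be free of other traps of size $\ge d_{n,\infty}^{1-\widehat\delta}$. The correct control, used in Lemma~\ref{LemmaInvariantMeasure}, comes from the environment coupling: $d_{n,\infty}^{-1}\sum_{x\in I_n^{\delta'}\setminus\{j_\ell(n),j_\ell(n)+1\}}c^{\lambda/n}(x)$ converges to the increment $\Delta^{+,\alpha_\infty}_{x_\ell}(\delta')+\Delta^{-,\alpha_\infty}_{x_\ell}(2\delta')$ of $S^{\alpha_\infty,\lambda}$ around $x_\ell$, which tends to $0$ as $\delta'\to 0$ by one-sided continuity of the subordinator. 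This is a statement about the coupled limit environment, not a moment bound on i.i.d.\ conductances.
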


To prove the proposition above, we will need the following result. In the following we denote $x_\ell^{(n)} \coloneqq j_{\ell}(n)/n$, where $j_{\ell}(n)$ is in the sense of the statement of Proposition~\ref{PropositionMarginalWithTraps}.

\begin{lemma}\label{Lemma:ConditionalProbHitting}
Let $(x_\ell, w_\ell)$ be an atom of $\nu^{\alpha_\infty}$ and $(x_\ell^{(n)},w_\ell^{(n)})$ be the sequence converging to $(x_\ell, w_\ell)$, as provided by Proposition~\ref{PropositionPoissonPoint}. There exists a sequence $\eta=\eta(\delta) \to 0$ as $\delta \to 0$ such that
	\begin{equation*}
		\lim_{\substack{\delta \to 0 \\ \eta(\delta) \to 0}} \liminf_{n \to \infty} \widetilde{P}^{\omega, \lambda/n}\left(\widetilde{X}^{(n)}_1 \in \left\{x_\ell^{(n)}, x_\ell^{(n)} + 1/n\right\} \Big| \big| \widetilde{X}^{(n)}_{1 - \eta} - x_{\ell}^{(n)} \big| < \delta \right) = 1.
	\end{equation*}
\end{lemma}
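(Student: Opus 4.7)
The plan is to reduce the claim, via the quenched scaling limit of Proposition~\ref{prop:QuenchedWeakConvergence}, to the analogous conditional probability for the limiting diffusion $\widetilde Z^{\lambda}$, where $x_\ell$ is an atom of the speed measure $\widetilde\mu^{\lambda}$ of weight $w_\ell e^{2\lambda x_\ell}$. Crucially, since $\nu^{\alpha_0}$ and $\nu^{\alpha_\infty}$ are independent, their atoms are $\widetilde{\mathbf P}$-a.s.\ disjoint, so $x_\ell$ is a continuity point of $S^{\alpha_0,\lambda}$ and the relevant atom of $\widetilde\mu^\lambda$ sits at the well-defined location $S^{\alpha_0,\lambda}(x_\ell)$. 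Writing the conditional probability as a ratio and turning to the complement, it suffices to bound
\[\frac{\widetilde P^{\omega,\lambda/n}\bigl(\widetilde X_{1}^{(n)}\notin\{x_\ell^{(n)},x_\ell^{(n)}+1/n\},\;|\widetilde X_{1-\eta}^{(n)}-x_\ell^{(n)}|<\delta\bigr)}{\widetilde P^{\omega,\lambda/n}\bigl(|\widetilde X_{1-\eta}^{(n)}-x_\ell^{(n)}|<\delta\bigr)}\]
and show that its $\limsup$ in $n$ is arbitrarily small as $\delta$ and $\eta=\eta(\delta)$ tend to $0$.

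For the denominator, the Skorohod convergence yields $\liminf_{n}\widetilde P^{\omega,\lambda/n}(|\widetilde X_{1-\eta}^{(n)}-x_\ell^{(n)}|<\delta)\ge \widetilde P^{\omega,\lambda}(\widetilde Z_{1-\eta}^{\lambda}=x_\ell)$, which is strictly positive and bounded away from $0$ uniformly in small $\eta$, since the speed-measure atom at $S^{\alpha_0,\lambda}(x_\ell)$ gives $\widetilde Z^{\lambda}$ a genuine sojourn at $x_\ell$. For the numerator, I would first replace the event $\{\widetilde X_{1}^{(n)}\notin\{x_\ell^{(n)},x_\ell^{(n)}+1/n\}\}$ by $\{|\widetilde X_{1}^{(n)}-x_\ell^{(n)}|>\varepsilon\}$ (for $\varepsilon>0$ chosen after the other parameters) at the cost of a vanishing error. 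This step uses a quenched invariant-measure comparison on the $\varepsilon n$-neighbourhood of $j_\ell(n)$: the trap edge carries mass of order $w_\ell d_{n,\infty}$, while each off-trap vertex carries only $O(1)$, so the total off-trap mass in that neighbourhood is at most of order $\varepsilon^{1/\alpha_\infty}d_{n,\infty}$, and the probability of being at such a vertex at time $b_n$ is therefore $o(1)$. Passing to the Skorohod limit then yields the upper bound
\[\widetilde P^{\omega,\lambda}\bigl(\widetilde Z_{1-\eta}^\lambda=x_\ell,\;\widetilde Z_1^\lambda\ne x_\ell\bigr)+\widetilde P^{\omega,\lambda}\bigl(0<|\widetilde Z_{1-\eta}^\lambda-x_\ell|\le\delta\bigr).\]
The first term tends to $0$ as $\eta\to 0$ by the Markov property at time $1-\eta$ and the fact that the sojourn time of $\widetilde Z^\lambda$ at $x_\ell$ (equal to $w_\ell e^{2\lambda x_\ell}$ times the Brownian local time at $S^{\alpha_0,\lambda}(x_\ell)$) is almost surely positive. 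The second vanishes as $\delta\to 0$ because the off-atom part of the law of $\widetilde Z^\lambda_{1-\eta}$ puts no mass at $x_\ell$.

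The main obstacle is carrying out the off-trap estimate without invoking Proposition~\ref{PropositionMarginalWithTraps}, which this lemma is meant to feed into. I would handle this by a self-contained two-step discrete argument: (i) a commute-time bound showing that, starting within $\delta n$ of $j_\ell(n)$, the walker hits the trap edge $\{j_\ell(n),j_\ell(n)+1\}$ within time $\sqrt{\delta}\,b_n$ with high probability (the commute time is controlled by $m(V)\,R\sim d_{n,\infty}\cdot \delta d_{n,0}=\delta b_n$, giving an $O(\sqrt\delta)$ tail via Markov); and (ii) a local renewal/stationary comparison on the trap neighbourhood, where excursions from the trap have length $O(d_{n,\infty})$ and return with probability close to $1$, showing that the time-average occupancy of off-trap vertices transfers to a pointwise-in-time bound of $O(\varepsilon^{1/\alpha_\infty})$ after a mixing time of order $d_{n,\infty}$. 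The choice $\eta(\delta)=\sqrt\delta$ then makes all three error contributions (hitting, off-trap, and limit-level decomposition) vanish simultaneously.
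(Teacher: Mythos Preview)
Your outline has the right ingredients---a commute-time bound to reach the trap, and a stationary-measure bound once there---but the way you assemble them has a gap and misses a structural simplification that the paper exploits.

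The paper does not write the conditional probability as a ratio. Instead it applies the Markov property at time $1-\eta$ and takes a supremum over starting points $x\in I_n^\delta$, reducing the problem to bounding
\[
\sup_{x\in I_n^\delta}\widetilde P_x^{\omega,\lambda/n}\bigl(\widetilde X^{(n)}_\eta\notin\{x_\ell^{(n)},x_\ell^{(n)}+1/n\}\bigr).
\]
This avoids having to lower-bound the denominator uniformly in $\eta$, which you assert but do not justify. The paper then introduces an \emph{intermediate scale} $\delta'\in(\delta,1)$ and decomposes into three pieces: (a) hit the trap edge within time $\eta$ without exiting $I_n^{\delta'}$ (your commute-time bound together with a resistance-ratio estimate, Lemmas~\ref{LemmaHittingTheMiddle}--\ref{LemmaHittingLnFast}); (b) once at the trap, remain inside $I_n^{\delta'}$ for the residual time (handled via the scaling limit, Lemma~\ref{LemmaExitSlowly}); and (c) conditionally on staying in $I_n^{\delta'}$, be on the trap edge at the terminal instant. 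The parameters are chosen in the order $\delta'$, then $\eta$, then $\delta$.

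The gap is your step (ii). You propose to pass from a time-average occupancy bound to a pointwise-in-time bound via a ``mixing time of order $d_{n,\infty}$'', but this is neither made precise nor proved, and the claim that ``excursions from the trap have length $O(d_{n,\infty})$'' is not right as stated (individual excursions are $O(1)$; it is their \emph{number} before escape that is large). The paper dispenses with all of this by a one-line inequality (Lemma~\ref{LemmaInvariantMeasure}): for the walk \emph{reflected at the boundary of} $I_n^{\delta'}$, normalise the invariant measure so that $\pi(\widehat\beta_n)\ge 1$ at the trap vertices; then, since $\pi(\widehat\beta_n)\,p_s(\widehat\beta_n,x)\le\pi(x)$ for every $s>0$ by invariance, one gets immediately
\[
\sup_{s>0}\widetilde P^{\omega,\lambda/n}_{\widehat\beta_n,\,|I_n^{\delta'}|}\bigl(\widetilde X_s\notin\{j_\ell(n),j_\ell(n)+1\}\bigr)\le\frac{\sum_{x\in I_n^{\delta'}\setminus\{j_\ell(n),j_\ell(n)+1\}}c^{\lambda/n}(x)}{c^{\lambda/n}(\{j_\ell(n),j_\ell(n)+1\})},
\]
which is $o_{\delta'}(1)$. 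No mixing or renewal argument is needed; the bound is pointwise in $s$ from the start. The reflection at scale $\delta'$ is essential---without it the total conductance is unbounded and the ratio is useless---and this intermediate scale does not appear in your sketch.
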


We will prove the above lemma at the end of this section. We first proceed to the proof of the proposition.

\begin{proof}[Proof of Proposition \ref{PropositionMarginalWithTraps}] We prove the statement for $t=1$, but the proof is identical for arbitrary $t\geq0$. First, the fact that the support of $\widetilde{\rho}(1)$ is a subset of $\nu^{\alpha_\infty}$ (for the first coordinate) is a consequence of Lemma \ref{lem73}. Hence, let us fix an atom $(x_\ell, w_\ell)$ of $\nu^{\alpha_\infty}$, and let $(x_\ell^{(n)},w_\ell^{(n)})$ be the sequence converging to $(x_\ell, w_\ell)$, as provided by Proposition~\ref{PropositionPoissonPoint}. To shorten the notation, let us write $\widetilde{X}^{(n)}_1 \coloneqq n^{-1} \widetilde{X}_{b_n}$. Let us start by recalling that vague convergence follows from $J_1$ process convergence in distribution and the continuity of the limiting process. Moreover, vague convergence implies that
\begin{equation*}
	\limsup_{n \to \infty} \widetilde{P}^{\omega, \lambda/n}\left(\widetilde{X}^{(n)}_1\in \left\{x_\ell^{(n)}, x_\ell^{(n)} + 1/n\right\} \right) \le \widetilde{P}^{\omega, \lambda}\left(\widetilde{Z}_1 = x_\ell \right).
\end{equation*}
We are left with the task of proving that if we fix any atom $(x_\ell, v_\ell)\in \text{supp}(\widetilde{\rho}(t))$, then
\[	\liminf_{n \to \infty} \widetilde{P}^{\omega, \lambda/n}\left(\widetilde{X}^{(n)}_1 \in \left\{x_\ell^{(n)}, x_\ell^{(n)} + 1/n\right\} \right) \ge \widetilde{P}^{\omega, \lambda}\left(\widetilde{Z}_1 = x_\ell \right).\]
We notice that, for all $\delta, \eta > 0$,
\begin{align*}
	\widetilde{P}^{\omega, \lambda/n}&\left(\widetilde{X}^{(n)}_1 \in \left\{x_\ell^{(n)}, x_\ell^{(n)} + 1/n\right\} \right)\\
	&\ge \widetilde{P}^{\omega, \lambda/n}\left(\widetilde{X}^{(n)}_1 \in \left\{x_\ell^{(n)}, x_\ell^{(n)} + 1/n\right\} \Big| \big| \widetilde{X}^{(n)}_{1 - \eta} - x_{\ell}^{(n)} \big| < \delta \right) \widetilde{P}^{\omega, \lambda/n}\left(\big| \widetilde{X}^{(n)}_{1 - \eta} - x_{\ell}^{(n)} \big| < \delta \right),
\end{align*}
so that
\begin{align}
	\lefteqn{\liminf_{n \to \infty} \widetilde{P}^{\omega, \lambda/n}\left(\widetilde{X}^{(n)}_1 \in \{x_\ell^{(n)}, x_\ell^{(n)} + 1/n\} \right) \ge} \nonumber \\ & \lim_{\substack{\delta \to 0 \\ \eta(\delta) \to 0}} \liminf_{n \to \infty} \widetilde{P}^{\omega, \lambda/n}\left(\widetilde{X}^{(n)}_1 \in \{x_\ell^{(n)}, x_\ell^{(n)} + 1/n\} \Big| \big| \widetilde{X}^{(n)}_{1 - \eta} - x_{\ell}^{(n)} \big| < \delta \right) \widetilde{P}^{\omega, \lambda/n}\left(\big| \widetilde{X}^{(n)}_{1 - \eta} - x_{\ell}^{(n)} \big| < \delta \right), \label{eqn:FirstBoundBelowRWRWT}
\end{align}
for all possible $\delta \to 0$ and $\eta(\delta) \to 0$. Let us focus on the second term in \eqref{eqn:FirstBoundBelowRWRWT}. We observe that
\begin{align*}
	\lefteqn{\left\{ \left| \widetilde{X}^{(n)}_{1 - \eta} - x_{\ell}^{(n)} \right| < \delta \right\} \supseteq }\\
&\qquad\left\{ \left|\widetilde{X}^{(n)}_{1-\eta} - \widetilde{Z}^{\lambda}_{1-\eta}\right| < \frac{\delta}{3} \right\} \cap \left\{ \left|x_\ell - x_\ell^{(n)}\right| < \frac{\delta}{3} \right\} \cap \left\{ \left|\widetilde{Z}^{\lambda}_{1} - \widetilde{Z}^{\lambda}_{1-\eta}\right| < \frac{\delta}{3} \right\} \cap \left\{ \widetilde{Z}^{\lambda}_{1} = x_\ell \right\}.
\end{align*}
Note that, in the last expression, the event is well-defined since we consider the coupled version of $\widetilde{X}^{(n)}$ and $\widetilde{Z}^\lambda$, and we can use Proposition~\ref{prop:QuenchedWeakConvergence} and Skorohod's representation theorem, together with the fact that $\widetilde{Z}^{\lambda}$ is continuous almost-surely.  Moreover, the event $\{ |x_\ell - x_\ell^{(n)}| < \delta/3 \}$ is measurable with respect to the environment and is thus deterministically true for all $n$ large enough by Proposition~\ref{PropositionPoissonPoint} and the construction of Section~\ref{SectionCopuledSpaces}. Using a union bound we obtain
\begin{align*}
	&\widetilde{P}^{\omega, \lambda/n}\left(\left| \widetilde{X}^{(n)}_{1 - \eta} - x_{\ell}^{(n)} \right| < \delta \right) \\
	&\ge \widetilde{P}^{\omega, \lambda}\left(\widetilde{Z}_{1} = x_\ell \right) -  \widetilde{P}^{\omega, \lambda/n}\left(\left|\widetilde{X}^{(n)}_{1-\eta} - \widetilde{Z}^{\lambda}_{1-\eta}\right| \ge \delta/3 \right)- \widetilde{P}^{\omega, \lambda}\left(\left|\widetilde{Z}_{1} - \widetilde{Z}_{1-\eta}\right| \ge \delta/3\right) .
\end{align*}
Plugging the last estimate back into \eqref{eqn:FirstBoundBelowRWRWT} and choosing  $\eta(\delta) \to 0$,  we observe that
\begin{align}
	\liminf_{n \to \infty} &\widetilde{P}^{\omega, \lambda/n}\left(\widetilde{X}^{(n)}_1 \in \left\{x_\ell^{(n)}, x_\ell^{(n)} + 1/n\right\} \right) \ge \nonumber \\ & \widetilde{P}^{\omega, \lambda}\left(\widetilde{Z}_{1} = x_0 \right) \lim_{\substack{\delta \to 0 \\ \eta(\delta) \to 0}} \liminf_{n \to \infty} \widetilde{P}^{\omega, \lambda/n}\left(\widetilde{X}^{(n)}_1 \in \left\{x_\ell^{(n)}, x_\ell^{(n)} + 1/n\right\} \Big| \big| \widetilde{X}^{(n)}_{1 - \eta} - x_{\ell}^{(n)} \big| < \delta \right) \nonumber \\
	&- \lim_{\delta \to 0} \limsup_{n \to \infty} \widetilde{P}^{\omega, \lambda/n}\left(\left|\widetilde{X}^{(n)}_{1-\eta} - \widetilde{Z}^{\lambda}_{1-\eta}\right| \ge \delta/3 \right) \label{eqn:ThirdBoundBelowRWRWT} \\
	&- \lim_{\substack{\delta \to 0 \\ \eta(\delta) \to 0}} \widetilde{P}^{\omega, \lambda}\left(\left|\widetilde{Z}_{1} - \widetilde{Z}_{1-\eta}\right| \ge \delta/3\right) \label{eqn:FourthBoundBelowRWRWT}.
\end{align}
The term in \eqref{eqn:ThirdBoundBelowRWRWT} is $0$  using Proposition~\ref{prop:QuenchedWeakConvergence} and the coupling given by Skorohod's representation theorem. The  term \eqref{eqn:FourthBoundBelowRWRWT} is $0$ by Lemma \ref{lemmaleftcont}. The result then follows from Lemma~\ref{Lemma:ConditionalProbHitting}. The extension to a generic starting point $\beta_n$ is straightforward, and we thus conclude the proof.
\end{proof}

Our goal is now to prove Lemma \ref{Lemma:ConditionalProbHitting}. For this purpose, we will first state and prove three lemmas providing random walks estimates on the interval we define below. We will work with an atom $(x_\ell, w_\ell)$ of $\nu^{\alpha_\infty}$ and $(j_\ell^{(n)}/n,w_\ell^{(n)})$ the sequence converging to $(x_\ell, w_\ell)$ provided by Proposition~\ref{PropositionPoissonPoint}. Let us fix $\delta'> \delta >0$ and the following intervals:
\begin{itemize}
	\item $I_n^\delta \coloneqq \left[ j_{\ell}(n) - \delta n, j_{\ell}(n) + 1 + \delta n \right]$;
	\item $I_n^{\delta'} \coloneqq \left[ j_{\ell}(n) - \delta' n, j_{\ell}(n) + 1 + \delta' n \right]$.
\end{itemize}
See Figure \ref{fig:figIntervalAroundLnV2}.

\begin{figure}[H]
	\centering
	\includegraphics[width=.7\linewidth]{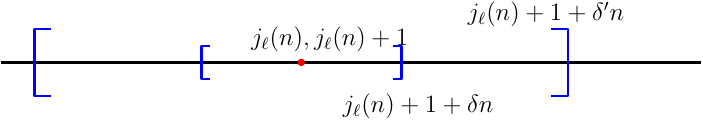}
	\captionof{figure}{Visualisation the interval around $\{j_{\ell}(n), j_{\ell}(n) + 1\}$. The two points are visualised as collapsed for simplicity.}
	\label{fig:figIntervalAroundLnV2}
\end{figure}

For a subset $A\subset\mathbb{Z}$, let us define $\tau^{\widetilde{X}^{(n)}}(A) \coloneqq \inf \{s \ge 0 : \, \widetilde{X}_s^{(n)} \in A\}$ to be the first hitting time of the set $A$ for $\widetilde{X}^{(n)}$. For any $\vartheta>0$ and $y \in \R$, define
\begin{equation*}
	\Delta^{+, \alpha_0}_{y}(\vartheta) \coloneqq S^{\alpha_0, \lambda}(y + \vartheta) - S^{\alpha_0, \lambda}(y), \quad \mathrm{and} \quad \Delta^{-, \alpha_0}_{y}(\vartheta) \coloneqq S^{\alpha_0, \lambda}(y^-) - S^{\alpha_0, \lambda}(y - \vartheta).
\end{equation*}
Note that by the definition of $j_{\ell}(n)$ given below \eqref{eqn:eqnProofPointProcess}, $I_{n}^{\delta}/n$ contains the interval $[x_\ell - \delta, x_\ell + \delta]$ almost-surely by construction (respectively the same works with $\delta'$).

\begin{lemma}\label{LemmaHittingTheMiddle}
The walk, started inside $I_n^\delta$, hits $ \{j_{\ell}(n), j_{\ell}(n) + 1\}$ before exiting $I_n^{\delta'}$, with high probability. More precisely, there exists a positive constant $C_1 > 0$ such that
\[\limsup_{n \to \infty} \sup_{x \in I_n^\delta} \widetilde{P}_x^{\omega, \lambda/n} \left( \tau^{\widetilde{X}^{(n)}}\left(\{j_{\ell}(n), j_{\ell}(n) + 1\}\right) > \tau^{\widetilde{X}}\left((I^{\delta'}_n)^c\right) \right) \le C_1 \left(\frac{\Delta^{+, \alpha_0}_{x_\ell}(\delta)}{\Delta^{+, \alpha_0}_{x_\ell}(\delta'/2)} + \frac{\Delta^{-, \alpha_0}_{x_\ell}(2\delta)}{\Delta^{-, \alpha_0}_{x_\ell}(\delta')} \right).\]
\end{lemma}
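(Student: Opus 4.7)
The plan is to reduce the supremum to a one-dimensional electric-network hitting-probability calculation, and then pass to the continuum limit using the almost-sure $J_1$-convergence $S^{\alpha_0, \lambda/n, (n)} \to S^{\alpha_0, \lambda}$ established in Lemma~\ref{LemmaCorollaryEnvironment}.

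First, I split $I_n^\delta$ into the left half $[j_\ell(n) - \delta n, j_\ell(n)]$ and the right half $[j_\ell(n)+1, j_\ell(n)+1+\delta n]$ (the boundary cases $x \in M := \{j_\ell(n), j_\ell(n)+1\}$ are trivial). Since $\widetilde{X}$ is a nearest-neighbour walk on $\mathbb{Z}$, a trajectory started in the left half cannot exit $I_n^{\delta'}$ through the right without first visiting $j_\ell(n) \in M$, and hence
\begin{equation*}
\widetilde{P}_x^{\omega, \lambda/n}\!\left( \tau^{\widetilde{X}^{(n)}}(M) > \tau^{\widetilde{X}}((I_n^{\delta'})^c) \right) = \widetilde{P}_x^{\omega, \lambda/n}\!\left( \tau_{j_\ell(n) - \delta' n - 1} < \tau_{j_\ell(n)} \right)
\end{equation*}
for such $x$, and symmetrically on the right half. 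The classical one-dimensional electric-network formula (see e.g.\ \cite[Appendix~A]{BergerSalvi2020}) then expresses the right-hand side as the ratio $R^{\lambda/n}(x, j_\ell(n)) / R^{\lambda/n}(j_\ell(n) - \delta' n - 1, j_\ell(n))$.

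Next, using the monotonicity bound $R^{\lambda/n}(x, j_\ell(n)) \le R^{\lambda/n}(j_\ell(n) - \delta n, j_\ell(n))$ for $x$ in the left half, together with the definition \eqref{EquationEffctiveResistance}, both numerator and denominator are written as increments of $S^{\alpha_0, \lambda/n, (n)}$ evaluated at points converging to $x_\ell^-$, $x_\ell - \delta$, and $x_\ell - \delta'$. A parallel analysis handles the right half, producing increments of $S^{\alpha_0, \lambda/n, (n)}$ of sizes $\delta$ and $\delta'$ to the right of $x_\ell^{(n)} + 1/n$.

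Finally, I pass to the limit. Because $x_\ell$ is an atom of $\nu^{\alpha_\infty}$, and since by construction $\nu^{\alpha_\infty}$ and $\nu^{\alpha_0}$ are independent under the coupling of Section~\ref{SectionCopuledSpaces}, $x_\ell$ is $\widetilde{\mathbf{P}}$-a.s.\ a continuity point of $S^{\alpha_0, \lambda}$; hence the $J_1$-convergence from Lemma~\ref{LemmaCorollaryEnvironment} implies that the two resistance ratios above converge to $\Delta^{-, \alpha_0}_{x_\ell}(\delta) / \Delta^{-, \alpha_0}_{x_\ell}(\delta')$ and $\Delta^{+, \alpha_0}_{x_\ell}(\delta) / \Delta^{+, \alpha_0}_{x_\ell}(\delta')$ respectively. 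A union bound followed by a supremum over $x \in I_n^\delta$, together with a mild inflation of $\delta \mapsto 2\delta$ and $\delta' \mapsto \delta'/2$ to absorb rounding at the interval endpoints into the multiplicative constant $C_1$, delivers the stated estimate. The main obstacle is precisely the continuity of $S^{\alpha_0, \lambda}$ at $x_\ell$: this is not automatic but follows from the independence of the two Poisson processes guaranteed by Lemma~\ref{LemmaGoodSeparationIndep} and the construction of Section~\ref{SectionCopuledSpaces}, without which the $J_1$-convergence could not be evaluated at the endpoint.
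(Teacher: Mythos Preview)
Your proposal is correct and follows essentially the same route as the paper's proof: both split $I_n^\delta$ into the two one-sided pieces, invoke the standard one-dimensional gambler's-ruin/electric-network formula to write the exit probability as a ratio of effective resistances, take the supremum over the starting point by monotonicity, and then pass to the limit using the coupling of Section~\ref{SectionCopuledSpaces}. Your remark that $x_\ell$ is almost surely a continuity point of $S^{\alpha_0,\lambda}$ (by independence of $\nu^{\alpha_0}$ and $\nu^{\alpha_\infty}$) is exactly the ingredient the paper uses implicitly; the only minor imprecision is that the inflation $\delta\mapsto 2\delta$, $\delta'\mapsto\delta'/2$ is needed not merely for ``rounding'' but to guarantee one-sided inequalities in the $J_1$-limit at the outer endpoints $x_\ell\pm\delta$, $x_\ell\pm\delta'$, which need not themselves be continuity points of $S^{\alpha_0,\lambda}$.
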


\begin{proof}
Let us assume that $x \in \left[j_n(\ell) + 1, j_n(\ell) + n \delta\right]$. For all $x \in I_n^\delta$ we have by well known electrical networks formulas (for instance, \cite[Eqn.~(9.13)]{LPBook}) that
\begin{equation*}
	 \widetilde{P}_x^{\omega, \lambda/n} \left( \tau^{\widetilde{X}^{(n)}}\left( \{j_{\ell}(n), j_{\ell}(n) + 1\}\right) > \tau^{\widetilde{X}}\left( (I^{\delta'}_n)^c\right) \right) = \frac{R^{\lambda/n}\left(j_{\ell}(n) + 1, x\right)}{R^{\lambda/n}\left(j_{\ell}(n) + 1, j_{\ell}(n)+\lfloor n\delta'\rfloor +2\right)}.
\end{equation*}
 As we work under the coupling of Section \ref{SectionCopuledSpaces}, we have that, almost surely,
\begin{equation*}
	\limsup_{n \to \infty} \sup_{x \in \left[j_n(\ell) + 1, j_n(\ell) +1+ n \delta\right]} \frac{R^{\lambda/n}\left(x, j_{\ell}(n) + 1\right)}{R^{\lambda/n}\left(j_{\ell}(n) + 1, j_{\ell}(n)+1+\lfloor n\delta'\rfloor +1\right)} \le C_1 \frac{\Delta^{+, \alpha_0}_{x_\ell}(\delta)}{\Delta^{+, \alpha_0}_{x_\ell}(\delta'/2)}.
\end{equation*}
Following symmetric arguments for $x \in \left[j_n(\ell) -n\delta, j_n(\ell) \right]$ (taking into account the asymmetry of the jumps of $S^{\alpha_0,\lambda}$), we conclude the proof.
\end{proof}

The lemma below shows that a walk started inside $I_n^\delta$ and reflected at the boundary of $I_n^{\delta'}$ hits $\{j_{\ell}(n), j_{\ell}(n) + 1\}$ quickly. We will denote $\widetilde{E}^{\omega, \lambda/n}_{x, |I_n^{\delta'}|}$ the expectation associated with this reflected random walk, started at $x$.
\begin{lemma}\label{LemmaHittingLnFast}
There exists a positive constant $C_2 > 0$ such that
\begin{align*}
&\limsup_{n \to \infty} \sup_{x \in I_n^\delta} \widetilde{E}^{\omega, \lambda/n}_{x, |I_n^{\delta'}|} \left[ \tau^{\widetilde{X}}(x, \{j_{\ell}(n), j_{\ell}(n) + 1\}) \right] \\
&\le C_2 \left(\Delta^{+, \alpha_0}_{x_\ell}(\delta) + \Delta^{-, \alpha_0}_{x_\ell}(2\delta) \right) \left( \Delta^{+, \alpha_\infty}_{x_\ell}(\delta') + \Delta^{-, \alpha_\infty}_{x_\ell}(2\delta') \right) b_n.
\end{align*}
\end{lemma}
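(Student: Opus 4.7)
\medskip

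\noindent\textbf{Proof plan.} The plan is to combine the commute-time identity with the environment estimates from Section~\ref{sec:coupling}. Concretely, the reflected walk on $I_n^{\delta'}$ is a continuous-time Markov chain on a finite state space with rate-$1$ holding times (since the generator \eqref{eqn:Generator} is normalised), so its expected real-time hitting times coincide with those of the associated discrete-time chain. Collapsing the target set $\{j_\ell(n), j_\ell(n)+1\}$ into a single vertex and applying the commute-time identity (see \cite[Proposition~10.6]{LPBook}), one obtains the uniform bound
\[
\sup_{x \in I_n^\delta}\widetilde{E}^{\omega, \lambda/n}_{x, |I_n^{\delta'}|}\!\left[\tau^{\widetilde{X}}(x, \{j_\ell(n), j_\ell(n)+1\})\right]
\;\le\; R^{\lambda/n}\!\left(I_n^{\delta},\, \{j_\ell(n), j_\ell(n)+1\}\right) \,\cdot\, \sum_{v \in I_n^{\delta'}} c^{\lambda/n}(v),
\]
where I write $R^{\lambda/n}(I_n^{\delta}, A) \coloneqq \sup_{x \in I_n^\delta} R^{\lambda/n}(x, A)$; this supremum is attained at one of the two endpoints of $I_n^\delta$, and is in turn bounded above by $R^{\lambda/n}(j_\ell(n) - \lfloor \delta n \rfloor, j_\ell(n)) + R^{\lambda/n}(j_\ell(n)+1, j_\ell(n)+1+\lfloor \delta n\rfloor)$.

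The next step is to estimate each of the two factors via the couplings of Section~\ref{SectionCopuledSpaces}. For the resistance factor, the almost sure $J_1$-convergence of $S^{\alpha_0, \lambda/n, (n)}$ to $S^{\alpha_0, \lambda}$ (Lemma~\ref{LemmaCorollaryEnvironment}), together with the fact that $j_\ell(n)/n \to x_\ell$, yields
\[
\limsup_{n \to \infty} \frac{1}{d_{n, 0}} R^{\lambda/n}\!\left(I_n^{\delta},\, \{j_\ell(n), j_\ell(n)+1\}\right) \;\le\; \Delta^{+, \alpha_0}_{x_\ell}(\delta) + \Delta^{-, \alpha_0}_{x_\ell}(2\delta),
\]
where the extra factor $2$ in the $\Delta^{-}$ term absorbs the discrepancy between $(j_\ell(n) - \lfloor \delta n\rfloor)/n$ and $x_\ell - \delta$ for finite $n$. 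Similarly, since $c^{\lambda/n}(v)$ equals the sum of the two adjacent edge conductances, the vague convergence of $\nu^{\alpha_\infty,(n)}$ to $\nu^{\alpha_\infty}$ combined with the identity \eqref{eqn:eqnDefAlphaInftySub} gives
\[
\limsup_{n \to \infty} \frac{1}{d_{n, \infty}}\sum_{v \in I_n^{\delta'}} c^{\lambda/n}(v) \;\le\; C\left(\Delta^{+, \alpha_\infty}_{x_\ell}(\delta') + \Delta^{-, \alpha_\infty}_{x_\ell}(2\delta')\right),
\]
where the jump of $S^{\alpha_\infty,\lambda}$ at $x_\ell$ (namely the large conductance $c(\{j_\ell(n), j_\ell(n)+1\})\asymp d_{n,\infty} w_\ell$) is captured in $\Delta^{+,\alpha_\infty}_{x_\ell}(\delta')$.

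Multiplying the two factors and recalling $b_n = d_{n,0}\, d_{n,\infty}$ yields the desired bound, with $C_2$ an absolute constant. I do not anticipate a substantive obstacle: essentially the only care needed is to verify that the commute-time identity is applied to the \emph{reflected} walk on the finite set $I_n^{\delta'}$ (which is straightforward since the induced chain is reversible with respect to the restricted measure $c^{\lambda/n}$), and to handle the jump of $S^{\alpha_\infty,\lambda}$ at $x_\ell$ correctly when passing to the limit of the conductance sum. The somewhat loose choice of $2\delta$ and $2\delta'$ in the statement is what allows one to absorb all finite-$n$ discretisation errors into a single constant $C_2$.
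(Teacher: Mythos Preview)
Your overall strategy---commute-time identity plus the environment convergence from Section~\ref{sec:coupling}---is exactly what the paper does, and the argument is essentially sound. There is, however, one genuine slip in how you handle the conductance sum.

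You assert that the large conductance $c(\{j_\ell(n), j_\ell(n)+1\})\asymp d_{n,\infty}w_\ell$ is ``captured in $\Delta^{+,\alpha_\infty}_{x_\ell}(\delta')$''. This is false: since $S^{\alpha_\infty,\lambda}$ is right-continuous and $x_\ell$ is a jump point of $S^{\alpha_\infty,\lambda}$, the quantity $\Delta^{+,\alpha_\infty}_{x_\ell}(\delta')=S^{\alpha_\infty,\lambda}(x_\ell+\delta')-S^{\alpha_\infty,\lambda}(x_\ell)$ records the increment \emph{after} the jump, and $\Delta^{-,\alpha_\infty}_{x_\ell}(2\delta')$ records the increment \emph{before} it; neither contains the jump $w_\ell e^{2\lambda x_\ell}$. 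If your $\sum_{v\in I_n^{\delta'}}c^{\lambda/n}(v)$ really included that edge, the limiting bound would acquire an extra $w_\ell$ term that does not vanish as $\delta'\to 0$, and the lemma would be useless for its intended application in the proof of Lemma~\ref{Lemma:ConditionalProbHitting}.

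The repair is easy and you have already set it up: once you collapse $\{j_\ell(n),j_\ell(n)+1\}$ to a single vertex, the edge between them becomes a self-loop, which is irrelevant for hitting the collapsed vertex and must be dropped from the total edge weight in the commute-time identity. Equivalently (and this is how the paper proceeds), treat $x>j_\ell(n)+1$ and $x<j_\ell(n)$ separately: in each case the walk reaches the target without leaving its own side of $I_n^{\delta'}$, so the commute-time bound only involves the conductances on that side, and the large edge never appears. Either way, the sum you need is over edges \emph{other than} $\{j_\ell(n),j_\ell(n)+1\}$, and then your limiting estimate via Lemma~\ref{LemmaCorollaryEnvironment} goes through as stated.
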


\begin{proof}
As we consider the reflected random walk, the network it evolves on is finite, and thus we can then apply the commute time identity (\cite[Prop.~10.7]{LPBook} and \cite[Cor.~2.21]{LPBook}) and get that, for any $x \in \{j_n(\ell) + 1, j_n(\ell) + n \delta\}$
	\begin{align*}
		\widetilde{E}^{\omega, \lambda/n}_{x, |I_n^{\delta'}|} \left[ \tau^{\widetilde{X}}(x, \{j_{\ell}(n), j_{\ell}(n) + 1\}) \right] \le R^{\lambda/n} \left( x, j_{\ell}(n) + 1 \right) \sum_{i = j_{\ell}(n) + 1}^{j_n(\ell) + n \delta'} c^{\lambda/n} (\{i, i+1\}) .
	\end{align*}
As we work under the coupling of Section \ref{SectionCopuledSpaces}, we obtain that
\begin{align*}
	&\limsup_{n \to \infty} \sup_{x \in \{j_n(\ell) + 1, j_n(\ell) + n \delta\}} R^{\lambda/n} \left( x, j_{\ell}(n) + 1 \right) \sum_{i = j_{\ell}(n) + 1}^{j_n(\ell) + n \delta'} c^{\lambda/n} (\{i, i+1\}) \\
	&\le C_2 d_{n, \infty} d_{n, 0} \Delta^{+, \alpha_\infty}_{x_\ell}(\delta') \Delta^{+, \alpha_0}_{x_\ell}(\delta).
\end{align*}
A symmetric argument for $x \in \{j_n(\ell) - \delta n, j_n(\ell)\}$ completes the proof.
\end{proof}

\begin{lemma}\label{LemmaExitSlowly}
	A walk started inside $\{j_{\ell}(n), j_{\ell}(n) + 1\}$ exits $I_n^{\delta'}$ slowly enough, because it spends a lot of time around its starting point. More precisely, for all $\eta>0$, we have that
	\[
	\liminf_{n \to \infty} \min_{x \in\{ x_\ell^{(n)}, x_\ell^{(n)} + 1/n\}} \widetilde{P}_x^{\omega, \lambda/n}\left(\widetilde{X}_{[0, \eta]}^{(n)} \in \left[x_\ell - \delta', x_\ell + \delta' \right] \right) \ge \widetilde{P}^{\omega, \lambda}_{x_\ell} \left( \left\| \widetilde{Z}_{[0, \eta]} - x_\ell \right\|_\infty > \frac{\delta'}{2} \right).
	\]
	Here $X_{[0, t]}$ denotes the whole trajectory of the process $X$ between time $0$ and $t$.
\end{lemma}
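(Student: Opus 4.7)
The plan is to deduce the lemma from the quenched weak convergence of $\widetilde{X}^{(n)}$ to $\widetilde{Z}^{\lambda}$ provided by Proposition~\ref{prop:QuenchedWeakConvergence}, combined with the almost-sure continuity of the limit process and Skorokhod's representation theorem. The content of the lemma is essentially that for the discrete walk the interval $[x_\ell - \delta',x_\ell+\delta']$ is "just as confining" as the smaller interval $[x_\ell - \delta'/2, x_\ell+\delta'/2]$ is for the limit, and this will follow once one has uniform-on-compacts convergence.

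First I would verify that the hypotheses of Proposition~\ref{prop:QuenchedWeakConvergence} are satisfied. By construction, both $j_\ell(n)/n \to x_\ell$ and $(j_\ell(n)+1)/n \to x_\ell$. Since $x_\ell$ is an atom of $\nu^{\alpha_\infty}$, and the Poisson point processes $\mathcal{P}^{\alpha_0}$ and $\mathcal{P}^{\alpha_\infty}$ are independent, almost surely $x_\ell$ is not an atom of $\nu^{\alpha_0}$; equivalently, $x_\ell$ is a continuity point of $S^{\alpha_0,\lambda}$. Hence, for either starting vertex in $\{j_\ell(n), j_\ell(n)+1\}$, the proposition yields the weak convergence in $D([0,\infty),\mathbb{R})$ of $\widetilde{X}^{(n)}$ under the quenched law to $\widetilde{Z}^{\lambda}$ started at $x_\ell$.

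Next I would invoke Skorokhod's representation theorem to place these laws on a common probability space on which $\widetilde{X}^{(n)} \to \widetilde{Z}^\lambda$ a.s.\ in the $J_1$ topology. Because $\widetilde{Z}^\lambda$ has continuous sample paths almost surely (by \cite[Lemma~5.4]{MottModel}), $J_1$ convergence to a continuous path is equivalent to uniform convergence on the compact interval $[0,\eta]$. Consequently, on the event $A \coloneqq \{ \|\widetilde{Z}_{[0,\eta]} - x_\ell\|_\infty \le \delta'/2 \}$, there exists $n_0(\omega)$ such that for all $n \ge n_0$ one has $\|\widetilde{X}^{(n)}_{[0,\eta]} - \widetilde{Z}_{[0,\eta]}\|_\infty \le \delta'/2$, whence $\widetilde{X}^{(n)}_{[0,\eta]} \subset [x_\ell - \delta', x_\ell + \delta']$. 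An application of (reverse) Fatou then gives
\[
\liminf_{n\to\infty} \widetilde{P}^{\omega,\lambda/n}_{x}\!\left(\widetilde{X}^{(n)}_{[0,\eta]} \in [x_\ell - \delta', x_\ell + \delta']\right) \;\ge\; \widetilde{P}^{\omega,\lambda}_{x_\ell}(A),
\]
for either choice of starting point $x\in\{x^{(n)}_\ell, x^{(n)}_\ell + 1/n\}$, so the same lower bound holds for the minimum over the two starting points.

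The only modest technical point is verifying that the Skorokhod coupling can be chosen to accommodate both starting points simultaneously; one can simply apply the argument separately for each and absorb the two null sets, or equivalently work with the joint weak convergence. I note in passing that the right-hand side of the stated inequality appears to contain a typo (it should presumably read $\le \delta'/2$ rather than $> \delta'/2$), since the argument above naturally produces the non-trivial lower bound with a closed event on the continuous side, which is also the formulation that plugs into the subsequent use in Lemma~\ref{Lemma:ConditionalProbHitting}.
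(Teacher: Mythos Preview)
Your proposal is correct and follows essentially the same approach as the paper: verify that $x_\ell$ is a continuity point of $S^{\alpha_0,\lambda}$ (via the independence of $\nu^{\alpha_0}$ and $\nu^{\alpha_\infty}$), apply Proposition~\ref{prop:QuenchedWeakConvergence}, upgrade $J_1$ to uniform convergence on $[0,\eta]$ using Skorokhod's representation and the continuity of $\widetilde{Z}^\lambda$, and conclude by an inclusion-of-events argument. Your observation about the typo is also correct: the paper's own proof concludes with the bound $\widetilde{P}^{\omega,\lambda}_{x_\ell}(\|\widetilde{Z}_{[0,\eta]}-x_\ell\|_\infty \le \delta'/2)$ on the right-hand side, which is what is actually used in Lemma~\ref{Lemma:ConditionalProbHitting}.
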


\begin{proof}
	The key step of the proof is recalling from Proposition~\ref{prop:QuenchedWeakConvergence} that $\widetilde{\mathbf{P}}$ almost surely, on the coupling of Section~\ref{SectionCopuledSpaces} we have that for $\beta_n \in \{ x_\ell^{(n)}, x_\ell^{(n)} + 1/n\}$
	\begin{equation*}
		\widetilde{P}_{\beta_n}^{\omega, \lambda/n}\left( \left(n^{-1} \widetilde{X}_{t b_n} \right)_{t \ge 0} \in \cdot \right),
	\end{equation*}
	converges, weakly as a probability measures on $D([0, \infty), \R)$, to the law of $(\widetilde{Z}^{\lambda}_t)_{t \ge 0}$ started from $x_\ell$. To establish this fact, one just needs to notice that $x_\ell$ is almost-surely a continuity point for the resistance metric. Since $\widetilde{Z}^{\lambda}_t$ is almost-surely continuous, using Skorohod's representation theorem, we can couple the processes so that $P_{\beta_n}^{\omega, \lambda/n}$ almost surely
	\begin{equation*}
		\sup_{t \in [0, \eta(\delta)]} \left\| \widetilde{X}_{t}^{(n)} - \widetilde{Z}^{\lambda}_{t} \right\|_\infty \to 0, \quad \text{as } n \to \infty.
	\end{equation*}
	Then, we can write that, for $\beta_n \in \left\{ x_\ell^{(n)}, x_\ell^{(n)} + 1/n\right\}$
	\begin{align*}
		\liminf_{n \to \infty} \widetilde{P}_{\beta_n}^{\omega, \lambda/n}&\left(\widetilde{X}_{[0, \eta]}^{(n)} \in \left[x_\ell - \delta', x_\ell + \delta' \right] \right) \ge \\ & \quad \quad  \liminf_{n \to \infty} \widetilde{P}_{\beta_n, x_\ell}^{\omega, \lambda/n}\left( \left\|\widetilde{X}_{[0, \eta]}^{(n)} - \widetilde{Z}^{\lambda}_{[0, \eta]} \right\|_\infty\le \frac{\delta'}{3}, \left\|\widetilde{Z}^{\lambda}_{[0, \eta]} - x_\ell \right\|_\infty \le \frac{\delta'}{2} \right).
	\end{align*}
Using the coupling, we get that the first event in the last probability happens almost-surely for all $n$ large enough. So we get the following bound
\begin{align*}
\liminf_{n \to \infty} \widetilde{P}_{\beta_n}^{\omega, \lambda/n}\left(\widetilde{X}_{[0, \eta]}^{(n)} \in \left[x_\ell - \delta', x_\ell + \delta' \right] \right) \ge \widetilde{P}^{\omega, \lambda}_{x_\ell} \left( \left\| \widetilde{Z}_{[0, \eta]} - x_\ell \right\|_\infty \le \frac{\delta'}{2} \right),
\end{align*}
as desired.
\end{proof}

\begin{lemma}\label{LemmaInvariantMeasure}
When the random walk is inside $I_n^{\delta'}$, then it spends almost all the time on the set $\{j_{\ell}(n), j_{\ell}(n) + 1\}$. More precisely,  there exists a positive constant $C_4 > 0$ such
\[\limsup_{n \to \infty} \sup_{s > 0} \max_{\widehat{\beta}_n \in \{j_{\ell}(n), j_{\ell}(n) + 1\}} \widetilde{P}^{\omega, \lambda/n}_{\widehat{\beta}_{n}, |I_n^{\delta'}|} \left( \widetilde{X}_s \not\in \{j_{\ell}(n), j_{\ell}(n) + 1\}\right) \le C_4 \frac{\left( \Delta^{+, \alpha_\infty}_{x_\ell}(\delta') + \Delta^{-, \alpha_\infty}_{x_\ell}(2\delta') \right)}{\nu^{\alpha_\infty} (x_\ell) e^{-2\lambda K}}.\]
\end{lemma}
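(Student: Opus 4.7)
The plan is to exploit the reversibility of the reflected random walk on $I_n^{\delta'}$, which automatically delivers uniformity in $s > 0$. The chain with generator \eqref{eqn:Generator} restricted to $I_n^{\delta'}$ is reversible with respect to the invariant measure $\pi(i) := c^{\lambda/n}(i)$ defined in \eqref{InvariantMeasure}, since detailed balance reduces to the symmetry $c^{\lambda/n}(\{x,y\}) = c^{\lambda/n}(\{y,x\})$. By reversibility, the transition density satisfies $p_s(x,y) \leq \pi(y)/\pi(x)$ for every $x,y \in I_n^{\delta'}$ and every $s > 0$. Summing this pointwise bound over $y \in I_n^{\delta'} \setminus \{j_\ell(n), j_\ell(n)+1\}$ gives, for any $\widehat{\beta}_n \in \{j_\ell(n), j_\ell(n)+1\}$ and uniformly over $s > 0$,
\[
\widetilde{P}^{\omega,\lambda/n}_{\widehat{\beta}_n, |I_n^{\delta'}|}\bigl(\widetilde{X}_s \notin \{j_\ell(n), j_\ell(n)+1\}\bigr) \;\leq\; \frac{\sum_{i \in I_n^{\delta'} \setminus \{j_\ell(n), j_\ell(n)+1\}} c^{\lambda/n}(i)}{c^{\lambda/n}(\widehat{\beta}_n)}.
\]

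What remains is to estimate this ratio using the coupling of Section~\ref{SectionCopuledSpaces}. For the numerator, each edge appears in at most two of the vertex-conductances $c^{\lambda/n}(i)$, so the sum is dominated by $2 e^{2\lambda K}\sum_{i \in I_n^{\delta'},\, i \neq j_\ell(n)} c(\{i,i+1\})$ after pulling out the common bound $e^{2\lambda i/n} \leq e^{2\lambda K}$ valid on $I_n^{\delta'}$; the point-process convergence of Proposition~\ref{PropositionPoissonPoint}, applied after dividing by $d_{n,\infty}$ and removing the distinguished atom at $x_\ell$, then bounds the $\limsup$ of this quantity by a constant times $\bigl(\Delta^{+,\alpha_\infty}_{x_\ell}(\delta') + \Delta^{-,\alpha_\infty}_{x_\ell}(2\delta')\bigr)$. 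For the denominator, one uses $c^{\lambda/n}(\widehat{\beta}_n) \geq e^{2\lambda j_\ell(n)/n} c(\{j_\ell(n),j_\ell(n)+1\})$; by the choice of $j_\ell(n)$ under the coupling, $d_{n,\infty}^{-1} c(\{j_\ell(n),j_\ell(n)+1\}) \to \nu^{\alpha_\infty}(x_\ell)$, and $e^{2\lambda j_\ell(n)/n} \geq e^{-2\lambda K}$ since $|j_\ell(n)/n| \leq K$. Combining these two asymptotic estimates produces the stated inequality.

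The main conceptual point is the reversibility trick, which replaces a dynamical question (a bound holding uniformly over all $s > 0$) by a purely static comparison of invariant-measure masses; beyond this, I do not anticipate any real obstacle, only the routine bookkeeping needed to track the exponential tilt factors of $e^{\pm 2\lambda K}$ and to identify the correct limiting $\nu^{\alpha_\infty}$-masses through the coupling.
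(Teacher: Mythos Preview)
Your proposal is correct and follows essentially the same approach as the paper: both arguments use the reversibility bound $p_s(x,y)\le \pi(y)/\pi(x)$ (the paper phrases this as $\pi(\widehat\beta_n)\ge 1$ for the normalised invariant measure $\pi(x)=c^{\lambda/n}(x)/c^{\lambda/n}(\{j_\ell(n),j_\ell(n)+1\})$ and then bounds the off-set probability by $\sum_{x\notin\{j_\ell(n),j_\ell(n)+1\}}\pi(x)$), and then estimate the resulting ratio of conductance sums via the coupling. The only differences are cosmetic bookkeeping in how the tilt factors are handled.
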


\begin{proof}
Start by observing that
\begin{equation*}
	\pi(x) = \frac{c^{\lambda/n}(x)}{c^{\lambda/n} (\{ j_\ell(n), j_\ell(n) + 1\})}
\end{equation*}
is an invariant measure for the walk and $\pi(\widehat{\beta}_n) \ge 1$. Hence, for either value of $\widehat{\beta}_n$, we have that, for all $s > 0$
	\begin{equation}\label{eq1}
		\widetilde{P}^{\omega, \lambda/n}_{\widehat{\beta}_{n}, |I_n^{\delta'}|} \left( \widetilde{X}_s \not\in \{j_{\ell}(n), j_{\ell}(n) + 1\}\right) \le \frac{1}{c^{\lambda/n} (\{ j_\ell(n), j_\ell(n) + 1\})} \sum_{x \in I_n^{\delta'} \backslash \{j_\ell(n), j_\ell(n) + 1\}} c^{\lambda/n}(x).
	\end{equation}
The coupling of Section \ref{SectionCopuledSpaces} implies that
\begin{equation}\label{eq2}
	\frac{1}{d_{n, \infty}} c^{\lambda/n} (\{ j_\ell(n), j_\ell(n) + 1\}) \stackrel{n \to \infty}{\to} \nu^{\alpha_\infty} (x_\ell) e^{-\lambda x_\ell},
\end{equation}
and
\begin{equation}\label{eq3}
\limsup_{n \to \infty} \frac{1}{d_{n, \infty}} \sum_{x \in I_n^{\delta'} \backslash \{j_\ell(n), j_\ell(n) + 1\}} c^{\lambda/n}(x) \le C_4 \left( \Delta^{+, \alpha_\infty}_{x_\ell}(\delta') + \Delta^{-, \alpha_\infty}_{x_\ell}(2\delta') \right).
\end{equation}
One can conclude the proof by inserting \eqref{eq2} and \eqref{eq3} into \eqref{eq1}.
\end{proof}

\begin{proof}[Proof of Lemma \ref{Lemma:ConditionalProbHitting}]
Recall that $(x_\ell, w_\ell)$ is an atom of $\nu^{\alpha_\infty}$ and $(x_\ell^{(n)},w_\ell^{(n)})$ is the sequence converging to $(x_\ell, w_\ell)$, as provided by Proposition~\ref{PropositionPoissonPoint}. Let us fix the notation $A_n \coloneqq \{x_\ell^{(n)}, x_\ell^{(n)} + 1/n\}$, and let $\tau^* \coloneqq \inf \{s \ge 0 : \, \widetilde{X}_s^{(n)} \in A_n\}$. Fix  $\delta' > \delta$ and observe that, using Markov's property,
\begin{align}
	\widetilde{P}^{\omega, \lambda/n}\left(\widetilde{X}^{(n)}_1 \not \in A_n \Big| \big| \widetilde{X}^{(n)}_{1 - \eta} - x_{\ell}^{(n)} \big| < \delta \right) &\le \sup_{x \in I_n^\delta} \widetilde{P}_x^{\omega, \lambda/n}\left(\widetilde{X}^{(n)}_{\eta} \not \in A_n\right) \nonumber\\
	&\le \sup_{x \in I_n^\delta} \widetilde{P}^{\omega, \lambda/n}_x\left(\tau^* > \eta \right) + \sup_{x \in I_n^\delta} \widetilde{P}^{\omega, \lambda/n}_x\left(\widetilde{X}^{(n)}_{\eta} \not \in A_n, \tau^* \le \eta \right).\nonumber
\end{align}
Let $\theta_t$ be the canonical time shift by time $t$. Then the last term in the sum can be bounded from above by
\begin{align*}
	\lefteqn{2\sup_{x \in I_n^\delta} \max_{y \in A_n} \widetilde{P}^{\omega, \lambda/n}_x\left(\tau^* \le \eta, \widetilde{X}^{(n)}_{\tau^*} = y, \widetilde{X}^{(n)}_{\eta - \tau^*} \circ \theta_{\tau^*} \not \in A_n \right) }\\
&= 2\sup_{x \in I_n^\delta} \max_{y \in A_n} \widetilde{E}^{\omega, \lambda/n}_x\left[ \mathds{1}_{\left\{\tau^* \le \eta, \widetilde{X}^{(n)}_{\tau^*} = y \right\}} \left.P_y^{\omega, \lambda/n}\left( \widetilde{X}^{(n)}_{\eta - t} \not \in A_n \right)\right|_{t=\tau^*} \right]
\end{align*}
Hence, by applying standard computations, we get that for $C>0$
\begin{align}
	\widetilde{P}^{\omega, \lambda/n}\left(\widetilde{X}^{(n)}_1 \in A_n \Big| \big| \widetilde{X}^{(n)}_{1 - \eta} - x_{\ell}^{(n)} \big| < \delta \right) &\ge 1 - C\bigg(\sup_{x \in I_n^\delta} \widetilde{P}_x^{\omega, \lambda/n}\left( \tau^* >\eta \right)\nonumber\\
	& + \max_{y \in A_n} \widetilde{P}_y^{\omega, \lambda/n}\left( \left\| \widetilde{X}_{[0, \eta]}^{(n)} - x_\ell \right\|_\infty > \delta' \right) \nonumber \\
	& + \max_{y \in A_n} \sup_{s > 0} \widetilde{P}_{y, |I_n^{\delta'}|}^{\omega, \lambda/n}\left( \widetilde{X}_{s}^{(n)} \not \in  \left\{x_\ell^{(n)}, x_\ell^{(n)} + 1/n\right\} \right) \bigg) \nonumber,
\end{align}
where we recall that $\widetilde{P}_{y, |I_n^{\delta'}|}^{\omega, \lambda/n}$ denotes the measure associated with the random walk reflected on the boundary of $I_n^{\delta'}$. In order to conclude, let us fix $\delta'\in(0,1)$ and $\varepsilon>0$. First, by Lemma~\ref{LemmaInvariantMeasure}, together with the fact that $\nu^{\alpha_\infty}(x_\ell) > 0$ and that the subordinator is right-continuous with left limits, we have that we can choose $\delta'>0$ small enough so that
\begin{equation}
	\limsup_{n \to \infty} \max_{y \in A_n} \sup_{s > 0} \widetilde{P}_{y, |I_n^{\delta'}|}^{\omega, \lambda/n}\left( \widetilde{X}_{s}^{(n)} \not \in  \left\{x_\ell^{(n)}, x_\ell^{(n)} + 1/n\right\} \right) \le \varepsilon. \nonumber
\end{equation}
Second, by Lemma~\ref{LemmaExitSlowly} and Lemma~\ref{lemmaExitIntervalDiffusion}, we can choose $\eta>0$ small enough such that
\[
	\limsup_{n \to \infty} \max_{y \in A_n} \widetilde{P}_y^{\omega, \lambda/n}\left( \left\| \widetilde{X}_{[0, \eta]}^{(n)} - x_\ell \right\|_\infty > \delta' \right) \le \widetilde{P}^{\omega, \lambda}_{x_\ell} \left( \left\| \widetilde{Z}_{[0, \eta]} - x_\ell \right\|_\infty > \frac{\delta'}{2} \right) \le \varepsilon.\]
Finally, using Lemma~\ref{LemmaHittingTheMiddle}, and Lemma~\ref{LemmaHittingLnFast}, we can choose $\delta>0$ small enough, depending on $\eta$ and $\delta'$, such that
\begin{equation*}
	\limsup_{n \to \infty} \sup_{x \in I_n^\delta} \widetilde{P}_x^{\omega, \lambda/n}\left( \tau^* >\eta \right) \le \varepsilon.
\end{equation*}
Overall, we have shown that
\begin{equation*}
	\liminf_{n \to \infty} \widetilde{P}^{\omega, \lambda/n}\left(\widetilde{X}^{(n)}_1 \in \{x_\ell^{(n)}, x_\ell^{(n)} + 1/n\} \Big| \big| \widetilde{X}^{(n)}_{1 - \eta} - x_{\ell}^{(n)} \big| < \delta \right) \ge 1 - 4C \varepsilon,
\end{equation*}
which is enough to conclude the proof, since $\varepsilon$ can be chosen arbitrarily small by taking the appropriate $\delta$ and $\eta(\delta)$.
\end{proof}

\section{Proof of the aging results}\label{sec:agingproof}

In Section~\ref{SectionAgingRW} we prove Theorem~\ref{TheoremGap} and Proposition~\ref{theoremAging}. In Section~\ref{SectionAgingRWT}, we prove the first part of Proposition \ref{theoremSubAging}, which is the aging statement. For this purpose, we first prove the result restricted to the box $[-K, K]$, in Proposition~\ref{PropositionInsideKAging}, and then extend it to the whole process. Using Proposition~\ref{PropostionWallsPointProcess}  and Proposition~\ref{PropositionMarginalWithTraps} is crucial is this section.

\subsection{Aging under \eqref{RW}}

Again, we work under the coupling of Section \ref{SectionCopuledSpaces}. Let $\mathbb{P}^{\lambda/n, K}$ be the annealed law of the random walk reflected at the boundary of the box $[-Kn, Kn]$ and $\mathbb{P}^{\lambda, K}$ that of the corresponding diffusion reflected at the boundary of $[-K, K]$, with $K \in \mathbb{N}$. Recall the definition \eqref{EquationDefGaps} of $\mathrm{Gap}^{\lambda}_{n}(t)$ and $\mathrm{Gap}^{\lambda}(t)$. Under $\mathbb{P}^{\lambda/n, K}$, we assign a deterministic value $C^{*} \in \mathbb{R}_{+}$ to $\mathrm{Gap}^{\lambda}_{n}(t)$ (respectively $\mathrm{Gap}^{\lambda}(t)$) if $\widebar{X}_{a_n} = Kn$ (respectively $\widebar{Z}^{\lambda}_1 = K$).

Our main goal here is to prove Theorem \ref{TheoremGap} but, before that, we need to prove the convergence of the relevant point processes. Under the coupling of Section~\ref{SectionCopuledSpaces}, let us define $v_i^{(n)} \coloneqq P^{\omega, \lambda/n, K}(\widebar{X}_{a_n} = i/n)$ and, for an atom $x_\ell\ge0$ of $\nu^{\alpha_0}$, $v_\ell \coloneqq P^{\omega, \lambda, K}(\widebar{Z}_1 = x_\ell)$. Note that
\begin{align*}
	\widebar{\rho}^{(n)}(1) &= \sum_{i/n \in [0, K]} \delta_{i/n} v_i^{(n)}, \\
	\widebar{\rho}(1) &= \sum_{\ell: x_\ell \in [0, K]} \delta_{x_{\ell}} v_{\ell},
\end{align*}
where $x_\ell$ are the locations of the atoms of the measure $\widebar{\rho}(1)$ (which is purely atomic, see Lemma~\ref{lemmaMaximumDisc} below), together with the convention $x_0 = K$. Set for simplicity $r^{(n)}(\{i, i+1\}) = d_{n, 0}^{-1} r^{\lambda/n}(\{i, i+1\})$. Let us define
\[	\begin{split}
		\widebar{\pi}^{(n)}(dx) &\coloneqq \sum_{i/n \in [0, K]} \delta_{r^{(n)}(\{i, i+1\})} v_i^{(n)}, \\
		\widebar{\pi}(dx) &\coloneqq \sum_{\ell: x_\ell \in [0, K]} \delta_{e^{-\lambda x_\ell}\nu^{\alpha_0}(x_\ell)} v_\ell,
	\end{split}\]
where we attach the special value $ r^{(n)}(\{Kn, Kn + 1\}) = e^{-\lambda K} \nu^{\alpha_0}(x_0) = C^*$.

\begin{lemma}\label{LemmaConvergenceWallMeas}
	Under Assumption~\eqref{RW}, for almost every realisations of the environment, as $n \to \infty$
	\begin{equation*}
		\widebar{\pi}^{(n)}(dx) \to \widebar{\pi}(dx),
	\end{equation*}
	both vaguely and in the point process sense.
\end{lemma}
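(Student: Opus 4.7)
The plan is to derive both modes of convergence by pushing forward the point-process convergence of $\widebar{\rho}^{(n)}\to\widebar{\rho}$ from Proposition~\ref{PropostionWallsPointProcess} along the map $i\mapsto r^{(n)}(\{i,i+1\})=e^{-2\lambda i/n}\,d_{n,0}^{-1}r(\{i,i+1\})$, using also the environment convergence $\nu^{\alpha_0,(n)}\to\nu^{\alpha_0}$ from Proposition~\ref{PropositionPoissonPoint}. Morally, $\widebar{\pi}^{(n)}$ is the pushforward of $\widebar{\rho}^{(n)}$ by this map, and $\widebar{\pi}$ is the pushforward of $\widebar{\rho}$ by the analogous continuous map $x_\ell\mapsto e^{-\lambda x_\ell}\nu^{\alpha_0}(x_\ell)$ defined on atoms of $\nu^{\alpha_0}$.

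First I would verify Condition~\ref{Condition1}. Fix an atom $(e^{-\lambda x_\ell}\nu^{\alpha_0}(x_\ell),v_\ell)$ of $\widebar{\pi}$; the boundary atom at $C^*$ associated with $x_0=K$ is handled identically via the corresponding boundary-hitting atom. By Proposition~\ref{PropostionWallsPointProcess} there exists an index $j_\ell(n)$ such that $j_\ell(n)/n\to x_\ell$, $v_{j_\ell(n)}^{(n)}\to v_\ell$, and $d_{n,0}^{-1}r(\{j_\ell(n),j_\ell(n)+1\})\to\nu^{\alpha_0}(x_\ell)$. Continuity of the exponential tilt then forces $r^{(n)}(\{j_\ell(n),j_\ell(n)+1\})\to e^{-\lambda x_\ell}\nu^{\alpha_0}(x_\ell)$, so the atom $\bigl(r^{(n)}(\{j_\ell(n),j_\ell(n)+1\}),v_{j_\ell(n)}^{(n)}\bigr)$ of $\widebar{\pi}^{(n)}$ converges to the prescribed atom of $\widebar{\pi}$, as required.

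Next, for vague convergence I would test against $f\in C_c((0,\infty))$ with support in $[a,b]$, $a>0$. Only indices $i$ with $r^{(n)}(\{i,i+1\})\in[a,b]$ contribute to $\int f\,d\widebar{\pi}^{(n)}$; since the tilt factor $e^{-2\lambda i/n}$ is bounded above and below on $[-Kn,Kn]$, these indices are exactly those whose untilted normalised resistance $d_{n,0}^{-1}r(\{i,i+1\})$ lies in a compact $[a',b']\subset(0,\infty)$. By point-process convergence of $\nu^{\alpha_0,(n)}$, almost surely there are only finitely many such indices for $n$ large, and they are in bijection with the finitely many atoms of $\nu^{\alpha_0}|_{[0,K]}$ having weight in $[a',b']$. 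I would then split these atoms into (a) those that are also atoms of $\widebar{\rho}$, and (b) those that are not. For (a), continuity of $f$ combined with Proposition~\ref{PropostionWallsPointProcess} gives $f(r^{(n)}(\{i,i+1\}))v_i^{(n)}\to f(e^{-\lambda x_\ell}\nu^{\alpha_0}(x_\ell))v_\ell$. For (b), the point-process convergence $\widebar{\rho}^{(n)}\to\widebar{\rho}$ forces the corresponding mass $v_i^{(n)}\to 0$, since otherwise a new atom of $\widebar{\rho}$ would appear at the limit location. Summing the finitely many contributions yields $\int f\,d\widebar{\pi}^{(n)}\to\int f\,d\widebar{\pi}$.

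Vague convergence together with Condition~\ref{Condition1} then upgrades to point-process convergence via Lemma~\ref{LemmaCond1PointCv}, completing the proof. The only delicate point is the dichotomy in the preceding paragraph—namely, ruling out that atoms of $\widebar{\pi}^{(n)}$ can accumulate mass near a location that is not in the support of $\widebar{\pi}$—but this is precisely what point-process convergence of $\widebar{\rho}^{(n)}$ furnishes, so the obstruction is essentially nominal once the right atoms have been identified.
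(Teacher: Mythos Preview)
Your verification of Condition~\ref{Condition1} matches the paper's. The difference is in the vague-convergence step: the paper splits the sum according to whether $v_i^{(n)}>\delta$, whereas you split according to whether $r^{(n)}(\{i,i+1\})$ falls in the support of the test function.

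Your restriction to test functions $f\in C_c((0,\infty))$ leaves a real gap, and it is not the one you flag as ``the only delicate point''. Vague convergence here (Definition~3.1) is on $\mathbb{R}$ and must be checked against all $f\in C_c(\mathbb{R})$, including functions that do not vanish at $0$. For such $f$, the order-$n$ many indices $i$ with $r^{(n)}(\{i,i+1\})$ near $0$ contribute roughly $f(0)\sum_{\text{such }i}v_i^{(n)}$, and you give no argument that this residual mass is small. This is not automatic: each non-wall edge carries a positive probability $v_i^{(n)}=P^{\omega,\lambda/n,K}(\widebar{X}_{a_n}=i)$, and there are many of them. The paper's split is designed precisely to absorb this, bounding the complementary piece by $\|f\|_\infty\bigl(1-\sum_{\ell:v_\ell>\delta}v_\ell\bigr)$ and invoking the pure atomicity of $\widebar{\rho}(1)$ (Lemma~\ref{lemmaMaximumDisc}) to send $\delta\to 0$. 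You could close your gap with the same tightness estimate---pick finitely many atoms with $\sum v_\ell>1-\varepsilon$, use Proposition~\ref{PropostionWallsPointProcess} to transfer this to $\widebar{\rho}^{(n)}$, and observe that the corresponding $r^{(n)}$ values are bounded away from $0$---but that is exactly the ingredient your decomposition was meant to bypass. (As an aside, your case~(b) is vacuous by Lemma~\ref{LemmaMaximumOnEveryWall}: every atom of $\nu^{\alpha_0}$ in $(0,K)$ is already an atom of $\widebar{\rho}(1)$.)
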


\begin{proof}
By Lemma~\ref{lemmaMaximumDisc} we get that every atom $(e^{-\lambda x_\ell}\nu^{\alpha_0}(x_\ell), v_\ell)$ of $\widebar{\pi}$ corresponds to an atom $(x_\ell, w_\ell)$ of $\nu^{\alpha_0}$ (restricted to $[0, K]$) defined in \eqref{eqn:PoissonPointMeasures}, with an extra atom at $K$. By Proposition~\ref{PropostionWallsPointProcess} we immediately have that Condition~\ref{Condition1} is satisfied by the measures $\widebar{\pi}^{(n)}$ and $\widebar{\pi}$. Note that this is also true for the special atom at $K$. We just need to prove vague convergence. Let $I_{\delta} \coloneqq \{i \colon i/n \in [0, K]\text{ and } v_{i}^{(n)} > \delta \}$, then for any continuous and non-negative function on $[-K, K]$, we have that
\begin{align*}
	\int f(x) \widebar{\pi}^{(n)} (dx) &= \sum_{i \in I_{\delta}} v_{i}^{(n)} f\left( r^{(n)}({i, i + 1}) \right) + \sum_{i \not\in I_{\delta}} v_{i}^{(n)} f\left( r^{(n)}({i, i + 1}) \right).
\end{align*}
As $\{\ell: v_\ell>\delta\}$ is almost-surely finite,  the observation above concerning Condition~\ref{Condition1} yields
\begin{equation*}
	\lim_{n \to \infty} \sum_{i \in I_{\delta}} v_{i}^{(n)} f\left( r^{(n)}({i, i + 1}) \right) = \sum_{\ell: v_\ell>\delta} v_{\ell} f\left( \nu^{\alpha_0}(x_\ell) e^{- x_\ell}\right)
\end{equation*}
for Lebesgue almost-every $\delta>0$. The right-hand side is monotone and bounded by $\|f\|_\infty$, thus as $\delta \to 0$ it converges to $\int f(x) \widebar{\pi}(dx)$. Analogously, we obtain that
\begin{align*}
	\limsup_{n\to \infty} \sum_{i \not\in I_{\delta}} v_{i}^{(n)} f\left( r^{(n)}({i, i + 1}) \right) &\le \|f\|_\infty \left( 1 - \liminf_{n \to \infty} \sum_{i \in I_{\delta}} v_{i}^{(n)}  \right) \\
	&\le \|f\|_\infty \left( 1 - \sum_{\ell: v_\ell>\delta} v_{\ell} \right).
\end{align*}
Since $\widebar{\rho}(1)$ is almost-surely a purely atomic measure, we conclude the proof by taking the limit as $\delta\to 0$.
\end{proof}

Now, we are ready to prove the convergence of the gap processes.

\begin{proof}[Proof of Theorem~\ref{TheoremGap}] \label{SectionAgingRW}
	 We aim to prove that for every bounded $f : \mathbb{R}_+ \to \mathbb{R}$
	\begin{equation}\label{eqn:GapConvergence}
		\lim_{n \to \infty} \mathbb{E}^{\lambda/n}\left[ f \left( \mathrm{Gap}^{\lambda}_{n}(1) \right) \right] = \mathbb{E}^{\lambda}\left[ f \left( \mathrm{Gap}^{\lambda}(1) \right) \right],
	\end{equation}
Because it is easier to work with the reflected processes, let us start by observing that
\begin{align*}
	\mathbb{E}^{\lambda/n}\left[ f \left( \mathrm{Gap}^{\lambda}_{n}(1) \right) \mathds{1}_{ \left\{\tau^{X}_{-Kn} \wedge \tau^{X}_{Kn} > a_n\right\} } \right] &= \mathbb{E}^{\lambda/n, K}\left[ f \left( \mathrm{Gap}^{\lambda}_{n}(1) \right) \mathds{1}_{ \left\{\tau_{-Kn} \wedge \tau_{Kn} > a_n\right\} } \right], \\
	\mathbb{E}^{\lambda}\left[ f \left( \mathrm{Gap}^{\lambda}(1) \right) \mathds{1}_{ \left\{ \tau^{Z^{\lambda}}_{-K} \wedge \tau^{Z^{\lambda}}_{K} > 1\right\} }  \right] &= \mathbb{E}^{\lambda, K}\left[ f \left( \mathrm{Gap}^{\lambda}(1) \right) \mathds{1}_{ \left\{ \tau^{Z^{\lambda}}_{-K} \wedge \tau^{Z^{\lambda}}_{K} > 1\right\} } \right].
\end{align*}
The above yields that
\begin{align}\label{danse1}
	\left| \mathbb{E}^{\lambda/n}\left[ f \left( \mathrm{Gap}^{\lambda}_{n}(1) \right) \right] - \mathbb{E}^{\lambda/n, K}\left[ f \left( \mathrm{Gap}^{\lambda}_{n}(1) \right)\right] \right| &\le \left\| f \right\|_{\infty} \mathbb{P}^{\lambda/n} \left( \tau^{X}_{-Kn} \wedge \tau^{X}_{Kn} \le a_n \right), \\ \label{danse2}
	\left|\mathbb{E}^{\lambda}\left[ f \left( \mathrm{Gap}^{\lambda}(1) \right) \right] - \mathbb{E}^{\lambda, K}\left[ f \left( \mathrm{Gap}^{\lambda}(1) \right) \right] \right| &\le \left\| f \right\|_{\infty} \mathbb{P}^{\lambda} \left( \tau^{Z}_{-K} \wedge \tau^{Z}_{K} \le 1 \right).
\end{align}
Using Lemma~\ref{Lemma:HittingKComparison}, we can bound the $\limsup$ of the probability on the right-hand side of \eqref{danse1} as follows:
\begin{equation}\label{eqn:RecallBoundHittingK}
	\limsup_{n} \,\,\mathbb{P}^{\lambda/n, K}\left( \tau_{Kn}^{X} \wedge \tau_{-Kn}^{X} \le a_n \right) \le \mathbb{P}^{\lambda, K}\left( \tau_{K - 1}^{Z} \wedge \tau_{-K + 1}^{Z} \le 2 \right).
\end{equation}
By \cite[Lemma 5.3]{MottModel}, we have that the right-hand side of \eqref{danse2} and \eqref{eqn:RecallBoundHittingK} converge to $0$ as $K$ goes to infinity. Hence, in order to prove \eqref{eqn:GapConvergence}, we only need to prove that the $\mathbb{P}^{\lambda/n, K}$-law of    $\mathrm{Gap}^{\lambda}_{n}(1) $ converges towards the $\mathbb{P}^{\lambda, K}$-law of $\mathrm{Gap}^{\lambda}(1)$.

One can observe that
\begin{align*}
	\mathbb{P}^{\lambda/n, K}\left( \mathrm{Gap}^{\lambda}_{n}(1)\le u \right) &=  \mathbf{E}\left[ P^{\omega, \lambda/n, K}\left( r^{(n)}\left(\widebar{X}_{a_n},\widebar{X}_{a_n} + 1\right)\le u \right)\right]\\
	&= 1 - \mathbf{E}\left[ P^{\omega, \lambda/n, K}\left( r^{(n)}\left(\widebar{X}_{a_n}, \widebar{X}_{a_n} + 1 \right) > u \right) \right]\\
	& = 1 - \mathbf{E}\left[ \sum_{r^{(n)}(\{i, i+1\}) > u} v_i^{(n)} \right].
\end{align*}
Under the coupling, using Lemma~\ref{LemmaConvergenceWallMeas} and the fact that $\{\ell: \nu^{\alpha_0}(x_\ell)>u\}$ is almost surely finite, we obtain that
\begin{equation*}
	\lim_{n\to \infty} \sum_{r^{(n)}(i, i+1) > u} v_i^{(n)} = \sum_{\nu^{\alpha_0}(x_{\ell}) e^{-\lambda x_\ell} > u} v_\ell,
\end{equation*}
as long as $u \ne \nu^{\alpha_0}(x_{\ell}) e^{-\lambda x_\ell}, \forall \ell$, which is true for Lebesgue almost-every $u$. We conclude the proof by applying the dominated convergence theorem.
\end{proof}

Finally, we check our main aging result under assumption \eqref{RW}.

\begin{proof}[Proof of Proposition~\ref{theoremAging}]
We prove that the aging statement holds in the box $[-K, K]$, i.e.\ we show that
	\begin{equation}\label{eqn:newAgingInsideK}
		\lim_{n \to \infty} \Prob^{\lambda/n, K}\left( \widebar{X}_{a_n} = \widebar{X}_{[a_n,h a_n]} \right) = \theta(h) \coloneqq \Prob^{\lambda, K} \left(\widebar{Z}_{1} = \widebar{Z}_{[1, h]} \right).
	\end{equation}
The extension to the whole space follows from Lemma~\ref{Lemma:HittingKComparison} and \cite[Lemma 5.3]{MottModel} by repeating the steps of the proof of Theorem~\ref{TheoremGap}; we omit the details.

We work under the coupling of Section \ref{SectionCopuledSpaces}. From the $J_1$-convergence of $X^{(n)} \coloneqq (n^{-1}X_{ta_n})_{t\geq 0}$ to $Z^\lambda$ of Proposition \ref{prop:QuenchedWeakConvergence} (and the continuity of the limiting process), we have
\begin{equation}\label{rerere}
	E_0^{\omega,\lambda/n, K}\left[f\left(X_1^{(n)}\right)g\left(\widebar{X}_1^{(n)}\right)\right]\rightarrow
	E_0^{\omega,\lambda, K}\left[f\left(Z_1\right)g\left(\widebar{Z}_1\right)\right]
\end{equation}
for all continuous functions $f$ and $g$ on $[-K,K]$. Taking $g=g_\delta$, as defined by setting
\[g_\delta(x)\coloneqq (1-2\delta^{-1}|x-x_\ell|)_+,\]
where $x_\ell$ is the location of a discontinuity of $S^{\alpha_0}$, the right-hand side of \eqref{rerere} converges as $\delta\rightarrow0$ to
\[ E_0^{\omega,\lambda, K}\left[f\left(Z_1\right)\mathds{1}_{\{\widebar{Z}_1=x_\ell\}}\right].\]
As for the left-hand side of \eqref{rerere}, recall the definition of $j_{\ell}(n)$ given below \eqref{eqn:eqnProofPointProcess}, as soon as $n$ is large enough, we have that $|x_\ell-j_\ell(n)/n|\le 1/n \le \delta/2$, and therefore we obtain
\begin{eqnarray*}
	\lefteqn{\left|E_0^{\omega,\lambda/n, K}\left[f\left(X_1^{(n)}\right)g_\delta\left(\widebar{X}_1^{(n)}\right)\right]-
		E_0^{\omega,\lambda/n, K}\left[f\left(X_1^{(n)}\right)\mathds{1}_{\{\widebar{X}_1^{(n)}=j_\ell(n)/n\}}\right]\right|}\\
	&\leq&\|f\|_\infty\left( 2\delta^{-1}|x_\ell-j_\ell(n)/n|+P_0^{\omega,\lambda/n, K}\left(\widebar{X}_{a_n}\in[j_\ell(n)-\delta n,j_\ell(n)+\delta n]\backslash\{j_\ell(n)\}\right)\right),
\end{eqnarray*}
which, by Lemmas \ref{lemmaEstimate1RWRW} and \ref{lemmaEstimate2RWRW}, converges to 0 as $n\rightarrow \infty$ and then $\delta\rightarrow 0$. In particular, it follows that
\[E_0^{\omega,\lambda/n, K}\left[f\left(X_1^{(n)}\right)\mathds{1}_{\{\widebar{X}_1^{(n)}=j_\ell(n)/n\}}\right]\rightarrow
E_0^{\omega,\lambda, K}\left[f\left(Z_1\right)\mathds{1}_{\{\widebar{Z}_1=x_\ell\}}\right].\]
Combining this with Proposition \ref{PropostionWallsPointProcess} and the strict positivity of the limiting probabilities in \eqref{eqn:WallsPP} (as is confirmed by Lemma \ref{LemmaMaximumOnEveryWall}), this yields that
\[E_0^{\omega,\lambda/n, K}\left[f\left(X_1^{(n)}\right)\:\vline\:\widebar{X}_1^{(n)}=j_\ell(n)/n\right]\rightarrow
E_0^{\omega,\lambda, K}\left[f\left(Z_1\right)\:\vline\:\widebar{Z}_1=x_\ell\right].\]
Since the continuous function $f$ was arbitrary, this implies that if we define $\mu_{n,j_\ell(n)}$ to be the law of $X^{(n)}_1$ conditioned on $\widebar{X}_1^{(n)}=j_\ell(n)/n$, and $\mu_{x_\ell}$ to be the law of $Z^\lambda_1$ conditioned on $\widebar{Z}^\lambda_1=x_\ell$, then
\[\mu_{n,j_\ell(n)} \rightarrow\mu_{x_\ell},\]
weakly as probability measures on $\mathbb{R}$.

Now, from the conclusion of the previous paragraph, we obtain from Skorohod's representation theorem the existence of random variables $A_n$, $A$ built on the same probability space so that: $A_n\sim \mu_{n,j_\ell(n)} $, $A\sim\mu_{x_\ell}$, and $A_n\rightarrow A$, almost-surely. Noting from the proof of \cite[Lemma 5.4]{MottModel} that $A$ is almost-surely not at a discontinuity of $S^{\alpha_0}$, Proposition~\ref{PropostionWallsPointProcess} implies that we further have that
\[P_{A_n}^{\omega,\lambda/n, K}\left(\widebar{X}_{h-1}^{(n)}=j_\ell(n)/n\right)\rightarrow P_{A}^{\omega,\lambda, K}\left(\widebar{Z}_{h-1}=x_\ell\right),\]
almost-surely. Taking expectations of the above limit (and again applying Proposition~\ref{PropostionWallsPointProcess})), we conclude that
\[P^{\omega,\lambda/n, K}_0\left(\widebar{X}_1^{(n)}=j_\ell(n)/n=\widebar{X}_{[1,h]}^{(n)}\right)\rightarrow
P^{\omega,\lambda, K}_0\left(\widebar{Z}_1=x_\ell=\widebar{Z}_{[1,h]}\right).\]

Finally, recall that $v_\ell = P^{\omega, \lambda, K}(\widebar{Z}_1 = x_\ell)$, since $\{\ell:\:v_\ell>\delta\}$ is a finite set for each $\delta>0$, we deduce from the above conclusion that
\[\sum_{\ell:\:v_\ell>\delta}P^{\omega,\lambda/n, K}_0\left(\widebar{X}_1^{(n)}=j_\ell(n)/n=\widebar{X}_{[1,h]}^{(n)}\right)\rightarrow
\sum_{\ell:\:v_\ell>\delta}P^{\omega,\lambda, K}_0\left(\widebar{Z}_1=x_\ell=\widebar{Z}_{[1,h]}\right).\]
Clearly, by Lemma \ref{lemmaMaximumDisc}, the right-hand side here satisfies
\[\sum_{\ell:\:v_\ell>\delta}P^{\omega,\lambda, K}_0\left(\widebar{Z}_1=x_\ell=\widebar{Z}_{[1,h]}\right)\rightarrow P^{\omega,\lambda, K}_0\left(\widebar{Z}_1=\widebar{Z}_{[1,h]}\right),\]
as $\delta\rightarrow 0$. Moreover, as for the left-hand side, we have
\begin{eqnarray*}
	\lefteqn{\left|P^{\omega,\lambda/n, K}_0\left(\widebar{X}_1^{(n)}=\widebar{X}_{[1,h]}^{(n)}\right)-\sum_{\ell:\:v_\ell>\delta}P^{\omega,\lambda/n, K}_0\left(\widebar{X}_1^{(n)}=j_\ell(n)/n=\widebar{X}_{[1,h]}^{(n)}\right)\right|}\\
	&\leq&\sum_{i\not\in\{j_\ell(n):\:v_\ell>\delta\}}P^{\omega,\lambda/n, K}_0\left(\widebar{X}_1^{(n)}=i/n\right)\\
	&=&1-\sum_{\ell:\:v_\ell>\delta}P^{\omega,\lambda/n, K}_0\left(\widebar{X}_1^{(n)}=j_\ell(n)/n\right)\\
	&\rightarrow&1-\sum_{\ell:\:v_\ell>\delta}v_\ell
\end{eqnarray*}
as $n\rightarrow\infty$, where we have applied Proposition \ref{PropostionWallsPointProcess} to deduce the limit. Moreover, again appealing to Lemma \ref{lemmaMaximumDisc}, we see that the final expression converges to 0 as $\delta\rightarrow0$. This is enough to complete the proof of the result under the quenched measure when the coupling of environments is in place. Taking expectations with respect to the environment law yields the annealed result \eqref{eqn:newAgingInsideK}, as desired.
\end{proof}

\subsection{Aging under \eqref{RWT}} \label{SectionAgingRWT}

We still work under the coupling of Section \ref{SectionCopuledSpaces}. Let $\widetilde{\mathbb{P}}^{\lambda/n, K}$ be the annealed law of the random walk reflected at the boundary of the box $[-Kn, Kn]$ and $\widetilde{\mathbb{P}}^{\lambda, K}$ that of the corresponding diffusion reflected at the boundary of $[-K, K]$, with $K \in \mathbb{N}$.

\begin{proposition} \label{PropositionInsideKAging}
	Under the assumption \eqref{RWT}, with $\alpha_0, \alpha_\infty \in (0, 1)$, we have that, for all  $h > 1$,
\[
		\lim_{n \to \infty} \widetilde{\mathbb{P}}^{\lambda/n, K}\left( \big| \widetilde{X}_{b_n} - \widetilde{X}_{h b_n} \big| \le 1 \right) = \widetilde{\theta}^{K}(h) \coloneqq \widetilde{\mathbb{P}}^{\lambda, K} \left(\widetilde{Z}_{1} = \widetilde{Z}_{h} \right).\]
\end{proposition}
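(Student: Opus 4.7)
The plan is to argue under the environment coupling of Section~\ref{SectionCopuledSpaces}, where the quenched convergences of Section~\ref{sec:randomwalksestimates} hold $\widetilde{\mathbf{P}}$-almost surely, and to then transfer to the annealed statement via dominated convergence. The strategy parallels that of Proposition~\ref{theoremAging}: decompose the event $\{|\widetilde{X}_{b_n}-\widetilde{X}_{h b_n}|\le 1\}$ by applying the Markov property at the deterministic time $b_n$, localize the dominant contribution on trap edges using Proposition~\ref{PropositionMarginalWithTraps}, and then invoke the Markov property of the limit diffusion $\widetilde{Z}^\lambda$ at time $1$ to recognise the limit as $\widetilde{P}^{\omega,\lambda,K}(\widetilde{Z}_1=\widetilde{Z}_h)$.

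For each $\delta>0$, the set $L(\delta):=\{\ell:\ v_\ell>\delta\}$, where $v_\ell:=\widetilde{P}^{\omega,\lambda,K}(\widetilde{Z}_1=x_\ell)$ and $x_\ell$ ranges over the atoms of $\nu^{\alpha_\infty}$ in $[-K,K]$, is finite, and by Lemma~\ref{lem73} the total atomic mass of $\widetilde{\rho}(1)$ equals $1$. Proposition~\ref{PropositionMarginalWithTraps} provides indices $j_\ell(n)$ with $j_\ell(n)/n\to x_\ell$ satisfying
\[
\widetilde{P}^{\omega,\lambda/n,K}\!\left(\widetilde{X}_{b_n}\in\{j_\ell(n),j_\ell(n)+1\}\right)\longrightarrow v_\ell.
\]
Because $\nu^{\alpha_0}$ and $\nu^{\alpha_\infty}$ are independent Poisson point processes, the location $x_\ell$ is $\widetilde{\mathbf{P}}$-a.s.\ a continuity point of $S^{\alpha_0,\lambda}$, so Proposition~\ref{PropositionMarginalWithTraps} applies a second time to the walk restarted at $\beta_n=y\in\{j_\ell(n),j_\ell(n)+1\}$ (noting $\beta_n/n\to x_\ell$ either way), giving
\[
\widetilde{P}^{\omega,\lambda/n,K}_y\!\left(\widetilde{X}_{(h-1)b_n}\in\{j_\ell(n),j_\ell(n)+1\}\right)\longrightarrow \widetilde{P}^{\omega,\lambda,K}_{x_\ell}(\widetilde{Z}_{h-1}=x_\ell).
\]
Combining these through the Markov property at $b_n$ for the discrete walk and at time $1$ for $\widetilde{Z}^\lambda$, the $\ell$-th trap contributes
\[
v_\ell\cdot\widetilde{P}^{\omega,\lambda,K}_{x_\ell}(\widetilde{Z}_{h-1}=x_\ell)=\widetilde{P}^{\omega,\lambda,K}(\widetilde{Z}_1=\widetilde{Z}_h=x_\ell)
\]
in the limit; summing over $\ell\in L(\delta)$ and sending $\delta\to 0$ (using the atomicity of $\widetilde{\rho}(1)$ and $\widetilde{\rho}(h)$ from Lemma~\ref{lem73}) yields $\widetilde{P}^{\omega,\lambda,K}(\widetilde{Z}_1=\widetilde{Z}_h)$.

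The main obstacle is bounding the residual terms. Three types must be shown to vanish. \emph{Type (a):} the mass of $\widetilde{X}_{b_n}$ outside $\bigcup_{\ell\in L(\delta)}\{j_\ell(n),j_\ell(n)+1\}$, controlled above by $1-\sum_{\ell\in L(\delta)}v_\ell+o(1)$, which is made arbitrarily small by shrinking $\delta$ thanks to Lemma~\ref{lem73}. \emph{Type (b):} configurations where $\widetilde{X}_{b_n}$ and $\widetilde{X}_{h b_n}$ sit on two \emph{distinct} trap edges; these are ruled out by Lemma~\ref{LemmaGoodSeparationIndep}, since distinct trap edges are separated by at least $n^{1/4}\gg 1$. \emph{Type (c)}, the most delicate: configurations where $\widetilde{X}_{b_n}\in\{j_\ell(n),j_\ell(n)+1\}$ but $\widetilde{X}_{h b_n}\in\{j_\ell(n)-1,j_\ell(n)+2\}$, or vice versa; these are handled by a further application of Proposition~\ref{PropositionMarginalWithTraps} to the walk restarted at $y=j_\ell(n)$, noting that for $n$ large the single sites $j_\ell(n)-1$ and $j_\ell(n)+2$ lie outside every trap edge (again by Lemma~\ref{LemmaGoodSeparationIndep}), so the marginal probability there must vanish. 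Once these residuals are controlled, the quenched identity holds $\widetilde{\mathbf{P}}$-a.s.\ on the coupled probability space, and dominated convergence delivers the annealed conclusion $\widetilde{\theta}^K(h)=\widetilde{\mathbb{P}}^{\lambda,K}(\widetilde{Z}_1=\widetilde{Z}_h)$.
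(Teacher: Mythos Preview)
The proposal is correct and follows essentially the same approach as the paper: apply the quenched Markov property at time $b_n$, use Proposition~\ref{PropositionMarginalWithTraps} twice (once for $\widetilde{X}_{b_n}$, once for the restarted walk at time $(h-1)b_n$) to localize on trap edges, control the residual mass via the pure atomicity of $\widetilde{\rho}(1)$ from Lemma~\ref{lem73}, and pass to the annealed statement by dominated convergence. Your organization of the residuals into Types~(a)--(c) is cosmetically different from the paper's three-set decomposition $A_n(\delta)$, $B_n(\delta)$, $(A_n^v(\delta))^c$, but the content is the same; in particular your Type~(c) term (the probability that the restarted walk sits at $j_\ell(n)-1$ or $j_\ell(n)+2$) is precisely what the paper absorbs when it shows $\widebar{u}^{(n)}_{j_\ell(n)}\to u_{\ell,\ell}$ via the vague convergence part of Proposition~\ref{PropositionMarginalWithTraps}.
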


\begin{proof}[Proof of Proposition~\ref{PropositionInsideKAging}]
 Let us start by recalling some notation: $\widetilde{\rho}^{(n)}(1)$ denotes the quenched marginal law of $\widetilde{X}^{(n)}_1$ and $\widetilde{\rho}(1)$ is the quenched marginal law of $\widetilde{Z}^{\lambda}_1$. As $\widetilde{\rho}(1)$ is purely atomic, we can safely define the countable collection $\mathcal{A}_K$ of atoms $(x_\ell, v_\ell)$ such that $x_\ell\in[-K,K]$, $v_\ell \coloneqq P^{\omega, \lambda, K}(\widetilde{Z}_1 = x_\ell)$ and
\begin{equation}\label{eqn:TwoMarginalsWithTraps}
		\widetilde{\rho}(1) = \sum_{(x_\ell,v_\ell)\in\mathcal{A}_K} \delta_{x_\ell} v_\ell.
\end{equation}
We also define $v_i^{(n)} \coloneqq P^{\omega, \lambda, K}(\widetilde{X}_{b_n} = i/n)$, so that
\begin{equation}\label{eqn:TwoMarginalsWithTraps2}
		\widetilde{\rho}^{(n)}(1) = \sum_{i/n \in [-K, K]} \delta_{i/n} v^{(n)}_{i}.
\end{equation}
In the rest of the proof, we will drop the $K$  for notational simplicity, but we are still working on the environments and processes restricted to the finite boxes $[-Kn, Kn]$ and $[-K, K]$. Recall the definition \eqref{def:Tn} of the event $\mathcal{T}_n$. Let us also define
\[	\begin{split}
		u_{i, j}^{(n)} &\coloneqq P^{\omega, \lambda/n} \left( \widetilde{X}^{(n)}_{1 + h} = j/n | \widetilde{X}^{(n)}_{1} = i/n \right), \\
		u_{i, j} &\coloneqq P^{\omega, \lambda} \left( \widetilde{Z}_{1 + h} = x_j | \widetilde{Z}_{1} = x_i \right).
	\end{split}\]
By Proposition~\ref{PropositionMarginalWithTraps},  for every atom $(x_{\ell}, v_{\ell})\in\mathcal{A}_K$ of $\widetilde{\rho}(1)$ there exists a function $j_{\ell}(n)$ such that
\begin{equation*}
	v_{\ell} = \widetilde{P}^{\omega, \lambda}\left( \widetilde{Z}_1 = x_{\ell} \right) = \lim_{n \to \infty} P^{\omega, \lambda/n}\left( \widetilde{X}^{(n)}_1 \in \left\{ \frac{j_{\ell}(n)}{n}, \frac{j_{\ell}(n)}{n}+ 1/n \right\} \right).
\end{equation*}
Moreover, by Proposition~\ref{PropositionMarginalWithTraps}, $(x_\ell,v_\ell)$ is an atom of $\nu^{\alpha_\infty}$, hence it is not an atom of $\nu^{\alpha_0}$ by independence. Thus, for all $\ell$ and all $k$, we have that, almost surely,
\begin{align*}
	\left( u^{(n)}_{j_{\ell}(n), j_{k}(n)} + u^{(n)}_{j_{\ell}(n), j_{k}(n) + 1} \right) &\stackrel{n\to \infty}{\to} u_{\ell, k} \\
	\left( u^{(n)}_{j_{\ell}(n) + 1, j_{k}(n)} + u^{(n)}_{j_{\ell}(n) +1, j_{k}(n) + 1} \right) &\stackrel{n\to \infty}{\to} u_{\ell, k}
\end{align*}
Now, we aim to show that
\begin{equation}\label{goal00}
\widetilde{\mathbf{E}}\left[\sum_{i} v^{(n)}_{i} \left( u_{i, i}^{(n)} + u_{i, i + 1}^{(n)} + u_{i, i - 1}^{(n)} \right)\right]\stackrel{n\to\infty}{\longrightarrow}	\widetilde{\mathbf{E}}\left[\sum_{\ell} v_{\ell} u_{\ell, \ell}\right],
\end{equation}
which would imply the result. Let us denote $\widebar{u}_i^{(n)} \coloneqq \left( u_{i, i}^{(n)} + u_{i, i + 1}^{(n)} + u_{i, i - 1}^{(n)} \right)$. Using the observations above it is straightforward to notice that
\begin{equation}\label{eqn:LiminfAgingWithTraps}
	\begin{split}
		\sum_{\ell} v_{\ell} u_{\ell, \ell} &= \lim_{n \to \infty} \sum_{\ell} \left(v^{(n)}_{j_{\ell}(n)} \widebar{u}^{(n)}_{j_{\ell}(n)} + v^{(n)}_{j_{\ell}(n)+1} \widebar{u}^{(n)}_{j_{\ell}(n) + 1}\right) \\
		&\le \liminf_{n \to \infty} \sum_{i} v^{(n)}_{i} \left( u_{i, i}^{(n)} + u_{i, i + 1}^{(n)} + u_{i, i - 1}^{(n)} \right).
	\end{split}
\end{equation}
Recalling the notation from the statement of Lemma \ref{LemmaGoodSeparationIndep}, let us define the following sets
\begin{equation}\label{events}
	\begin{split}
		A_n(\delta) &\coloneqq \left\{  j \in T_n^{\alpha_\infty}:\: \exists \, |i - j| \le 1 \text{ such that } (v_j^{(n)} + v_i^{(n)})> \delta, \, \widebar{u}^{(n)}_j> \delta \right\}, \\
		B_n(\delta) &\coloneqq \left\{j : \widebar{u}_j^{(n)} \le \delta\right\}, \\
		A^v_n(\delta) &\coloneqq \left\{  j \in T_n^{\alpha_\infty}:\: \exists \, |i - j| \le 1 \text{ with } (v_j^{(n)} + v_i^{(n)})> \delta \right\}.
	\end{split}
\end{equation}
Notice that $A_n(\delta)\cup B_n(\delta)\cup\left(A_n^v(\delta)\right)^c$ contains all the indices. On the event $\mathcal{T}_n$, thanks to Proposition~\ref{PropositionMarginalWithTraps} and the fact that the number of terms in $A_n(\delta)$ is finite almost-surely, we get that, for Lebesgue almost-every $\delta$,
\begin{equation}\label{eqn:BoundSupAgingWithTraps1}
	\limsup_{n \to \infty} \sum_{i \in A_n(\delta)} v_i^{(n)} \widebar{u}_i^{(n)} \le \sum_{(x_\ell,v_\ell) \in A(\delta)} v_\ell u_{\ell, \ell},
\end{equation}
where $A(\delta)$ is the set of atoms $(x_\ell,v_\ell)\in\mathcal{A}_K$ such that $v_\ell>\delta$. Moreover
\[	\limsup_{n \to \infty} \sum_{i \in B_n(\delta)} v_i^{(n)} \widebar{u}_i^{(n)} \le \delta \limsup_{n \to \infty} \sum_{i \in B_n(\delta)} v_i \le \delta.\]
Finally, let us denote by $C_n(\delta)$ the complement of $A_n^{v}(\delta)$, then
\begin{equation}\label{eqn:BoundSupAgingWithTraps3}
	\begin{split}
		\limsup_{n \to \infty} \sum_{i \in C_n(\delta)} v_i^{(n)} \widebar{u}_i^{(n)} \le \limsup_{n \to \infty} \sum_{i \in C_n(\delta)} v^{(n)}_i \le 1 - \liminf_{n \to \infty} \sum_{i \in A^{v}_n(\delta)} v^{(n)}_i.
	\end{split}
\end{equation}
But, we also have that
\begin{equation*}
	\liminf_{n \to \infty} \sum_{i \in A^{v}_n(\delta)} v^{(n)}_i \ge \sum_{i: v_i > \delta} v_i,
\end{equation*}
and this last sum converges to $1$ as $\delta \to 0$ since $\widetilde{\rho}$ is purely atomic. By putting together \eqref{eqn:BoundSupAgingWithTraps1}-\eqref{eqn:BoundSupAgingWithTraps3} we get
\begin{equation}\label{eqn:LimsupAgingWithTraps}
	\limsup_{n \to \infty} \sum_{i} v^{(n)}_i \widebar{u_i} \le \sum_{\ell \in A(\delta)} v_i u_{i, i} + \delta + \left( 1 - \sum_{i: v_i > \delta} v_i \right).
\end{equation}
The bound from above follows by taking the limit as $\delta \to 0$. Using \eqref{eqn:LiminfAgingWithTraps} and  \eqref{eqn:LimsupAgingWithTraps}, one can prove \eqref{goal00} by applying  the dominated convergence theorem. This concludes the proof.
\end{proof}

 We now have all the tools and are able to prove the aging part of Theorem~\ref{theoremSubAging}.

\begin{proof}[Proof of Theorem~\ref{theoremSubAging}, Part I]
	It is immediate to notice that, for $n$ large,
	\begin{equation*}
		\begin{split}
			\widetilde{\mathbb{P}}^{\lambda/n, K}\left( \big| \widetilde{X}_{b_n} - \widetilde{X}_{h b_n} \big| \le 1, \,\, \tau_{Kn}^{X} \wedge \tau_{-Kn}^{X} > 2hb_n\right) = \widetilde{\mathbb{P}}^{\lambda/n}\left( \big| \widetilde{X}_{b_n} - \widetilde{X}_{h b_n} \big| \le 1, \,\, \tau_{Kn}^{X} \wedge \tau_{-Kn}^{X} > 2hb_n\right),
		\end{split}
	\end{equation*}
	and analogously
	\begin{equation*}
		\widetilde{\mathbb{P}}^{\lambda, K} \left(\widetilde{Z}_{1} = \widetilde{Z}_{h}, \,\, \tau_{K}^{Z} \wedge \tau_{-K}^{Z} > 2h \right) = \widetilde{\mathbb{P}}^{\lambda} \left(\widetilde{Z}_{1} = \widetilde{Z}_{h}, \,\, \tau_{K}^{Z} \wedge \tau_{-K}^{Z} > 2h \right).
	\end{equation*}
	So, by Proposition~\ref{PropositionInsideKAging}, we have, for every $K \in \mathbb{N}$ and every $h \ge 1$,
	\begin{align}
		\limsup_{n \to \infty}\,\, &\widetilde{\mathbb{P}}^{\lambda/n}\left( \big| \widetilde{X}_{b_n} - \widetilde{X}_{h b_n} \big| \le 1 \right) \nonumber \\
		& \le \widetilde{\mathbb{P}}^{\lambda} \left(\widetilde{Z}_{1} = \widetilde{Z}_{h} \right) + \widetilde{\mathbb{P}}^{\lambda}\left( \tau_{K}^{Z} \wedge \tau_{-K}^{Z} \le 2h \right) + \limsup_{n} \,\,\widetilde{\mathbb{P}}^{\lambda/n}\left( \tau_{Kn}^{X} \wedge \tau_{-Kn}^{X} \le 2hb_n \right).\nonumber
	\end{align}
By \eqref{eqn:2HittingTimesComparedRWT} in Lemma~\ref{Lemma:HittingKComparison} we have that
\begin{equation*}
	\limsup_{n} \,\,\widetilde{\mathbb{P}}^{\lambda/n}\left( \tau_{Kn}^{X} \wedge \tau_{-Kn}^{X} \le 2hb_n \right) \le \widetilde{\mathbb{P}}^{\lambda}\left( \tau_{K - 1}^{Z} \wedge \tau_{-K + 1}^{Z} \le 2h +1 \right).
\end{equation*}
Thus we have that
	\begin{align}
		\limsup_{n \to \infty}\,\,\widetilde{\mathbb{P}}^{\lambda/n}\left( \big| \widetilde{X}_{b_n} - \widetilde{X}_{h b_n} \big| \le 1 \right) \le \widetilde{\mathbb{P}}^{\lambda} \left(\widetilde{Z}_{1} = \widetilde{Z}_{h} \right) + 2\widetilde{\mathbb{P}}^{\lambda}\left( \tau_{K - 1}^{Z} \wedge \tau_{-K + 1}^{Z} \le 2h +1 \right).\nonumber
	\end{align}
	By reasoning in the same way, one can obtain
	\begin{equation*}
		\liminf_{n \to \infty}\,\, \widetilde{\mathbb{P}}^{\lambda/n}\left( \big| \widetilde{X}_{b_n} - \widetilde{X}_{h b_n} \big| \le 1 \right) \ge \widetilde{\mathbb{P}}^{\lambda}\left(\widetilde{Z}_{1} = \widetilde{Z}_{h} \right) - 2\widetilde{\mathbb{P}}^{\lambda}\left( \tau_{K - 1}^{Z} \wedge \tau_{-K + 1}^{Z} \le 2h +1\right).
	\end{equation*}
The conclusion follows by taking $K$ to infinity and applying Lemma~\ref{LemmaExitSlowlyKBox}.
\end{proof}

\section{Proof of the sub-aging result under the assumption \eqref{RWT}}\label{sec:subagingproof}

At the end of this section, we prove the second part of Theorem~\ref{theoremSubAging}. We will need several tools before being able to prove the main result. The crucial step is the following proposition; its proof is the core of this section. We still work under the coupling of Section \ref{SectionCopuledSpaces}. Recall that $\widetilde{\mathbb{P}}^{\lambda/n, K}$ is the annealed law of the random walk reflected at the boundary of the box $[-Kn, Kn]$ and $\widetilde{\mathbb{P}}^{\lambda, K}$ that of the corresponding diffusion reflected at the boundary of $[-K, K]$, with $K \in \mathbb{N}$.

\begin{proposition} \label{PropositionInsideKSubaging}
	Under the assumption \eqref{RWT}, with $\alpha_0, \alpha_\infty \in (0, 1)$, we have that, for all  $h > 0$,
	\[		\lim_{n \to \infty} \widetilde{\mathbb{P}}^{\lambda/n, K}\left( \big| \widetilde{X}_{b_n + s_1 d_{n, \infty}} - \widetilde{X}_{b_n +s_2 d_{n, \infty}} \big| \le 1, \,\, \forall s_1, s_2 \in [0, h] \right) = \widebar{\theta}(h) \coloneqq \widetilde{\EVal}^{\lambda, K}\left[ e^{- h\frac{A^0 + A^2}{2A^1}}\right],\]
	where $A^0, A^1, A^2$ are such that $A^1 \stackrel{(\mathrm{d})}{=} \nu^{\alpha_\infty}(\widetilde{Z}^{\lambda}_1)$ and $A^0, A^2$ are distributed as independent conductances under $\widetilde{\mathbf{P}}$, independent of $\widetilde{Z}^{\lambda}$. In this statement $\nu^{\alpha_\infty}$ is restricted to $[-K, K]$ and $\widetilde{Z}^{\lambda}_1$ is reflected at the boundary.
\end{proposition}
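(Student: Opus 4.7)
The approach is: by the Markov property at time $b_n$ and Proposition~\ref{PropositionMarginalWithTraps}, $\widetilde{X}_{b_n}$ sits (with total mass tending to $1$) on one of the trapping pairs $\{j_\ell(n), j_\ell(n)+1\}$ corresponding to atoms $(x_\ell, w_\ell)$ of $\nu^{\alpha_\infty}$, and the sub-aging event then reduces to requiring that $\widetilde{X}$ does not exit such a pair during the window of length $h d_{n, \infty}$. The limiting expression will arise by identifying the rescaled exit time $T_{\mathrm{exit}}/d_{n,\infty}$ from such a pair as asymptotically exponential with parameter $(A^0+A^2)/(2w_\ell)$.

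First, letting $E_n(h)$ denote the sub-aging event, I would decompose
\[
\widetilde{\mathbb{P}}^{\lambda/n, K}(E_n(h)) = \widetilde{\mathbf{E}}\Bigl[\sum_{i} v_i^{(n)} \, q_i^{(n)}(h)\Bigr],
\]
with $v_i^{(n)} := \widetilde{P}^{\omega, \lambda/n, K}(\widetilde{X}_{b_n} = i)$ and $q_i^{(n)}(h)$ the quenched probability, starting from $i$, that the range of $\widetilde{X}$ over $[0, h d_{n,\infty}]$ has diameter at most $1$. For $i \notin \bigcup_\ell \{j_\ell(n), j_\ell(n)+1\}$, Lemma~\ref{LemmaGoodSeparationIndep} ensures the conductances in a neighborhood of $i$ are bounded by $d_{n,\infty}^{1-\widehat{\delta}}$, so $\widetilde{X}$ exits $\{i-1, i, i+1\}$ in time $O(1) \ll d_{n,\infty}$ and $q_i^{(n)}(h) \to 0$; by Proposition~\ref{PropositionMarginalWithTraps}, together with atomicity of $\widetilde{\rho}(1)$ (Lemma~\ref{lem73}), $v_{j_\ell(n)}^{(n)} + v_{j_\ell(n)+1}^{(n)} \to v_\ell := \widetilde{P}^{\omega, \lambda, K}(\widetilde{Z}^\lambda_1 = x_\ell)$ on the coupling with $\sum_\ell v_\ell = 1$. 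Hence the environment-expectation reduces, up to vanishing error, to the sum of the $\ell$-indexed contributions.

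Second, for $i \in \{j_\ell(n), j_\ell(n)+1\}$ I would compute $q_\ell^{(n)}(h)$ using that, restricted to the pair until first exit to $j_\ell(n)-1$ or $j_\ell(n)+2$, $\widetilde{X}$ is a two-state absorbing continuous-time Markov chain. Writing $c_L, c_M, c_R$ for the three tilted conductances around the trap, $p_0 := c_M/(c_L+c_M)$, $p_1 := c_M/(c_M+c_R)$, $q_j := 1 - p_j$, a direct Laplace-transform computation gives
\[
\widetilde{E}_{j_\ell(n)}^{\omega, \lambda/n}\bigl[e^{-\theta T_{\mathrm{exit}}}\bigr] = \frac{(1+\theta)\, q_0 + p_0\, q_1}{(1+\theta)^2 - p_0 p_1}.
\]
Setting $\theta = \eta/d_{n,\infty}$ and using that $c_M/d_{n,\infty} \to w_\ell\, e^{2\lambda x_\ell}$ while $c_L, c_R$ are at most $d_{n,\infty}^{1-\widehat{\delta}}\, e^{2\lambda K} = o(d_{n,\infty})$ by Lemma~\ref{LemmaGoodSeparationIndep}, the tilt factors cancel and the right-hand side converges to $\frac{(A^0+A^2)/(2w_\ell)}{\eta + (A^0+A^2)/(2w_\ell)}$ -- the Laplace transform of $\mathrm{Exp}((A^0+A^2)/(2w_\ell))$, with $A^0, A^2$ the untilted neighboring conductances. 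Hence $q_\ell^{(n)}(h) \to e^{-h(A^0+A^2)/(2w_\ell)}$, after also verifying that excursions into $\{j_\ell(n)-1, j_\ell(n)\}$ or $\{j_\ell(n)+1, j_\ell(n)+2\}$ contribute negligibly since any such walker is forced back across the huge middle conductance in time $O(1)$.

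Combining via dominated convergence (summand bounded by $1$) and using the identity $\sum_\ell v_\ell\, f(w_\ell) = \widetilde{E}^{\omega, \lambda, K}[f(\nu^{\alpha_\infty}(\widetilde{Z}^\lambda_1))]$ then yields
\[
\lim_{n \to \infty} \widetilde{\mathbb{P}}^{\lambda/n, K}(E_n(h)) = \widetilde{\mathbb{E}}^{\lambda, K}\bigl[e^{-h(A^0+A^2)/(2A^1)}\bigr] = \widebar{\theta}(h).
\]
The main obstacle I anticipate is this final combination: the microscopic conductances $(A^0_\ell, A^2_\ell)$ controlling $q_\ell^{(n)}(h)$ do not stabilize almost-surely under the coupling of Section~\ref{SectionCopuledSpaces}, while $v_\ell^{(n)}$ does. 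To obtain the limiting independence structure $A^1$ independent of $(A^0, A^2)$, one must exploit that, conditionally on the macroscopic environment (the atoms of $\nu^{\alpha_0}, \nu^{\alpha_\infty}$ in the box), the two edges adjacent to the $\ell$-th trap are, up to vanishing conditioning, independent samples from $\widetilde{\mathbf{P}}$, independent of everything determining $v_\ell$ and $w_\ell$ in the limit.
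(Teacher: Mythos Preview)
Your approach coincides with the paper's: reduce the sub-aging probability to the Laplace transform of the rescaled exit time $T^{(n)}$ from the trap pair, compute it via the two-state absorbing chain, and reassemble using the atomic decomposition of $\widetilde{\rho}(1)$ from Proposition~\ref{PropositionMarginalWithTraps}. Your Laplace-transform computation and the negligibility of the ``wrong-side'' excursions match the paper's Lemma~\ref{LemmaQuenchedLaplaceTransform}.

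You correctly isolate the one genuine difficulty, and this is where the proposal is incomplete rather than wrong: the quenched weights $v_i^{(n)}$ depend on the \emph{entire} environment, including the two adjacent conductances, so one cannot simply condition on ``macroscopic data'' and declare those edges independent. The paper resolves this with a re-sampling device. Writing $N_n^{\alpha_\infty}$ for the set of edges adjacent to (but not in) $T_n^{\alpha_\infty}$, it constructs a modified environment $\widehat{\omega}$ by replacing $\{c_i\}_{i \in N_n^{\alpha_\infty}}$ with fresh i.i.d.\ samples $\{\widehat{c}_i\}$ from the law of $c(\{0,1\})$ conditioned on $c(\{0,1\}) \le d_{n,\infty}^{1-\widehat{\delta}}$. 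A separate lemma (Lemma~\ref{LemmaNeighboursSmall}) shows these edges contribute $o(d_{n,\infty})$ to conductance sums and $o(d_{n,0})$ to resistance sums almost surely, so the coupling of Section~\ref{SectionCopuledSpaces}, and hence Proposition~\ref{PropositionMarginalWithTraps}, survive in $\widehat{\omega}$: in particular $\widehat{v}_{j_\ell(n)}^{(n)} + \widehat{v}_{j_\ell(n)+1}^{(n)} \to v_\ell$ on the same coupling. One then works under $\widehat{\mathbf{E}}[\,\cdot\mid\omega]$: conditionally on the original environment $\omega$, the pairs $\{\widehat{c}_{j_\ell(n)-1}, \widehat{c}_{j_\ell(n)+1}\}_{\ell \in A^v(\delta)}$ are genuinely independent of $\omega$ and of each other, and their conditioning becomes trivial as $n\to\infty$ (Lemma~\ref{LemmaWeakCVNeighbours}). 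This is precisely what furnishes the limiting independence of $(A^0,A^2)$ from $A^1 = \nu^{\alpha_\infty}(\widetilde{Z}^\lambda_1)$; without some such decoupling step your final dominated-convergence combination does not go through.
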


We postpone the proof of the result as we will first need to establish several preliminary results. Let us assume the construction of Section~\ref{SectionCopuledSpaces}. Recall the notation defined in \eqref{eqn:TwoMarginalsWithTraps} and \eqref{eqn:TwoMarginalsWithTraps2}, and note that  each $x_\ell$ is both an atom of $\widetilde{\rho}(1)$ with weight $v_\ell$ and an atom of $\nu^{\alpha_\infty}$ with weight $\nu^{\alpha_\infty}(x_\ell)$.  Also, recall that there exists $j_\ell(n)$ such that $j_\ell(n)/n$ converges to $x_\ell$ and for which the masses of the relevant discrete measures converge (see the proof of the following result for details). To ease the notation, we will drop the superscript $K$ in the proofs but we will still work with the restricted processes.

Set for simplicity $c^{(n)}(i) = d_{n, \infty}^{-1} c(i)$. Under Assumption~\eqref{RWT}, let us define the following two measures
\[	\begin{split}
		\pi^{(n)}(dx) &\coloneqq \sum_{i/n \in [-K, K]} \delta_{c^{(n)}(i)} v_i^{(n)}, \\
		\pi(dx) &\coloneqq \sum_{(x_\ell,v_\ell)\in\mathcal{A}_K} \delta_{\nu^{\alpha_\infty}(x_\ell)} v_\ell.
	\end{split}\]

\begin{lemma}\label{LemmaConvergenceDepthMeas}
	Under Assumption~\eqref{RWT}, for almost every realisations of the environment, as $n \to \infty$
	\begin{equation*}
		\pi^{(n)}(dx) \stackrel{v}{\to} \pi(dx).
	\end{equation*}
	Moreover, for any atom $(\nu^{\alpha_\infty}(x_\ell), v_\ell) \in \pi$ there exists an index $j_\ell(n)$ such that, almost-surely, as $n \to \infty$
	\begin{align*}
		c^{(n)}\left( j_\ell(n) \right) &\to \nu^{\alpha_\infty}(x_{\ell}),\\
		c^{(n)}\left( j_\ell(n) + 1 \right) &\to \nu^{\alpha_\infty}(x_{\ell}), \\
		v^{(n)}_{j_\ell(n)} + v^{(n)}_{j_\ell(n) + 1} &\to v_\ell.
	\end{align*}
\end{lemma}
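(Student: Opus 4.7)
The strategy will be to first establish the three per-atom convergences using Proposition~\ref{PropositionMarginalWithTraps} together with the asymptotic separation of $n$-traps in Lemma~\ref{LemmaGoodSeparationIndep}, and then to deduce vague convergence from these by exploiting the fact that $\widetilde{\rho}(1)$ is purely atomic.

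Fix an atom $(x_\ell, v_\ell)$ of $\widetilde{\rho}(1)$. Proposition~\ref{PropositionMarginalWithTraps} supplies an index $j_\ell(n)$ for which $j_\ell(n)/n \to x_\ell$, $d_{n,\infty}^{-1} c(\{j_\ell(n), j_\ell(n)+1\}) \to \nu^{\alpha_\infty}(x_\ell)$, and $v^{(n)}_{j_\ell(n)} + v^{(n)}_{j_\ell(n)+1} \to v_\ell$, so the third listed convergence is immediate. For the first two, the identity $c^{(n)}(i) = d_{n,\infty}^{-1}[c(\{i-1,i\}) + c(\{i,i+1\})]$ reduces the problem to showing that the two edges neighbouring $\{j_\ell(n), j_\ell(n)+1\}$ contribute negligibly on scale $d_{n,\infty}$. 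To see this, I will note that for $n$ large the edge $\{j_\ell(n), j_\ell(n)+1\}$ belongs to $T_n^{\alpha_\infty}$ (since its rescaled conductance tends to $\nu^{\alpha_\infty}(x_\ell) > 0$); then Lemma~\ref{LemmaGoodSeparationIndep} forces, on the almost-sure event $\mathcal{T}_n$, the edges $\{j_\ell(n){-}1, j_\ell(n)\}$ and $\{j_\ell(n)+1, j_\ell(n)+2\}$ to lie outside $T_n^{\alpha_\infty}$, and hence to satisfy $c(e) \le d_{n,\infty}^{1-\widehat{\delta}}$. Dividing by $d_{n,\infty}$ yields $o(1)$ contributions, and hence $c^{(n)}(j_\ell(n)) \to \nu^{\alpha_\infty}(x_\ell)$ and $c^{(n)}(j_\ell(n)+1) \to \nu^{\alpha_\infty}(x_\ell)$.

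For the vague convergence, let $f : \mathbb{R}_+ \to \mathbb{R}$ be continuous with bounded support, and for $\delta > 0$ lying outside the countable set $\{v_\ell\}_\ell$ decompose
\begin{equation*}
\int f\, d\pi^{(n)} = \sum_{\ell:\, v_\ell > \delta} \sum_{i \in \{j_\ell(n), j_\ell(n)+1\}} v_i^{(n)} f(c^{(n)}(i)) + R_n(\delta),
\end{equation*}
where $R_n(\delta)$ gathers the contributions from all remaining indices. Only finitely many atoms have $v_\ell > \delta$, so the first sum converges by the per-atom statement and continuity of $f$ to $\sum_{\ell:\, v_\ell > \delta} v_\ell f(\nu^{\alpha_\infty}(x_\ell))$. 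Because $\{v_i^{(n)}\}_i$ is a probability distribution, $\limsup_n |R_n(\delta)| \le \|f\|_\infty \bigl(1 - \sum_{\ell:\, v_\ell > \delta} v_\ell\bigr)$, which vanishes as $\delta \to 0$ since $\widetilde{\rho}(1)$ is purely atomic (its support is contained in the atoms of $\nu^{\alpha_\infty}$ by Lemma~\ref{lem73}). Letting $\delta \to 0$ concludes the argument.

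The principal subtlety is the separation step: without it, an adjacent large conductance could contribute meaningfully to $c^{(n)}(j_\ell(n))$ and prevent its convergence to $\nu^{\alpha_\infty}(x_\ell)$. Lemma~\ref{LemmaGoodSeparationIndep} is exactly what rules this out, and is where the asymptotic independence of the $\alpha_0$- and $\alpha_\infty$-subordinators underlying $\nu^{\alpha_0, (n)}$ and $\nu^{\alpha_\infty, (n)}$ plays its role.
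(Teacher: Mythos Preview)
Your proof is correct and follows essentially the same approach as the paper: first establish the per-atom convergences via Proposition~\ref{PropositionMarginalWithTraps} and the separation event $\mathcal{T}_n$ from Lemma~\ref{LemmaGoodSeparationIndep}, then deduce vague convergence by splitting off the finitely many large atoms and controlling the remainder using that $\widetilde{\rho}(1)$ is purely atomic. The only cosmetic difference is that the paper indexes its ``large'' set via the discrete probabilities (the set $A_n^v(\delta)$), whereas you index directly via the limiting weights $\{v_\ell>\delta\}$; both work.
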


\begin{proof}
	Let us start by proving the second part of the lemma. By Proposition  \ref{PropositionMarginalWithTraps}, we have that for each atom $(x_{\ell}, v_\ell)$, there exists a function $j_{\ell}(n)$ such that
	\begin{align*}
		d_{n, \infty}^{-1} c\left( \{j_\ell(n), j_\ell(n) + 1\} \right) &\to \nu^{\alpha_\infty}(x_\ell),\\
		v^{(n)}_{j_\ell(n)} + v^{(n)}_{j_\ell(n) + 1} &\to v_\ell.
	\end{align*}
	On the event $\mathcal{T}_n$ of Lemma~\ref{LemmaGoodSeparationIndep} we have that, almost-surely,
	\begin{equation*}
		d_{n, \infty}^{-1}\left(c\left( \{j_\ell(n) + 1, j_\ell(n) + 2\} \right) + c\left( \{j_\ell(n) - 1, j_\ell(n)\} \right) \right) \to 0,
	\end{equation*}
	which implies the second part of the lemma. Let us now prove vague convergence. Recall the definition \eqref{events} of the set $A^{v}_n(\delta)$. For $f$ a continuous and non-negative function on $[-K, K]$, we have that
	\begin{align*}
		\int f(x) \pi^{(n)} (dx) &= \sum_{i \in A_n^{v}(\delta)} v_{i}^{(n)} f\left( c^{(n)}(i) \right) + \sum_{i \not\in A_n^{v}(\delta)} v_{i}^{(n)} f\left( c^{(n)}(i) \right).
	\end{align*}
	As $\{\ell: v_\ell>\delta\}$ is almost-surely finite,  the observation above yields
	\begin{equation*}
		\lim_{n \to \infty} \sum_{i \in A_n^{v}(\delta)} v_{i}^{(n)} f\left( c^{(n)}(i) \right) = \sum_{\ell: v_\ell>\delta} v_{\ell} f\left( \nu^{\alpha_\infty}(x_\ell)\right)
	\end{equation*}
for Lebesgue almost-every $\delta$. The right hand side is monotone and bounded by $\|f\|_\infty$, thus as $\delta \to 0$ it converges to $\int f(x) \pi(dx)$. Analogously, we obtain that
	\begin{align*}
		\limsup_{n\to \infty} \sum_{i \not\in A_n^{v}(\delta)} v_{i}^{(n)} f\left( c^{(n)}(i) \right) &\le \|f\|_\infty \left( 1 - \liminf_{n \to \infty} \sum_{i \in A_n^{v}(\delta)} v_{i}^{(n)}  \right) \\
		&\le \|f\|_\infty \left( 1 - \sum_{\ell: v_\ell>\delta} v_{\ell} \right).
	\end{align*}
	We conclude by taking the limit as $\delta\to 0$ since $\widetilde{\rho}(1)$ is almost surely a purely atomic measure.
\end{proof}

\begin{proposition} Recall the definition \eqref{InvariantMeasure}, then, for all points of continuity $u$ of the right-hand side,
\begin{equation*}
	\lim_{n \to \infty} \widetilde{\mathbf{E}}\left[ \widetilde{P}^{\omega, \lambda/n, K}\left( d_{n, \infty}^{-1} c\left(\widetilde{X}_{b_n} \right) \le u \right)\right] = \widetilde{\mathbf{E}}\left[ \widetilde{P}^{\omega, \lambda, K}\left( \nu^{\alpha_\infty}\left(\widetilde{Z}_1\right) \le u \right) \right].
\end{equation*}
\end{proposition}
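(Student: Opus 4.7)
The plan is to reformulate the claim as convergence of distribution functions of random probability measures, and then to chain together Lemma~\ref{LemmaConvergenceDepthMeas} with a standard upgrade from vague to weak convergence, followed by bounded convergence under the environment expectation.

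First I would observe that $\pi^{(n)}$ is precisely the pushforward of the quenched marginal $\widetilde{\rho}^{(n)}(1)$ under the map $i/n\mapsto c^{(n)}(i)$, so that $\pi^{(n)}([0,u]) = \widetilde{P}^{\omega,\lambda/n,K}\!\bigl(d_{n,\infty}^{-1}c(\widetilde{X}_{b_n})\le u\bigr)$. Similarly, since $\widetilde{\rho}(1)$ is purely atomic with atoms supported on atoms of $\nu^{\alpha_\infty}$ (via Lemma~\ref{lem73}, as invoked in the proof of Proposition~\ref{PropositionMarginalWithTraps}), one has $\pi([0,u]) = \widetilde{P}^{\omega,\lambda,K}\!\bigl(\nu^{\alpha_\infty}(\widetilde{Z}_1)\le u\bigr)$. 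The claim therefore becomes $\widetilde{\mathbf{E}}[\pi^{(n)}([0,u])] \to \widetilde{\mathbf{E}}[\pi([0,u])]$ at continuity points $u$ of the right-hand side.

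Next I would upgrade the almost-sure vague convergence $\pi^{(n)}\stackrel{v}{\to}\pi$ supplied by Lemma~\ref{LemmaConvergenceDepthMeas} to almost-sure weak convergence on $\mathbb{R}_+$. Both measures are probability measures of total mass~$1$, so the only issue to rule out is escape of mass to infinity. For any $\varepsilon>0$ I would pick $R$ large enough that $\pi([0,R-1])\ge 1-\varepsilon$, and then test the vague convergence against a nonnegative continuous function that equals $1$ on $[0,R-1]$ and vanishes off $[0,R]$ to obtain $\liminf_n \pi^{(n)}([0,R])\ge 1-\varepsilon$. Combined with vague convergence against arbitrary compactly supported continuous functions, this yields weak convergence on $\mathbb{R}_+$.

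Finally, almost-sure weak convergence gives, by the Portmanteau theorem, $\pi^{(n)}([0,u])\to \pi([0,u])$ almost surely at every $u$ with $\pi(\{u\})=0$ a.s. To identify such $u$, note that if $u$ is a continuity point of $F(u):=\widetilde{\mathbf{E}}[\pi([0,u])]$, then $\widetilde{\mathbf{E}}[\pi(\{u\})] = F(u)-F(u^-) = 0$, and so $\pi(\{u\})=0$ almost surely. Since $\pi^{(n)}([0,u])\in[0,1]$ is uniformly bounded, bounded convergence closes the argument. The only mildly delicate step is the upgrade from vague to weak convergence; tightness of $\{\pi^{(n)}\}$ is what drives this and requires the observation that the total mass of the limit matches that of the prelimits. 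Everything else is bookkeeping built on Lemma~\ref{LemmaConvergenceDepthMeas}.
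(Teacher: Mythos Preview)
Your proposal is correct and follows essentially the same route as the paper: both identify the claim as $\pi^{(n)}([0,u])\to\pi([0,u])$ and deduce it from Lemma~\ref{LemmaConvergenceDepthMeas}, then average with bounded/dominated convergence. The only cosmetic difference is that the paper works with the complement $\pi^{(n)}((u,\infty))=\sum_{c^{(n)}(i)>u}v_i^{(n)}$ and invokes directly that $\{\ell:\nu^{\alpha_\infty}(x_\ell)>u\}$ is a.s.\ finite (together with the atom-matching part of Lemma~\ref{LemmaConvergenceDepthMeas}), whereas you phrase the same step as an explicit vague-to-weak upgrade via tightness followed by Portmanteau; these are equivalent once one notes both $\pi^{(n)}$ and $\pi$ are probability measures.
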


\begin{proof} Note that
\begin{align*}
	\widetilde{\mathbf{E}}\left[ \widetilde{P}^{\omega, \lambda/n}\left( c^{(n)}\left(\widetilde{X}_{b_n}\right)\le u \right)\right] &= 1 - \widetilde{\mathbf{E}}\left[ \widetilde{P}^{\omega, \lambda/n}\left( c^{(n)}\left(\widetilde{X}_{b_n}\right) > u \right) \right]\\
	& = 1 - \widetilde{\mathbf{E}}\left[ \sum_{c^{(n)}(i) > u} v_i^{(n)} \right].
\end{align*}
However on our coupling, by Lemma~\ref{LemmaConvergenceDepthMeas} and using that $\{\ell: \nu^{\alpha_\infty}(x_\ell)>u\}$ is almost-surely finite, we obtain that
\begin{equation*}
	\lim_{n\to \infty} \sum_{c^{(n)}(i) > u} v_i^{(n)} = \sum_{\nu^{\alpha_\infty}(x_{\ell}) > u} v_\ell,
\end{equation*}
as long as $u \ne \nu^{\alpha_\infty}(x_{\ell}), \forall \ell$, which is true for Lebesgue almost-every $u$. Note that this is true otherwise the vague convergence of Lemma~\ref{LemmaConvergenceDepthMeas} would be violated. We conclude the proof by applying the dominated convergence theorem.
\end{proof}

Let us introduce the following set:
\[	N_n^{\alpha_\infty} \coloneqq \left\{i: i/n \in [-K, K], \,\, i \not\in T_n^{\alpha_\infty}, \,\, \exists j \in T_n^{\alpha_\infty} \text{ such that } |i-j| = 1\right\}.\]
By construction, the conductances $\{c(\{i, i + 1\})\}_{i \in N_n^{\alpha_\infty}}$ are i.i.d.\ and distributed as $c(\{0, 1\})$ conditional on $c(\{0, 1\}) \le d_{n, \infty}^{1 - \widehat{\delta}}$. Let us also define a family of i.i.d.\ random variables $\{\widehat{c}_i^{(n)}\}_{i \in \Z}$ distributed like $c(\{0, 1\})$ conditional on $c(\{0, 1\}) \le d_{n, \infty}^{1 - \widehat{\delta}}$. The next lemma guarantees that re-sampling the conductances in $N_n^{\alpha_\infty}$ does not affect the almost-sure convergence of Proposition~\ref{PropositionPoissonPoint}.

\begin{lemma}\label{LemmaNeighboursSmall}
The following limits hold almost-surely
\begin{equation}\label{eqn:NeighSmall1}
	\frac{1}{d_{n, \infty}}\sum_{i \in N_n^{\alpha_\infty}} c(\{i, i + 1\}) \stackrel{n \to \infty}{\to} 0,  \qquad  \frac{1}{d_{n, 0}}\sum_{i \in N_n^{\alpha_\infty}} \frac{1}{c(\{i, i + 1\})} \stackrel{n \to \infty}{\to} 0
\end{equation}
and
\begin{equation}\label{eqn:NeighSmall2}
	\frac{1}{d_{n, \infty}}\sum_{i \in [-2K\ceil{n^{3/4}}, 2K\ceil{n^{3/4}}]} \widehat{c}_i^{(n)} \stackrel{n \to \infty}{\to} 0, \qquad \frac{1}{d_{n, 0}}\sum_{i \in [-2K\ceil{n^{3/4}}, 2K\ceil{n^{3/4}}]} \frac{1}{\widehat{c}_i^{(n)}} \stackrel{n \to \infty}{\to} 0.
\end{equation}
\end{lemma}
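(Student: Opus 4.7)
The plan rests on two general facts. First, Lemma~\ref{LemmaGoodSeparationIndep} gives that almost surely, for all $n$ large enough, $\mathcal{T}_n$ holds, so the traps and walls are pairwise separated by at least $n^{1/4}$. In particular, any $i \in N_n^{\alpha_\infty}$ lies within distance $1$ of some $j \in T_n^{\alpha_\infty}$ and hence, on $\mathcal{T}_n$, cannot simultaneously belong to $T_n^{\alpha_0}$; this gives both $c(\{i,i+1\}) \le d_{n,\infty}^{1-\widehat{\delta}}$ and $r(\{i,i+1\}) \le d_{n,0}^{1-\widehat{\delta}}$ uniformly in $i \in N_n^{\alpha_\infty}$. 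Second, writing $M = d_{n,\infty}^{1-\widehat{\delta}}$, the event $\{i \in N_n^{\alpha_\infty}\}$ factors as $\{c(\{i,i+1\}) \le M\}$ intersected with $\{c(\{i-1,i\}) > M \text{ or } c(\{i+1,i+2\}) > M\}$, and by the i.i.d.\ structure of the environment these two events are independent.

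For \eqref{eqn:NeighSmall1} the independence yields
\[
\widetilde{\mathbf{E}}\Big[\sum_{i \in N_n^{\alpha_\infty}} c(\{i,i+1\})\Big] \;\le\; (2Kn+1) \cdot \widetilde{\mathbf{E}}\big[c\,\mathds{1}_{\{c \le M\}}\big] \cdot 2\,\widetilde{\mathbf{P}}(c>M),
\]
and Karamata's theorem for the partial moment of a heavy-tailed variable gives $\widetilde{\mathbf{E}}[c\,\mathds{1}_{\{c \le M\}}] \sim C M^{1-\alpha_\infty}L_\infty(M)$, while the tail assumption gives $\widetilde{\mathbf{P}}(c > M) = L_\infty(M) M^{-\alpha_\infty}$. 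Plugging $M = d_{n,\infty}^{1-\widehat{\delta}}$ and dividing by $d_{n,\infty}$, the expectation is polynomially small in $n$ for $\widehat{\delta}$ chosen small enough. The companion bound for $\sum r(\{i,i+1\})/d_{n,0}$ is entirely symmetric: on $\mathcal{T}_n$ we have the deterministic upper bound $r \le d_{n,0}^{1-\widehat{\delta}}$, and Karamata applied to $r$ (using $\alpha_0$) produces the analogous polynomial decay.

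For \eqref{eqn:NeighSmall2} the $\widehat{c}_i^{(n)}$ are truly i.i.d., and Karamata again gives $\widetilde{\mathbf{E}}[\widehat{c}^{(n)}] \sim CM^{1-\alpha_\infty}L_\infty(M)$, so the normalised first sum has expectation of order $n^{-1/(4\alpha_\infty)+o(1)}$. The resistance sum $\sum 1/\widehat{c}_i^{(n)}$ is more delicate because $\widetilde{\mathbf{E}}[1/\widehat{c}^{(n)}] = \infty$ (the upper tail of $r$ is not suppressed by the conditioning $c \le M$). I will remedy this by truncating each resistance at a carefully chosen level $T$ with $n^{3/(4\alpha_0)} \ll T \ll d_{n,0}$: a union bound using the tail $\widetilde{\mathbf{P}}(1/\widehat{c}^{(n)} > T) \sim L_0(T)T^{-\alpha_0}$ shows that, with probability tending to one polynomially fast, no $1/\widehat{c}_i^{(n)}$ exceeds $T$, and on this event Karamata applied to the truncated mean yields polynomial decay of the normalised sum.

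The principal obstacle is that these polynomial-in-$n$ expectation bounds are \emph{not} summable, so direct Borel--Cantelli fails. I will resolve this exactly as in the second half of the proof of Lemma~\ref{LemmaGoodSeparationIndep}: along a sparse subsequence $n_k = \exp((\log k)^2)$ the probabilities become summable, giving almost-sure convergence along $\{n_k\}$; the values between $n_k$ and $n_{k+1}$ (with $n_{k+1}/n_k \to 1$) will be controlled by uniform block bounds. For \eqref{eqn:NeighSmall1} the block bound is obtained by enlarging $N_n^{\alpha_\infty}$ to a set defined using the largest range and the weakest cutoff over $n \in [n_k, n_{k+1}]$, in direct analogy with the $\widebar{\mathcal{T}}_\ell$ device. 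For \eqref{eqn:NeighSmall2}, I will couple the $\widehat{c}_i^{(n)}$ across $n$ via the inverse-CDF method, making $n \mapsto \widehat{c}_i^{(n)}$ monotone non-decreasing (hence $n \mapsto 1/\widehat{c}_i^{(n)}$ non-increasing), which furnishes the monotonicity needed for the block bound to pass.
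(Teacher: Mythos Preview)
Your approach is sound and would yield a correct proof, but it differs substantially from the paper's in both halves of the lemma.

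For \eqref{eqn:NeighSmall1}, the paper does not compute expectations at all. Instead it observes that on $\mathcal{T}_n$ one has $N_n^{\alpha_\infty}\subseteq (T_n^{\alpha_\infty})^c$ and $N_n^{\alpha_\infty}\subseteq (T_n^{\alpha_0})^c$, and that the sums $\tfrac{1}{d_{n,\infty}}\sum_{i\notin T_n^{\alpha_\infty}} c(\{i,i+1\})$ and $\tfrac{1}{d_{n,0}}\sum_{i\notin T_n^{\alpha_0}} r(\{i,i+1\})$ are precisely the ``small-atom'' contributions already shown to vanish almost surely in the proof of Proposition~\ref{PropositionPoissonPoint} (the estimates \eqref{eqn:SmallDiscrete}--\eqref{eqn:VerySmallDiscrete}). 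Since that proposition was established under the explicit coupling with almost-sure convergence, \eqref{eqn:NeighSmall1} follows immediately with no fresh subsequence or block argument. Your Karamata-plus-block route works but re-derives from scratch something the paper can simply quote.

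For \eqref{eqn:NeighSmall2}, the paper first dominates $\widehat{c}_i^{(n)}\le \widetilde{c}_i$ (and $1/\widehat{c}_i^{(n)}\le \widetilde{r}_i$) by variables that do \emph{not} depend on $n$, removing the need for your inverse-CDF coupling across $n$. It then applies the Fuk--Nagaev inequality to bound $\widetilde{\mathbf{P}}(\sum_{i\le 4Kn^{3/4}}\widetilde{c}_i>d_{n,\infty}^{1-\widehat{\delta}/2})$ directly, which handles the heavy-tailed resistance sum without your separate truncation-at-$T$ device; the monotonicity trick from Lemma~\ref{LemmaGoodSeparationIndep} is then trivial because the dominating sums are genuinely monotone in $n$. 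Your route trades the one exogenous tool (Fuk--Nagaev) for a more elementary but longer argument: the truncation level $T$ with $n^{3/(4\alpha_0)}\ll T\ll d_{n,0}$ is chosen correctly, and the inverse-CDF coupling does give the monotonicity you need, but the block bound for \eqref{eqn:NeighSmall1} requires juggling both the $n$-dependent set $N_n^{\alpha_\infty}$ and the $\mathcal{T}_n$-implied truncation simultaneously, which is fiddlier than you indicate. Both approaches are valid; the paper's is shorter and leans on existing machinery, yours is more self-contained.
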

\begin{proof}
Firstly, let us focus on the terms in \eqref{eqn:NeighSmall1}. On the event $\mathcal{T}_n$, defined at \eqref{def:Tn}, we get that almost-surely
\begin{equation*}
	\sum_{i \in N_n^{\alpha_\infty}} c(\{i, i + 1\}) \le \sum_{i \not \in T_n^{\alpha_\infty}} c(\{i, i + 1\}) \quad \textnormal{and} \quad\sum_{i \in N_n^{\alpha_\infty}} \frac{1}{c(\{i, i + 1\})} \le \sum_{i \not \in T_n^{\alpha_0}} \frac{1}{c(\{i, i + 1\})},
\end{equation*}
for all $n$ large enough. This observation makes the proof symmetric for the conductances and the resistances. Let us just present the first one. One may also note that for all $\delta > 0$ and all $n$ large enough the indices $i \not \in T_n^{\alpha_0}$ are contained in $I^{(n), \alpha_\infty}_0 \backslash I^{(n), \alpha_\infty}_\delta$ (restricted to $[-K, K]$), as defined in \eqref{eqn:SetLargeIncrements}. Using equations \eqref{eqn:SmallDiscrete}, \eqref{eqn:SmallAtomsAreSmall} and \eqref{eqn:VerySmallDiscrete} we get that for all $\varepsilon>0$ and all $n$ large enough
\begin{equation*}
	\frac{1}{d_{n, \infty}} \sum_{i \not \in T_n^{\alpha_\infty}} c(\{i, i + 1\}) \le \varepsilon,
\end{equation*}
which concludes the proof of \eqref{eqn:NeighSmall1}.
	
Let us now prove \eqref{eqn:NeighSmall2}. First, we dominate the sums that appear in the statements with the sum of conductances (respectively resistances) that are not conditioned. This is needed because we aim to use the monotonicity trick already used in the proof of Lemma~\ref{LemmaGoodSeparationIndep}. The conditioning creates a problem in this case because as $n$ increases the conductances have more room to be large.
For every $\widehat{c}_{i}^{(n)}$, we can find a coupling with a $\widetilde{c}_{i}$ which is distributed as a standard conductance and the coupling is such that $\widetilde{c}_{i} \ge\widehat{c}_{i}^{(n)}$. In particular, we can suppose,  for all $n$ and $i$,
\[	\widetilde{c}_{i} = B_{i}^{(n)} \widehat{c}_{i}^{(n)} + (1 - B_{i}^{(n)}) \widebar{c}_{i}^{(n)} \ge \widehat{c}_{i}^{(n)},\]
where $\widebar{c}_{i}^{(n)}$ is distributed as $c(\{0, 1\})$ conditional on $c(\{0, 1\}) > d_{n, \infty}^{1 - \widehat{\delta}}$, and $B_{i}^{(n)}$ is a Bernoulli random variable (independent of $\widehat{c}_{i}^{(n)}$ and $\widebar{c}_{i}^{(n)}$) with parameter $\mathbf{P}(c(\{0, 1\}) \le d_{n, \infty}^{1 - \widehat{\delta}})$. Thus, in order to prove \eqref{eqn:NeighSmall2}, it will suffice to show that
\begin{equation*}
	\lim_{n} \frac{1}{d_{n, \infty}}\sum_{j = 1}^{4K\ceil{n^{3/4}}} \widetilde{c}_{j} = 0, \quad \mathbf{P}\text{-a.s..}
\end{equation*}
For the sum of the inverses appearing in the statement, we can follow the same procedure, defining a family $\{\widetilde{r}_i\}_{i = 1}^{4K\ceil{n^{3/4}}}$, where $\widetilde{r}_1$ is distributed as $r(\{0, 1\})$ conditional on $r(\{0, 1\}) \ge 1$; this conditioning is necessary because $1/\widehat{c}_1^{(n)}$ is conditioned on $\widehat{c}_1^{(n)} \le d_{n, \infty}^{1 - \widehat{\delta}}$. Let us define the events
\[		\widehat{C}_n^{\alpha_\infty} = \left\{ \sum_{j = 1}^{4K\ceil{n^{3/4}}} \widetilde{c}_{i} \le d_{n, \infty}^{1 - \widehat{\delta}/2} \right\}\text{ and } \widehat{C}_n^{\alpha_0} = \left\{ \sum_{j = 1}^{4K\ceil{n^{3/4}}} \widetilde{r}_{i} \le d_{n, 0}^{1 - \widehat{\delta}/2}   \right\}.\]
We want to apply the Fuk-Nagaev inequality \cite[Theorem 5.1]{BergerFukNagaev}, which gives the following property of random variables with regularly varying tails. Let $\mathscr{S}(m) \coloneqq \sum_{i = 1}^m X_i$ and $\mathscr{M}(m) \coloneqq \max_{i \in \{1,\cdots,m\}} X_i$, then there exists a constant $c>0$ such that, for all $y \le x$,
\begin{equation}\label{FN}
	P \left( \mathscr{S}(m) > x, \, \mathscr{M}(m) < y \right) {\le} \left( c m \frac{y}{x} L(y) y^{-\gamma}\right)^{x/y}.
\end{equation}
Let us use \eqref{FN} with $x = d_{n, \infty}^{1 - \widehat{\delta}/2}$, $y = d_{n, \infty}^{1 - \widehat{\delta}}$ and $m = 4K n^{3/4}$, and recall that a slowly varying function is eventually smaller than any polynomial. This implies that there exists $\nu>0$ such that, for $n$ large enough,
\begin{equation*}
	\widetilde{\mathbf{P}}\left( (\widehat{C}_n^{\alpha_\infty})^c \right) \le n^{-\nu} \text{ and }
	\widetilde{\mathbf{P}}\left( (\widehat{C}_n^{\alpha_0})^c \right) \le n^{-\nu}.
\end{equation*}
We can define the event
\begin{equation*}
	\widehat{\mathcal{C}}_\ell^{\alpha_\infty} \coloneqq \sum_{j = 1}^{4K\ceil{(2\ell)^{3/4}}} \widetilde{c}_{i} \le d_{\ell, \infty}^{1 - \widehat{\delta}/2},
\end{equation*}
and applying again \eqref{FN} implies that $\mathbf{P}(\widehat{\mathcal{C}}_\ell^{\alpha_\infty}) \ge 1 - \ell^{-\nu}$. We can now apply the monotonicity trick already used in the proof of Lemma~\ref{LemmaGoodSeparationIndep} by noticing that $ (\widehat{C}_n^{\alpha_\infty})^c \subseteq (\widehat{\mathcal{C}}_\ell^{\alpha_\infty})^c$ for all $n = \ell, \dots, 2\ell$, and get that $\widehat{C}_n^{\alpha_\infty}$ happens almost-surely for all $n$ large enough. The proof for the event $\widehat{C}_n^{\alpha_0}$ follows the same lines. We wish to highlight the fact that the application of the Fuk-Nagaev inequality is not affected by the conditioning of the $\widetilde{r}_i$ because it simply multiplies the tail probability by a constant.
\end{proof}

For simplicity, let us set the notation $c_i = c(\{i, i+1\})$ in what follows. Let us denote by $\widehat{\omega}$ the environment induced by substituting the variables $\{c_i\}_{i \in N_n^{\alpha_\infty}}$ with the conductances $\{\widehat{c}_i\}_{i \in \Z}$ (and $\widehat{\mathbf{P}}$ its law). Note that the distribution of $\widehat{\omega}$ is the same as the one of $\omega$. Using Lemma~\ref{LemmaNeighboursSmall}, we could replicate the procedure of Section~\ref{SectionCopuledSpaces} and get that Proposition~\ref{prop:QuenchedWeakConvergence} and Proposition~\ref{PropositionMarginalWithTraps} would still hold. Crucially, we also obtain that, for any atom $(x_{\ell}, v_{\ell}) \in \text{supp }\widetilde{\rho}(1)$, that, almost surely
\begin{equation}\label{eqn:BothEnvirnomentsConverge}
	\begin{split}
		\widetilde{P}^{\omega, \lambda}\left( \widetilde{Z}_{1} = x_{\ell} \right) &=\lim_{n \to \infty} \widetilde{P}^{\omega, \lambda/n}\left( \widetilde{X}_{b_n} \in \{j_{\ell}(n), j_{\ell}(n) + 1\} \right) \\
		&= \lim_{n \to \infty} \widetilde{P}^{\widehat{\omega}, \lambda/n}\left( \widetilde{X}_{b_n} \in \{j_{\ell}(n), j_{\ell}(n) + 1\} \right).
	\end{split}
\end{equation}
Recall $A^{v}(\delta) = \{\ell: v_\ell > \delta\}$, let $A^{v}_n(\delta)$ be its discrete counterpart in the environment $\omega$ (see \eqref{eqn:BoundSupAgingWithTraps3}) and $A^{\widehat{v}}_n(\delta)$ the one in the environment $\widehat{\omega}$.

Let us define the random variable $T \coloneqq \inf\{t \ge 0 : |\mathrm{range}(\widetilde{X}_t)| > 2\}$. Recall $\theta_t$ is the canonical time shift by $t$, then
\[	T^{(n)} \coloneqq \frac{1}{d_{n, \infty}} T \circ \theta_{b_n}.\]
Let us fix some further notation, let $\mathcal{L}_X(\cdot)$ be the Laplace transform of the random variable $X$. In particular, we recall that, for $\xi>0$,
\[\mathcal{L}_{\exp\left(\lambda \right)}\left(  \xi \right)=\frac{\lambda}{\lambda+\xi},\]
where $\exp(\lambda)$ here denotes an exponential random variable with parameter $\lambda$. Furthermore, let $d\widetilde{\mathbf{P}}(\cdot)$ be the measure associated with the distribution function $\widetilde{\mathbf{P}}(c_0 \le t)$. Finally, let $d\widetilde{F}$ be the measure associated with the distribution function $\widetilde{\mathbb{P}}^{K}(\nu^{\alpha_\infty}(\widetilde{Z}^{\lambda}_1) \le t)$. The following proposition aims to show that the distribution of the random variable $T^{(n)}$ converges to an exponential distribution of parameter with the correct parameter (in the sense of Proposition~\ref{PropositionInsideKSubaging}).

\begin{proposition}\label{PropositionLaplaceTransform}
	For every $\xi>0$,
	\begin{equation*}
		\widetilde{\mathbb{E}}^{\lambda/n, K}\left[ e^{- \xi T^{(n)}} \right] \stackrel{n\to \infty}{\to} \widetilde{\mathbb{E}}^{\lambda, K} \left[ \mathcal{L}_{\exp\left( \frac{A^0 + A^2}{2 A^1} \right)}\left(  \xi \right)\right],
	\end{equation*}
where $A_0$, $A_1$ and $A_2$ are defined as in Proposition~\ref{PropositionInsideKSubaging}. More explicitly, we have
\begin{equation*}
	\widetilde{\mathbb{E}}^{\lambda, K} \left[ \mathcal{L}_{\exp\left( \frac{A^0 + A^2}{2 A^1} \right)}\left(  \xi \right)\right] = \int_{0}^{\infty}  \int_{0}^\infty\int_{0}^\infty\int_{0}^\infty e^{-\xi s} \frac{t_1 + t_2}{2 u} e^{-s \frac{t_1 + t_2}{2 u}} ds d\widetilde{\mathbf{P}}(t_1) d\widetilde{\mathbf{P}}(t_2) d\widetilde{F}(u).
\end{equation*}
\end{proposition}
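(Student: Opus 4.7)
The plan is to use the Markov property at time $b_n$ to reduce the Laplace transform to an exit-time calculation from a two-vertex trap, then solve a $2\times 2$ linear system explicitly, and finally identify the joint law of the limiting constants via the re-sampling construction. Writing $v_i^{(n)} = \widetilde{P}^{\omega,\lambda/n}(\widetilde{X}_{b_n} = i)$ and $\tau_i$ for $T$ under $\widetilde{P}_i^{\omega,\lambda/n}$, the Markov property at $b_n$ yields
\begin{equation*}
\widetilde{\mathbb{E}}^{\lambda/n, K}\bigl[e^{-\xi T^{(n)}}\bigr] = \widetilde{\mathbf{E}}\left[\sum_{i/n \in [-K,K]} v_i^{(n)}\, \widetilde{E}^{\omega,\lambda/n}_i\bigl[e^{-\xi \tau_i / d_{n,\infty}}\bigr]\right].
\end{equation*}
By Proposition~\ref{PropositionMarginalWithTraps} and Lemma~\ref{LemmaConvergenceDepthMeas}, the mass $v_i^{(n)}$ concentrates asymptotically on the pairs $\{j_\ell(n), j_\ell(n)+1\}$ corresponding to atoms $(x_\ell, v_\ell) \in \mathcal{A}_K$ of $\widetilde{\rho}(1)$; truncating to atoms with $v_\ell > \delta$ and bounding the remainder by $1 - \sum_{v_\ell>\delta} v_\ell \to 0$ reduces the problem to analysing $\widetilde{E}^{\omega,\lambda/n}_{j_\ell(n)}[e^{-\xi \tau_{j_\ell(n)}/d_{n,\infty}}]$ (and the analogue starting at $j_\ell(n)+1$) for each fixed atom.

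For a fixed atom, $\tau_{j_\ell(n)}$ equals, up to an $O(1)$-additive correction occurring with probability $p_0 = O(d_{n,\infty}^{-1})$ (hence negligible after rescaling), the first exit time from the pair $\{j_\ell(n), j_\ell(n)+1\}$. Setting
\begin{equation*}
p_0 = \frac{c^{\lambda/n}(\{j_\ell(n)-1, j_\ell(n)\})}{c^{\lambda/n}(j_\ell(n))}, \qquad p_2 = \frac{c^{\lambda/n}(\{j_\ell(n)+1, j_\ell(n)+2\})}{c^{\lambda/n}(j_\ell(n)+1)},
\end{equation*}
the Laplace transforms $\phi_0^{(n)}(\xi), \phi_1^{(n)}(\xi)$ of the pair-exit time, obtained by conditioning on the first $\mathrm{Exp}(1)$-holding time and the first jump destination, satisfy
\begin{equation*}
(1+\xi)\phi_0^{(n)} = p_0 + (1-p_0)\phi_1^{(n)}, \qquad (1+\xi)\phi_1^{(n)} = p_2 + (1-p_2)\phi_0^{(n)}.
\end{equation*}
Solving explicitly, replacing $\xi$ by $\xi/d_{n,\infty}$, and using $d_{n,\infty}p_0 \to A^0/A^1$ and $d_{n,\infty}p_2 \to A^2/A^1$ (the tilt factors $e^{\pm 2\lambda/n}$ cancelling in both ratios), a short expansion yields
\begin{equation*}
\phi_j^{(n)}(\xi/d_{n,\infty}) \xrightarrow{n\to\infty} \frac{(A^0+A^2)/(2A^1)}{\xi + (A^0+A^2)/(2A^1)} = \mathcal{L}_{\exp((A^0+A^2)/(2A^1))}(\xi), \quad j \in \{0,1\},
\end{equation*}
where $A^1 = \nu^{\alpha_\infty}(x_\ell)$ and $A^0, A^2$ are the almost-sure limits of the two neighbouring conductances.

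To identify the joint law of $(A^0, A^1, A^2)$ as in the statement, I would invoke the re-sampling construction around \eqref{eqn:BothEnvirnomentsConverge}: replacing the conductances indexed by $N_n^{\alpha_\infty}$ with independent copies $\widehat{c}_i^{(n)}$ yields an environment $\widehat{\omega}$ of the same law as $\omega$, under which, by Lemma~\ref{LemmaNeighboursSmall}, the convergence in Proposition~\ref{PropositionMarginalWithTraps} still holds almost-surely. Crucially, in $\widehat{\omega}$ the neighbour conductances are by construction independent of the big conductances $\nu^{\alpha_\infty}$ and of the walker's marginal law at time $b_n$, and their distribution (conditional on being at most $d_{n,\infty}^{1-\widehat{\delta}}$) converges to the unconditioned law under $\widetilde{\mathbf{P}}$; hence the limiting triplet $(A^0, A^1, A^2)$ has exactly the independence structure claimed. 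Summing over atoms (using $\sum_\ell v_\ell = 1$ by atomicity of $\widetilde{\rho}(1)$), applying the dominated convergence theorem (Laplace transforms are bounded by $1$), and rewriting the resulting expectation as the announced integral via Fubini completes the proof. The main obstacle is that the walker's marginal concentrates on the big conductances while the neighbouring conductances must remain distributed as unconditioned ones in the limit; this tension is resolved by combining the separation property of Lemma~\ref{LemmaGoodSeparationIndep} with the re-sampling trick.
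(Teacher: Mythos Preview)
Your proposal is correct and follows essentially the same architecture as the paper's proof: Markov property at time $b_n$, truncation to atoms $v_\ell>\delta$ (with remainder controlled by $1-\sum_{v_\ell>\delta}v_\ell$), an exit-time computation from the two-vertex trap, and the re-sampling of neighbour conductances to obtain the claimed independence. The one genuine difference is computational: you solve the $2\times 2$ first-step system for $\phi_0,\phi_1$ directly, whereas the paper (in Lemma~\ref{LemmaQuenchedLaplaceTransform}) represents $T^+$ as a geometric sum of unit exponentials and manipulates the resulting Laplace transform; both routes lead to the same limit and yours is arguably cleaner.

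One point of phrasing to tighten: you write that $A^0,A^2$ are ``the almost-sure limits of the two neighbouring conductances'', but in the re-sampled environment $\widehat\omega$ the conductances $\widehat c_{j_\ell(n)\pm1}$ depend on $n$ through the conditioning $\{c\le d_{n,\infty}^{1-\widehat\delta}\}$ and do not converge almost surely; the convergence is only in distribution (this is exactly the content of Lemma~\ref{LemmaWeakCVNeighbours}). Concretely, the limit $\phi_j^{(n)}(\xi/d_{n,\infty})\to \mathcal L_{\exp((A^0+A^2)/(2A^1))}(\xi)$ should be stated after taking $\widehat{\mathbf E}[\,\cdot\mid\omega]$, not pointwise in $\widehat\omega$. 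You clearly understand this, since you invoke the re-sampling in the next paragraph, but the logical order should be: pass to $\widehat\omega$ first, then do the $2\times2$ computation with the $\widehat c$'s held fixed, then take conditional expectation over the $\widehat c$'s using their distributional convergence.
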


Before proving this result we need to show that the random variable $T^{(n)}$ is well-approximated by an exponential random variable whose parameter depends on the discrete environment.

\begin{lemma}\label{LemmaQuenchedLaplaceTransform}
For every $\xi>0$, for all $\delta>0$, for almost all realisations of $\omega$ we have that
\begin{flalign*}
\widehat{\mathbf{E}}&\left[ \sum_{\ell \in A^{v}(\delta)} \bigg| \widehat{v}^{(n)}_{j_{\ell}(n)} \widetilde{E}^{\widehat{\omega}, \lambda/n}_{j_\ell(n)} \left[ e^{-\xi \frac{T}{d_{n, \infty}}} \right] + \widehat{v}^{(n)}_{j_{\ell}(n) + 1} \widetilde{E}^{\widehat{\omega}, \lambda/n}_{j_\ell(n) + 1} \left[ e^{-\xi \frac{T}{d_{n, \infty}}} \right]- v_{\ell}\mathcal{L}_{\exp\left(\frac{\widehat{c}_{j_\ell(n) - 1} + \widehat{c}_{j_\ell(n) + 1}}{2\nu^{\alpha_\infty}(x_\ell)} \right)}\left(  \xi \right)\bigg| \Big| \omega \right]
\end{flalign*}
converges to 0 as $n\rightarrow\infty$.
\end{lemma}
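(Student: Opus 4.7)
The plan is to reduce the sum inside the expectation to three controllable pieces via the triangle inequality, show each piece vanishes under the coupling, and then apply bounded convergence to pass the limit through $\widehat{\mathbf{E}}[\cdot\mid\omega]$. More precisely, denote $\lambda_\ell = (\widehat{c}_{j_\ell(n) - 1} + \widehat{c}_{j_\ell(n) + 1})/(2\nu^{\alpha_\infty}(x_\ell))$ and bound each term of the sum by
\[
\sum_{i=0,1} \widehat{v}^{(n)}_{j_\ell(n)+i}\, \big|\widetilde{E}^{\widehat{\omega},\lambda/n}_{j_\ell(n)+i}[e^{-\xi T/d_{n,\infty}}] - \mathcal{L}_{\exp(\lambda_\ell)}(\xi)\big| + \big|\widehat{v}^{(n)}_{j_\ell(n)} + \widehat{v}^{(n)}_{j_\ell(n)+1} - v_\ell\big|,
\]
since $\mathcal{L}_{\exp(\lambda_\ell)}(\xi) \le 1$. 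Because $\widetilde{\rho}(1)$ is purely atomic and of total mass bounded by one, the index set $A^v(\delta)$ is $\widehat{\mathbf{P}}$-almost surely finite, and it suffices to handle each $\ell$ separately before summing.

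The last piece $|\widehat{v}^{(n)}_{j_\ell(n)} + \widehat{v}^{(n)}_{j_\ell(n)+1} - v_\ell|$ vanishes almost surely by \eqref{eqn:BothEnvirnomentsConverge} (the analogue of Proposition~\ref{PropositionMarginalWithTraps} on the modified environment $\widehat{\omega}$), and bounded convergence takes care of its $\widehat{\mathbf{E}}$. The main task is then the Laplace-transform approximation. Let $T^* := \inf\{t:\widetilde{X}_t \notin \{j_\ell(n), j_\ell(n)+1\}\}$ and consider the two-state continuous-time chain on $\{j_\ell(n), j_\ell(n)+1\}$ with unit holding times and exit probabilities $\alpha_L = \widehat{c}_{j_\ell(n)-1}/c(j_\ell(n))$ and $\alpha_R = \widehat{c}_{j_\ell(n)+1}/c(j_\ell(n)+1)$ at each step. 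Solving the $2\times 2$ linear system for the quenched Laplace transforms yields an explicit formula; for instance,
\[
\widetilde{E}^{\widehat{\omega},\lambda/n}_{j_\ell(n)}[e^{-\xi T^{*}/d_{n,\infty}}] = \frac{(1-\alpha_L)\alpha_R + (1+\xi/d_{n,\infty})\alpha_L}{(1+\xi/d_{n,\infty})^2 - (1-\alpha_L)(1-\alpha_R)},
\]
with an analogous formula starting from $j_\ell(n)+1$.

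Next, use Proposition~\ref{PropositionPoissonPoint} (under the coupling) to see $c_{j_\ell(n)}/d_{n,\infty} \to \nu^{\alpha_\infty}(x_\ell)$ almost surely, while the resampled neighbours satisfy $\widehat{c}_{j_\ell(n)\pm 1} \le d_{n,\infty}^{1-\widehat{\delta}} = o(d_{n,\infty})$, so $c(j_\ell(n)),c(j_\ell(n)+1) \sim c_{j_\ell(n)}$ and $\alpha_L, \alpha_R \to 0$ with $d_{n,\infty}(\alpha_L+\alpha_R)/2 \to \lambda_\ell$. Plugging into the explicit formula and keeping only leading-order terms in the numerator and denominator shows both Laplace transforms converge, $\widehat{\mathbf{P}}$-almost surely, to $\lambda_\ell/(\xi+\lambda_\ell) = \mathcal{L}_{\exp(\lambda_\ell)}(\xi)$. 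Finally, one replaces $T^*$ by $T$: since the walker, starting from one trap site, jumps to the other with probability $1-\alpha_L$ (resp.\ $1-\alpha_R$) going to $1$, we have $T = T^*$ with $\widehat{\mathbf{P}}$-probability tending to one, so the Laplace transforms share the same limit. Bounded convergence (each term is bounded by $2$) then upgrades the pointwise convergence to convergence of $\widehat{\mathbf{E}}[\cdot\mid\omega]$.

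The main obstacle is the step identifying $T$ with $T^*$: the definition of $T$ counts the first time three distinct sites are visited, so one must rule out the scenario where the walker immediately leaves the trap (e.g., jumps from $j_\ell(n)$ to $j_\ell(n)-1$ before visiting $j_\ell(n)+1$), after which $T$ measures a different event than $T^*$. This scenario has probability at most $\alpha_L$ (or $\alpha_R$), which vanishes, making the two random variables asymptotically equivalent; once this is handled, the computation reduces to the classical geometric-sum-of-exponentials description of escape from a deep trap.
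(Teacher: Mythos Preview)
Your proof is correct and follows essentially the same strategy as the paper: reduce to a single atom (since $A^v(\delta)$ is a.s.\ finite), compare $T$ with the exit time $T^*$ from the two-site trap $\{j_\ell(n),j_\ell(n)+1\}$ via the vanishing probability $\alpha_L$ (the paper's $p_n^*$) of an immediate ``wrong-way'' jump, compute the Laplace transform of $T^*$, match it to $\mathcal{L}_{\exp(\lambda_\ell)}(\xi)$, combine with $\widehat v^{(n)}_{j_\ell(n)}+\widehat v^{(n)}_{j_\ell(n)+1}\to v_\ell$, and finish by dominated convergence.

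The only methodological difference is in how the Laplace transform of $T^*$ is obtained. The paper sandwiches $T^+$ (their notation for your $T^*$) between a geometric sum of i.i.d.\ unit exponentials and the same sum plus one extra exponential, then uses the closed form for the transform of such a sum. You instead write down and solve the $2\times2$ linear system for $\phi_0,\phi_1$ directly. Both lead to the same leading-order expression $\dfrac{d_{n,\infty}(\alpha_L+\alpha_R)}{2\xi+d_{n,\infty}(\alpha_L+\alpha_R)}$, and both then use $\widehat c_{j_\ell(n)\pm1}\le d_{n,\infty}^{1-\widehat\delta}$ together with $c_{j_\ell(n)}/d_{n,\infty}\to\nu^{\alpha_\infty}(x_\ell)$ to conclude. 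Your route is arguably cleaner since it gives an exact identity rather than stochastic bounds; the paper's route makes the ``geometric number of crossings of a deep edge'' picture explicit. One small phrasing point: since $\lambda_\ell$ depends on $n$ through $\widehat c_{j_\ell(n)\pm1}$, what you are really proving is that the \emph{difference} $\big|\widetilde E^{\widehat\omega,\lambda/n}_{j_\ell(n)+i}[e^{-\xi T^*/d_{n,\infty}}]-\mathcal{L}_{\exp(\lambda_\ell)}(\xi)\big|\to0$, not convergence to a fixed limit; the paper is careful about this (see their bound below \eqref{eqn:AlmostExponentialLaplaceTransform}), and your leading-order computation supports it once you note the error terms are uniform in the resampled $\widehat c$'s because of the deterministic cap $d_{n,\infty}^{1-\widehat\delta}$.
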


\begin{proof} Note that, for all $n$ large enough, $j_{\ell}(n) \in T_n^{\alpha_\infty}$, which implies $\{j_{\ell}(n) - 1, j_{\ell}(n) + 1\} \in N_n^{\alpha_\infty}$, Then $\widehat{c}_{i} \le d_{n, \infty}^{1 - \widehat{\delta}}$ for $i = \{j_{\ell}(n) - 1, j_{\ell}(n) + 1\}$. Furthermore, we have that $\widehat{c}_{j_{\ell}(n) - 1}, \widehat{c}_{j_{\ell}(n) + 1}$ are independent of $\omega$. We already know that
\begin{equation}\label{eqn:FactsBeginningProof}
	\widehat{v}_{j_{\ell}(n)} + \widehat{v}_{j_{\ell}(n) + 1}\to v_\ell \quad \text{and} \quad d_{n, \infty}^{-1}\widehat{c}_{j_{\ell}(n)} = d_{n, \infty}^{-1} c_{j_{\ell}(n)} \to \nu^{\alpha_\infty}(x_{\ell}).
\end{equation}
For simplicity of notation, let us set $j_\ell(n) = 0$, the general case being an easy adaptation. Let us define $T^{+} \coloneqq \inf\{t \ge 0: \widetilde{X}_t \not \in \{0,  1\}\}$ and  $p_n^*(0) = \widetilde{P}^{\widehat{\omega}, \lambda/n}_{0}(\tau_{-1} < \tau_{1})$. Observe that, under the quenched law,
\[\widetilde{E}_x^{\widehat{\omega}, ^{\lambda/n}} \left[ e^{-\xi \frac{T^{+}}{d_{n, \infty}}} \right]- p_n^*(x)\le 	\widetilde{E}^{\widehat{\omega}, ^{\lambda/n}}_x \left[ e^{-\xi \frac{T}{d_{n, \infty}}} \right] \le  \widetilde{E}_x^{\widehat{\omega}, ^{\lambda/n}} \left[ e^{-\xi \frac{T^{+}}{d_{n, \infty}}} \right]+ p_n^*(x)\]
Start by noting that
\begin{equation*}
	p_n^*(0) \le n^{-\widehat{\delta}} \to 0 \quad \mathrm{as} \,\, n \to \infty.
\end{equation*}
It remains to control the Laplace transform of $T^+$. Define
\begin{equation*}
p_1 = \frac{\widehat{c}^{\lambda/n}_{-1}}{\widehat{c}^{\lambda/n}_{-1} + c^{\lambda/n}_0}\text{,  } p_{2} = \frac{\widehat{c}^{\lambda/n}_{1}}{\widehat{c}^{\lambda/n}_{1} + c^{\lambda/n}_{0}}\text{ and } p_n = 1 - (1 - p_1)(1-p_2).
 \end{equation*}
Under the measure $\widetilde{P}^{\widehat{\omega}, \lambda/n}_{0}$, we have that
\begin{equation*}
	\sum_{i = 1}^{Y^* } \left( e_{2i - 1} + e_{2i} \right) \preccurlyeq T^{+} \preccurlyeq \left(\sum_{i = 1}^{Y^* }
	 \left(e_{2i - 1} + e_{2i}\right) + e_0 \right),
\end{equation*}
where $\preccurlyeq$ denotes stochastic domination, $\{e_i\}_{i \ge 0}$ is a family of i.i.d.~exponential random variables of mean $1$ (independent of everything else),  $Y_1$ and $Y_2$ are geometric random variables of parameters $p_1$ and $p_2$ respectively, and $Y^*= \min\{Y_1, Y_2\} \sim \mathrm{Geom}(p_n)$. All these geometric random variables take values in $\{0,1,2,\dots\}$.
We can discard $e_0$ in the sum as $d_{n, \infty}^{-1} e_0 \to 0$ in probability. A straightforward computation yields that
\begin{equation*}
	p_n  =\frac{c^{\lambda/n}_0(\widehat{c}^{\lambda/n}_{-1} + \widehat{c}^{\lambda/n}_{1})}{(c^{\lambda/n}_0 + \widehat{c}^{\lambda/n}_{-1})(c^{\lambda/n}_0 + \widehat{c}^{\lambda/n}_{1})} + \frac{\widehat{c}^{\lambda/n}_{-1} \widehat{c}^{\lambda/n}_{1}}{(c^{\lambda/n}_0 + \widehat{c}^{\lambda/n}_{-1})(c^{\lambda/n}_0 + \widehat{c}^{\lambda/n}_{1})}.
\end{equation*}
Furthermore, by using the exact form of the Laplace transform of a geometric sum of i.i.d.~random variables we get that
\begin{equation*}
	\widetilde{E}_{0}^{\widehat{\omega}, \lambda/n} \left[ e^{-\xi \frac{T^{+}}{d_{n, \infty}}} \right] = \frac{p_n d_{n, \infty} \left(\frac{\xi + d_{n, \infty}}{d_{n, \infty}}\right)^2}{p_n d_{n, \infty} + 2 \xi + \xi^2 d_{n, \infty}^{-1}} + o^{(n)}(1).
\end{equation*}
We now focus on showing that
\begin{equation} \label{eqn:AlmostExponentialLaplaceTransform}
	\left|\frac{p_n d_{n, \infty} \left(\frac{\lambda + d_{n, \infty}}{d_{n, \infty}}\right)^2}{p_n d_{n, \infty} + 2 \lambda + \lambda^2 d_{n, \infty}^{-1}} - \mathcal{L}_{\exp\left(\frac{\widehat{c}_{ - 1} + \widehat{c}_{1}}{2\nu^{\alpha_\infty}(x_\ell)} \right)}\left(  \xi \right) \right| \stackrel{n \to \infty}{\to} 0.
\end{equation}
Let us make the following observations, using the facts which were stated in \eqref{eqn:FactsBeginningProof} and immediately above that:
\begin{enumerate}
	\item $ \displaystyle \left|p_n d_{n, \infty} -  \frac{\widehat{c}_{- 1} + \widehat{c}_{1}}{\nu^{\alpha_\infty}(x_\ell)} \right| = \left(\widehat{c}_{- 1} + \widehat{c}_{ + 1}\right) o^{(n)}(1)$,
	\item $\displaystyle \left| \left(\frac{\xi + d_{n, \infty}}{d_{n, \infty}}\right)^2 - 1 \right| = o^{(n)}(n^{-1})$,
	\item $\displaystyle \left| p_n d_{n, \infty} + 2 \xi + \xi^2 d_{n, \infty}^{-1} - \frac{\widehat{c}_{- 1} + \widehat{c}_{ + 1}}{\nu^{\alpha_\infty}(x_\ell)} - 2 \xi \right| = \left(\widehat{c}_{- 1} + \widehat{c}_{ + 1}\right) o^{(n)}(1)$.
\end{enumerate}
By plugging these estimates in to the left-hand side of \eqref{eqn:AlmostExponentialLaplaceTransform} we get that it is bounded from above by
\begin{equation*}
	\frac{\left( \frac{\widehat{c}_{- 1} + \widehat{c}_{ 1}}{\nu^{\alpha_\infty}(x_\ell)} + 2\xi \right) o^{(n)}(1)}{\left( \frac{\widehat{c}_{- 1} + \widehat{c}_{ 1}}{\nu^{\alpha_\infty}(x_\ell)} + 2\xi \right)^2 (1 - o^{(n)}(1))},
\end{equation*}
and this quantity goes to $0$ as $n \to \infty$ for all $\xi>0$. Observe that, thus far we showed that
\begin{equation*}
	\limsup_{n} \left| \widetilde{E}^{\widehat{\omega}, \lambda/n}_{0} \left[ e^{-\xi \frac{T}{d_{n, \infty}}} \right] - \mathcal{L}_{\exp\left(\frac{\widehat{c}_{ - 1} + \widehat{c}_{ 1}}{2\nu^{\alpha_\infty}(x_\ell)} \right)}\left(  \xi \right)\right| = 0.
\end{equation*}
By mirroring this argument we also get that
\begin{equation*}
	\limsup_{n} \left| \widetilde{E}^{\widehat{\omega}, \lambda/n}_{1} \left[ e^{-\xi \frac{T}{d_{n, \infty}}} \right] - \mathcal{L}_{\exp\left(\frac{\widehat{c}_{ - 1} + \widehat{c}_{ 1}}{2\nu^{\alpha_\infty}(x_\ell)} \right)}\left(  \xi \right)\right| = 0.
\end{equation*}
This, together with $\widehat{v}_{j_{\ell}(n)} + \widehat{v}_{j_{\ell}(n) + 1}\to v_\ell$, is enough to conclude that each term in the sum over $A^{v}(\delta)$ goes to $0$ almost surely. However, since we also know that $A^{v}(\delta)$ is almost-surely finite, this observation and an application of the dominated convergence theorem finishes the proof.
\end{proof}

\begin{lemma}\label{LemmaWeakCVNeighbours}
For all $\delta>0$, conditional on $\omega$, if we define $A^{v}(\delta) = \{\ell: v_\ell > \delta\}$, the collection
	\begin{equation*}
		\left(\{\widehat{c}_{j_\ell(n)-1}, \widehat{c}_{j_\ell(n) + 1} \}\right)_{\ell \in A^{v}(\delta) }
	\end{equation*}
converges in distribution to an i.i.d.\ collection of random variables distributed as
$\{c_0, c_1\}$, where $c_0, c_1$ are distributed as two independent conductances under $\widetilde{\mathbf{P}}$.
\end{lemma}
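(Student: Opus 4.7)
\textbf{Proof plan for Lemma \ref{LemmaWeakCVNeighbours}.}

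The plan is to reduce the claim to a statement about a deterministic (for a.e.\ $\omega$) finite collection of indices, and then invoke independence and a standard conditioning argument. First, observe that since $\widetilde\rho(1)$ is a probability measure on $[-K,K]$, the set $A^v(\delta)=\{\ell:v_\ell>\delta\}$ is $\widetilde{\mathbf{P}}$-a.s.\ finite, with cardinality $L=L(\omega,\delta)<\infty$. Moreover, by Proposition~\ref{PropositionPoissonPoint}, for each $\ell\in A^v(\delta)$ the index $j_\ell(n)$ is a measurable function of $\omega$ (constructed from the discrete $\alpha_\infty$-environment and hence determined by $\omega$), and $j_\ell(n)\in T_n^{\alpha_\infty}$ for all $n$ sufficiently large. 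So it suffices, for fixed $\omega$ in a full-measure set and fixed $L$, to check that the $2L$-tuple $(\widehat c_{j_\ell(n)-1},\widehat c_{j_\ell(n)+1})_{\ell\in A^v(\delta)}$ converges in distribution under $\widehat{\mathbf{P}}(\,\cdot\mid\omega)$ to $L$ independent copies of $(c_0,c_1)$ under $\widetilde{\mathbf{P}}$.

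Next, I would use the separation property of Lemma~\ref{LemmaGoodSeparationIndep}. On the a.s.\ event $\mathcal{T}_n$, any two distinct elements of $T_n^{\alpha_\infty}$ are at distance $>n^{1/4}$, so in particular the $2L$ indices $\{j_\ell(n)\pm 1:\ell\in A^v(\delta)\}$ are pairwise distinct for all large $n$, and each lies in $N_n^{\alpha_\infty}$ by definition. By the construction of $\widehat\omega$ in Section~\ref{SectionCopuledSpaces} (and the paragraphs preceding this lemma), the random variables $\{\widehat c_i^{(n)}\}_{i\in\mathbb{Z}}$ are i.i.d.\ with the distribution of $c(\{0,1\})$ conditioned on $\{c(\{0,1\})\le d_{n,\infty}^{1-\widehat\delta}\}$, and are, by construction, \emph{independent} of $\omega$. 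Therefore, conditional on $\omega$, the $2L$ random variables $\widehat c_{j_\ell(n)\pm 1}$ (indexed by the deterministic, distinct positions $j_\ell(n)\pm 1$) form an i.i.d.\ family with the above conditional distribution.

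Finally, I would show that this conditional distribution converges weakly to the unconditional law of $c(\{0,1\})$. This is standard: for any $t>0$,
\[
\widetilde{\mathbf{P}}\bigl(\widehat c_0^{(n)}\le t\bigr)=\frac{\widetilde{\mathbf{P}}\bigl(c(\{0,1\})\le t\bigr)-\widetilde{\mathbf{P}}\bigl(c(\{0,1\})>d_{n,\infty}^{1-\widehat\delta}\bigr)\mathds{1}_{\{t\ge d_{n,\infty}^{1-\widehat\delta}\}}}{1-\widetilde{\mathbf{P}}\bigl(c(\{0,1\})>d_{n,\infty}^{1-\widehat\delta}\bigr)},
\]
and since $d_{n,\infty}^{1-\widehat\delta}\to\infty$ while the tail $\widetilde{\mathbf{P}}(c(\{0,1\})>d_{n,\infty}^{1-\widehat\delta})\to 0$, the right-hand side tends to $\widetilde{\mathbf{P}}(c(\{0,1\})\le t)$ for every continuity point $t$. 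Combined with the i.i.d.\ property established in the previous paragraph and the fact that joint convergence of finitely many independent sequences follows from marginal convergence, this yields the claim.

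The only mild subtlety is keeping track that the random indices $j_\ell(n)$ depend measurably on $\omega$ but not on the resampled $\widehat c_i$'s; once this is observed, the argument is essentially mechanical. I do not expect any serious obstacle—the content of the lemma is just that resampling the conductances strictly adjacent to the traps produces an i.i.d.\ family which, in the scaling limit, is indistinguishable from an unconstrained i.i.d.\ family of conductances.
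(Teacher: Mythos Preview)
Your proposal is correct and follows essentially the same approach as the paper's proof: both argue that for a.e.\ $\omega$ and all large $n$ the indices $j_\ell(n)\pm1$ lie in $N_n^{\alpha_\infty}$ and are pairwise distinct, so that conditional on $\omega$ the resampled conductances at these positions are i.i.d.\ with the law of $c(\{0,1\})$ conditioned on $\{c(\{0,1\})\le d_{n,\infty}^{1-\widehat\delta}\}$, and then observe that this conditioning event has probability tending to one. Your write-up simply makes each of these points more explicit than the paper's three-sentence version.
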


\begin{proof} For almost all realisations of $\omega$, we have that, for $n$ large enough, $\{j_\ell(n)-1, j_\ell(n) + 1 \} \in N_n^{\alpha_\infty}$ for all $\ell \in A^{v}(\delta)$, and moreover the relevant pairs are disjoint. Hence, independently, each of the pairs $\{\widehat{c}_{j_\ell(n)-1}, \widehat{c}_{j_\ell(n) + 1} \}$ are independent, with distribution of the conductance $c(\{0,1\})$ conditioned on being no greater than $d_{n,\infty}^{1-\hat{\delta}}$. Since the event in the latter conditioning has probability converging to one, the result readily follows.
\end{proof}

\begin{proof}[Proof of Proposition~\ref{PropositionLaplaceTransform}]
Let us recall the definition of the following set, which is measurable with respect to $\omega$,
\begin{equation*}
	A_n^{v}(\delta) = \left\{j \in T_n^{\alpha_\infty}:\:\exists \,\, |i - j| = 1 \text{ such that } \left(v_i^{(n)} + v_j^{(n)}\right) > \delta \right\},
\end{equation*}
and define analogously $A_n^{\widehat{v}}(\delta)$ in the environment $\widehat{\omega}$. Using \eqref{eqn:BothEnvirnomentsConverge}, we have that there exists a $n_0$ such that for all $n \ge n_0$ the indices appearing in $A_n^{v}(\delta)$ and $A_n^{\widehat{v}}(\delta)$ are the same for all $n \ge n_0$, and in particular, they are the $j_{\ell}(n), j_{\ell}(n) + 1$ corresponding to the atoms of $A^{v}(\delta) = \{\ell: v_\ell > \delta\}$. By the Markov property, we can write
 \begin{align*}
	\widetilde{\mathbb{E}}^{\lambda/n}\left[ e^{- \xi T^{(n)}} \right]  = \widehat{\mathbf{E}}\left[ \sum_{i/n \in [-K, K]} \widehat{v}^{(n)}_{i} \widetilde{E}^{\widehat{\omega}, \lambda/n}_i \left[ e^{-\xi \frac{T}{d_{n, \infty}}} \right] \right].
\end{align*}
Let us split the sum as follows:
\begin{equation}\label{eqn:TwoPiecesSubAging}
	 \widehat{\mathbf{E}}\left[ \sum_{i/n \in A_n^{\widehat{v}}(\delta)} \widehat{v}^{(n)}_{i} \widetilde{E}^{\widehat{\omega}, \lambda/n}_i \left[ e^{-\xi \frac{T}{d_{n, \infty}}} \right] \right] + \widehat{\mathbf{E}}\left[ \sum_{i/n \not\in A_n^{\widehat{v}}(\delta)} \widehat{v}^{(n)}_{i} \widetilde{E}^{\widehat{\omega}, \lambda/n}_i \left[ e^{-\xi \frac{T}{d_{n, \infty}}} \right] \right].
\end{equation}
The contribution of the second term can be estimated as
\begin{equation*}
	\limsup_{n \to \infty} \widehat{\mathbf{E}}\left[ \sum_{i/n \not\in A_n^{\widehat{v}}(\delta)} \widehat{v}^{(n)}_{i} \widetilde{E}^{\widehat{\omega}, \lambda/n}_i \left[ e^{-\xi \frac{T}{d_{n, \infty}}} \right] \right]  \le \limsup_{n \to \infty} \widehat{\mathbf{E}}\left[ \sum_{i/n \not\in A_n^{\widehat{v}}(\delta)} \widehat{v}^{(n)}_{i} \right] \le \left( 1 - \widetilde{\mathbf{E}} \left[ \sum_{\ell \in A^{v}(\delta)} v_\ell \right] \right).
\end{equation*}
Let us define
\begin{equation*}
R_1(\delta)=1 - \widetilde{\mathbf{E}} \left[ \sum_{\ell \in A^{v}(\delta)} v_\ell \right],
\end{equation*}
and note that $R_1(\delta)$ converges to $0$ as $\delta$ goes to $0$, using the  dominated convergence theorem and the fact that the marginals of $\widetilde{Z}^{\lambda}$ are almost-surely purely atomic. For the first part of \eqref{eqn:TwoPiecesSubAging}, we can rewrite it as
\begin{align*}
	\widehat{\mathbf{E}}\left[ \sum_{i/n \in A_n^{\widehat{v}}(\delta)} \widehat{v}^{(n)}_{i}\widetilde{E}^{\widehat{\omega}, \lambda/n}_i \left[ e^{-\xi \frac{T}{d_{n, \infty}}} \right]\right] &= \widehat{\mathbf{E}}\left[ \widehat{\mathbf{E}}\left[ \sum_{i/n \in A_n^{\widehat{v}}(\delta)} \widehat{v}^{(n)}_{i} \widetilde{E}^{\widehat{\omega}, \lambda/n}_i \left[ e^{-\xi \frac{T}{d_{n, \infty}}} \right] \Big| \omega \right]\right] \\
	& = \widetilde{\mathbf{E}}\left[ \widehat{\mathbf{E}}\left[ \sum_{i/n \in A_n^{\widehat{v}}(\delta)} \widehat{v}^{(n)}_{i} \widetilde{E}^{\widehat{\omega}, \lambda/n}_i \left[ e^{-\xi \frac{T}{d_{n, \infty}}} \right] \Big|\,\, \omega \right]\right].
\end{align*}
The last term, by the observation at the beginning of the proof, can be re-written as the expectation of the sum appearing in Lemma~\ref{LemmaQuenchedLaplaceTransform}. Then, applying said lemma, we can write it as
\begin{flalign*}
	\limsup_{n \to \infty}\widetilde{\mathbf{E}}&\left[ \widehat{\mathbf{E}}\left[ \sum_{i/n \in A_n^{\widehat{v}}(\delta)} \widehat{v}^{(n)}_{i} \widetilde{E}^{\widehat{\omega}, \lambda/n}_i \left[ e^{-\xi \frac{T}{d_{n, \infty}}} \right] \Big|\,\, \omega \right]\right] \\
	&= \limsup_{n \to \infty} \widetilde{\mathbf{E}}\left[ \sum_{\ell \in A^v(\delta)} \widehat{\mathbf{E}}\left[ \widehat{v}^{(n)}_{j_{\ell}(n)} \widetilde{E}^{\widehat{\omega}, \lambda/n}_{j_\ell(n)} \left[ e^{-\xi \frac{T}{d_{n, \infty}}} \right] + \widehat{v}^{(n)}_{j_{\ell}(n) + 1} \widetilde{E}^{\widehat{\omega}, \lambda/n}_{j_\ell(n) + 1} \left[ e^{-\xi \frac{T}{d_{n, \infty}}} \right] \Big| \omega \right] \right]  \\
	&\le \limsup_{n \to \infty} \widetilde{\mathbf{E}}\left[ \sum_{\ell \in A^v(\delta)} v_{\ell} \widehat{\mathbf{E}}\left[ \mathcal{L}_{\exp\left(\frac{\widehat{c}_{j_\ell(n) - 1} + \widehat{c}_{j_\ell(n) + 1}}{2\nu^{\alpha_\infty}(x_\ell)} \right)} \Big| \omega \right] \right].
\end{flalign*}
Using Lemma~\ref{LemmaWeakCVNeighbours} and the boundedness of the function inside the expectation, we obtain
\begin{equation*}
	 \limsup_{n \to \infty} \widetilde{\mathbf{E}}\left[ \sum_{\ell \in A^v(\delta)} v_{\ell} \widehat{\mathbf{E}}\left[ \mathcal{L}_{\exp\left(\frac{\widehat{c}_{j_\ell(n) - 1} + \widehat{c}_{j_\ell(n) + 1}}{2\nu^{\alpha_\infty}(x_\ell)} \right)}\Big| \omega \right] \right]=\widetilde{\mathbf{E}}\left[ \sum_{\ell \in A^v(\delta)} v_{\ell} \widehat{\mathbf{E}}\left[ \mathcal{L}_{\exp\left(\frac{A^0+A^1}{2\nu^{\alpha_\infty}(x_\ell)} \right)}\Big| \omega \right] \right].
\end{equation*}
Plugging back in the sum the terms $\ell \not \in A^{v}(\delta)$ we get, by the dominated convergence theorem,
\begin{align*}
	\limsup_{n \to \infty} \widetilde{\mathbb{E}}^{\lambda/n}\left[ e^{- \xi T^{(n)}} \right] \le \widetilde{\mathbb{E}}^{\lambda} \left[ \mathcal{L}_{\exp\left( \frac{A^0 + A^2}{2 A^1} \right)}\left(  \xi \right)\right] + 2R_1(\delta).
\end{align*}
By reasoning analogously one gets
\begin{align*}
	\liminf_{n \to \infty} \widetilde{\mathbb{E}}^{\lambda/n}\left[ e^{- \xi T^{(n)}} \right] \ge \widetilde{\mathbb{E}}^{\lambda} \left[ \mathcal{L}_{\exp\left( \frac{A^0 + A^2}{2 A^1} \right)}\left(  \xi \right)\right] - 2R_1(\delta).
\end{align*}
Which is enough to conclude the proof, since $\delta$ is arbitrary and $\lim_{\delta \to 0} R_1(\delta) = 0$.
\end{proof}

\begin{proof}[Proof of Proposition~\ref{PropositionInsideKSubaging}]
	Note that we can write
	\begin{align*}
		\widetilde{\mathbb{P}}^{\lambda/n, K}\left( \big| \widetilde{X}_{b_n + s_1 d_{n, \infty}} - \widetilde{X}_{b_n +s_2 d_{n, \infty}} \big| \le 1, \,\, \forall s_1, s_2 \in [0, h] \right) = \widetilde{\mathbb{P}}^{\lambda/n, K} \left( T^{(n)} \ge h \right).
	\end{align*}
Using Proposition~\ref{PropositionLaplaceTransform} and L\'{e}vy's continuity Theorem (see \cite[Theorem~5.3]{Kallenberg}), we get that
\begin{equation*}
	\lim_{n \to \infty} \widetilde{\mathbb{P}}^{\lambda/n, K} \left( T^{(n)} \ge h \right) = \widetilde{\EVal}^K\left[ e^{- h\frac{A^0 + A^2}{2A^1}}\right],
\end{equation*}
also due to the shape of the upper tail of the distribution of an exponential random variables.
\end{proof}

\begin{proof}[Proof of Theorem~\ref{theoremSubAging}, Part II]
	The proof is very similar to the one of Part I above. We can once again notice that
	\begin{equation*}
		\begin{split}
			\widetilde{\mathbb{P}}^{\lambda/n}&\left( \big| \widetilde{X}_{b_n + s_1 d_{n, \infty}} - \widetilde{X}_{b_n +s_2 d_{n, \infty}} \big| \le 1, \,\, \forall s_1, s_2 \in [0, h], \,\, \tau_{Kn}^{X} \wedge \tau_{-Kn}^{X} > 2b_n \right) \\
			& = \widetilde{\mathbb{P}}^{\lambda/n, K}\left( \big| \widetilde{X}_{b_n + s_1 d_{n, \infty}} - \widetilde{X}_{b_n +s_2 d_{n, \infty}} \big| \le 1, \,\, \forall s_1, s_2 \in [0, h], \,\, \tau_{Kn}^{X} \wedge \tau_{-Kn}^{X} > 2b_n \right),
		\end{split}
	\end{equation*}
and analogously
\begin{equation*}
	\widetilde{\EVal}^{\lambda}\left[ e^{- h\frac{A^0 + A^2}{2A^1}}\mathds{1}_{\left\{\tau_{K}^{Z} \wedge \tau_{-K}^{Z} > 2\right\}}\right] = \widetilde{\EVal}^{\lambda, K}\left[ e^{- h\frac{A^0 + A^2}{2A^1}} \mathds{1}_{\left\{\tau_{K}^{Z} \wedge \tau_{-K}^{Z} > 2 \right\} }\right].
\end{equation*}
So, by Proposition~\ref{PropositionInsideKSubaging}, we have that, for all $K \in \mathbb{N}$ and $h>0$,
\begin{align}
	\limsup_{n \to \infty}\,\, &\widetilde{\mathbb{P}}^{\lambda/n}\left(\big| \widetilde{X}_{b_n + s_1 d_{n, \infty}} - \widetilde{X}_{b_n +s_2 d_{n, \infty}} \big| \le 1, \,\, \forall s_1, s_2 \in [0, h] \right) \nonumber \\
	& \le \widetilde{\EVal}^{\lambda}\left[ e^{- h\frac{A^0 + A^2}{2A^1}}\right] + \widetilde{\mathbb{P}}^\lambda \left( \tau_{K}^{Z} \wedge \tau_{-K}^{Z} \le 2 \right) + \limsup_{n} \,\,\widetilde{\mathbb{P}}^{\lambda/n}\left( \tau_{Kn}^{X} \wedge \tau_{-Kn}^{X} \le 2b_n \right).\nonumber
\end{align}
By applying \eqref{eqn:2HittingTimesComparedRWT}, we get that
\begin{align}
	\limsup_{n \to \infty}\,\, &\widetilde{\mathbb{P}}^{\lambda/n}\left( \big| \widetilde{X}_{b_n + s_1 d_{n, \infty}} - \widetilde{X}_{b_n +s_2 d_{n, \infty}} \big| \le 1, \,\, \forall s_1, s_2 \in [0, h] \right) \nonumber\\
	& \le \widetilde{\EVal}^{\lambda}\left[ e^{- h\frac{A^0 + A^2}{2A^1}}\right] + 2\widetilde{\mathbb{P}}^{\lambda}\left( \tau_{K - 1}^{Z} \wedge \tau_{-K + 1}^{Z} \le 2 \right). \nonumber
\end{align}
Reasoning in the same way, one can also get
\begin{align}
	\liminf_{n \to \infty}\,\, &\widetilde{\mathbb{P}}^{\lambda/n}\left( \big| \widetilde{X}_{b_n + s_1 d_{n, \infty}} - \widetilde{X}_{b_n +s_2 d_{n, \infty}} \big| \le 1, \,\, \forall s_1, s_2 \in [0, h] \right) \nonumber\\
	& \ge \widetilde{\EVal}^\lambda\left[ e^{- h\frac{A^0 + A^2}{2A^1}}\right] - 2\widetilde{\mathbb{P}}^\lambda\left( \tau_{K - 1}^{Z} \wedge \tau_{-K + 1}^{Z} \le 2 \right). \nonumber
\end{align}
Applying Lemma~\ref{LemmaExitSlowlyKBox} finishes the proof.
\end{proof}

\section{Some results on the limiting processes}\label{sec:reslimitprocess}

In this section, we collect together a number of useful results for the limiting processes $Z^\lambda$ and $\widetilde{Z}^{\lambda}$. We start by showing that, at a fixed time, $Z^\lambda$ is $\mathbb{P}^{\lambda}$-a.s.\ not located at its maximum.

\begin{lemma}\label{lemmaZNotAtMaximum}
Recall that $Z^{\lambda}_t$ was defined in \eqref{eqn:eqnDefinitionZ}. For this process, it holds that
\[\Prob^{\lambda}\left( Z_t<\widebar{Z}_t\right) = 1,\qquad \forall t>0.\]
\end{lemma}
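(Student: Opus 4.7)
My plan is to condition on the environment, reducing to the quenched statement $P^{\omega,\lambda}(Z_t = \widebar{Z}_t) = 0$ for $\mathbf{P}$-a.s.\ $\omega$ (the annealed statement then follows by integration). Set $\sigma := H^{\lambda}_t$ (a stopping time for the Brownian filtration with $\sigma > 0$ a.s.), and let $D$ denote the countable discontinuity set of $S^{\alpha_0,\lambda}$. Since $(S^{\alpha_0,\lambda})^{-1}$ is continuous and non-decreasing, being constant precisely on the closed jump intervals $J_y := [S^{\alpha_0,\lambda}(y^-), S^{\alpha_0,\lambda}(y)]$ for $y \in D$, and since $\widebar{Z}_t = (S^{\alpha_0,\lambda})^{-1}(\widebar{B}_\sigma)$, the event $\{Z_t = \widebar{Z}_t\}$ is the union of $\{B_\sigma = \widebar{B}_\sigma\}$ and $\{B_\sigma < \widebar{B}_\sigma$, with $B_\sigma, \widebar{B}_\sigma$ in a common $J_y\}$. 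I treat these two pieces in turn.

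The second piece is contained in $\{Z_t \in D\}$, and I expect to dispatch it by showing that the quenched distribution of $Z_t$ is non-atomic. This follows from the standard theory of regular one-dimensional diffusions: $Z$ is a time-changed Brownian motion with speed measure $\mu^{\lambda}$ which, by direct computation from \eqref{SmoothSpeedMeasure}, is non-atomic (in particular $\mu^{\lambda}(\{S^{\alpha_0,\lambda}(y^-)\}) = \mu^{\lambda}(\{S^{\alpha_0,\lambda}(y)\}) = 0$), so its time-$t$ transition kernel admits a density with respect to $\mu^{\lambda}$ on the state space and $P^{\omega,\lambda}(Z_t = y) = 0$ for every $y$. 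The key technical observation enabling this is that $\mu^{\lambda}$ gives no mass to the open interior of any $J_y$, so $A_s := \int L^B_s(x)\mu^{\lambda}(dx)$ is constant while $B$ excurses through an open gap; consequently $\sigma$ avoids the boundary points $\{S^{\alpha_0,\lambda}(y^{\pm})\}$ almost surely for a generic deterministic $t$.

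For $\{B_\sigma = \widebar{B}_\sigma\}$, I would apply the Markov property of $Z$ at time $t/2$: conditional on $(Z_{t/2}, \widebar{Z}_{t/2}) = (z, m)$ with $z \leq m$, the continuation of $Z$ on $[t/2, t]$ is a diffusion started from $z$, and one must verify that it ends at time $t$ precisely at the updated maximum $\max\{m, \sup_{[t/2, t]} Z\}$ with conditional probability zero. This is established via the reflection principle applied to the Brownian motion driving the time-change on $[t/2, t]$, combined with the positive density of the time-$t/2$ transition kernel of $Z$ with respect to $\mu^{\lambda}$.

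The main obstacle lies here: since $\sigma = H^\lambda_t$ is measurable with respect to $B$, one cannot invoke the classical identity $\widebar{B}_u - B_u \stackrel{(d)}{=} |B_u|$ directly at the random time $u = \sigma$. I expect to circumvent this by shifting the analysis from $B$ to $Z$, applying the strong Markov property of $Z$ at $t/2$ so that the continuation on $[t/2, t]$ is driven by an independent Brownian increment, and then exploiting the non-atomicity of the transition density of $Z$ with respect to $\mu^{\lambda}$ to rule out $\widebar{Z}_t - Z_t = 0$ conditional on $\mathcal{F}^Z_{t/2}$.
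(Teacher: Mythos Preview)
Your decomposition and the treatment of the second piece are essentially sound: once one checks that $\widebar{Z}_t = (S^{\alpha_0,\lambda})^{-1}(\widebar{B}_\sigma)$ (which uses that any discrepancy between $\widebar{B}_\sigma$ and $\sup_{s\le t} B_{H^\lambda_s}$ lies inside a single gap $J_y$, on which $(S^{\alpha_0,\lambda})^{-1}$ is constant), the event in question forces $Y_t = B_\sigma \in \{S^{\alpha_0,\lambda}(y^-), S^{\alpha_0,\lambda}(y)\}$ and hence $Z_t = y \in D$. Since $Y$ has a continuous transition density with respect to the non-atomic measure $\mu^\lambda$, this has quenched probability zero.

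The genuine gap is in your handling of $\{B_\sigma = \widebar{B}_\sigma\}$: the Markov-at-$t/2$ reduction is circular. Conditioning on $\mathcal{F}^Z_{t/2}$ with $(Z_{t/2}, \widebar{Z}_{t/2}) = (z, m)$, the event $\{Z_t = \widebar{Z}_t\}$ splits into (i) $\widebar{Z}_t = m$ and $Z_t = m$, which is indeed killed by non-atomicity since $m$ is then a fixed level, and (ii) $\widebar{Z}_t = \widebar{Z}_{[t/2,t]} > m$ and $Z_t = \widebar{Z}_{[t/2,t]}$. Case (ii) is precisely the statement $\{Z'_{t/2} = \widebar{Z'}_{t/2}\}$ for the fresh continuation $Z'$ started from $z$. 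Neither the reflection identity $\widebar{B}_u - B_u \stackrel{(d)}{=} |B_u|$ (which, as you already note, still fails at the $B'$-dependent random time $\sigma'$) nor non-atomicity of the transition kernel (which only rules out hitting \emph{fixed} levels, not the process's own running maximum) addresses case (ii). Iterating merely yields $P_0(Z_t = \widebar{Z}_t) \le E_0\big[P_{Z_{t/2}}(Z_{t/2} = \widebar{Z}_{t/2})\big]$, which is no improvement.

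The paper supplies the missing idea: time-reversal under a $\sigma$-finite stationary measure. Working under $\mathbb{Q} = \mathbf{P}_B \times \mathbf{P}_S \times \nu$ with $\nu(dx) = e^{2\lambda x}\,dx$, one builds $Z^X$ started from a $\nu$-distributed point $X$; symmetry of the transition density with respect to $\nu$ then gives $(Z^X_s)_{s\le T} \stackrel{d}{=} (Z^X_{T-s})_{s\le T}$ under $\mathbb{Q}(\cdot\,|\,S)$. This transforms $\{Z^X_T = \overline{Z^X}_T\}$ into $\{Z^X_0 = \overline{Z^X}_T\}$, and the latter is $\mathbb{Q}$-null because the process immediately exceeds its starting point (one exhibits times $t_i \downarrow 0$ with $Z^X_{t_i} > Z^X_0$, using discontinuities of $S^{\alpha_0}$ accumulating to the right of $X$). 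Stationarity of the subordinator's increments under spatial shifts then transfers the conclusion back to $\mathbb{P}$. It is reversibility, not the Markov property at a midpoint, that breaks the circularity.
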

\begin{proof} In the proof, we drop the $\lambda$ superscript of $\Prob^{\lambda}$, $Z^{\lambda}$ (and other objects) for simplicity. Let $B$ and $S \coloneqq S^{\alpha_0}$ be as in Section \ref{sec:limits}, and introduce the $\sigma$-finite measure $\nu(dx)=e^{2\lambda x}dx$ on $\mathbb{R}$. Set
\[\mathbb{Q}\coloneqq\mathbf{P}_B\times \mathbf{P}_S\times \nu,\]
where we write $\mathbf{P}_B$ for the law of $B$ and $\mathbf{P}_S$ for the law of $S$. Then, if $(B,S,X)$ is ``chosen'' according to $\mathbb{Q}$, we write $B^X=B+S(X)$, i.e.\ conditional on $(S,X)$, $B^X$ is the standard Brownian motion started from $S(X)$. We also define $H^X$ from $B^X$ analogously to the definition of $H^{\lambda}$ at \eqref{eqn:eqnDefinitionH}, and set
\[Z^X \coloneqq S^{-1}\left(B^X_{H^X_\cdot}\right).\]
Similarly to the proof of \cite[Lemma 5.4]{MottModel}, we observe that, for $\mathbf{P}_S$-a.e.\ realisation of $S$, under $\mathbb{Q}(\cdot\: |\: S)$, the process $Y^X \coloneqq B^X_{H^X_\cdot}$ is the Markov process naturally associated with the resistance metric measure space
\[\left( \overline{S(\mathbb{R})}\backslash\{S_{\infty}\},d,\mu\right),\]
where $S_{\infty} \coloneqq \lim_{t\rightarrow \infty}S(t)$, $d$ is the Euclidean metric and $\mu \coloneqq \mu^\lambda$, as defined at \eqref{SmoothSpeedMeasure}, started from $S(X)$. In particular, we highlight that, by \cite[Theorem 10.4]{KigQ}, $Y^X$ admits a jointly continuous, symmetric transition density $(p^Y(x,y))_{x,y\in \overline{S(\mathbb{R})}\backslash\{S_{\infty}\},\: t>0}$ with respect to $\mu$ (that does not depend on $X$). Again following the proof of \cite[Lemma 5.4]{MottModel}, one can use these facts and that
\begin{align*}
S:\mathbb{R}\backslash D\rightarrow \overline{S(\mathbb{R})}\backslash \left(\bigcup_{s\in D} \{S(s^{-}),S(s) \}\cup \left\{S_{\infty}\right\}\right),
\end{align*}
where $D$ is the set of discontinuities of $S$, is a homeomorphism to check that: for $\mathbf{P}_S$-a.e.\ realisation of $S$ and $X\not\in D$, it holds that $Z^X$ is $\mathbb{Q}(\cdot\: | \:S,X)$-a.s.\ continuous and, under $\mathbb{Q}(\cdot\:| \:S,X)$, is Markov with symmetric transition density
\[\left(p^Z(u,v)\right)_{u,v\in\mathbb{R},\:t>0}\coloneqq \left(p^Y(S(u),S(v))\mathds{1}_{u,v\not\in D}\right)_{u,v\in\mathbb{R},\:t>0}\]
with respect to $\nu$. As an easy consequence of the latter property, we note it $\mathbf{P}_S$-a.s.\ holds that, for any continuous, bounded functions of compact support $f_0,f_1,\dots,f_k$ and times $0=t_0<t_1<\dots t_k=T\in (0,\infty)$,
\begin{eqnarray*}
\mathbb{Q}\left(\prod_{i=0}^k f_i\left(Z^X_{t_i}\right)\:\vline\: S\right)   &=& \int\nu^{\otimes(k+1)}(dx_0dx_1\dots dx_k)f_0(x_0)\prod_{i=1}^kp_{t_i-t_{i-1}}^Z(x_{i-1},x_i) f_i(x_i) \\
   &=& \int\nu^{\otimes(k+1)}(dx_0dx_1\dots dx_k)f_k(x_k)\prod_{i=1}^kp_{(T-t_{i-1})-(T-t_{i})}^Z(x_{i},x_{i-1}) f_{i-1}(x_{i-1})\\
   &=&\mathbb{Q}\left(\prod_{i=0}^k f_i\left(Z^X_{T-t_i}\right)\:\vline\: S\right).
\end{eqnarray*}
Together with the continuity of $Z^X$, this implies that, $\mathbf{P}_S$-a.s., under $\mathbb{Q}(\cdot\:\vline\: S)$,
\begin{equation}\label{zrev}
\left(Z^X_t\right)_{t=0}^T\buildrel{d}\over{=}\left(Z^X_{T-t}\right)_{t=0}^T.
\end{equation}

Next, we claim that
\begin{equation}\label{zsup}
\mathbb{Q}\left(Z^X_0=\overline{Z^X}_T\right)=0,\qquad \forall T>0.
\end{equation}
To prove this, we start by fixing a typical realisation of $S$ and $X$, and a sequence $\delta_i\downarrow X$ in $D$. Let us denote, to ease the notation, $S_{\delta_{i}} \coloneqq S(\delta_i)$. For the elements in the sequence $\delta_i$, we will check that $\mathbb{Q}(\cdot\:\vline\:S,X)$-a.s., the hitting time
\[\tau^{B^X}(S_{\delta_i})\coloneqq\inf\left\{t>0:\:B^X_t=S(\delta_i)\right\}\]
is equal to $H^X_{t_i}$ for some (random) time $t_i>0$. Indeed, since the local time of Brownian motion is strictly positive on an open interval of its starting point for any positive time and also $\mu([S_{\delta_i},S_{\delta_i}+\varepsilon])>0$ for any $\varepsilon>0$, it $\mathbb{Q}(\cdot\:\vline\:S,X)$-a.s.\ holds that
\[\int L^X_{\tau^{B^X}(S_{\delta_i})+t}(x)\mu(dx)-\int L^X_{\tau^{B^X}(S_{\delta_i})}(x)\mu(dx)>0,\qquad \forall t>0,\]
where $L^X$ is the local time of $B^X$. In particular $\tau^{B^X}(S_{\delta_i})$ is a point of strict increase for the continuous additive functional $t\mapsto \int L^X_{t}(x)\mu(dx)$, and it must therefore fall into the image of $H^X$. Applying the same argument at $t=0$, one can verify that $t=0$ also falls into the image of $H^X$. Hence we find that $H^X_0=0$ and $\tau^{B^X}(S_{\delta_i})=H^X_{t_i}$ for some $t_i>0$, as required. Consequently, it $\mathbb{Q}(\cdot\:\vline\:S,X)$-a.s.\ holds that
\[Z^X_{t_i}=S^{-1}\left(B^X_{H^X_{t_i}}\right)=S^{-1}\left(B^X_{\tau^{B^X}(S_{\delta_i})}\right)=S^{-1}\left(S_{\delta_i}\right)=\delta_i,\]
which in turn implies that
\[\overline{Z^X}_{t_i}>Z^X_0.\]
Furthermore, note that $H_{t_i}=\tau^{B^X}(S_{\delta_i})\downarrow0$. Since $H^X$ is strictly increasing (by the continuity of $t\mapsto \int L^X_{t}(x)\mu(dx)$), it follows that $t_i\downarrow 0$. In conjunction with the previous displayed equation, this leads to the conclusion that, $\mathbb{Q}(\cdot\:\vline\:S,X)$-a.s.,
\[\overline{Z^X}_{t}>Z^X_0,\qquad \forall t>0,\]
and the result at \eqref{zsup} follows.

Combining \eqref{zrev} and \eqref{zsup}, we conclude that $\mathbb{Q}(Z^X_t=\overline{Z^X}_t)=0$ for any $t>0$. Thus, denoting by $\mathbb{P}$ the measure $\mathbf{P}_B\times \mathbf{P}_S$, defining $Z$ as in \eqref{eqn:eqnDefinitionZ}, and writing $\theta_x$ for the usual shift on $\mathbb{R}$,
\begin{eqnarray*}
0&=&\int \nu(dx)\int \mathbf{P}_S(ds)\mathbb{Q}\left(Z^X_t=\overline{Z^X}_t\:\vline\:S=s,\:X=x\right)\\
&=&\int \nu(dx)\int \mathbf{P}_S(ds)\mathbb{P}\left(Z_t=\overline{Z}_t\:\vline\:S=s\circ \theta_x-s_x\right)\\
&=&\int \nu(dx) \mathbb{P}\left(Z_t=\overline{Z}_t\right),
\end{eqnarray*}
where the second inequality is a simple consequence of the construction of $Z^X$, and the third is a result of the stationarity of the distribution of the increments of $S$ under spatial shifts. Hence we obtain that $\mathbb{P}(Z_t=\overline{Z}_t)=0$, which completes the proof.
\end{proof}

We next show that the maximum of $Z^\lambda$ is located at a discontinuity of $S^{\alpha_0}$. Note we also drop lambda superscripts in the proof of the following result.

\begin{lemma}\label{lemmaMaximumDisc}
Recall that $Z^{\lambda}_t$ was defined in \eqref{eqn:eqnDefinitionZ} and let us define the set $D \coloneqq \{ v \in \R:\: S^{\alpha_0}(v) \ne S^{\alpha_0}(v^-) \}$. For each fixed $t>0$, it holds that
\begin{equation}\label{statement}
\Prob^\lambda\left( \widebar{Z}_t \in D \right) =\Prob^{\lambda}\left( \widebar{(B_{H^{\lambda}})}_t \in \bigcup_{v\in D}\{S^{\alpha_0,\lambda}(v^-)\} \right) =\Prob^\lambda \left( (\widebar{B})_{H^{\lambda}_t} \in \bigcup_{v \in D} \left\{ \left(S^{\alpha_0, \lambda}(v^-), S^{\alpha_0, \lambda}(v) \right) \right\}  \right) = 1.
\end{equation}
\end{lemma}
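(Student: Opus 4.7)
I would split the lemma into a deterministic part identifying the three events up to null sets, and a probabilistic part showing one of them has full probability.

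For the deterministic part, set $Y_\cdot := B_{H^\lambda_\cdot}$. Since the non-decreasing right-continuous function $(S^{\alpha_0,\lambda})^{-1}$ commutes with running suprema, $\widebar{Z}_t = (S^{\alpha_0,\lambda})^{-1}(\widebar{(B_{H^\lambda})}_t)$. The process $Y$ takes values in $R := \overline{S^{\alpha_0,\lambda}(\mathbb{R})}$, and by a.s.\ strict monotonicity of the stable subordinator we have the disjoint decomposition $R = S^{\alpha_0,\lambda}(\mathbb{R}) \sqcup \{S^{\alpha_0,\lambda}(v^-) : v \in D\}$. Since the level set $(S^{\alpha_0,\lambda})^{-1}(\{v\}) = [S^{\alpha_0,\lambda}(v^-), S^{\alpha_0,\lambda}(v))$ for $v \in D$ intersects $R$ only in the singleton $\{S^{\alpha_0,\lambda}(v^-)\}$, the first two events coincide identically. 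For the containment of the third event in the second: the image of $H^\lambda|_{[0,t]}$ is contained in $\{s \ge 0 : B_s \in R\}$, so if the argmax of $B$ on $[0, H^\lambda_t]$ lies inside the open gap $(S^{\alpha_0,\lambda}(v^-), S^{\alpha_0,\lambda}(v))$, that time is skipped by $H^\lambda|_{[0,t]}$; since $R$ has no points in the gap, and $B$ on $[0,H^\lambda_t]$ stays below the argmax hence below $S^{\alpha_0,\lambda}(v)$, the largest value $B$ takes at a matched time is the entry value $S^{\alpha_0,\lambda}(v^-)$, which therefore equals $\widebar{(B_{H^\lambda})}_t$.

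It then suffices to show $\Prob((\widebar{B})_{H^\lambda_t} \in R) = 0$, since $R$ is exactly the complement of the gap-union and has Lebesgue measure zero (standard for stable subordinators of index $\alpha_0 \in (0,1)$). My plan is to establish that, conditional on the environment, $(\widebar{B})_{H^\lambda_t}$ has an absolutely continuous distribution on $(0, \infty)$. Continuity of $B$ and Brownian oscillation at hitting times give $\Prob((\widebar{B})_{H^\lambda_t} = y) = \Prob(T_y = H^\lambda_t)$ for each fixed $y > 0$, where $T_y := \inf\{s : B_s = y\}$. Writing $\phi(s) := \int L_s^B(z)\,\mu^\lambda(dz)$, so that $H^\lambda$ is the right-continuous inverse of $\phi$, this reduces (when $y$ is not in a gap) to $\{\phi(T_y) = t\}$. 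A Ray-Knight description of $(L^B_{T_y}(z))_{z \le y}$ as a squared Bessel process then yields absolute continuity of $\phi(T_y)$ with respect to Lebesgue measure, and differentiating the CDF identity $\Prob((\widebar{B})_{H^\lambda_t} \le y) = \Prob(\phi(T_y) \ge t)$ in $y$ transfers this into absolute continuity of the law of $(\widebar{B})_{H^\lambda_t}$.

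The main obstacle is rigorously extracting absolute continuity of the full distribution from the Ray-Knight analysis, particularly because $\mu^\lambda$ is singular and supported on a Cantor-like set, so care is needed when integrating local time against it. A potentially cleaner alternative would be to adapt the enlarged-space machinery from the proof of Lemma~\ref{lemmaZNotAtMaximum}: working on $\mathbb{Q} = \mathbf{P}_B \times \mathbf{P}_S \times \nu$ with $\nu(dx) = e^{2\lambda x}dx$, and exploiting the time-reversibility of $Z^X$ together with the fact that $\mu^\lambda$ charges no single point, one could try to transfer the conclusion ``$Z$ is not at its maximum at time $t$'' from Lemma~\ref{lemmaZNotAtMaximum} directly into the stronger statement that the maximum is attained at a discontinuity of $S^{\alpha_0}$.
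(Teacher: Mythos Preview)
Your deterministic identification of the three events is essentially what the paper does, and your overall reduction to showing $(\widebar{B})_{H^\lambda_t}\notin R$ a.s.\ is the right target. The gap is in your probabilistic argument. From absolute continuity of $\phi(T_y)$ in $t$ for each fixed $y$, you cannot conclude absolute continuity of the law of $(\widebar{B})_{H^\lambda_t}$: the identity $\Prob((\widebar{B})_{H^\lambda_t}\le y)=\Prob(\phi(T_y)\ge t)$ would need to be differentiated in $y$, not in $t$, and that requires joint regularity of the family $(\phi(T_y))_{y>0}$ in $y$, which is far from obvious when $\mu^\lambda$ is a singular Cantor-type measure. You acknowledge this, but without resolving it the argument is incomplete, and you need more than atomlessness since $R$ is an uncountable Lebesgue-null set.

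The paper takes precisely the ``cleaner alternative'' you allude to, but the key trick is more concrete than the vague time-reversal idea you sketch. It uses Lemma~\ref{lemmaZNotAtMaximum} as follows: since $Z_t<\widebar{Z}_t$ a.s., continuity of $Z$ gives a $\delta>0$ with $\widebar{Y}_s=\widebar{Y}_t$ for all $s\in[t-\delta,t]$; strict monotonicity of $H^\lambda$ then forces $[H^\lambda_{t-\delta},H^\lambda_t]$ to contain a \emph{rational} $q$, so that $\widebar{Y}_t=\sup\,[0,\widebar{B}_q]\cap R$. The point is that $\widebar{B}_q$ is now the Brownian running maximum at a \emph{deterministic} time, whose law is classically absolutely continuous, hence a.s.\ misses the Lebesgue-null set $R$ and falls into some gap $(S(u^-),S(u))$. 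This bypasses entirely the need to understand the distribution of the maximum at the random, $B$-dependent time $H^\lambda_t$, and no Ray--Knight analysis is required.
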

\begin{proof} We drop the $\lambda$ superscript of $\mathbb{P}^{\lambda}$, $Z^{\lambda}$, $H^{\lambda}$ $S^{\alpha_0, \lambda}$ for simplicity. Let $B$ and $S \coloneqq S^{\alpha_0}$ be as in Section \ref{sec:limits}. We first claim that if $Y \coloneqq B_{H_\cdot}$, then it $\mathbb{P}$-a.s.\ holds that
\begin{equation}\label{ybar}
\overline{Y}_t = \sup\left\{B_s:\:s\leq H_t\right\}\cap\overline{S(\mathbb{R})},\qquad \forall t>0.
\end{equation}
Since $\mu$ has support $\overline{S(\mathbb{R})}$, it follows from its construction that the process $Y$ takes values in $\overline{S(\mathbb{R})}$, $\mathbb{P}$-a.s. Thus it is straightforward to check that $\overline{Y}_t$ is bounded above by the right-hand side of \eqref{ybar}. Conversely, for every $u\in D$, by applying the same argument as used in the proof of Lemma \ref{lemmaZNotAtMaximum}, it can be checked that, $\mathbb{P}$-a.s., if $\tau^B(S(u))\leq H_t$ (where $\tau^B(x)$ is the hitting time of $x$ by $B$), then $\tau^B(S(u))=H_s$ for some $s\leq t$. In particular, this implies that $S(u)\leq \overline{Y}_t$. Hence, $\mathbb{P}$-a.s.\ we have that
\[\sup\left\{B_s:\:s\leq H_t\right\}\cap\overline{S(D)}\leq \overline{Y}_t,\qquad \forall t>0.\]
Since one has that $\overline{S(D)}=\overline{S(\mathbb{R})}$ and taking closure does not affect the supremum, we thus obtain that $\overline{Y}_t$ is also bounded below by the right-hand side of \eqref{ybar}, and thus the proof of the claim is complete.

Next, fix $t>0$, and recall from Lemma \ref{lemmaZNotAtMaximum} that $Z_t<\overline{Z}_t$, $\mathbb{P}$-a.s. Since $Z$ is a continuous process, it follows that, $\mathbb{P}$-a.s., there exists a $\delta>0$ such that $Z_s<\overline{Z}_t$ for all $s\in [t-\delta,t]$. Using the monotonicity of $S^{-1}$, one can also check the analogous claim for $Y$. In particular, we $\mathbb{P}$-a.s.\ have that there exists a $\delta>0$ such that
\[\bar{Y}_t=\bar{Y}_s=\sup\left\{B_r:\:r\leq H_s\right\}\cap\overline{S(\mathbb{R})},\qquad \forall s\in[t-\delta,t].\]
Now, the function $s\mapsto H_s$ is strictly increasing (by the continuity of $r\mapsto \int L^X_{r}(x)\mu(dx)$), and so one may further deduce that $[H_{t-\delta},H_t]$ contains a rational number, $q$ say. Combining these observations, we obtain that $\mathbb{P}$-a.s.\ there exists a rational number $q\in [0, H_t]$ such that
\begin{equation}\label{yexp}
\overline{Y}_t=\sup[0,\overline{B}_q]\cap\overline{S(\mathbb{R})}.
\end{equation}
Since the set $\overline{S(\mathbb{R})}$ has zero Lebesgue measure (see \cite[Corollary II.20]{BertoinLevy}) and the supremum of a Brownian motion at a fixed time has a distribution that is absolutely continuous with respect to Lebesgue measure (see \cite[pg.~102]{KaratzasShreve}), it $\mathbb{P}$-a.s.\ holds that
\[\overline{B}_q\in (0,\infty)\backslash \overline{S(\mathbb{R})}=\left(S_\infty,\infty\right) \cup \bigcup_{u \in {D}:\:u>0} \left(S(u^-), S(u) \right) ,\]
where we again write $S_{\infty} \coloneqq \lim_{t\rightarrow \infty}S(t)$. If $\overline{B}_q\in (S_\infty,\infty)$, then \eqref{yexp} implies that $\overline{Y}_t=S_\infty$, and from this it follows that $\overline{Z}_t=\infty$. However, this cannot happen, since $Z$ is a conservative process (as established in \cite[Lemma 5.3]{MottModel}). Hence $\overline{B}_q\in (S(u^-), S(u))$ for some $u\in D$, and together with \eqref{yexp} this yields that $\overline{Y}_t=S(u^-)$. Consequently, $\overline{Z}_t=u\in D$, which verifies that the left-hand probability at \eqref{statement} is equal to 1. Moreover, from the same argument, we see that $(\overline{B})_{H_t}=\overline{B}_q\in (S(u^-), S(u))$ for some $u\in D$, as required to complete the proof.
\end{proof}

We can also show that the maximum of $Z^\lambda_{t}$ has positive probability of being located at any discontinuity point of $S^{\alpha_0}$ on the right of its starting point.

\begin{lemma}\label{LemmaMaximumOnEveryWall}
	Recall that $Z^{\lambda}_t$ was defined in \eqref{eqn:eqnDefinitionZ} and the set $D \coloneqq \{ v \in \R:\: S^{\alpha_0}(v) \ne S^{\alpha_0}(v^-) \}$. For each fixed $t>0$, it holds that $\mathbb{P}_{0}$-a.s.\ the locations of the atoms of the distribution of $\widebar{Z}^{\lambda}_t$ contain the locations of the atoms of $D \cap (0, \infty)$.
\end{lemma}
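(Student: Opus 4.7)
Fix $u \in D \cap (0,\infty)$ and set $a \coloneqq S^{\alpha_0,\lambda}(u^-) < b \coloneqq S^{\alpha_0,\lambda}(u)$. The plan is to produce an event of positive $P^{\omega,\lambda}$-probability on which $\widebar{Z}^\lambda_t = u$. By Lemma~\ref{lemmaMaximumDisc}, up to a null event, $\{\widebar{Z}^\lambda_t = u\}$ coincides with $\{\tau^B_a \le H^\lambda_t < \tau^B_b\}$, where $\tau^B_x$ denotes the first hitting time of level $x$ by the driving Brownian motion $B$.

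First, I would choose $\eta \in (0, (b-a)/2)$ small enough that $\mu^\lambda((a-\eta, a]) > 0$. This is possible because $(S^{\alpha_0,\lambda})^{-1}(a-\eta) < u = (S^{\alpha_0,\lambda})^{-1}(a)$ for all sufficiently small $\eta > 0$, so by \eqref{SmoothSpeedMeasure} the mass of $(a-\eta, a]$ equals $2\mathbf{E}[c(\{0,1\})]\int_{(S^{\alpha_0,\lambda})^{-1}(a-\eta)}^{u} e^{2\lambda v}\,dv > 0$. I then select $M, L_0 > 0$ so that $L_0 \cdot \mu^\lambda((a-\eta/2, a]) > t$, and an auxiliary $\epsilon > 0$ to be chosen later. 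Consider the event
\begin{align*}
F \coloneqq \bigl\{\tau^B_a \le \epsilon\bigr\} &\cap \Bigl\{B_{\tau^B_a + s} \in (a-\eta, a+\eta),\ \forall s \in [0, M]\Bigr\} \\
&\cap \Bigl\{L^B_{\tau^B_a + M}(x) \ge L_0,\ \forall x \in [a-\eta/2, a]\Bigr\}.
\end{align*}
The strong Markov property at $\tau^B_a$ decouples the first factor from the intersection of the latter two. The first factor has positive probability for any $\epsilon > 0$ by standard Brownian hitting-time estimates, while the intersection of the latter two, viewed as an event under the law of a Brownian motion started at $a$, has positive probability: one forces numerous short excursions from $a$ within the confining tube $(a-\eta, a+\eta)$ (via It\^o excursion theory), which produces a large local time at $a$, and this transfers to a uniform lower bound over $[a-\eta/2, a]$ via the H\"older continuity of $(s, x) \mapsto L^B_s(x)$.

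On $F$, clearly $\widebar{B}_{\tau^B_a + M} \le a + \eta < b$, so $\tau^B_b > \tau^B_a + M$; and $\int L^B_{\tau^B_a + M}(x)\,\mu^\lambda(dx) \ge L_0\cdot \mu^\lambda((a-\eta/2, a]) > t$, so $H^\lambda_t \le \tau^B_a + M < \tau^B_b$. To also ensure $H^\lambda_t \ge \tau^B_a$, I would intersect $F$ with $\{\int L^B_{\tau^B_a}(x) \mu^\lambda(dx) \le t\}$; since this integrand tends to $0$ as $\tau^B_a \to 0$, taking $\epsilon$ small enough keeps the intersection of positive probability. The resulting refined event lies in $\{\tau^B_a \le H^\lambda_t < \tau^B_b\}$, which by Lemma~\ref{lemmaMaximumDisc} gives $\widebar{Z}^\lambda_t = u$ with positive $P^{\omega,\lambda}$-probability. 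Since $D \cap (0,\infty)$ is countable, a union bound then yields the claim $\mathbb{P}_0$-a.s.\ simultaneously across all atoms of $D$ in $(0,\infty)$.

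The main technical obstacle is the Brownian-motion estimate establishing the uniform local time lower bound on $[a-\eta/2, a]$, which splits into (i) bounding $L^B(a)$ from below on the event that all excursions from $a$ are confined to $(a-\eta, a+\eta)$, via Poissonian excursion counts, and (ii) propagating this pointwise bound to a neighborhood using the H\"older regularity of the local time in the spatial variable. Both are standard, though combining them with the confinement event cleanly requires some care.
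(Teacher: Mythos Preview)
Your approach is essentially the paper's: both reduce $\{\widebar{Z}^\lambda_t=u\}$ to $\{\tau^B_a\le H^\lambda_t<\tau^B_b\}$ (with $a=S^{\alpha_0,\lambda}(u^-)$, $b=S^{\alpha_0,\lambda}(u)$) and then split into (i) the Brownian motion reaches $a$ before the $Z$-clock hits $t$, and (ii) it thereafter stays below $b$ long enough for the clock to pass $t$. For (ii) the paper appeals directly to the Ray--Knight theorem (at the exit time of the larger interval $(-1,(a+b)/2)$) to get the uniform local-time lower bound, which is cleaner than your excursion-theory-plus-H\"older route, but either argument works.

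There is one genuine gap in your treatment of (i). The sentence ``since this integrand tends to $0$ as $\tau^B_a\to 0$, taking $\epsilon$ small enough keeps the intersection of positive probability'' is not a valid deduction: pathwise continuity $A_s\to 0$ gives no deterministic $\epsilon$ with $\{\tau^B_a\le\epsilon\}\subset\{A_{\tau^B_a}\le t\}$, and conditioning on the vanishing event $\{\tau^B_a\le\epsilon\}$ could in principle bias $A_{\tau^B_a}$ upward. The fix is to drop $\epsilon$ entirely. Since $\{A_{\tau^B_a}\le t\}$ is $\mathcal{F}_{\tau^B_a}$-measurable, the strong Markov property factorises it from the post-$\tau^B_a$ tube-and-local-time event, so you only need $P^{\omega,\lambda}(A_{\tau^B_a}\le t)=P^{\omega,\lambda}(\tau^Z_u\le t)>0$. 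This follows from Ray--Knight: on $\{\tau^B_a<\tau^B_{-1}\}$ the profile $x\mapsto L^B_{\tau^B_a}(x)$ vanishes off $[-1,a]$ and, being a BESQ$(2)$/BESQ$(0)$ process in the level variable, can be uniformly small on $[-1,a]$ with positive probability, whence $A_{\tau^B_a}\le t$. (The paper covers this same point with the informal remark that Brownian motion can enter $(a,b)$ ``whilst placing arbitrarily small local time on $\overline{S(\mathbb{R})}$'', so it is not more explicit here.)
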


\begin{proof}
	We drop the $\lambda$ superscript of $\mathbb{P}^{\lambda}$, $Z^{\lambda}$, $H^{\lambda}$ $S^{\alpha_0, \lambda}$, $\mu^\lambda$ for simplicity. Let $B$ and $S \coloneqq S^{\alpha_0}$ be as in Section \ref{sec:limits} and set $Y = B_{H_\cdot}$. Fix any $u \in D \cap (0, \infty)$, we know that $\left(S(u^-), S(u) \right)$ has positive Lebesgue measure and that $S(u^-) > 0$. Firstly, by basic properties of Brownian motion, we have for any $q>0$ that
	\begin{equation*}
		\mathbb{P}\left( \widebar{B}_{q} \in \left( S(u^{-}), S(u) \right), B_{q} \in \left( 0, S(u^{-}),\:q \in [0, H_{t}] \right) \,\, | \,\, S \right) > 0.
	\end{equation*}
(In particular, in time $q$, the Brownian motion can hit the interval $( S(u^{-}), S(u))$ and return to the left of $S(u^-)$ without hitting $S_u$, whilst placing arbitrarily small local time on $\overline{S(\mathbb{R})}$.) Hence, we are left to prove that with positive probability the maximum of the process does not exceed $S(u)$ in the remaining time. A Brownian motion $B$ started from $S(u^{-})$ can accumulate, with positive probability, arbitrary large local time in the set $\left[ 0, S(u^{-}) \right]$ before leaving the interval $(-1, r)$, where $r \coloneqq (S(u) + S(u^{-}))/2$. More precisely, let $\sigma \coloneqq \inf\{t> 0: B_{t} \not \in (-1, r)\}$ and let $L^{B}_{t}[x, y] \coloneqq \inf_{z \in [x, y]} L_{t}^{B}(z)$, then, for all $s>0$, by the Ray-Knight theorem,
	\begin{equation*}
		\mathbb{P}_{S(u^{-})}\left( L_{\sigma}^{B}[0, S(u^{-})] > s \,\, | \,\, S \right) > 0.
	\end{equation*}
	Furthermore, by the fact that subordinators are almost surely increasing, it must be the case that $\mu([0, S(u^{-})]) > 0$. Hence, we deduce that
	\begin{equation*}
		\mathbb{P}_{S(u^{-})}\left( \sigma > H_{t}\,\,| \,\, S \right) > 0.
	\end{equation*}
	Then, by the Markov property,
	\begin{align*}
		\lefteqn{\mathbb{P}\left( \widebar{Y}_{t} = S(u^{-}) \,\,| \,\, S \right)}\\
& \ge \mathbb{P}\left( \widebar{B}_{q} \in \left( S(u^{-}), S(u) \right), B_{q} \in \left( 0, S(u^{-}) \right),\:q \in [0, H_{t}] \,\, | \,\, S \right) \mathbb{P}_{S(u^{-})}\left( \sigma > H_{t} \,\, | \,\, S \right)\\&>0,
	\end{align*}
and we conclude by taking expectations.
\end{proof}

Now, turning to the model with traps, we show that $\widetilde{Z}^\lambda$ is $\widetilde{\Prob}$-a.s.\ located in the discontinuities of $S^{\alpha_\infty}$.

\begin{lemma}\label{lem73}
Recall that $\widetilde{Z}^{\lambda}_t$ was defined at \eqref{eqn:eqnDefinitionZTilde} and let us define the set $D_{\infty} \coloneqq \{ v \in \R:\: S^{\alpha_\infty}(v) \ne S^{\alpha_\infty}(v^-) \}$. It holds that
\[\widetilde{\Prob}^{\lambda}\left(  \widetilde{Z}_t\in D_\infty \right) = 1,\qquad \forall t>0.\]
\end{lemma}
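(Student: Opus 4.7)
The plan is to argue that $\widetilde{Z}^\lambda_t$ takes values in the support of the pushforward speed measure $\widetilde{\mu}^\lambda$, to identify this support with $\{S^{\alpha_0,\lambda}(v):v\in D_\infty\}$, and then to read off the statement via $(S^{\alpha_0,\lambda})^{-1}$.

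First, I would analyse the atomic structure of $\widetilde{\mu}^\lambda$. By \eqref{eqn:eqnDefAlphaInftySub}, $S^{\alpha_\infty}$ is a pure-jump subordinator with jump set exactly $D_\infty$, hence the measure $\mu_0(dv):=e^{2\lambda v}\,dS^{\alpha_\infty}(v)$ on $\mathbb{R}$ is purely atomic with atoms at $D_\infty$ and weights $e^{2\lambda v}\Delta S^{\alpha_\infty}(v)$, where $\Delta S^{\alpha_\infty}(v)$ is the jump size. By the independence of the Poisson processes $\mathcal{P}^{\alpha_0}$ and $\mathcal{P}^{\alpha_\infty}$, we have $D_\infty\cap D=\emptyset$ almost surely, so $S^{\alpha_0,\lambda}$ is continuous and strictly increasing at every $v\in D_\infty$. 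Pushing $\mu_0$ forward through $S^{\alpha_0,\lambda}$, as dictated by \eqref{eqn:HeavyTailedSpeedMeasure}, I obtain that $\widetilde{\mu}^\lambda$ is purely atomic, with atom set $E:=\{S^{\alpha_0,\lambda}(v):v\in D_\infty\}$.

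Second, mirroring the analysis carried out in the proof of Lemma~\ref{lemmaZNotAtMaximum}, I would argue that, for $\widetilde{\mathbf{P}}$-a.e.\ realisation of the environment, $\widetilde{Y}^\lambda_t:=B_{\widetilde{H}^\lambda_t}$ is the Markov process naturally associated with the resistance metric measure space $\bigl(\overline{S^{\alpha_0,\lambda}(\mathbb{R})}\setminus\{S^{\alpha_0,\lambda}_\infty\},d,\widetilde{\mu}^\lambda\bigr)$, and hence, by \cite[Theorem 10.4]{KigQ}, admits a jointly continuous, symmetric transition density $p^{\widetilde{Y}}_t(x,y)$ with respect to $\widetilde{\mu}^\lambda$. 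Since $0=B_0=S^{\alpha_0,\lambda}(0)$ lies in the support of $\widetilde{\mu}^\lambda$ (as $D_\infty$ accumulates at $0$, by the infinite intensity of $\mathcal{P}^{\alpha_\infty}$ near the origin), the conditional law of $\widetilde{Y}^\lambda_t$ given the environment is absolutely continuous with respect to $\widetilde{\mu}^\lambda$. Because $\widetilde{\mu}^\lambda\bigl(\overline{S^{\alpha_0,\lambda}(\mathbb{R})}\setminus E\bigr)=0$, this forces $\widetilde{Y}^\lambda_t\in E$ almost surely.

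Finally, on the event $\widetilde{Y}^\lambda_t\in E$, I may write $B_{\widetilde{H}^\lambda_t}=S^{\alpha_0,\lambda}(v)$ for some $v\in D_\infty$. By the first step, $v$ is a continuity point of the strictly-increasing subordinator $S^{\alpha_0,\lambda}$, and therefore $(S^{\alpha_0,\lambda})^{-1}\bigl(S^{\alpha_0,\lambda}(v)\bigr)=v$, giving $\widetilde{Z}^\lambda_t=v\in D_\infty$. Integrating out the environment yields the claim under $\widetilde{\Prob}^\lambda$. The delicate step is the rigorous application of \cite[Theorem 10.4]{KigQ} to the random resistance form with this atomic (and $\sigma$-finite but not finite) speed measure; in case the starting point $0$ should prove exceptional, one can adopt the $\mathbb{Q}$-measure averaging trick used in the proof of Lemma~\ref{lemmaZNotAtMaximum}, integrating against a translation-invariant $\sigma$-finite measure on starting points and using the stationarity of increments of $S^{\alpha_0}$ and $S^{\alpha_\infty}$ to reduce to a generic starting point in the support.
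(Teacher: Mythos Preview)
Your proposal is correct and follows essentially the same route as the paper: both arguments rest on observing that $\widetilde{\mu}^\lambda$ is purely atomic with atoms at $\{S^{\alpha_0,\lambda}(v):v\in D_\infty\}$, so that the time-changed Brownian motion $B_{\widetilde{H}^\lambda_t}$ lies in this set almost surely at each fixed time, after which one applies $(S^{\alpha_0,\lambda})^{-1}$. The paper's proof is extremely terse---it simply asserts that the time change of Brownian motion by a purely atomic speed measure must sit at an atom at any fixed time---whereas you supply one rigorous mechanism for this (absolute continuity of the marginal via the transition density of \cite{KigQ}), together with the auxiliary observation that $D_\infty\cap D=\emptyset$ a.s.\ so that the inverse subordinator recovers $v$. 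Your added care about the starting point and the suggested $\mathbb{Q}$-averaging fallback are reasonable hedges, though in practice the paper is content to treat the transition-density argument as standard.
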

\begin{proof} Since $B_{\widetilde{H}^{\lambda}_\cdot}$ is the time change of Brownian motion by the measure $\widetilde{\mu}^{\lambda}$, which is a purely atomic measure, with atoms at $(S^{\alpha_0, \lambda}_u)_{u\in D_\infty}$, it must hold for each fixed $t>0$ that, $\widetilde{\Prob}$-a.s.,
\[B_{\widetilde{H}^{\lambda}_t}\in \left\{S^{\alpha_0, \lambda}(u):\:u\in D_\infty\right\}.\]
The result follows.
\end{proof}

From the previous result, we can check that $\widetilde{Z}^\lambda$ is likely to be found at exactly the same location at two nearby times.

\begin{lemma}\label{lemmaleftcont}
Recall that $\widetilde{Z}^{\lambda}_t$ was defined at \eqref{eqn:eqnDefinitionZTilde}. It holds that
\[\lim_{\varepsilon \downarrow 0}\widetilde{\Prob}^{\lambda}\left( \widetilde{Z}_{1-\varepsilon}  = \widetilde{Z}_{1}  \right) = 1.\]
\end{lemma}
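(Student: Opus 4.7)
The plan is to show that $\widetilde{Z}^\lambda$ is a piecewise-constant c\`adl\`ag process on the countable set $D_\infty$ and that the deterministic time $t=1$ is almost surely not one of its jump times, so that the conclusion follows by monotone convergence.

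First, I would upgrade Lemma~\ref{lem73} from a pointwise-in-$t$ statement to the pathwise one that, $\widetilde{\mathbb{P}}^\lambda$-almost surely, $\widetilde{Z}^\lambda_t\in D_\infty$ for every $t\ge 0$. This is immediate from the construction at \eqref{eqn:HeavyTailedSpeedMeasure}--\eqref{eqn:eqnDefinitionZTilde}: the measure $\widetilde{\mu}^\lambda$ is purely atomic, supported on $\{S^{\alpha_0,\lambda}(u):u\in D_\infty\}$, so the additive functional $A_s:=\int L^B_s(x)\,\widetilde{\mu}^\lambda(dx)$ only increases when $B$ is at one of those atoms. Its right-continuous inverse $\widetilde{H}^\lambda_t$ therefore always selects a time at which $B_{\widetilde{H}^\lambda_t}\in\{S^{\alpha_0,\lambda}(u):u\in D_\infty\}$, giving $\widetilde{Z}^\lambda_t\in D_\infty$ pathwise.

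Second, by the general theory of time changes of Brownian motion by a purely atomic measure (equivalently, by the fact that the symmetric Hunt process associated with the resistance form of Proposition~\ref{PropositionRWTEnvConvergence} is a continuous-time Markov chain on the support of its speed measure), conditional on the environment $(S^{\alpha_0,\lambda},S^{\alpha_\infty,\lambda})$ the process $\widetilde{Z}^\lambda$ is a CTMC on $D_\infty$ with exponential holding times, and its set of jump times $\{\sigma_k\}$ is locally finite $\widetilde{\mathbb{P}}^\lambda$-a.s.\ by non-explosion. On the event $\{1\notin\{\sigma_k\}\}$, writing $\sigma_*<1<\sigma_{**}$ for the jump times flanking $t=1$, the process is constant on $[\sigma_*,\sigma_{**})\ni 1$, so $\widetilde{Z}^\lambda_{1-\varepsilon}=\widetilde{Z}^\lambda_1$ for every $\varepsilon<1-\sigma_*$; it therefore suffices to prove $\widetilde{\mathbb{P}}^\lambda(\exists k:\sigma_k=1)=0$.

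Third, conditional on the environment and on $\sigma_k$, the strong Markov property together with the CTMC structure above implies $\sigma_{k+1}-\sigma_k\sim\mathrm{Exp}(q_{\widetilde{Z}^\lambda_{\sigma_k}})$ for some state-dependent rate $q_{\cdot}>0$. In particular, $\sigma_{k+1}$ has a continuous conditional distribution, so $\widetilde{\mathbb{P}}^\lambda(\sigma_{k+1}=1\mid\sigma_k)=0$, and a union bound over the (a.s.\ countable) index set finishes the proof once combined with monotone convergence on $\{\widetilde{Z}^\lambda_{1-\varepsilon}=\widetilde{Z}^\lambda_1\}\uparrow\{1\notin\{\sigma_k\}\}$ as $\varepsilon\downarrow 0$.

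The main obstacle is the rigorous justification of the CTMC description in the second step. A slicker alternative, which I would favour writing up, is to invoke the quasi-left-continuity of the Hunt process $\widetilde{Z}^\lambda$ at all predictable stopping times, and in particular at the deterministic time $t=1$; this gives $\widetilde{\mathbb{P}}^\lambda(\widetilde{Z}^\lambda_{1-}=\widetilde{Z}^\lambda_1)=1$ directly, and combined with the pathwise piecewise-constancy on $D_\infty$ from the first step yields $\widetilde{Z}^\lambda_{1-\varepsilon}=\widetilde{Z}^\lambda_1$ for all sufficiently small $\varepsilon>0$, $\widetilde{\mathbb{P}}^\lambda$-a.s.
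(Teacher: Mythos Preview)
Your proposal contains a genuine gap: the description of $\widetilde{Z}^\lambda$ as a CTMC on $D_\infty$ with a locally finite set of jump times $\{\sigma_k\}$ is not correct. The set $D_\infty$ is \emph{dense} in $\mathbb{R}$, and the process $\widetilde{Z}^\lambda$ is continuous (see the reference to \cite[Corollary~3.1]{Ogura} in the paper). While the process does hold at each atom of $\widetilde{\mu}^\lambda$ for a positive exponential time, these holding intervals are not well-ordered: to pass continuously from one atom to another, the process must traverse the infinitely many atoms in between, each contributing its own (arbitrarily short) holding interval. The complement of the union of holding intervals in $[0,1]$ is therefore not a discrete sequence $\sigma_1<\sigma_2<\cdots$ but an uncountable, Cantor-like set of measure zero. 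Consequently, your union bound over countably many $\sigma_k$ does not apply, and ``non-explosion'' does not rescue the argument. Relatedly, your pathwise upgrade of Lemma~\ref{lem73} is false: at times in this Cantor-like set (for instance when $B_{\widetilde{H}^\lambda_t}$ sits at a point $S^{\alpha_0,\lambda}(v^-)$ with $v\in D$), one has $\widetilde{Z}^\lambda_t\notin D_\infty$.

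Your alternative via quasi-left-continuity also falls short. Quasi-left-continuity (or simply the continuity of $\widetilde{Z}^\lambda$) gives $\widetilde{Z}^\lambda_{1^-}=\widetilde{Z}^\lambda_1$, but this is strictly weaker than what is needed: it does not rule out that $t=1$ is a right-accumulation point of infinitely many distinct holding intervals whose values converge to $\widetilde{Z}^\lambda_1$, in which case $\widetilde{Z}^\lambda_{1-\varepsilon}\neq\widetilde{Z}^\lambda_1$ for every $\varepsilon>0$. The paper avoids this backward-in-time difficulty by first using the self-similarity $(\widetilde{Z}_{ct})_{t\ge0}\stackrel{(\mathrm{d})}{=}(c^{\alpha_0\alpha_\infty/(\alpha_0+\alpha_\infty)}\widetilde{Z}_t)_{t\ge0}$ to reduce to the \emph{forward} statement $\lim_{\varepsilon\downarrow0}\widetilde{\Prob}(\widetilde{Z}_1=\widetilde{Z}_{1+\varepsilon})=1$, which is then accessible via the Markov property at time $1$: conditioning on $B_{\widetilde{H}_1}=S^{\alpha_0}_u$, one bounds the probability of escaping a ball of radius $\eta$ in time $\varepsilon$ by an exponential, and within the ball invokes the invariant-measure estimate of \cite[Lemma~2.5]{FIN} to control the probability of landing on a different atom; the latter vanishes as $\eta\to0$ since $\widetilde{\mu}^\lambda([S^{\alpha_0}_u-\eta,S^{\alpha_0}_u+\eta]\setminus\{S^{\alpha_0}_u\})\to0$.
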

\begin{proof} We drop the $\lambda$ superscript of $\widetilde{\Prob}^{\lambda}$, $\widetilde{Z}^{\lambda}$, $\widetilde{H}^{\lambda}$, $S^{\alpha_0, \lambda}$, $S^{\alpha_\infty, \lambda}$, $\widetilde{\mu}^{\lambda}$ for simplicity, set also $S_{u}^{\alpha_0} \coloneqq S^{\alpha_0}(u)$. By the scaling properties of the subordinators $S^{\alpha_0}$, $S^{\alpha_\infty}$ and the Brownian motion $B$, it is possible to check that, for any constant $c>0$,
\[\left(\widetilde{Z}_{ct}\right)_{t\geq0}\buildrel{(\mathrm{d})}\over{=}\left(c^{\alpha_0\alpha_\infty/(\alpha_0+\alpha_\infty)}\widetilde{Z}_{t}\right)_{t\geq0}.\]
Hence the statement of the lemma is equivalent to the following limit:
\[\lim_{\varepsilon \downarrow 0}\widetilde{\Prob}\left( \widetilde{Z}_{1}  = \widetilde{Z}_{1+\varepsilon} \right) = 1.\]

Now, from (the proof of) Lemma \ref{lem73}, we have that $B_{\widetilde{H}_1}=S_{u}^{\alpha_0}$ for some $u\in D_\infty$. Moreover, since a Brownian motion accumulates a mean $\eta/2$ exponential amount of local time at its starting point before exiting an ball of radius $\eta$ around it (cf.\ \cite[(3.189)]{MR}), it holds that the time-changed process $B_{\widetilde{H}_\cdot}$ takes at least a mean $\eta/2\tilde{\mu}(\{S_{u}^{\alpha_0}\})$ exponential time after time $1$ to escape the interval $[S_{u}^{\alpha_0}-\eta,S_{u}^{\alpha_0}+\eta]$. Thus
\[\widetilde{\mathbb{P}}\left(B_{\widetilde{H}_{1+\varepsilon}}\neq S_{u}^{\alpha_0}\:\vline\:S^{\alpha_0},\:S^{\alpha_\infty},\:B_{\widetilde{H}_1}=S_{u}^{\alpha_0}\right)\leq 1-e^{-2\varepsilon\widetilde{\mu}(\{S_{u}^{\alpha_0}\})/\eta}+\widetilde{\mathbb{P}}\left(B_{\widetilde{H}^\eta_{1+\varepsilon}}\neq S_{u}^{\alpha_0}\:\vline\:S,\:B_{\widetilde{H}^\eta_1}=S_{u}^{\alpha_0}\right),\]
where $B_{\widetilde{H}^\eta_\cdot}$ corresponds to the Brownian motion time-changed by the restricted measure $\widetilde{\mu}^\eta\coloneqq \widetilde{\mu}(\cdot\cap[S^{\alpha_0}_u-\eta,S^{\alpha_0}_u+\eta])$. For the probability on the right-hand side above, we have the following estimate from \cite[Lemma 2.5]{FIN}:
\[\widetilde{\mathbb{P}}\left(B_{\widetilde{H}^\eta_{1+\varepsilon}}\neq S_u^{\alpha_0}\:\vline\:S^{\alpha_0},\:S^{\alpha_\infty},\:B_{\widetilde{H}^\eta_1}=S_u^{\alpha_0}\right)\leq \frac{\widetilde{\mu}\left([S^{\alpha_0}_u-\eta,S^{\alpha_0}_u+\eta]\backslash\{S^{\alpha_0}_u\}\right)}{\widetilde{\mu}\left([S^{\alpha_0}_u-\eta,S^{\alpha_0}_u+\eta]\right)}.\]
Hence we conclude that
\begin{eqnarray*}
\lefteqn{\widetilde{\mathbb{P}}\left(B_{\widetilde{H}_{1+\varepsilon}}\neq B_{\widetilde{H}_1}\right)}\\
&= &\widetilde{\mathbb{E}}\left[\sum_{u\in D_\infty}\widetilde{\mathbb{P}}\left(B_{\widetilde{H}_{1+\varepsilon}}\neq S_u^{\alpha_0}\:\vline\:S^{\alpha_0},\:S^{\alpha_\infty},\:B_{\widetilde{H}_1}=S_u^{\alpha_0}\right)\widetilde{\mathbb{P}}\left(B_{\widetilde{H}_1}=S_u^{\alpha_0}\:\vline\: S^{\alpha_0},\:S^{\alpha_\infty}\right)\right]\\
&\leq &\widetilde{\mathbb{E}}\left[\sum_{u\in D_\infty}\left(1-e^{-2\varepsilon\widetilde{\mu}(\{S^{\alpha_0}_u\})/\eta}+\frac{\widetilde{\mu}\left([S^{\alpha_0}_u-\eta,S^{\alpha_0}_u+\eta]\backslash \{S^{\alpha_0}_u\}\right)}{\widetilde{\mu}\left([S^{\alpha_0}_u-\eta,S^{\alpha_0}_u+\eta]\right)}\right)\widetilde{\mathbb{P}}\left(B_{\widetilde{H}_1}=S_u^{\alpha_0}\:\vline\: S^{\alpha_0},\:S^{\alpha_\infty}\right)\right].
\end{eqnarray*}
Taking limits as $\varepsilon\rightarrow 0$, this implies
\[\limsup_{\varepsilon\rightarrow0}\widetilde{\mathbb{P}}\left(B_{\widetilde{H}_{1+\varepsilon}}\neq B_{\widetilde{H}_1}\right)\leq \widetilde{\mathbb{E}}\left[\sum_{u\in D_\infty}\frac{\widetilde{\mu}\left([S^{\alpha_0}_u-\eta,S^{\alpha_0}_u+\eta]\backslash \{S^{\alpha_0}_u\}\right)}{\widetilde{\mu}\left([S^{\alpha_0}_u-\eta,S^{\alpha_0}_u+\eta]\right)}\widetilde{\mathbb{P}}\left(B_{\widetilde{H}_1}=S_u^{\alpha_0}\:\vline\: S^{\alpha_0},\:S^{\alpha_\infty}\right)\right].\]
Moreover, since $\widetilde{\mathbb{P}}$-a.s., for any $u\in D_\infty$, as $\eta\rightarrow 0$, $\widetilde{\mu}([S^{\alpha_0}_u-\eta,S^{\alpha_0}_u+\eta]\backslash \{S^{\alpha_0}_u\})\rightarrow 0$ and $\widetilde{\mu}([S^{\alpha_0}_u-\eta,S^{\alpha_0}_u+\eta])\rightarrow \widetilde{\mu}(\{S^{\alpha_0}_u\})>0$, we have from the dominated convergence theorem that the right-hand side here converges to zero as $\eta\rightarrow 0$. Thus
\begin{equation}\label{eqn:ProbContinuity}
	\lim_{\varepsilon\rightarrow0}\widetilde{\mathbb{P}}\left(B_{\widetilde{H}_{1+\varepsilon}}= B_{\widetilde{H}_1}\right)=1,
\end{equation}
and the result follows.
\end{proof}

A weaker version of the previous result is the following, which is a simple consequence of the continuity of $\widetilde{Z}^\lambda$ (see, for example, \cite[Corollary 3.1]{Ogura}).

\begin{lemma}\label{lemmaExitIntervalDiffusion}
Recall that $\widetilde{Z}^{\lambda}_t$ was defined at \eqref{eqn:eqnDefinitionZTilde}. For all $\delta>0$, it holds that
	\[\lim_{\varepsilon \downarrow 0}\widetilde{\Prob}^{\lambda}\left( \sup_{t \in [1-\varepsilon, 1]} \left|\widetilde{Z}_{1} - \widetilde{Z}_{t}\right| \le \delta \right) = 1.\]
\end{lemma}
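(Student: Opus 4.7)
My plan is to derive this lemma directly from the almost-sure continuity of the process $\widetilde{Z}^{\lambda}$, which is the fact cited in the statement via \cite[Corollary 3.1]{Ogura}. Because the time point $t=1$ is deterministic (and in particular not a jump time of the subordinators), the process is continuous at $1$ with full probability, so pointwise in $\omega$ the quantity inside the probability will tend to $1$ as $\varepsilon\downarrow 0$.

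Concretely, I would argue as follows. Let $\Omega_0$ be the event (of full $\widetilde{\Prob}^{\lambda}$-measure) on which $t\mapsto \widetilde{Z}^{\lambda}_t$ is continuous on $[0,2]$. Fix $\delta>0$ and set
\[A_\varepsilon \coloneqq \Bigl\{ \sup_{t\in[1-\varepsilon,1]} \bigl|\widetilde{Z}^{\lambda}_1 - \widetilde{Z}^{\lambda}_t\bigr| \le \delta \Bigr\}.\]
The events $A_\varepsilon$ are decreasing in $\varepsilon$. On $\Omega_0$, the map $t\mapsto \widetilde{Z}^{\lambda}_t$ is (left-)continuous at $t=1$, so there exists $\varepsilon(\omega)>0$ such that $|\widetilde{Z}^{\lambda}_1(\omega)-\widetilde{Z}^{\lambda}_t(\omega)|\le\delta$ for all $t\in[1-\varepsilon(\omega),1]$; equivalently, $\mathds{1}_{A_\varepsilon}(\omega)\to 1$ as $\varepsilon\downarrow 0$ for every $\omega\in\Omega_0$.

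The conclusion now follows from the dominated convergence theorem, since $\mathds{1}_{A_\varepsilon}\le 1$. There is no real obstacle; the only point worth noting is why continuity at the fixed time $t=1$ is automatic, and this is precisely the content of the continuity of $\widetilde{Z}^{\lambda}$ on $[0,\infty)$ established in \cite[Corollary 3.1]{Ogura} (and also essentially reproved on the coupled space in \cite[Lemma 5.4]{MottModel}). Alternatively, one could invoke the much stronger Lemma~\ref{lemmaleftcont} just established: since $\widetilde{Z}^{\lambda}_{1-\varepsilon}=\widetilde{Z}^{\lambda}_1$ with probability tending to one, and between these times (by continuity and the fact that $\widetilde{Z}^{\lambda}$ spends positive time at atoms of $\nu^{\alpha_\infty}$) the trajectory stays within a $\delta$-neighbourhood of $\widetilde{Z}^{\lambda}_1$, the claim follows, but the dominated convergence argument above is the cleanest route.
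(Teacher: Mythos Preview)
Your proposal is correct and matches the paper's approach exactly: the paper does not even write out a proof for this lemma, simply stating that it is ``a simple consequence of the continuity of $\widetilde{Z}^\lambda$ (see, for example, \cite[Corollary 3.1]{Ogura})'', which is precisely the argument you spell out via dominated convergence.
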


\begin{lemma}
	Recall that $\widetilde{Z}^{\lambda}_t$ was defined at \eqref{eqn:eqnDefinitionZTilde} and let us define the set $D_{\infty} \coloneqq \{ v \in \R:\: S^{\alpha_\infty}(v) \ne S^{\alpha_\infty}(v^-) \}$. For any starting point $x \in \mathbb{R}$ and any $t>0$ the locations of the atoms $D_{\infty}$ are contained in the locations of the atoms of the marginal $\widetilde{Z}^{\lambda}_t$ started from $x$.
\end{lemma}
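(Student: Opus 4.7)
The plan is to view $\widetilde{Z}^\lambda$ as the canonical Markov diffusion on a resistance metric measure space whose speed measure has an atom at $S^{\alpha_0,\lambda}(u)$, and then exploit the existence of a strictly positive continuous transition density.

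First, I would fix $u\in D_\infty$ and set $y_u:=S^{\alpha_0,\lambda}(u)$, $y_x:=S^{\alpha_0,\lambda}(x)$. Because Lemma~\ref{lem73} already puts the marginal on the set $\{S^{\alpha_0,\lambda}(v):v\in D_\infty\}$, it is enough to show $\widetilde{\mathbb{P}}^\lambda_x(\widetilde{Z}_t=u)>0$. Following Section~\ref{sec:limits} (and the identification made explicit in Proposition~\ref{PropositionRWTEnvConvergence}), the process $Y_s:=B_{\widetilde{H}^\lambda_s}$, with $B$ a Brownian motion started from $y_x$, is the canonical Markov process associated with the resistance metric measure space $(\overline{S^{\alpha_0,\lambda}(\mathbb{R})},d,\widetilde{\mu}^\lambda)$, and $\widetilde{Z}^\lambda_t=u$ if and only if $Y_t=y_u$. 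Since $u\in D_\infty$, the atomic speed measure satisfies $\widetilde{\mu}^\lambda(\{y_u\})=\Delta S^{\alpha_\infty,\lambda}(u)>0$.

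Second, I would invoke \cite[Theorem~10.4]{KigQ}, which yields a jointly continuous transition density $p_s(\cdot,\cdot)$ for $Y$ with respect to $\widetilde{\mu}^\lambda$ for every $s>0$. Evaluating the semigroup at the singleton $\{y_u\}$ then gives
\[
	\widetilde{\mathbb{P}}^\lambda_x(\widetilde{Z}_t=u)=\mathbb{P}(Y_t=y_u)=p_t(y_x,y_u)\,\widetilde{\mu}^\lambda(\{y_u\}),
\]
so the claim reduces to the strict positivity of $p_t(y_x,y_u)$.

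The main obstacle is precisely this positivity step. The intuition is clear: the underlying Brownian motion on $\mathbb{R}$ is recurrent and hits every point almost surely, so $Y$ should be irreducible on the support of $\widetilde{\mu}^\lambda$. Making this rigorous in the resistance-form setting requires some care, because $\overline{S^{\alpha_0,\lambda}(\mathbb{R})}$ is disconnected (the jumps of $S^{\alpha_0,\lambda}$ create gaps). I would argue directly along the lines of the proof of Lemma~\ref{LemmaMaximumOnEveryWall}: with positive probability, $B$ reaches any prescribed neighbourhood of $y_u$ while accumulating arbitrarily small $\widetilde{\mu}^\lambda$-local time en route, and once near $y_u$ a Ray--Knight-type argument shows that it accumulates a positive amount of local time at $y_u$ itself before leaving; combined with the joint continuity of $p_s$ and a standard chaining/Chapman--Kolmogorov argument, this upgrades reachability to $p_t>0$ at any time horizon, completing the proof.
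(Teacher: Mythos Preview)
Your approach is correct in outline but takes a genuinely different route from the paper's proof. You work through the existence of a jointly continuous transition density $p_t$ for $Y=B_{\widetilde{H}^\lambda_\cdot}$ via \cite[Theorem~10.4]{KigQ}, reducing the problem to the strict positivity of $p_t(y_x,y_u)$, and then you propose to obtain this positivity by a direct Brownian-motion hitting argument plus Chapman--Kolmogorov chaining. The paper instead avoids the transition density entirely and argues as follows: it first uses the continuity in $t$ of $s\mapsto\widetilde{\mathbb{P}}(B_{\widetilde{H}_s}=z_0\mid S^{\alpha_0},S^{\alpha_\infty},B_{\widetilde{H}_0}=y)$, which was already established as equation~\eqref{eqn:ProbContinuity} in the proof of Lemma~\ref{lemmaleftcont}; it then writes the occupation-time identity
\[
\int_t^{t'}\widetilde{\mathbb{P}}\bigl(B_{\widetilde{H}_s}=z_0\mid\cdots\bigr)\,ds
=\widetilde{\mathbb{E}}\bigl[L^B_{\widetilde{H}_{t'}}(z_0-y)-L^B_{\widetilde{H}_{t}}(z_0-y)\mid\cdots\bigr]\,\widetilde{\mu}(z_0),
\]
observes that the right-hand side is strictly positive (since $\widetilde{H}$ is strictly increasing and the Brownian motion crosses $z_0-y$ with positive probability on $[\widetilde{H}_t,\widetilde{H}_{t'}]$), and concludes that the probability is positive for \emph{some} $s\in(0,t)$; a final Markov-property step bootstraps this to the given time $t$.

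The paper's argument is shorter and more self-contained because it recycles the continuity result already proved just above. Your approach is more in the spirit of general resistance-form theory, but two points deserve care: first, you should check that the hypotheses of \cite[Theorem~10.4]{KigQ} are indeed met when the speed measure $\widetilde{\mu}^\lambda$ is purely atomic (the paper only invokes that theorem for the absolutely continuous measure $\mu^\lambda$ in Lemma~\ref{lemmaZNotAtMaximum}); second, your final ``reachability $\Rightarrow p_t>0$ at every time horizon'' step is sketched rather loosely---the Brownian hitting argument you describe gives positivity for \emph{some} small time, and the extension to the prescribed $t$ via Chapman--Kolmogorov still needs $p_{t-s}(y_u,y_u)>0$, which requires its own justification. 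Both gaps are fillable, but the paper's occupation-time route sidesteps them entirely.
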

\begin{proof}
We drop the $\lambda$ superscript of $\widetilde{\Prob}^{\lambda}$, $\widetilde{Z}^{\lambda}$, $\widetilde{H}^{\lambda}$, $S^{\alpha_0, \lambda}$, $S^{\alpha_\infty, \lambda}$, $\widetilde{\mu}^{\lambda}$ for simplicity. Note that the statement is equivalent to prove that the atoms of the marginal of $B_{\widetilde{H}_t}$, started from any $y \in \overline{S^{\alpha_0}(\mathbb{R})}$, contain the atoms of the speed measure $\widetilde{\mu}$.
	
By \eqref{eqn:ProbContinuity}, for all $z_{0}$ such that $\widetilde{\mu}(z_{0}) > 0$, we have that the probability
\begin{equation*}
	\widetilde{\mathbb{P}}\left( B_{\widetilde{H}_t} = z_{0} \:\vline\:S^{\alpha_0},\:S^{\alpha_\infty},\:B_{\widetilde{H}_0}=y\right),
\end{equation*}
is continuous in $t$. Furthermore, this continuity extends to all $z \in \mathbb{R}$ as Lemma~\ref{lem73} guarantees that the other points always have $0$ probability. From the definition at \eqref{eqn:HeavyTailedSpeedMeasure}, for all $t'>t>0$ we can write
\begin{align}
	\lefteqn{\int_{t}^{t'} \widetilde{\mathbb{P}}\left( B_{\widetilde{H}_s} = z_{0} \:\vline\:S^{\alpha_0},\:S^{\alpha_\infty},\:B_{\widetilde{H}_0}=y\right)ds}\nonumber\\
& = \widetilde{\mathbb{E}}\left[ L_{\widetilde{H}_{t'}}^{B} (z_{0} - y) - L_{\widetilde{H}_{t}}^{B} (z_{0} - y) \:\vline\:S^{\alpha_0},\:S^{\alpha_\infty},\:B_{\widetilde{H}_0}=y\right] \mu(z_0).\label{ProbCrossingPos}
\end{align}
We claim that the expectation on the right-hand side is strictly positive. Indeed, it is not hard to check that if $\widetilde{H}_t$ is strictly increasing in $t$ for all starting points $B_{\widetilde{H}_0}=y$. Hence, between times $\widetilde{H}_{t}$ and $\widetilde{H}_{t'}$ the Brownian motion has a positive probability to cross $z_{0} - y$ and hence will strictly increase its local time there so that the right-hand side of \eqref{ProbCrossingPos} is strictly positive.

Thus, for all $t>0$ there exists $0<s<t$ such that $\widetilde{\mathbb{P}}( B_{\widetilde{H}_s} = z_{0} \:\vline\:S^{\alpha_0},\:S^{\alpha_\infty},\:B_{\widetilde{H}_0}=y)$ is strictly positive. Hence, by continuity of these probabilities and the Markov property, we get that
\begin{equation*}
		 \widetilde{\mathbb{P}}\left( B_{\widetilde{H}_s} = z_{0} \:\vline\:S^{\alpha_0},\:S^{\alpha_\infty},\:B_{\widetilde{H}_0}=y\right) \widetilde{\mathbb{P}}\left( B_{\widetilde{H}_{t-s}} = z_{0} \:\vline\:S^{\alpha_0},\:S^{\alpha_\infty},\:B_{\widetilde{H}_0}=z_0\right) > 0,
\end{equation*}
which is a lower bound for $\widetilde{\mathbb{P}}( B_{\widetilde{H}_t} = z_{0} \:\vline\:S^{\alpha_0},\:S^{\alpha_\infty},\:B_{\widetilde{H}_0}=y)$. We conclude by taking expectations.
\end{proof}

Finally, we check the conservativeness of $\widetilde{Z}^{\lambda}$.

\begin{lemma}\label{LemmaExitSlowlyKBox}
	For any $u > 0$,
	\begin{equation*}
		\limsup_{K \to \infty}\,\, \widetilde{\Prob}^{\lambda}\left( \tau_{K}^{\widetilde{Z}} \wedge \tau_{-K}^{\widetilde{Z}} \le u \right) = 0.
	\end{equation*}
\end{lemma}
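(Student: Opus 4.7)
I would first recast the event as one about the driving Brownian motion $B$. Since $\widetilde{Z}^\lambda_t=(S^{\alpha_0,\lambda})^{-1}(B_{\widetilde{H}^\lambda_t})$, $\widetilde{H}^\lambda$ is continuous and strictly increasing on $[0,\infty)$, and $\widetilde{Z}^\lambda$ is a.s.\ continuous (the analogue in the present setting of \cite[Lemma~5.4]{MottModel}), the event $\{\tau_K^{\widetilde{Z}}\wedge\tau_{-K}^{\widetilde{Z}}\le u\}$ coincides with $\{\widetilde{H}^\lambda_u\ge\sigma_K^B\}$, where $\sigma_K^B$ is the first exit time of $B$ from $I_K\coloneqq[S^{\alpha_0,\lambda}(-K),S^{\alpha_0,\lambda}(K)]$. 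By the definition of $\widetilde{H}^\lambda$ in \eqref{eqn:HeavyTailedSpeedMeasure}, this is further equivalent to
\[
\left\{\int L^B_{\sigma_K^B}(x)\,\widetilde{\mu}^\lambda(dx)\le u\right\}.
\]

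The strategy is then to show that the integral above tends to $+\infty$ almost surely as $K\to\infty$, so that the claim follows by dominated convergence. For a lower bound, fix an atom $x_0\in D_\infty\coloneqq\{v:\,S^{\alpha_\infty}(v)\neq S^{\alpha_\infty}(v^-)\}$ near the origin (which exists almost surely), set $y_0\coloneqq S^{\alpha_0,\lambda}(x_0)$, and let $w_0>0$ denote the mass that $\widetilde{\mu}^\lambda$ places at $y_0$. For all $K$ sufficiently large, $y_0\in I_K$ and
\[
\int L^B_{\sigma_K^B}(x)\,\widetilde{\mu}^\lambda(dx)\ge w_0\, L^B_{\sigma_K^B}(y_0),
\]
so it suffices to show that $L^B_{\sigma_K^B}(y_0)\to\infty$ almost surely.

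In the unbiased case $\lambda=0$, both endpoints $S^{\alpha_0}(\pm K)$ diverge, so $\sigma_K^B\to\infty$ by recurrence of Brownian motion and $L^B_{\sigma_K^B}(y_0)\uparrow\infty$ follows from the Ray--Knight description of the local time profile as a squared Bessel process. The main obstacle is the biased case, say $\lambda>0$: here $S^{\alpha_0,\lambda}(K)\uparrow S^{\alpha_0,\lambda}(\infty)<\infty$ while $S^{\alpha_0,\lambda}(-K)\downarrow-\infty$, so $B$ may exit $I_K$ on the right at a bounded time. On the event $\{\sigma_K^B=\sigma^B_{S^{\alpha_0,\lambda}(-K)}\}$ that $B$ exits on the left, $\sigma_K^B\to\infty$ and the previous argument still gives $L^B_{\sigma_K^B}(y_0)\to\infty$. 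On the complementary event I would instead use the Ray--Knight theorem together with the clustering of atoms of $\widetilde{\mu}^\lambda$ near $S^{\alpha_0,\lambda}(\infty)$ (driven by $\alpha_\infty\in(0,1)$, which forces many jumps of $S^{\alpha_\infty}$ near $+\infty$) to show that $\int L^B_{\sigma^B_{S^{\alpha_0,\lambda}(\infty)}}(x)\,\widetilde{\mu}^\lambda(dx)=\infty$ almost surely. This last step, which is really the conservativeness of $\widetilde{Z}^\lambda$, is the exact analogue of \cite[Lemma~5.3]{MottModel}, and I would expect to prove it by a direct Poisson-process computation mirroring theirs, exploiting the independence of $S^{\alpha_\infty}$ from $S^{\alpha_0}$ and $B$ that was arranged in Section~\ref{sec:limits}.
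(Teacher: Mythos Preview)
Your approach is genuinely different from the paper's, and the reduction is sound: the event $\{\tau_K^{\widetilde Z}\wedge\tau_{-K}^{\widetilde Z}\le u\}$ does translate into $\{\int L^B_{\sigma_K^B}(x)\,\widetilde\mu^\lambda(dx)\le u\}$, and for $\lambda=0$ your fixed-atom argument works cleanly since both endpoints of $I_K$ diverge and recurrence forces $L^B_{\sigma_K^B}(y_0)\to\infty$. For $\lambda>0$ you correctly observe that the only substantive case is exit on the right (indeed, $\sigma_K^B\uparrow\tau^B_{S^{\alpha_0,\lambda}(\infty)}<\infty$ a.s., so the left-exit event is eventually empty pathwise, and your splitting there is in fact unnecessary).

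The gap is the last step: showing $\int L^B_{\tau_a}(x)\,\widetilde\mu^\lambda(dx)=\infty$ a.s.\ where $a=S^{\alpha_0,\lambda}(\infty)$. This is not a bookkeeping detail but the entire content of conservativeness in the biased case, and it does not follow from a single atom (whose local time at $\tau_a$ is finite). One would need to control the whole sum $\sum_{v\in D_\infty,\,v>0}e^{2\lambda v}\Delta S^{\alpha_\infty}(v)\,Q\bigl(a-S^{\alpha_0,\lambda}(v)\bigr)$ with $Q$ a BESQ(2), balancing the decay of $a-S^{\alpha_0,\lambda}(v)$ against the growth of $e^{2\lambda v}$ and the divergence of $\sum\Delta S^{\alpha_\infty}(v)$. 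This can be done, but it is a genuine computation involving all three random inputs, and your appeal to ``mirroring'' \cite[Lemma~5.3]{MottModel} is misleading: that proof (and the paper's here) does \emph{not} prove divergence of a local-time integral. Instead, the paper locates the largest jump $M_K$ of $S^{\alpha_\infty}$ on $[1,K-1]$, shows $M_K\to\infty$, checks that the $S^{\alpha_0,\lambda}$-resistance on either side of its location is bounded below with uniformly positive probability (by independence of the two subordinators), and concludes that the process is held at that single trap for an exponential time of mean $\asymp M_K$. This sidesteps any Ray--Knight analysis and gives the biased case with essentially no extra work beyond a tilt in the resistance bound. Your route would ultimately succeed, but the step you defer is precisely where the paper's single-trap argument is doing all the work.
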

\begin{proof}
The proof is similar to the one of \cite[Lemma 5.3]{MottModel}. We aim to prove that for all $\varepsilon> 0$ we can find $K$ large enough such that
\begin{equation*}
	\widetilde{\Prob}^{\lambda}\left( \tau_{K}^{\widetilde{Z}} \wedge \tau_{-K}^{\widetilde{Z}} \le u \right) \le \varepsilon.
\end{equation*}
Let us start proving this for the symmetric process $Z = Z^{0}$. By symmetry, we can reduce the problem into proving that
\begin{equation*}
   	\widetilde{\Prob}\left( \tau^{\widetilde{Z}}_{K} \le u \right) \le \frac{\varepsilon}{2}.
\end{equation*}
Let us define
\begin{align*}
   	M_{K} &\coloneqq \sup_{t \in [1, K-1]} \left(S^{\alpha_\infty}(t) - S^{\alpha_\infty}(t^{-})\right), \\
   	T_{K} &\coloneqq \argmax_{t \in [1, K-1]} \left(S^{\alpha_\infty}(t) - S^{\alpha_\infty}(t^{-})\right).
\end{align*}
The number of jumps of $S^{\alpha_\infty}$ in the interval $[1,K-1]$ of size larger than $m$ has a Poisson distribution with mean $m^{-\alpha_\infty} (K - 2)$. Hence, for all $m$, we can find $K_0$ such that, for all $K \ge K_0$,
\[\widetilde{\mathbf{P}}\left(M_{K} < m\right)=e^{-m^{-\alpha_\infty}(K-2)} \le \frac{\varepsilon}{4}.\]
Moreover, using the independence of the two subordinators $S^{\alpha_0}$ and $S^{\alpha_\infty}$ (as well as the fact that the former is a strictly increasing function), we can find $\eta\in(0,1)$ (independent of $m$ and $K$) so that
\[\widetilde{\mathbf{P}}\left(\min\{S^{\alpha_0}(T_K+1)-S^{\alpha_0}(T_K),S^{\alpha_0}(T_K^-)-S^{\alpha_0}(T_K-1)\}< \eta\right)\leq \frac{\varepsilon}{4}.\]
Now, on the event that both $M_{K} \geq  m$ and $\min\{S^{\alpha_0}(T_K+1)-S^{\alpha_0}(T_K),S^{\alpha_0}(T_K^-)-S^{\alpha_0}(T_K-1)\}\geq \eta$, one has by arguing as in the proof of Lemma \ref{lemmaleftcont} that from the hitting time of $T_K$ by $\widetilde{Z}^\lambda$ to the time it exits a ball of radius 1 around this, at least an exponential, mean $\eta m/2$, amount of time must pass. In particular, we get that
\[\widetilde{\Prob}\left( \tau^{\widetilde{Z}}_{K} \le u \right)\leq \frac{\varepsilon}{2}+
\widetilde{\Prob}\left( \exp\left( \frac{2}{\eta m} \right) \le u \right)=  \frac{\varepsilon}{2}+1-e^{-2/\eta m}.\]
Thus, by choosing first $\eta$ (which we recall could be chosen independent of $m$ and $K$), and then $m$ and then $K$ suitably large, one can ensure this bound is smaller than $\varepsilon$, as desired.

For the version with vanishing bias $\lambda > 0$ we notice that, by standard properties of Brownian motion
\begin{equation*}
	\widetilde{\Prob}^{\lambda}\left( \tau_{K}^{\widetilde{Z}} \wedge \tau_{-K}^{\widetilde{Z}} = \tau_{-K}^{\widetilde{Z}} \big| S^{\alpha_0, \lambda} \right) = \frac{\left| S^{\alpha_0, \lambda}(- K)^{-1} \right|}{\left| S^{\alpha_0, \lambda}(- K)^{-1} \right| + \left| S^{\alpha_0, \lambda}(K)^{-1} \right|}.
\end{equation*}
Thanks to the fact that $\left(S^{\alpha_0, \lambda}(t)\right)_{t \in \mathbb{R}}$ is bounded for $t \to \infty$ and unbounded for $t \to - \infty$ one can choose $K$ large enough such that
\begin{equation*}
	\widetilde{\Prob}^{\lambda}\left( \tau_{K}^{\widetilde{Z}} \wedge \tau_{-K}^{\widetilde{Z}} = \tau_{-K}^{\widetilde{Z}} \right) \le \frac{\varepsilon}{2}.
\end{equation*}
Then re-running the proof as in the symmetric case we get that, for every $u,\varepsilon>0$,
\begin{equation*}
	\widetilde{\Prob}^{\lambda}\left( \tau_{K}^{\widetilde{Z}} \wedge \tau_{-K}^{\widetilde{Z}} \le u \right) \le \varepsilon
\end{equation*}
for large $K$. (Note that, the one adaptation required is that one should consider the event $\min\{S^{\alpha_0,\lambda}(T_K+1)-S^{\alpha_0,\lambda}(T_K),S^{\alpha_0,\lambda}(T_K^-)-S^{\alpha_0,\lambda}(T_K-1)\}\geq \eta e^{-2\lambda T_K}$, which still has a probability that is independent of $K$.) This is enough to conclude the proof.
\end{proof}

\appendix

\section{Note on $\mathrm{\mathbf{J_1}}$ convergence} \label{sec:AppendixJ_1}

In the proofs of Propositions~\ref{PropositionRWEnvConvergence} and \ref{PropositionRWTEnvConvergence}, we claimed that the convergence of $\widebar{\nu}^{\alpha_0, (n)}$ towards $\widebar{\nu}^{\alpha_0}$ in the sense of Proposition~\ref{PropositionPoissonPoint} guarantees that, under the same coupling, almost-surely, recalling \eqref{EquationEffctiveResistance},
\[
S^{\alpha_0, \lambda/n, (n)}(t) \stackrel{J_1}{\to} S^{\alpha_0, \lambda}(t),
\]
and the same holds for the $\alpha_\infty$ process. Let us now show that the first statement of Lemma~\ref{LemmaCorollaryEnvironment} holds. In particular, we will show that, almost surely,
\[
S^{\alpha_0, \lambda/n, (n)}(t) \stackrel{J_1}{\to} q^{-1/\alpha_0}S^{\alpha_0, \lambda/q}(qt),
\]
where $q \coloneqq (1 - p) = \widetilde{\mathbf{P}}(r(\{0, 1\}) > 1)$. This is enough, as a change of variable in \eqref{eqn:eqnTwoSidedLevy} and the self-similarity of subordinators guarantee that
\[
\left(S^{\alpha_0, \lambda}(t)\right)_{t \ge 0} \stackrel{(\mathrm{d})}{=} \left(q^{-1/\alpha_0}S^{\alpha_0, \lambda/q}(qt)\right)_{t \ge 0}.
\]
Note that, for our purpose, it is enough to prove almost sure convergence to a version of the process. Hence, in Lemma~\ref{LemmaCorollaryEnvironment} and in what follows, for simplicity, we denote the rescaled process $S^{\alpha_0, \lambda}(t)$.

\begin{proof}[Proof of Lemma~\ref{LemmaCorollaryEnvironment}] We use the notation defined in Section~\ref{SectionCopuledSpaces}. Note that $d^*_{n, 0}/d_{n, 0} \to q^{-1/\alpha_0}$, so let us discard that part. Concerning notation, let us set, for $x \in [0, 1]$, $B^{(n)}(x) \coloneqq \floor{nx} - N^{(n)}(x)$ and use the shorthand $B^{(n)} = B^{(n)}(1)$. Furthermore let us introduce the function $\widetilde{h}_n(x) \coloneqq B^{(n)}(x)/n$ and its right continuous inverse $\widetilde{h}^{-1}_n$. One can check that, for $i$ such that $b_i=0$, $\widetilde{h}_n(i/n) = (i -i^*)/n$ and $\widetilde{h}^{-1}_n((i -i^*)/n)-\tfrac{1}{n} = i/n$, where $i^*$ is defined in \eqref{def:x*}. By the functional law of large numbers we have that $\widetilde{h}_n (x) \to q x$ uniformly in $[0, 1]$ and consequently $\widetilde{h}_n^{-1} (s) \to s/q$.
	
The definition of the $J_1$ metric on $[0, 1]$ is the following
\begin{equation}\label{eqn:Skorohod}
	d_{J_1}\left(f, g \right) \coloneqq \inf_{\xi \in \Xi} \left( \sup_{t \in [0, 1]} \left| f \circ \xi(t) - g(t) \right| + \sup_{t \in [0, 1]} \left| \xi(t) - t \right|  \right),
\end{equation}
where $\Xi$ is the set of continuous, strictly increasing functions that map $[0, 1]$ onto itself (which necessarily admit continuous and strictly increasing inverses). Thus we need to prove that for every $\varepsilon > 0$ there exists $n_0$ and $\xi_n \in \Xi$ such that, for all $n \ge n_0$,
\begin{equation}\label{eqn:J_1Small}
	\sup_{t \in [0, 1]} \left| S^{\alpha_0, \lambda/n, (n)}(t) - q^{-1/\alpha_0}S^{\alpha_0, \lambda/q}(q\xi_n(t)) \right| + \sup_{t \in [0, 1]} \left| \xi_n(t) - t \right| \le \varepsilon.
\end{equation}
Recall the definition of the set $I^{(n), \alpha_\infty}_\delta$ from \eqref{eqn:SetLargeIncrements}, and consider analogously $I^{(n), \alpha_0}_\delta$. Furthermore, let
\begin{equation*}
	I^{\alpha_0, q}_\delta \coloneqq \left\{ t \in [0, q] \colon S^{\alpha_0}(t) - S^{\alpha_0}(t^{-}) > \delta \right\}.
\end{equation*}
Let us notice the following three claims hold for any $\delta>0$.
\begin{itemize}
	\item For all $n$ large enough, almost-surely, by the vague and point process convergence of Proposition~\ref{PropositionPoissonPoint}, the two sets $I^{(n), \alpha_0}_\delta$ and $I^{\alpha_0, q}_\delta$ will have matching atoms, in the sense that for every $x_j \in I^{\alpha_0, q}_\delta$ one can find $i \in I^{(n), \alpha_0}_\delta$ such that
	\begin{equation*}
		\widetilde{h}_n\left( \frac{i}{n}\right) = \frac{i - i^*}{n} \to x_j,\qquad g_{n}^{\alpha_0} \left( S^{\alpha_0}\left(\frac{i - i^*+1}{n} \right) - S^{\alpha_0}\left(\frac{i - i^*}{n} \right) \right) \to S^{\alpha_0}(x_j) - S^{\alpha_0}(x_j^{-}).
	\end{equation*}
    \item For all $n$ large enough the number of atoms in $I^{(n), \alpha_0}_\delta$ is the same as $|I^{\alpha_0, q}_\delta|$ and both are almost-surely finite.
    \item For all $n$ large enough the set of points $\{\widetilde{h}_n^{-1}(x_j) \colon x_j \in I^{\alpha_0, q}_\delta\} \cup \{0, 1\}$ is well defined in the sense that the map $\widetilde{h}_n^{-1}$ is injective on $I^{\alpha_0, q}_\delta$ and does not map any point to $\{0, 1\}$.
\end{itemize}
Thus we define $\xi_{n}(t)$ to be the inverse of the linear interpolation of the points $\{(1/q) x_j, \widetilde{h}_n^{-1}(x_j)\}$ for $x_j \in I^{\alpha_0, q}_\delta$ and with the convention that $0$ and $1$ are mapped onto themselves. The observations above imply that $\xi_n(\cdot)$ is well defined and is inside $\Xi$ for all $n$ large enough. This choice implies that for all $\varepsilon>0$, all $\delta>0$ and all $n$ large enough
\[\sup_{t \in [0, 1]} \left| \sum_{\substack{i = 0 \\ i \in I^{(n), \alpha_0}_\delta}}^{\floor{nt} - 1} e^{-\frac{2\lambda i}{n} }g_{n}^{\alpha_0} \left( S^{\alpha_0}\left(\tfrac{i - i^*+1}{n} \right) - S^{\alpha_0}\left(\tfrac{i - i^*}{n} \right) \right) - \sum_{\substack{j: \frac{x_j}{q} \le \xi_n(t),\\ x_j \in I^{\alpha_0, q}_\delta}}  e^{-\frac{2x_{j}}{q}} \left(S^{\alpha_0}(x_j) - S^{\alpha_0}(x_j^{-})\right) \right| \le \frac{\varepsilon}{4}.\]
Basically, $\xi_n$ exactly matches the location of the discontinuities that are larger than $\delta$. Furthermore, notice that by the third bullet point above and the fact that $\widetilde{h}_n(x)$ converges uniformly towards $q x$, for all $n$ large enough
\[\sup_{t \in [0, 1]} \left| \xi_n(t) - t \right| \le \frac{\varepsilon}{4}.\]
Moreover, the law of large numbers and the results in \eqref{eqn:SmallDiscrete}, \eqref{eqn:SmallAtomsAreSmall} and \eqref{eqn:VerySmallDiscrete} (re-phrased for the $\alpha_{0}$ process) imply that one can always pick $\delta$ small enough such that, for all $n$ large
\[	\sum_{\substack{i = 0 \\ i \not \in I^{(n), \alpha_0}_\delta \backslash I^{(n), \alpha_0}_0}}^{\floor{n} - 1} g_{n}^{\alpha_0} \left( S^{\alpha_0}\left(\frac{i - i^*+1}{n} \right) - S^{\alpha_0}\left(\frac{i - i^*}{n} \right) \right) + \frac{1}{d_{n, 0}}\sum_{i/n \in [0, 1], \, b_i = 1} \widebar{r}(\{i, i+1\})  \le \frac{\varepsilon}{4}, \]
	\[\sum_{x_j \not \in I^{\alpha_0, q}_\delta} S^{\alpha_0}\left(x_j \right) - S^{\alpha_0}\left(x_j^{-}\right) \le \frac{\varepsilon}{4}.\]
The last sum is taken over all discontinuity points of the stable subordinator, which is a pure jump process. This concludes the proof, as we have shown that \eqref{eqn:J_1Small} holds as $e^{-2\lambda x} \le 1$ for $x \in [0, 1]$. On this last observation, note that for the general case $[-K, K]$ for $K$ fixed we can bound $e^{-2\lambda x}$ with the constant $e^{2 \lambda K}$, the result follows as $\varepsilon$ above can be chosen arbitrarily small.
\end{proof}

\begin{lemma}\label{Lemma:HittingKComparison}
	For every $t>0$ we have that
	\[		\limsup_{n} \,\,\mathbb{P}^{\lambda/n, K}\left( \tau_{Kn}^{X} \wedge \tau_{-Kn}^{X} \le ta_n \right) \le \mathbb{P}^{\lambda, K}\left( \tau_{K - 1}^{Z} \wedge \tau_{-K + 1}^{Z} \le t + 1 \right).	\]
	and that
	\begin{equation}\label{eqn:2HittingTimesComparedRWT}
		\limsup_{n} \,\,\widetilde{\mathbb{P}}^{\lambda/n, K}\left( \tau_{Kn}^{\widetilde{X}} \wedge \tau_{-Kn}^{\widetilde{X}} \le tb_n \right) \le \widetilde{\mathbb{P}}^{\lambda, K}\left( \tau_{K - 1}^{\widetilde{Z}} \wedge \tau_{-K + 1}^{\widetilde{Z}} \le t + 1 \right).
	\end{equation}
\end{lemma}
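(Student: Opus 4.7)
The plan is to express both events in the lemma as supremum events and to apply the quenched weak convergence of Proposition~\ref{prop:QuenchedWeakConvergence} together with the Portmanteau theorem. I will detail the first inequality; the second follows by the same argument with $\widetilde{X}$, $\widetilde{Z}^{\lambda}$, $b_n$ and the second half of Proposition~\ref{prop:QuenchedWeakConvergence} replacing $X$, $Z^{\lambda}$, $a_n$ and the first. Setting $X^{(n)}_s\coloneqq n^{-1}X_{sa_n}$, one has the identity of events
\begin{equation*}
\left\{\tau^X_{Kn}\wedge\tau^X_{-Kn}\le ta_n\right\}=\left\{\sup_{s\in[0,t]}|X^{(n)}_s|\ge K\right\},
\end{equation*}
while the continuity of the sample paths of $Z^{\lambda}$ (cf.\ \cite[Lemma~5.4]{MottModel}) gives
\begin{equation*}
\left\{\sup_{s\in[0,t]}|Z^{\lambda}_s|\ge K\right\}\subseteq\left\{\tau^{Z}_{K-1}\wedge\tau^{Z}_{-K+1}\le t+1\right\}.
\end{equation*}

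Working under the coupling of Section~\ref{SectionCopuledSpaces} and taking $\beta_n=\beta=0$ in Proposition~\ref{prop:QuenchedWeakConvergence}, the quenched law of $(X^{(n)}_s)_{s\in[0,t]}$ under $P^{\omega,\lambda/n,K}$ converges $\mathbf{P}$-a.s.\ weakly in the $J_1$-topology on $D([0,t],\mathbb{R})$ to the quenched law of $(Z^{\lambda}_s)_{s\in[0,t]}$ under $P^{\omega,\lambda,K}$. Since $J_1$-convergence to a continuous limit is equivalent to uniform convergence and $Z^{\lambda}$ has $\mathbf{P}$-a.s.\ continuous sample paths, the supremum functional $f\mapsto\sup_{s\in[0,t]}|f(s)|$ is continuous at every path in the support of the limit law. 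Hence, by the continuous mapping theorem and the Portmanteau theorem applied to the closed set $[K,\infty)\subset\mathbb{R}$,
\begin{equation*}
\limsup_n P^{\omega,\lambda/n,K}\left(\sup_{s\in[0,t]}|X^{(n)}_s|\ge K\right)\le P^{\omega,\lambda,K}\left(\sup_{s\in[0,t]}|Z^{\lambda}_s|\ge K\right),
\end{equation*}
$\mathbf{P}$-almost surely. Integrating this pointwise-in-$\omega$ bound against $\mathbf{P}$ and invoking the reverse Fatou lemma (justified since the integrands are bounded by $1$), then combining with the inclusion of events recorded above, yields the first inequality of the lemma.

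The main obstacle in this plan lies in the interaction between the supremum functional and the $J_1$ weak convergence, which rests entirely on the continuity of the limiting paths; the cushion from $t$ to $t+1$ on the right-hand side is in fact unnecessary but is convenient to absorb the strict versus non-strict inequality in the hitting-time comparison. The second inequality follows by verbatim repetition of the argument, using the second half of Proposition~\ref{prop:QuenchedWeakConvergence} and the almost-sure continuity of the sample paths of $\widetilde{Z}^{\lambda}$ (which can be established in essentially the same way as for $Z^{\lambda}$ in \cite[Lemma~5.4]{MottModel}, cf.\ also Lemma~\ref{lemmaExitIntervalDiffusion}).
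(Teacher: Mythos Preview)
Your proof is correct and is essentially the same approach as the paper's, which simply cites \cite[Lemma~5.2]{MottModel} and states that the result is ``a simple consequence of $J_1$-convergence''; you have supplied the details of that $J_1$ argument (supremum functional, continuous mapping theorem, Portmanteau, reverse Fatou) accurately. Your observation that the cushion from $t$ to $t+1$ and from $K$ to $K-1$ is slack is also correct, though harmless for the applications in the paper.
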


\begin{proof}
	The proof goes in the same way as the proof of \cite[Lemma~5.2]{MottModel}, being a simple consequence of $J_1$-convergence.
\end{proof}

\printbibliography
\end{document}